\numberwithin{equation}{section}
\newtheorem{lemma}[equation]{Lemma}
\newtheorem{proposition}[equation]{Proposition}
\newtheorem{corollary}[equation]{Corollary}
\newtheorem{definition}[equation]{Definition}
\newtheorem{remark}[equation]{Remark}
\newtheorem{conjecture}[equation]{Conjecture}
\newcommand\ve{\varepsilon}
\newcommand\vN{\vec N}
\newcommand\lexp[2]{\kern\scriptspace\vphantom{#2}^{#1}\kern-\scriptspace#2}
\newcommand\hl{{\ell}}
\newcommand\lS{{\ell_{\bS^{\pm1}}}}
\newcommand\lZ{{\ell_{\BZ}}}
\newcommand\inv{^{-1}}
\newcommand\bb{{\mathbf b}}
\newcommand\bg{{\mathbf g}}
\newcommand\bh{{\mathbf h}}
\newcommand\bi{{\mathbf i}}
\newcommand\bj{{\mathbf j}}
\newcommand\bp{{\mathbf p}}
\newcommand\bs{{\mathbf s}}
\newcommand\bt{{\mathbf t}}
\newcommand\bu{{\mathbf u}}
\newcommand\bv{{\mathbf v}}
\newcommand\bw{{\mathbf w}}
\newcommand\bx{{\mathbf x}}
\newcommand\by{{\mathbf y}}
\newcommand\bI{{\mathbf I}}
\newcommand\bJ{{\mathbf J}}
\newcommand\bK{{\mathbf K}}
\newcommand\bN{{\mathbf N}}
\newcommand\bS{{\mathbf S}}
\newcommand\bW{{\mathbf W}}
\newcommand\BC{{\mathbb C}}
\newcommand\BN{{\mathbb N}}
\newcommand\BR{{\mathbb R}}
\newcommand\BZ{{\mathbb Z}}
\newcommand\CC{{\mathcal C}}
\DeclareMathOperator\Id{Id}
\DeclareMathOperator\rightlcm{right-lcm}
\DeclareMathOperator\leftgcd{left-gcd}
\DeclareMathOperator\inverse{inv}
\DeclareMathOperator\pr{pr}
\DeclareMathOperator\hpr{\widehat{pr}}
\DeclareMathOperator\hpi{\hat\pi}
\DeclareMathOperator\bpi{\boldsymbol\pi}
\DeclareMathOperator\product{prod}
\DeclareMathOperator\rev{rev}
\DeclareMathOperator\supp{supp}
\DeclareMathOperator\head{head}
\DeclareMathOperator\tail{tail}
\DeclareMathOperator\conj{Conj}
\begin{document}
\author{Fran\c cois Digne, Eddy Godelle  and Jean Michel}
\title{Retraction to a parabolic subgroup and applications} 

\begin{abstract}
We  continue the study of the retraction  from an Artin group to a standard
parabolic  subgroup introduced  by Blufstein,  Charney, Paris and the second
author.  Using right  and left  retractions we  obtain new  results on minimal
parabolic subgroups, intersection of parabolic subgroups, double cosets with
respect to parabolic subgroups and conjugacy classes in Artin groups.
\end{abstract}
\maketitle
\section{Introduction}

Given a non-empty set \index{S@$S$} $S$, a Coxeter matrix is a symmetric matrix
$(m_{s,t})_{s,t\in  S}$ such that $m_{s,s} = 1$ and $m_{s,t}\in \BN_{\geq
2}\cup \{\infty\}$ for $s\neq t$. To such a Coxeter matrix one can associate a
group, namely its associated Coxeter group, defined by the following
presentation \index{W@$W$}

$$ W = \biggl\langle S \  \biggl | \   \begin{array}{cl} s^2 = 1 &\textrm{
for } s\in S \\ \underbrace{sts\cdots}_{m_{s,t} \textrm{ terms}}  =
\underbrace{tst\cdots}_{m_{s,t} \textrm{ terms}} & \textrm{ for } s\neq t \textrm{ and }
m_{s,t}\neq\infty\end{array}  \biggr\rangle \leqno{(*)} $$  

Given a copy \index{S@$\bS$} $\bS$ of $S$ one can also define the associated Artin group
\index{B@$B$}

$$ B= \biggl\langle \bS\  \biggl | \    \underbrace{\bs\bt\bs\cdots}_{m_{s,t} \textrm{ terms}}  = \underbrace{\bt\bs\bt\cdots}_{m_{s,t} \textrm{ terms}}  \textrm{ for } s\neq t \textrm{ and } m_{s,t}\neq\infty  \biggr\rangle 
\leqno{(**)}$$

We  denote by \index{B@$B^+$} $B^+$ the submonoid of  $B$ generated by $\bS$; its elements are
called positive braids. We denote the length on $W$ with respect to $S$
by $\ell_S : W \to \BN$ \index{lS@$\ell_S$} \index{lZ@$\lZ$}
and we denote by $\lZ: B\to\BZ$ the group morphism which sends $\bS$ to
$1$.
It immediately follows
from the presentations that the map $\bs\in \bS \mapsto s\in S$ induces a
morphism  \index{pr@$\pr$} $\pr: B \to W$. Its kernel is called the
pure Artin group \index{P@$P$} associated to $B$
(or the coloured Artin group) and will be denoted by $P$. Moreover, the inverse
bijection $s\mapsto \bs$ extends by lifting reduced expressions 
to a canonical  section (of sets) from $W$ to $B$.
In the sequel \index{W@$\bW$} $\bW$ will denote  the image of this section. This is a subset
of $B^+$. The elements of $\bW$ are called simple braids. We will use the
following notations: for \index{w@$\bw, w$} $\bw\in\bW$ the notation $w$ means $\pr(\bw)$
and for $w\in W$ the notation $\bw$ means the lift of $w$ to $\bW$. In other words
$\bw$ is the unique element in $B^+$ such that $\pr(\bw) = w$ and $\lZ(\bw)
= \ell_S(w)$. \\

Many papers focus on Artin groups, but whereas Coxeter groups are almost
well-understood,  this is not the case for Artin groups. Even the word problem
remains open in general. The strategy used to prove most of the results
obtained is to use a particular family of subgroups, namely the family of
standard  parabolic subgroups. This is why in \cite{Charney-Paris} the authors
claim that ``any result on these subgroups is likely to be useful''. For $I\subseteq S$,
the standard  parabolic  subgroup  $B_I$  (\emph{resp.}  $W_I$)  of $B$
\index{BI@$B_I$}  \index{W@$W_I$}  (\emph{resp.}  of  $W$)  is the
subgroup generated by $\bI\subseteq\bS$ lifting
$I$ (\emph{resp.} generated  by $I$). Such  a subgroup is an Artin
group  (\emph{resp.} a Coxeter group) for the submatrix $(m_{s,t})_{s,t\in I}$
(\emph{cf.}  \cite{VanderLek}, see also Proposition~\ref{vdl}).
A parabolic subgroup is any subgroup that is
conjugate to a standard parabolic subgroup.

The present article studies the properties of a retraction on standard parabolic
subgroups which has been defined in~\cite{blufstein-paris}.
This is a continuation of a series of articles
\cite{Godelle-Paris,Charney-Paris,blufstein-paris,godelle}  on this  topic.
A  complex $\textrm{Sal}(B)$, called the Salvetti complex, can be associated
with  each Artin group. Furthermore,  for every standard parabolic subgroup
$B_I$  there exists  a canonical  embedding $\textrm{Sal}(B_I) \to
\textrm{Sal}(B)$.  In  \cite{Godelle-Paris},  the  authors  prove that this
embedding  admits a  retraction. Then  in \cite[Theorem 1.2]{Charney-Paris}, the authors
use  the above  retraction to  prove that  standard parabolic  subgroups of
Artin  groups are convex (see also Proposition~\ref{convexity}). 
Along  the way, they use  the above retraction to
construct  a retraction $\bpi_I$  from the Artin  group $B$ to its standard
parabolic subgroup $B_I$.
The construction was implicit in the proof of the main result of
\cite{Charney-Paris}  and  is  made  explicit  in~\cite{blufstein-paris}.
In \cite{godelle}  an alternative construction of this retraction is obtained,
which is based only on algebraic and elementary arguments; several algebraic
properties of the retraction are also obtained.

The present article will
take this approach further. One of the aims is to relate the
definition of the retraction to root systems and make it sufficiently explicit
that it is easy to calculate. The retraction is not a morphism, and
a goal is to  better  understand the retraction of a
product.  In particular, we  extend results of \cite{blufstein-paris,godelle}.
We recall that $B^+$ is a locally Garside monoid; each
element  $\bb\in B^+$ has a so-called  \emph{Garside normal form} which is
the  unique sequence $(\bb_1,\ldots,\bb_n)$ such
that  $\bb=\bb_1\ldots \bb_n$ where $\bb_i$ is the greatest left-divisor of $\bb_i\ldots
\bb_{n}$ in $B^+$ which is in $\bW$ and $n\ge 0$ is minimal. The following
proposition  summarises some of our results on the retraction of a product
(we postpone the definition of reduced elements and of the
morphism~$\varphi_{\pr(\bb)}$  to  the  next  sections).
\begin{proposition} \label{propIntro1}
Let  $B$ be an Artin  group and $B_I$ be  a standard parabolic subgroup for
some $I\subseteq S$. Let $\bb$ be in $B$.
\begin{enumerate}
\item  (Proposition~\ref{fourre-tout}(\ref{ft2})) If   $\pr(\bb)\in   W_I$   then  
 for $\bb'\in B$ we have $\bpi_I(\bb\bb') =\bpi_I(\bb)\bpi_I(\bb')$.
\item   (Proposition~\ref{fourre-tout}(\ref{ft4}))  Assume  for $J\subseteq S$ that
$\pr(\bb)$  is $I$-reduced-$J$. Then for  $\bj\in B_J$ we have
$\bpi_I(\bb\bj) = \bpi_I(\bb)\varphi_{\pr(\bb)}(\bpi_{J_1}(\bj))$
where $J_1=I^{\pr(\bb)}\cap J$.
\item  (Proposition~\ref{normal form}) For $\bb\in  B^+$, let $(\bb_1,\cdots, \bb_n)$ and
$(\bi_1,\cdots,  \bi_m)$ be the Garside  normal forms of $\bb$
and $\bpi_I(\bb)$ respectively. Then, $m\leq n$ and for $1\leq i\leq m$, the element
$\bpi_I(\bb_1\cdots \bb_i)$ left divides $\bi_1\cdots\bi_i$.
\end{enumerate}
\end{proposition}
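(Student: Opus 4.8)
The plan is to argue by induction on the canonical length $n$ of $\bb$ — the number of factors of its Garside normal form — after a preliminary reduction using standard properties of Garside normal forms in the locally Garside monoid $B^+$. I will use that every prefix $(\bb_1,\dots,\bb_i)$ of a normal sequence is again normal, so that $\bb_1\cdots\bb_i$ has canonical length exactly $i$; that the canonical length of a product is at most the sum of the canonical lengths of the factors; and that for $\bb'\in B^+$ with normal form $(\bb'_1,\dots,\bb'_k)$ and each $j\le k$, every left divisor of $\bb'$ of canonical length $\le j$ left divides the truncated product $\bb'_1\cdots\bb'_j$. I will also use, as established earlier, that $\bpi_I$ maps $B^+$ into $B_I^+$ and carries simple braids to simple braids. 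Granting this, it suffices to prove, for every $i$ with $0\le i\le n$, the two assertions: (i) $\bpi_I(\bb_1\cdots\bb_i)$ left divides $\bpi_I(\bb)$ in $B^+$; and (ii) $\bpi_I(\bb_1\cdots\bb_i)$ has canonical length at most $i$. Indeed, (ii) with $i=n$ forces the canonical length $m$ of $\bpi_I(\bb)$ to satisfy $m\le n$, and for $1\le i\le m$, assertion (i) together with assertion (ii) and the fact on truncations recalled above show that $\bpi_I(\bb_1\cdots\bb_i)$ left divides $\bi_1\cdots\bi_i$.

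The inductive step will rest on the following key lemma, which I expect to be the heart of the matter: for every $\bx\in B^+$ and every simple braid $\bw\in\bW$, there is a simple braid $\bv$ with $\bpi_I(\bx\bw)=\bpi_I(\bx)\,\bv$; in particular $\bpi_I(\bx)$ left divides $\bpi_I(\bx\bw)$ with quotient of canonical length at most $1$. The intuition is that retracting $\bx\bw$ amounts to retracting $\bx$ and then ``continuing from the resulting state''. The second assertion of the above proposition identifies this continuation, when it is applied to a braid lying in a standard parabolic subgroup, with a composite $\varphi_{\pr(\bx)}\circ\bpi_{J_1}$ (up to the left factor $\bpi_I(\bx)$ already produced); since $\bpi_{J_1}$ sends the simple braid $\bw$ to a simple braid while the morphisms $\varphi_w$, for $w$ an $I$-reduced element, carry simple braids to simple braids, the continuation should contribute exactly one simple braid on the right. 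The obstacle — and I expect this to be the main one — is that one cannot assume $\bx$ to lie in, nor to factor through, a standard parabolic subgroup; one should first absorb, using the first assertion of the above proposition, the ``$W_I$-part'' of $\bx$ on the left, and then reduce, by a secondary induction on $\lZ(\bx)$, to a situation where the second assertion applies. Matching the monoid-theoretic output of the first step with the Coxeter-theoretic ``$I$-reduced'' hypothesis required for the second is the delicate point, and this is where the explicit, root-system based description of $\bpi_I$ should be used.

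Granting the key lemma, the induction is routine. For $n=0$, $\bb=1$ and both assertions are trivial. Assume them for canonical length $n-1$ and let $\bb$ have canonical length $n$ with normal form $(\bb_1,\dots,\bb_n)$; set $\bx=\bb_1\cdots\bb_{n-1}$, which has normal form $(\bb_1,\dots,\bb_{n-1})$ and canonical length $n-1$, so (i) and (ii) hold with $\bx$ in place of $\bb$. By the key lemma, $\bpi_I(\bb)=\bpi_I(\bx\,\bb_n)=\bpi_I(\bx)\,\bv$ with $\bv$ simple, so $\bpi_I(\bx)$ left divides $\bpi_I(\bb)$ and the canonical length of $\bpi_I(\bb)$ is at most that of $\bpi_I(\bx)$ plus one, hence at most $n$. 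Now fix $i$ with $0\le i\le n$. If $i=n$, then $\bpi_I(\bb_1\cdots\bb_i)=\bpi_I(\bb)$ left divides itself and has canonical length $\le n$. If $i\le n-1$, then by the inductive hypothesis $\bpi_I(\bb_1\cdots\bb_i)$ has canonical length $\le i$ and left divides $\bpi_I(\bx)$, which in turn left divides $\bpi_I(\bb)$. In either case (i) and (ii) hold for $\bb$; with the preliminary reduction, this proves the proposition.
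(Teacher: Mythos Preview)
Your reduction of part~(3) to the ``key lemma'' plus the truncation fact is correct and is exactly the paper's argument: the paper uses Proposition~\ref{fourre-tout}(\ref{ft8}) for left-divisibility (your assertion~(i)), Proposition~\ref{fourre-tout}(\ref{ft10}) for the canonical-length bound (your assertion~(ii), equivalently your key lemma iterated), and \cite[III,~1.14]{DDGKM} for the truncation step you invoke.

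The gap is in your proposed route to the key lemma. Parts~(1) and~(2) apply only under special hypotheses --- $\pr(\bb)\in W_I$ for~(1), and $\pr(\bb)$ $I$-reduced-$J$ with the right factor in $B_J$ for~(2) --- and a generic pair $(\bx,\bw)$ with $\bx\in B^+$, $\bw\in\bW$ does not reduce to either. Writing $\bpi_I(\bx\bw)=\bpi_I(\bx)\,\bpi_I(\bt_I(\bx)\bw)$ via~(1) is fine, but then~(2) does not apply: $\bt_I(\bx)$ need not be positive, and there is no natural $J$ with $\bw\in B_J$ and $\pr(\bt_I(\bx))$ reduced-$J$ (taking $J=S$ forces $\pr(\bt_I(\bx))=1$). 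The secondary induction on $\lZ(\bx)$ you suggest does not visibly change this picture.

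The paper instead proves the key lemma directly from the general word-level product formula (Proposition~\ref{pi(bb')} and Corollary~\ref{pi(b1..bn)}): for arbitrary words one has
\[
\hpi_I(b\,|\,b')=\hpi_I(b)\ \big|\ p^{*-1}\bigl(\vN\bigl(\vN(\lexp{t_I(\hpr(b))}p^*(b'))\cap\Phi_I\bigr)\bigr),
\]
and when $b'$ is a positive word with simple image the right-hand factor is again a positive word with simple image, because $t_I(\hpr(b))$ is $I$-reduced and hence sends $\Phi^+$ either outside $\Phi_I$ or into $\Phi_I^+$. This is the root-system input you correctly anticipated; the point is that it yields the key lemma outright --- this is precisely Proposition~\ref{fourre-tout}(\ref{ft10}) --- rather than serving as a bridge between~(1) and~(2).
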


In  the next proposition we consider the  case of Artin groups of spherical
type,  that is when  $W$ is finite.  In this case  $W$ has a unique longest
element relative to~$\ell_S$. Its  lift in $\bW$, denoted by
\index{Delta@$\Delta, \Delta_I$}
$\Delta$, is  a  Garside  element  of  $B^+$.  For $I\subseteq S$, the
corresponding  element of $B^+_I$  lifting the longest  element of $W_I$ is
denoted by~$\Delta_I$.

\begin{proposition}  \label{propIntro2}(Proposition~\ref{pi(deltab)}) Assume $W$ is
finite. Then for any $\bb\in B$, $I\subseteq S$ and $i\in\BZ$  we have
$\bpi_I(\bb\Delta^i)=\bpi_I(\bb)\bpi_I(\Delta^i) =\bpi_I(\bb)\Delta_I^i$. 
\end{proposition}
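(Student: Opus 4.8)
The plan is to reduce the whole statement to the single identity
$(\star)$: $\bpi_I(\bb\Delta)=\bpi_I(\bb)\Delta_I$ for all $\bb\in B$, and then to prove $(\star)$. Granting $(\star)$: taking $\bb=1$ gives $\bpi_I(\Delta)=\Delta_I$, and then $\bb=\Delta$ gives $\bpi_I(\Delta^2)=\Delta_I^2$. Since $\Delta^2$ is central in $B$ and $\pr(\Delta^2)=1\in W_I$, Proposition~\ref{propIntro1}(1) yields $\bpi_I(\bb\Delta^{2k})=\bpi_I(\Delta^{2k})\bpi_I(\bb)$ for every $k\in\BZ$; applied with $\bb=1$, together with $\bpi_I(\Delta^2\Delta^{-2})=\bpi_I(1)=1$, this gives $\bpi_I(\Delta^{2k})=\bpi_I(\Delta^2)^k=\Delta_I^{2k}$, which is central in $B_I$, so $\bpi_I(\bb\Delta^{2k})=\bpi_I(\bb)\Delta_I^{2k}=\bpi_I(\bb)\bpi_I(\Delta^{2k})$. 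For odd exponents write $\bb\Delta^{2k+1}=(\bb\Delta^{2k})\Delta$; one more use of $(\star)$ and the even case give $\bpi_I(\bb\Delta^{2k+1})=\bpi_I(\bb\Delta^{2k})\Delta_I=\bpi_I(\bb)\Delta_I^{2k+1}$, and $\bb=1$ shows $\Delta_I^{2k+1}=\bpi_I(\Delta^{2k+1})$. Hence the Proposition follows once $(\star)$ is known.

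For $(\star)$ I would first prove it for $\bb\in B^+$ by induction on $\lZ(\bb)$ (the cases $\bb=1$ and $\bb=\Delta$ then supply $\bpi_I(\Delta^{2k})=\Delta_I^{2k}$ for all $k\in\BZ$ as above), and then bootstrap to arbitrary $\bb\in B$ by writing $\bb=\Delta^{-2N}\bb^+$ with $\bb^+\in B^+$ and $N$ large: centrality of $\Delta^{2N}$, Proposition~\ref{propIntro1}(1), and $\bpi_I(\Delta^{-2N})=\Delta_I^{-2N}$ reduce $(\star)$ for $\bb$ to $(\star)$ for $\bb^+$. The base case $\bb=1$ asks for $\bpi_I(\Delta)=\Delta_I$. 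Write $\Delta=\Delta_I\mathbf r$, where $\mathbf r$ is the simple braid lifting $r=w_{0,I}w_0$ ($w_0$, $w_{0,I}$ the longest elements of $W$, $W_I$): indeed $\ell_S(w_0)=\ell_S(w_{0,I})+\ell_S(r)$, and for $s\in I$ one gets $\ell_S(sr)=\ell_S(w_0)-\ell_S(sw_{0,I})=\ell_S(r)+1$, so $r$ is left-$I$-reduced. Since $\pr(\Delta_I)=w_{0,I}\in W_I$, Proposition~\ref{propIntro1}(1) gives $\bpi_I(\Delta)=\Delta_I\,\bpi_I(\mathbf r)$, and $\bpi_I(\mathbf r)=1$: this is read off the explicit root-theoretic description of $\bpi_I$ (the longest element $w_0$ carries every positive root outside the $I$-parabolic subsystem to a negative one, so $\mathbf r$ contributes nothing), equivalently it is the property that $\bpi_I$ kills a simple braid with left-$I$-reduced image. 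For the inductive step write $\bb=\bs\bb'$ with $s\in S$ and $\lZ(\bb')<\lZ(\bb)$; if $s\in I$ then $\pr(\bs)\in W_I$ and $\bpi_I$ fixes $\bs$, so two applications of Proposition~\ref{propIntro1}(1) together with the induction hypothesis give $\bpi_I(\bb\Delta)=\bs\,\bpi_I(\bb'\Delta)=\bs\,\bpi_I(\bb')\Delta_I=\bpi_I(\bb)\Delta_I$.

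The crux, and the step I expect to be the main obstacle, is the remaining case of the induction, $s\notin I$, equivalently $(\star)$ for $\bb\in B^+$ with $\pr(\bb)$ left-$I$-reduced and nontrivial; here Proposition~\ref{propIntro1}(1) is of no use and the finer theory must be invoked. One route is to argue directly from the explicit root-system formula for $\bpi_I$: applied to $\bb\Delta$, since the trailing $\Delta$ takes every still-positive root to a negative root, the roots recorded from that block are exactly those that assemble $\Delta_I$, independently of $\bb$. A second route is to use Proposition~\ref{propIntro1}(2) iteratively: expand $\Delta$ (or $\bb\Delta$) along double-coset representatives, append the parabolic blocks one at a time, and keep track of the conjugating morphisms $\varphi_{\pr(\bb)}$ and of the subsets $J_1=I^{\pr(\bb)}\cap J$; the delicate point is to make these conjugations telescope, using that conjugation by $\Delta$ is a diagram automorphism fixing $\Delta$ and permuting $\bS$ and that $\Delta_I$ is absorbed from the right. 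Once $(\star)$ is secured on $B^+$, the bootstrap of the second paragraph and the exponent manipulations of the first complete the proof.
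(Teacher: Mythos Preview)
Your reduction to the single identity $(\star)$ is sound, but you explicitly leave the crucial case---$\bb\in B^+$ with first letter outside $I$---unproved, offering only sketches. That is a genuine gap: neither of your two suggested routes is executed, and your induction on $\lZ(\bb)$ simply stalls there.

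The paper's proof avoids this difficulty entirely by a trick you did not find: instead of keeping $\Delta$ on the right and inducting on $\bb$, move $\Delta$ to the \emph{left} via $\bb\Delta=\Delta\,\varphi^{-1}(\bb)$, where $\varphi$ is conjugation by $\Delta$. Then write $\Delta=\Delta_I\bw$ with $\pr(\bw)$ an $I$-ribbon-$\varphi(I)$ (your $\mathbf r$), and apply Proposition~\ref{fourre-tout}(\ref{ft2}) and the ribbon formula~(\ref{ft3}) once: $\bpi_I(\bb\Delta)=\Delta_I\,\varphi_{\pr(\bw)}(\bpi_{\varphi(I)}(\varphi^{-1}(\bb)))$. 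The remaining step is the observation that $\varphi$ commutes with retraction in the sense $\varphi_{\pr(\bw)}\circ\bpi_{\varphi(I)}\circ\varphi^{-1}=\varphi_I\circ\bpi_I$ (where $\varphi_I$ is conjugation by $\Delta_I$), so the right-hand side is $\Delta_I\varphi_I(\bpi_I(\bb))=\bpi_I(\bb)\Delta_I$. No induction on $\bb$ is needed; the argument works for all $\bb\in B$ at once, and the reduction from arbitrary $i\in\BZ$ to $i=1$ is then a one-line manipulation.

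Your first sketched route can in fact be completed and is a legitimate alternative: using Proposition~\ref{pi(bb')}, the second factor is $p^{*-1}(\vN(\vN(\lexp{v}p^*(\Delta))\cap\Phi_I))$ with $v=t_I(\pr(\bb))$; since $v$ is $I$-reduced, $\lexp v\Phi^+\cap\Phi_I=\Phi_I^+$, so this factor is a positive word on $\bI$ of length $|\Phi_I^+|$ with simple image (Corollary~\ref{pi(b1..bn)}), hence represents $\Delta_I$. But you did not supply this computation, and as written the proof is incomplete.
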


For  $\bb\in B$, define $\bt_I(\bb)$  by $\bb=\bpi_I(\bb)\bt_I(\bb)$. Then, the
map $\bb\mapsto \bt_I(\bb)$ gives a transversal of the right coset $B_I\bb$ in
$B$  (see  Proposition~\ref{Icoset}).  Exchanging  left  and right, one can
define a right retraction $\bpi^r_I(\bb)$ such that $ \bb =
t^r_I(\bb)\bpi^r_I(\bb)$ and $\bb\mapsto t^r_I(\bb)$ gives a transversal of
the  left coset $\bb B_I$ in $B$. These two retractions are closely related
(see Proposition~\ref{rightProj}). We use them to
address  the question  of how  to solve  the double  coset problem in Artin
groups.  A solution to the double coset problem for Coxeter groups is known
since \cite{Bbk}: every double coset has a unique element of minimal length
which  is easy to compute starting  from any (word representing an) element
of  the double coset. Very few results have been obtained for Artin groups.
The  only result on the subject known  to the authors is in the unpublished
article~\cite{Kalkaetall}.  It gives a partial answer  in the case of braid
groups, that is  when $W$  is of  type $A$. Using left and
right  retractions, we obtain a partial result with the same flavour as in the
case of Coxeter groups:

\begin{proposition}\label{propIntro3}
Let  $I,J\subseteq S$ and $\bb\in B$ be such that $\lexp{\pr(\bb)}W_J\cap W_I =\{1\}
$. Then the element $\bb_0=\bt_J^r(\bt_I(\bb))=\bt_I(\bt_J^r(\bb))$ is the
unique element of $B_I\bb B_J$ such that $\bpi_I(\bb_0)=\bpi_J^r(\bb_0)=1$.
\end{proposition}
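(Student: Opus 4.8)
The plan is to establish three facts and combine them: (i) the element $\bb_0:=\bt_J^r(\bt_I(\bb))$ lies in $B_I\bb B_J$ and satisfies $\bpi_I(\bb_0)=\bpi_J^r(\bb_0)=1$; (ii) by the mirror construction, $\bt_I(\bt_J^r(\bb))$ also lies in $B_I\bb B_J$ with $\bpi_I=\bpi_J^r=1$; (iii) an element of $B_I\bb B_J$ with $\bpi_I$ and $\bpi_J^r$ both trivial is unique. Granting these, (iii) forces $\bt_J^r(\bt_I(\bb))=\bt_I(\bt_J^r(\bb))$, and this is the unique element of the double coset with the two vanishing properties.

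The crux is to turn the hypothesis into a statement about the minimal double coset representative. Set $D:=W_I\pr(\bb)W_J$ and let $d$ be its unique element of minimal length, which is $I$-reduced-$J$. Writing $\pr(\bb)=u_0dv_0$ with $u_0\in W_I$ and $v_0\in W_J$ one computes $\lexp{\pr(\bb)}W_J\cap W_I=u_0\bigl(\lexp d{W_J}\cap W_I\bigr)u_0\inv$, so the hypothesis says exactly $\lexp d{W_J}\cap W_I=\{1\}$; since $d$ is $I$-reduced-$J$, by \cite{Bbk} the group $\lexp d{W_J}\cap W_I$ is the standard parabolic subgroup attached to $I^d\cap J$, hence $J_1:=I^d\cap J=\emptyset$. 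This yields the key lemma: \emph{if $\pr(\bz)\in D$ and $\bj\in B_J$ then $\bpi_I(\bz\bj)=\bpi_I(\bz)$.} To prove it, fix a factorization $\pr(\bz)=udv$ with $u\in W_I$, $v\in W_J$, and lift it to $\bz=\bu\,\bx\,\bv$ with $\bu\in B_I$, $\bv\in B_J$ simple and $\pr(\bx)=d$. By Proposition~\ref{fourre-tout}(\ref{ft2}) applied to left multiplication by $\bu\in B_I$ (on which $\bpi_I$ is the identity), followed by Proposition~\ref{fourre-tout}(\ref{ft4}) applied to $\bx$ (here $J_1=\emptyset$, so $\bpi_{J_1}\equiv 1$ and $\varphi_d(1)=1$), we get $\bpi_I(\bz\bj)=\bu\,\bpi_I(\bx\,(\bv\bj))=\bu\,\bpi_I(\bx)$; the same computation with $\bj=1$ gives $\bpi_I(\bz)=\bu\,\bpi_I(\bx)$.

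For (i): by Proposition~\ref{Icoset}, $\bt_I(\bb)\in B_I\bb$ with $\bpi_I(\bt_I(\bb))=1$, and $\bt_J^r(\bt_I(\bb))=\bt_I(\bb)\,\bj_0$ with $\bj_0:=\bpi_J^r(\bt_I(\bb))\inv\in B_J$, while $\bpi_J^r(\bb_0)=1$ by construction; hence $\bb_0\in B_I\bb B_J$ and, since $\pr(\bt_I(\bb))\in W_I\pr(\bb)\subseteq D$, the lemma gives $\bpi_I(\bb_0)=\bpi_I(\bt_I(\bb)\,\bj_0)=\bpi_I(\bt_I(\bb))=1$. For (ii): the word-reversing anti-automorphism $\rev$ of $B$ fixes $B^+$ and each $B_I$, sends $\pr(\bx)$ to $\pr(\bx)\inv$, exchanges $(\bpi_I,\bt_I)$ with $(\bpi_J^r,\bt_J^r)$ after interchanging $I$ and $J$ (this is Proposition~\ref{rightProj}), and preserves the hypothesis since $\lexp d{W_J}\cap W_I=\{1\}\iff\lexp{d\inv}{W_I}\cap W_J=\{1\}$; applying (i) to $(\rev(\bb),J,I)$ and transporting back via $\rev$ therefore shows that $\bt_I(\bt_J^r(\bb))\in B_I\bb B_J$ with $\bpi_I=\bpi_J^r=1$.

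Finally (iii). Let $\bb',\bb''\in B_I\bb B_J$ with $\bpi_I=\bpi_J^r=1$, and write $\bb'=\bg\,\bb''\,\bh$ with $\bg\in B_I$, $\bh\in B_J$. By Proposition~\ref{fourre-tout}(\ref{ft2}) (left multiplication by $\bg$) and the lemma (with $\pr(\bb'')\in D$ and $\bh\in B_J$), $1=\bpi_I(\bb')=\bg\,\bpi_I(\bb''\bh)=\bg\,\bpi_I(\bb'')=\bg$, so $\bg=1$ and $\bb'=\bb''\bh$. Applying the right analogue of Proposition~\ref{fourre-tout}(\ref{ft2}) — immediate from Proposition~\ref{Icoset}, since $\bt_J^r$ is constant on left $B_J$-cosets, whence $\bpi_J^r(\by\bh)=\bpi_J^r(\by)\bh$ for $\bh\in B_J$ — we get $1=\bpi_J^r(\bb')=\bpi_J^r(\bb'')\bh=\bh$, so $\bh=1$ and $\bb'=\bb''$. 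This proves (iii), and hence the proposition. The one place where the hypothesis is really used, and the main obstacle, is the identification $J_1=I^d\cap J=\emptyset$ that produces the absorption lemma; once that lemma and the two multiplicative cases of Proposition~\ref{fourre-tout} are in hand, all three steps are essentially formal.
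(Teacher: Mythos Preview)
Your proof is correct. The approach overlaps with the paper's but is organised differently. The paper first proves, \emph{without any hypothesis on} $I^{\pr(\bb_0)}\cap J$, that $\bb_0=\bt_J^r(\bt_I(\bb))$ always satisfies $\bpi_I(\bb_0)=\bpi_J^r(\bb_0)=1$ (Proposition~\ref{prepdbcoset}); it then proves a general result (Proposition~\ref{propdbcoset} and Corollary~\ref{unique representative}) giving three equivalent characterisations of when the double coset has a unique element with both retractions trivial, and finally observes (Remark~\ref{intersection empty}) that $I\cap\lexp{\pr(\bb_0)}J=\emptyset$ guarantees this. Your route exploits the hypothesis from the very start: once $J_1=\emptyset$, your absorption lemma $\bpi_I(\bz\bj)=\bpi_I(\bz)$ is an immediate special case of Proposition~\ref{fourre-tout}(\ref{ft4}), and this makes both the verification of $\bpi_I(\bb_0)=1$ and the uniqueness argument essentially one-line. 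The paper's path builds reusable infrastructure (Proposition~\ref{prepdbcoset} holds without the hypothesis, and Corollary~\ref{unique representative} gives an if-and-only-if), while yours is shorter and more self-contained for this particular statement; the $\rev$-symmetry argument you use for (ii) is a nice alternative to the paper's invocation of Proposition~\ref{prepdbcoset}.
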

This is proved after Remark~\ref{intersection empty}
as a consequence of that remark and of the
more general result  Corollary~\ref{unique representative}.

It is a conjecture that the intersection of any two parabolic subgroups is
a parabolic subgroup.  The validity of the conjecture is proved for several families
of Artin groups \cite{AntolinFoniqi,CumplidoetAll3,CumplidoetAll,CumplidoetAll2,Morris-Wright}
but remains open in general.  Using the retraction we address the
conjecture and we show that 
starting from any element $\bb$ in $B$,  the study of $B_I\cap\lexp \bb B_J$ can be
reduced to the case where $\bb = \bp$ is in $P$ and $J = I$
(see Proposition~\ref{not enough}). In this case we prove
\begin{proposition}\label{propIntro4}
Let   $I$  be a  subset   of  $S$  and   $\bp$ be in $P$ such that 
$\pi_I(\bp)=1$; then   $$B_I\cap \lexp\bp B_I=C_{B_I}(\bp)$$
where   $C_{B_I}(\bp) = \{\bb\in B_I\mid \bp\bb = \bb\bp\}$,
the centraliser of~$\bp$  in $B_I$. 
\end{proposition}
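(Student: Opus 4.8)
The plan is to prove the two inclusions $B_I\cap\lexp\bp B_I\supseteq C_{B_I}(\bp)$ and $B_I\cap\lexp\bp B_I\subseteq C_{B_I}(\bp)$ separately, the first being essentially formal and the second being where the hypothesis $\pi_I(\bp)=1$ does all the work. For the easy inclusion, if $\bb\in B_I$ commutes with $\bp$ then $\bb=\bp\bb\bp\inv=\lexp\bp\bb$, so $\bb\in B_I\cap\lexp\bp B_I$; this needs nothing beyond the definitions.

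For the hard inclusion, take $\bb\in B_I\cap\lexp\bp B_I$, so $\bb\in B_I$ and $\bb=\lexp\bp\bc=\bp\bc\bp\inv$ for some $\bc\in B_I$. First I would observe that since $\bp\in P$ we have $\pr(\bp)=1$, so $\pr(\bp\bc\bp\inv)=\pr(\bc)\in W_I$ and also $\pr(\bc)=\pr(\bb)\in W_I$ is automatic; the point is that $\pr(\bp)\in W_I$, so Proposition~\ref{propIntro1}(1) (i.e.\ Proposition~\ref{fourre-tout}(\ref{ft2})) applies to $\bp$: for every $\bx\in B$ we have $\bpi_I(\bp\bx)=\bpi_I(\bp)\bpi_I(\bx)=\bpi_I(\bx)$, using $\bpi_I(\bp)=\pi_I(\bp)=1$. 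Applying this with $\bx=\bc\bp\inv$ gives $\bpi_I(\bb)=\bpi_I(\bp\bc\bp\inv)=\bpi_I(\bc\bp\inv)$. On the other hand $\bb\in B_I$, so $\bpi_I(\bb)=\bb$ since $\bpi_I$ is a retraction onto $B_I$. Hence $\bb=\bpi_I(\bc\bp\inv)$, and in particular $\bt_I(\bc\bp\inv)=\bpi_I(\bc\bp\inv)\inv\,\bc\bp\inv=\bb\inv\bc\bp\inv$.

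The remaining step is to identify $\bt_I(\bc\bp\inv)$ with $\bp\inv$ and conclude. I would argue as follows: write $\bc\bp\inv=\bpi_I(\bc\bp\inv)\,\bt_I(\bc\bp\inv)=\bb\,\bt_I(\bc\bp\inv)$, hence $\bt_I(\bc\bp\inv)=\bb\inv\bc\bp\inv$. Now I want $\bb\inv\bc\in B_I$, which is clear, and I want to pin down $\bt_I$ on the double-coset level. Since $\bc\in B_I$ we have $\bc\bp\inv\in B_I\bp\inv$, and by Proposition~\ref{Icoset} the element $\bt_I$ is constant on right cosets $B_I\,\cdot$; thus $\bt_I(\bc\bp\inv)=\bt_I(\bp\inv)$. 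But $\bt_I(\bp\inv)=\bpi_I(\bp\inv)\inv\bp\inv$, and $\bpi_I(\bp\inv)=1$: indeed applying Proposition~\ref{propIntro1}(1) to $\bp\inv$ (which maps to $1\in W_I$ under $\pr$) with $\bx=\bp$ gives $1=\bpi_I(\bp\inv\bp)=\bpi_I(\bp\inv)\bpi_I(\bp)=\bpi_I(\bp\inv)$. Therefore $\bt_I(\bc\bp\inv)=\bp\inv$, so $\bb\inv\bc\bp\inv=\bp\inv$, i.e.\ $\bc=\bb$, and then $\bb=\bp\bc\bp\inv=\bp\bb\bp\inv$, which says exactly $\bb\in C_{B_I}(\bp)$.

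The main obstacle is making sure the multiplicativity statement Proposition~\ref{propIntro1}(1) is being invoked under its actual hypotheses and that $\bt_I$ really is constant on the right cosets $B_I\bb$ (Proposition~\ref{Icoset}); once those are in hand the argument is a short chain of identities. I would double-check the edge case where the hypothesis $\pi_I(\bp)=1$ is genuinely used, namely both at $\bpi_I(\bp\bx)=\bpi_I(\bx)$ and at $\bpi_I(\bp\inv)=1$, and note that without it the statement fails (e.g.\ $\bp\in B_I$ nontrivial gives $B_I\cap\lexp\bp B_I=B_I\neq C_{B_I}(\bp)$ in general).
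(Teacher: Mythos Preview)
Your proof is correct and uses the same core idea as the paper: apply $\bpi_I$ to the equation relating $\bb$, $\bc$, and $\bp$, exploiting multiplicativity (Proposition~\ref{fourre-tout}(\ref{ft2})) and the hypothesis $\bpi_I(\bp)=1$. The paper's version is slightly more direct: from $\bi\bp=\bp\bi'$ with $\bi,\bi'\in B_I$ it computes $\bpi_I(\bi\bp)=\bi$ and $\bpi_I(\bp\bi')=\bi'$ in one step each, so your detour through $\bt_I$ and Proposition~\ref{Icoset}(4) can be replaced by simply writing $\bpi_I(\bc\bp\inv)=\bpi_I(\bc)\bpi_I(\bp\inv)=\bc$.
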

 
One  of the main long-standing open problems for braid groups is the existence
of a polynomial-time algorithm for
the conjugacy problem. The partial results obtained on the subject are
related to the Nielsen-Thurston classification (reducible, periodic or 
pseudo-Anosov)  of braids.  The conjugacy  problem, and  even the word problem remain
open in the general case. In the last part of the article, we focus on the
study of the conjugacy class of an element. In the spirit of the
Nielsen-Thurston  classification, we focus  on what might  be called
``\emph{reducible}''
elements.

Denote by \index{lS@$\lS$} $\lS:B\to\BN$ the length function  on  $B$ with respect to the generating
set $\bS^{\pm 1}$. For $\bb\in B$, it is equal to
$\hl(b)$,  the length of any minimal word $b\in(\bS^{\pm1})^*$ representing $\bb$.
By  convexity
any such  word $b$ is written on
the same set $\bI\subseteq\bS$, the image $I\subseteq S$ of which we call the
support of $\bb$, denoted by  $\supp(\bb)$.
The following result is a part of Propositions~\ref{bg=gh},~\ref{minimal  support} and
Corollary~\ref{min in conj}.
\begin{proposition}\label{propIntro5}
Let  $\bb\in  B$ and let $\conj_B(\bb)$ denote the conjugacy class of $\bb$. 
\begin{enumerate}
\item  For $I\subseteq S$, either $\conj_B(\bb)\cap B_I$ is empty or it contains an
element~$\bh$  such that $\lS(\bh)$ is minimal in $\conj_B(\bb)$. 
\item  If  $\lS(\bb)$  is minimal in $\conj(\bb)$ and if $\bb'\in \conj(\bb)$
satisfies $\lS(\bb') = \lS(\bb)$, then $\supp(\bb)$ and $\supp(\bb')$
are conjugate in $W$.
\item If  $\lS(\bb)$  is minimal in $\conj(\bb)$,  then $B_{\supp(\bb)}$ is a minimal
parabolic subgroup containing $\bb$ (among all parabolic subgroups,
not only the standard ones).
\end{enumerate}
\end{proposition}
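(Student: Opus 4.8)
The plan is to prove the three parts essentially independently, using the left retraction $\bpi_I$ and the transversal maps $\bt_I$ established earlier, together with convexity of standard parabolic subgroups (Proposition~\ref{convexity}).

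For part (1), I would start from a nonempty intersection $\conj_B(\bb)\cap B_I$ and pick any $\bg$ in it. The idea is to conjugate $\bg$ inside $B$ so as to decrease $\lS$ while staying inside some conjugate of $B_I$, and then transport back. Concretely, suppose $\bg'=\lexp{\bx}\bg$ with $\lS(\bg')$ minimal in the class. First I would reduce to the case where $\bx$ can be taken so that $\pr(\bx)$ is $(\supp(\bg), \cdot)$-reduced on the appropriate side, using the double coset decomposition in $W$ and the factorisation results for $\bpi_I$ in Proposition~\ref{fourre-tout}. Writing $\bx$ via such a reduced-word decomposition, repeated application of Proposition~\ref{bg=gh} (the part asserting that certain conjugations $\bg\mapsto\lexp\bs\bg$ either fix or decrease $\lS$ in a controlled way) will show that the whole conjugating element can be moved into $B_{\supp(\bg)}\subseteq B_I$ without increasing length, so that the minimal-length representative is actually realised inside $B_I$ by some $\bh$. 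The main obstacle here is bookkeeping: one must keep the support under control at every conjugation step, and in particular ensure that no conjugation forces the support to leave $I$; convexity of $B_I$ (so that $\lS$ restricted to $B_I$ agrees with the ambient $\lS$) is what makes the support argument meaningful.

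For part (2), assume $\lS(\bb)$ is minimal in $\conj_B(\bb)$ and $\lS(\bb')=\lS(\bb)$ with $\bb'=\lexp{\bx}\bb$. I would again use a reduced decomposition of $\bx$ and induct on $\lS(\bx)$. At each step, conjugating by one generator $\bs^{\pm1}$: if this step does not change $\lS$, Proposition~\ref{bg=gh} should tell us that the support either is unchanged or is replaced by a support related by the elementary operation $I\mapsto I^s$ (the parabolic ``twist'' by a simple reflection), hence conjugate in $W$; if it did decrease $\lS$ we would contradict minimality, so along a minimal path all steps preserve $\lS$ and each moves the support to a $W$-conjugate. Composing, $\supp(\bb)$ and $\supp(\bb')$ are conjugate in $W$. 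The delicate point is the case where the generator $\bs$ is not in $\bsupp(\bb)$ but does not commute with it: I must check that such a conjugation genuinely lengthens $\bb$ (so it cannot occur on a minimal path), which is again a convexity/exchange-condition argument in $W$ lifted to $B$ via the canonical section.

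For part (3), suppose $\lS(\bb)$ is minimal and let $B_0$ be any parabolic subgroup (standard or not) containing $\bb$, say $B_0=\lexp{\bx}B_K$ for some $K\subseteq S$. Then $\bb'=\lexp{\bx\inv}\bb\in B_K$, so $\conj_B(\bb)\cap B_K\neq\emptyset$; by part~(1) there is $\bh\in B_K$ with $\lS(\bh)$ minimal in the class, hence $\lS(\bh)=\lS(\bb)$, and then by part~(2) $\supp(\bb)$ and $\supp(\bh)$ are conjugate in $W$. Since $\bh\in B_K$ forces $\supp(\bh)\subseteq K$ (convexity again), we get $|\supp(\bb)|=|\supp(\bh)|\le|K|$, and since $B_0\cong B_K$ has ``rank'' $|K|$ while $B_{\supp(\bb)}$ has rank $|\supp(\bb)|$, no parabolic of strictly smaller rank can contain $\bb$. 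To finish I must also show $B_{\supp(\bb)}$ genuinely contains $\bb$ and is minimal among those of the same rank, which follows because any parabolic containing $\bb$ of rank $|\supp(\bb)|$ must, by the conjugacy of supports and Proposition~\ref{minimal support}, be conjugate to $B_{\supp(\bb)}$ and hence equal to it once it contains $\bb$. The crux of part~(3) is this last rigidity statement — that among parabolics of minimal rank containing a given element there is a unique one — and I expect to lean on the intersection results (Proposition~\ref{propIntro4} and its neighbours) to pin it down; this is the step I anticipate being the main obstacle.
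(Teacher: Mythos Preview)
Your overall architecture is off in parts (1) and (2): you try to process the conjugator one letter at a time, but the paper's argument treats the full conjugator in a single step via the retraction, and your inductive scheme has a genuine gap.

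For part (2), the induction on $\lS(\bx)$ cannot be made to work as stated. If $\bb'=\lexp{\bx}\bb$ with both of minimal length, and you write $\bx$ as a product of generators, there is no reason the \emph{intermediate} conjugates along the path have minimal $\lS$; so your inductive hypothesis is unavailable at each step. Moreover the claim that conjugating a minimal-length $\bb$ by a single $\bs\notin\supp(\bb)$ not commuting with $B_{\supp(\bb)}$ must increase $\lS$ is not an exchange-condition fact in $B$ and is not what Proposition~\ref{bg=gh} asserts. The paper instead applies the retraction once: from $\bi\bg=\bg\bj$ with $I=\supp(\bi)$, one computes $\bpi_I(\bi\bg)=\bi\,\bpi_I(\bg)$ and, by the explicit root-sequence formula of Proposition~\ref{pi(bb')}, $\bpi_I(\bg\bj)=\bpi_I(\bg)\,\bi'$ with $\hl(\text{word for }\bi')\le\lS(\bj)$. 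Minimality of $\lS(\bj)$ forces equality, which in the formula means every term of $\vN(\lexp{t_I(\pr(\bg))}p^*(j))$ lies in $\Phi_I$; hence $\pr(\bt_I(\bg))$ is an $I'$-ribbon-$J$, and when $\lS(\bi)$ is also minimal a symmetric argument gives $I'=I$. That is the whole proof of (2).

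For part (1) the same single-shot idea works and replaces your ``repeated application'': if $\bi\in B_I$ and $\bg\inv\bi\bg$ has minimal $\lS$, the computation above shows $\bi'=\bpi_I(\bg)\inv\,\bi\,\bpi_I(\bg)\in B_I$ already has minimal $\lS$ (this is exactly Proposition~\ref{conj min V2} and Corollary~\ref{min in conj}). No path-shortening or support bookkeeping is needed.

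For part (3) your rank argument is essentially fine and close to the paper's: given $\lexp{\bx}B_K\ni\bb$, part (1) produces an $\bh\in B_K$ of minimal length, part (2) gives $|\supp(\bb)|=|\supp(\bh)|\le|K|$, and since a parabolic properly contained in $B_{\supp(\bb)}$ is (by Proposition~\ref{parabolic inside}) a parabolic of $B_{\supp(\bb)}$ of strictly smaller rank, $B_{\supp(\bb)}$ is minimal. The paper adds further information (the form $\lexp{\bp}B_{\supp(\bb)}$ with $\bp\in C_P(\bb)$ and $\bpi_{\supp(\bb)}(\bp)=1$) via Proposition~\ref{propIntro4}, but for the bare minimality statement your outline suffices once (1) and (2) are proved correctly.
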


Note that point (3) above does not state the uniqueness of a minimal parabolic subgroup
since as mentioned above, the question remains open whether the intersection
of two parabolic subgroups is parabolic.

In  Section~\ref{conjectures}, we  will prove  that the  existence of a unique
minimal  parabolic subgroup  containing a  given element  is equivalent to the
following conjecture:
\begin{conjecture}\label{conjectureintr}
Let $\bb\in B$ such that $\lS(\bb)$  is minimal in $\conj(\bb)$
and let $I=\supp(\bb)$, then
any $\bp\in P$ which centralises $\bb$ and is such that $\bpi_I(\bp)=1$
centralises $B_I$.
\end{conjecture}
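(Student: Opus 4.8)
The plan is to reduce the conjecture to the assertion that $\bp$ normalises $B_I$, and then to prove that assertion. For the reduction, suppose $\lexp\bp B_I=B_I$. Then for every $\bi\in B_I$ the element $\bi':=\lexp\bp\bi$ lies in $B_I$ and $\bp\bi=\bi'\bp$. Since $\pr(\bp)=1\in W_I$ and $\pr(\bi')\in W_I$, Proposition~\ref{propIntro1}(1) applies to each product; using $\bpi_I(\bp)=1$ and that $\bpi_I$ restricts to the identity on $B_I$ we obtain
\[\bi=\bpi_I(\bp)\bpi_I(\bi)=\bpi_I(\bp\bi)=\bpi_I(\bi'\bp)=\bpi_I(\bi')\bpi_I(\bp)=\bi',\]
so $\bp$ commutes with $\bi$. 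Hence, under the hypotheses of the conjecture, ``$\bp$ centralises $B_I$'' is \emph{equivalent} to ``$\bp$ lies in $N_B(B_I)$'', the normaliser of $B_I$ in $B$, and it is enough to establish the latter.

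To prove $\bp\in N_B(B_I)$, observe that $\lexp\bp B_I$ is a parabolic subgroup and that, since $\bp$ commutes with $\bb$, it contains $\lexp\bp\bb=\bb$. By Proposition~\ref{propIntro5}(3) the parabolic $B_I=B_{\supp(\bb)}$ is minimal among parabolic subgroups containing $\bb$; conjugating that statement by $\bp$ and again using $\lexp\bp\bb=\bb$, so is $\lexp\bp B_I$. Thus the claim is exactly that the two minimal parabolic subgroups $B_I$ and $\lexp\bp B_I$ containing $\bb$ coincide. Now bring in the remaining hypothesis $\bpi_I(\bp)=1$: by Proposition~\ref{propIntro4} it yields $B_I\cap\lexp\bp B_I=C_{B_I}(\bp)$, a subgroup of $B_I$ containing $\bb$. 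Since $\supp(\bb)=I$ and $\lS(\bb)$ is minimal in $\conj_B(\bb)$ (hence also in its conjugacy class inside $B_I$), the whole conjecture is reduced to the statement
\[C_{B_I}(\bp)=B_I,\]
i.e. that the centraliser in $B_I$ of this particular $\bp$ is all of $B_I$, knowing only that it contains an element of full support and of minimal conjugacy length.

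For this last step one must use $\bb$ in an essential way. It is already clear that $C_{B_I}(\bp)$ cannot be contained in a proper \emph{parabolic} subgroup of $B_I$: such a subgroup is also a parabolic subgroup of $B$ strictly contained in $B_I$ and containing $\bb$, contradicting the minimality of $B_{\supp(\bb)}$ from Proposition~\ref{propIntro5}(3); so the content is to rule out a proper \emph{non-parabolic} $C_{B_I}(\bp)$. In spherical type I would attack this with Garside theory: the conjugacy class of $\bb$ in $B_I$ is controlled by its super summit set and by cyclic sliding, centralisers of minimal-length braids are understood through the Nielsen--Thurston trichotomy, and one expects this description, together with $\bpi_I(\bp)=1$, to force $\bp$ to be central in $B_I$ and hence to normalise it. In the general case I would instead look for a geometric argument, using the action on the Salvetti (or Deligne) complex and the geometry underlying the retraction $\bpi_I$, together with Proposition~\ref{propIntro1}(3) to control $\bpi_I$ along Garside normal forms, in order to show directly that $\lexp\bp B_I=B_I$.

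The main obstacle is precisely this last step in the general case. Proving $C_{B_I}(\bp)=B_I$ --- equivalently $\lexp\bp B_I=B_I$, equivalently $\bp\in N_B(B_I)$ --- without already knowing that $B_I$ is the \emph{unique} minimal parabolic subgroup containing $\bb$ is where the real difficulty lies: as will be shown in Section~\ref{conjectures}, the present conjecture is equivalent to that uniqueness statement. So any successful proof must draw on something strictly finer than Propositions~\ref{propIntro1}--\ref{propIntro5} provide --- most plausibly a direct Garside-theoretic or CAT(0)-geometric study of centralisers of minimal-length elements --- and outside the families of Artin groups for which the parabolic-intersection conjecture is already known, no such tool is presently available.
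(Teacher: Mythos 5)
What you were asked to prove is stated in the paper as a conjecture (Conjecture~\ref{conjectureintr}, restated as Conjecture~\ref{conjecture2}): the paper contains no proof of it, only the special case Proposition~\ref{pure centralizer} (positive elements, with $H_I$ in place of $\bpi_I$) and the result of Section~\ref{conjectures} that the conjecture is \emph{equivalent} to Conjecture~\ref{conjecture1}, the uniqueness of a minimal parabolic subgroup containing a given element. So there is no proof in the paper to compare yours with, and, as you say yourself, your proposal is not a proof either.

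That said, the reductions you do carry out are correct and coincide with the paper's own equivalence argument. Your computation that $\lexp\bp B_I=B_I$ together with $\bpi_I(\bp)=1$ forces $\bp$ to centralise $B_I$ (via Proposition~\ref{fourre-tout}(\ref{ft2})) is exactly the mechanism behind Proposition~\ref{propIntro4}, i.e.\ Proposition~\ref{not enough}(3); and your observation that $B_I$ and $\lexp\bp B_I$ are both minimal parabolic subgroups containing $\bb$ (Proposition~\ref{propIntro5}(3), i.e.\ Proposition~\ref{minimal support}) reduces the conjecture precisely to the statement that these two minimal parabolics coincide --- which is one direction of the equivalence with Conjecture~\ref{conjecture1} proved in Section~\ref{conjectures}. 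The remaining step, $C_{B_I}(\bp)=B_I$ (equivalently $\bp\in N_B(B_I)$), is the genuinely open content: your side remark that $C_{B_I}(\bp)$ cannot lie in a proper parabolic subgroup of $B_I$ is true but does not exclude non-parabolic proper centralisers, and your proposed tools are not carried out. Note also that in spherical type the conjecture is already a consequence of the known intersection-of-parabolics results cited in the paper, so the Garside-theoretic programme you sketch there would reprove a known case; the real difficulty is the general case, and there your proposal leaves exactly the gap the paper itself leaves open.
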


We  also obtain a partial result on the conjugacy problem: one can say that we
provide a solution to the conjugacy problem for ``\emph{reducible}'' elements in
Artin groups with $S$ finite.

\begin{proposition}(Propositions~\ref{conjugaison V2} and~\ref{determiner aut ruban})\label{propIntro6}
Assume $S$ is finite and let $\Lambda$ be a non-empty set of subsets of $S$ such that for any $I$ in
$\Lambda$,  the word problem and the  conjugacy problem are solvable in $B_I$.
Then the two following problems are solvable.
\begin{enumerate}
\item Given $I$ in $\Lambda$ and a word $b$ on $\bI^{\pm 1}=\bI\cup\bI\inv$, decide whether the
word $b$ represents the unity in $B$.
\item  Given two finite subsets $I, J$ in $\Lambda$ and words $i, j$ on $\bI^{\pm 1}$
and $\bJ^{\pm 1}$, respectively, decide whether or not $i$ and $j$ represent
conjugate elements in $B$.
\end{enumerate}
\end{proposition}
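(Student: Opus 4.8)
The plan is to prove the two assertions in turn: the first is essentially the injectivity of the inclusion of a standard parabolic subgroup, while the second rests on the length-and-support statements of Proposition~\ref{propIntro5} together with a study of ribbons and of normalisers of parabolic subgroups. For assertion~(1), since $\bpi_I\colon B\to B_I$ restricts to the identity on $B_I$ the inclusion $B_I\hookrightarrow B$ is injective (alternatively invoke Proposition~\ref{vdl}); hence a word $b$ on $\bI^{\pm1}$ represents the unity of $B$ if and only if it represents the unity of $B_I$, and the latter is decidable because the word problem is solvable in $B_I$.

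For assertion~(2), I would first reduce each input to a length-minimal representative inside its own parabolic. Given $\bi\in B_I$, compute $\lS(\bi)$ (the $\bI^{\pm1}$-word length, intrinsic by convexity, Proposition~\ref{convexity}), enumerate the finitely many elements of $B_I$ of $\bI^{\pm1}$-length at most $\lS(\bi)$, retain (using solvability of the conjugacy problem in $B_I$) those lying in $\conj_{B_I}(\bi)$, and pick one, $\bh$, of minimal length. By Proposition~\ref{propIntro5}(1), together with the fact --- to be extracted from Proposition~\ref{bg=gh} --- that the length-reducing conjugations can be carried out inside $B_I$, the element $\bh$ is actually of minimal $\lS$-length in $\conj_B(\bi)$; put $I_0:=\supp(\bh)\subseteq I$, so $\bh\in B_{I_0}$. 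Treat $\bj$ symmetrically, obtaining $\bh'\in B_{J_0}$ with $J_0=\supp(\bh')$. Then $\bi$ and $\bj$ are conjugate in $B$ if and only if $\bh$ and $\bh'$ are, and by Proposition~\ref{propIntro5}(2) a necessary condition is that $W_{I_0}$ and $W_{J_0}$ be conjugate in $W$; as $S$ is finite this is decided by a finite computation in the Coxeter group, and if it fails we answer ``not conjugate''.

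Assume $W_{I_0}$ and $W_{J_0}$ are conjugate in $W$ and fix a ribbon $\bg_0$ with $\bg_0\inv B_{I_0}\bg_0=B_{J_0}$, obtained from a minimal-length element of $W$ conjugating $W_{I_0}$ onto $W_{J_0}$. The set of $\bg\in B$ with $\bg\inv B_{I_0}\bg=B_{J_0}$ is $N_B(B_{I_0})\bg_0$, and the automorphisms of $B_{I_0}$ induced by conjugation by $N_B(B_{I_0})$ are generated by the inner ones and by the finite group of ribbon automorphisms of $B_{I_0}$, each of which is induced by a permutation of $\bI_0$ and can be listed from the Coxeter matrix (Proposition~\ref{determiner aut ruban}). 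A short computation then reduces ``$\bh$ and $\bh'$ are conjugate in $B$'' to the disjunction, over the finitely many ribbon automorphisms $\rho$ of $B_{I_0}$, of the statements ``$\bg_0\inv\rho(\bh)\bg_0$ and $\bh'$ are conjugate in $B_{J_0}$''. Each $\bg_0\inv\rho(\bh)\bg_0$ is again length-minimal with support $J_0$ (ribbon automorphisms permute $\bI_0$ and conjugation by a suitably chosen ribbon preserves $\lS$), so its conjugacy to $\bh'$ in $B_{J_0}$ can be decided, either by an induction on $|S|$, or by showing that for length-minimal elements of $B_{J_0}$ with full support $J_0$, conjugacy in $B_{J_0}$ coincides with conjugacy in $B_J$ up to ribbon automorphisms of $B_{J_0}$ --- which is decidable since the conjugacy problem is solvable in $B_J$.

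The crux, and the step I expect to be the main obstacle, is the structural input opening the last paragraph: that $B$-conjugacy between two length-minimal elements supported in standard parabolic subgroups is realised, up to an honest conjugacy inside the target parabolic, by a ribbon conjugating one parabolic onto the other. This is precisely where the minimality statement Proposition~\ref{propIntro5}(3), a description of $N_B(B_{I_0})$, and a description of the set of ribbons between two $W$-conjugate parabolics have to be brought in; note that uniqueness of a minimal parabolic subgroup containing a given element --- which is open --- is carefully avoided, only the existence of a suitable ribbon-type conjugator being used. The remaining work is finiteness bookkeeping: extracting the ribbons and ribbon-automorphism groups from the Coxeter matrix, checking that ribbon-conjugation preserves $\lS$-minimality, and tracking that the reduced supports $I_0,J_0$ may have left $\Lambda$ --- which is what forces the small induction in the final step.
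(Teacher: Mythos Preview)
Your approach for (1) matches the paper's. For (2), your overall strategy --- reduce to $\lS$-minimal representatives and then use ribbons --- is the same as the paper's, but your final step has a genuine gap as written. You reduce to deciding whether $\bg_0^{-1}\rho(\bh)\bg_0$ and $\bh'$ are conjugate \emph{in $B_{J_0}$}, where $J_0=\supp(\bh')$ need not lie in $\Lambda$, so this is not a priori decidable. Your two proposed fixes do not close this: the ``induction on $|S|$'' has no valid inductive hypothesis (you would need the conjugacy problem in $B_{J_0}$, which is exactly what is missing), and the statement ``conjugacy in $B_{J_0}$ coincides with conjugacy in $B_J$ up to ribbon automorphisms of $B_{J_0}$'' is the assertion that $B_J$-conjugacy of two minimal elements of $B_{J_0}$ is equivalent to $B_{J_0}$-conjugacy after applying \emph{some} ribbon automorphism --- this lets you decide $B_J$-conjugacy from $B_{J_0}$-conjugacy, not the other way round.

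The paper avoids this entirely by a small but decisive change: instead of fixing one minimal $\bh\in\conj_{B_I}(\bi)$, it enumerates \emph{all} minimal-length elements $\bi'$ of $\conj_{B_I}(\bi)$ (a finite list, computable using the conjugacy oracle in $B_I$). Corollary~\ref{corollary} then gives: $\bi$ and $\bj$ are $B$-conjugate iff some $\bi'$ in this list is \emph{equal} to $\varphi_w(\bj')$ for some $I'$-ribbon-$J'$ $w$, where $I'=\supp(\bi')$, $J'=\supp(\bj')$. This is a word-problem check, not a conjugacy check, so the issue of $J_0\notin\Lambda$ never arises. Your version can be patched in the same spirit: simply replace ``$B_{J_0}$-conjugate'' by ``$B_J$-conjugate'' in your final test. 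This is correct because if $\bh$ and $\bh'$ are $B$-conjugate then by Corollary~\ref{corollary} there is a ribbon $w$ with $\varphi_w^{-1}(\bh)$ even $B_{J_0}$-conjugate (hence $B_J$-conjugate) to $\bh'$, while conversely any $B_J$-conjugacy already gives $B$-conjugacy. Note also that your invocation of $N_B(B_{I_0})$ is more than is needed or available: Corollary~\ref{corollary} only produces a conjugator in $B_{I_0}\cdot\bW$, and it is the finiteness of the set of ribbons in $W$ (Proposition~\ref{determiner aut ruban}, using $S$ finite) that makes the search terminate.
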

 
The paper is structured as follows.

In  Section~\ref{section1} we  focus on  words.  If $X$ is a
set,  we denote by~$X^*$  the set of  words on $X$.  Using root systems, we
give  an alternate  definition of  the retraction  $\hpi_I$ from $(\bS^{\pm1})^*$,
the words on $\bS\cup\bS\inv$, 
\index{S@$\bS^{\pm1}$,   $(\bS^{\pm1})^*$}
to $(\bS_I^{\pm1})^*$  and extend some results of \cite{godelle}. We obtain new results
for   words  in  $(\bS^{\pm1})^*$  whose  image  in   $W$  is  $I$-reduced-$J$.  

In Section~\ref{section2},  we introduce the retraction $\bpi_I$ on the Artin
group and use the results of the previous section to prove
Propositions~\ref{propIntro1}  and ~\ref{propIntro2}.  Using the  notion of
biclosed  subset  and  results  of  Dyer  we study the compatibility of the
retraction  with lcm's and gcd's. 

In  Section~\ref{section3}  we  turn  to  the  projection~$\bpi^r_I$ and prove
Propositions~\ref{propIntro3}    and~\ref{propIntro4}.

In Section~\ref{conjectures} we consider the conjugacy classes,  prove
Propositions~\ref{propIntro5},~\ref{propIntro6} and state
Conjecture~\ref{conjectureintr} .

Finally, we provide in Section~\ref{section4} (assuming $S$ finite) a new topological version
of  $\bpi_I$ restricted to $\pr\inv(W_I)$, using  the Tits cone and results
of  Van der  Lek \cite{VanderLek}.  

\section{Retraction on words}\label{section1}
For $I\subseteq S$, we  recall  that  an  element  $w\in W$ is said to be
$I$-reduced   (\emph{resp.}  reduced-$I$)   if  $\ell_S(iw)=\ell_S(i)+\ell_S(w)$
(\emph{resp.}  $\ell_S(wi)=\ell_S(i)+\ell_S(w)$)   for   any   $i\in   W_I$,  or
equivalently for any $i\in I$. We say that an element is $I$-reduced-$J$ if
it  is both $I$-reduced  and reduced-$J$. For  $w\in W$  the unique element of
minimum  length in  the coset  $W_I\, w$  is $I$-reduced;  we denote  it by
\index{tI@$t_I$} $t_I(w)$.

\begin{lemma}\label{lemma 1}
If  $w$ is $I$-reduced and $s\in  S$, then the property that $\lexp{w}s\in W_I$
is  equivalent to  $ws$ not  being $I$-reduced.  Furthermore, in  this case
$\lexp w s\in I$.
\end{lemma}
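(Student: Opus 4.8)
\emph{Proposal.} The plan is to prove the two implications separately; the ``furthermore'' will drop out of each. Throughout I use two classical facts about Coxeter systems (see \cite{Bbk}): the length function of $W_I$ with respect to $I$ is the restriction of $\ell_S$ (so an element of $W_I$ of $\ell_S$-length $1$ lies in $I$), and the exchange condition.

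First suppose $\lexp w s\in W_I$ and set $r=\lexp w s=wsw\inv$. Since $s\neq 1$ in $W$, we have $r\neq 1$. From $rw=ws$ and the hypothesis that $w$ is $I$-reduced, $\ell_S(ws)=\ell_S(rw)=\ell_S(r)+\ell_S(w)$; as $\ell_S(ws)$ and $\ell_S(w)$ differ by exactly $1$, this forces $\ell_S(r)=1$ and $\ell_S(ws)=\ell_S(w)+1$. Hence $r\in I$, which already gives the last assertion. Finally $r(ws)=r^2w=w$ has length $\ell_S(ws)-1\neq \ell_S(r)+\ell_S(ws)$, so $ws$ fails the defining inequality for $I$-reducedness at $i=r\in I$, i.e. $ws$ is not $I$-reduced.

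Conversely suppose $ws$ is not $I$-reduced: there is $i\in I$ with $\ell_S(iws)<\ell_S(i)+\ell_S(ws)=\ell_S(ws)+1$, hence $\ell_S(iws)=\ell_S(ws)-1$. I first exclude $\ell_S(ws)=\ell_S(w)-1$: if this held, then since $\ell_S(iw)=\ell_S(i)+\ell_S(w)=\ell_S(w)+1$ we would get $\ell_S(iws)=\ell_S(iw\cdot s)\geq \ell_S(iw)-1=\ell_S(w)$, contradicting $\ell_S(iws)=\ell_S(ws)-1=\ell_S(w)-2$. So $\ell_S(ws)=\ell_S(w)+1$ and $\ell_S(iws)=\ell_S(w)$. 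Now fix a reduced expression $w=t_1\cdots t_k$; then $iw=i\,t_1\cdots t_k$ is reduced of length $k+1$, and $\ell_S(iw\cdot s)=k<k+1$, so the exchange condition writes $iws$ as $i\,t_1\cdots t_k$ with one letter deleted. If the deleted letter is some $t_j$, then cancelling $i$ gives $ws=t_1\cdots\widehat{t_j}\cdots t_k$ of length $\leq k-1<k+1=\ell_S(ws)$, a contradiction. Therefore the deleted letter is the leading $i$, i.e. $iws=t_1\cdots t_k=w$, whence $ws=iw$ and $\lexp w s=wsw\inv=i\in I\subseteq W_I$. This proves the remaining implication together with the ``furthermore''.

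The argument is elementary; the only point needing care is the converse, where one must first rule out the length-decreasing case $\ell_S(ws)=\ell_S(w)-1$ and then apply the exchange condition to the \emph{reduced} word $i\,t_1\cdots t_k$ in order to pin down that the cancelled letter is the initial $i$ rather than one inside $w$.
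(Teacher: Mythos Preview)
Your proof is correct and follows essentially the same route as the paper's: the heart of both arguments is the exchange condition applied to a reduced word for $iw$ (with $i\in I$) to force $iws=w$. The only difference is organizational---you split by implication and rule out the length-decreasing case inside the converse, whereas the paper splits first on whether $\ell_S(ws)>\ell_S(w)$ or $\ell_S(ws)<\ell_S(w)$; your forward direction also extracts $r\in I$ directly from $\ell_S(rw)=\ell_S(r)+\ell_S(w)$ rather than via the exchange lemma, which is a small but pleasant shortcut.
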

\begin{proof}Assume first that $\ell_S(ws)>\ell_S(w)$.
If $ws$ is not $I$-reduced, then there exists $s'\in I$ such
that $\ell_S(s'ws)<\ell_S(ws)$, which by the exchange lemma implies that $s'ws$ is
either equal to $w$ or to $\hat ws$ where $\ell_S(\hat w)<\ell_S(w)$ . But $s'ws=\hat
ws$  contradicts the fact that $w$ is  $I$-reduced, so that $s'ws=w$, that
is  $\lexp w  s=s'\in I$.  Conversely if $\lexp w s\in W_I$ then $ws\in W_I w$ so is not
$I$-reduced.

Now if $\ell_S(ws)<\ell_S(w)$, write $w=w's$, then $ws=w'$ is $I$-reduced and by the first
case $\lexp w s=\lexp {w'}s$ is not in $W_I$.
\end{proof}

\index{s@$s_{f(j)}$} \index{piI@$\hat\pi_I$}
\begin{definition}[Retraction]\label{retract}
Write $b\in(\bS^{\pm1})^*$ as $b=\bs_1^{\ve_1}|\ldots|\bs_k^{\ve_k}$ where 
$\ve_i=\pm 1$ and $\bs_i\in\bS$.
For $1\le j\le k$ let $w_j=s_1\ldots s_j$ be the image in $W$ of the $j$ first
letters of $b$.
We apply Lemma~\ref{lemma 1} to $t_I(w_{j-1})$ (with $w_0=1$) to deduce that if
$x=\lexp{t_I(w_{j-1})}s_j\in W_I$ then $x\in I$; we denote then $x$
by $s_{f(j)}$ and say that $j$ is good; otherwise we say that $j$ is bad
and set $s_{f(j)}=1$.
The retraction $\hpi_I(b)$ is the word in $(\bI^{\pm1})^*$ whose letters
are the $\bs_{f(j)}^{\ve_j}$ for $j$ good taken in increasing order.
\end{definition}
This definition is equivalent to that in \cite[Page 1522]{blufstein-paris}
though the notation there makes it a little bit difficult to see.
\begin{remark}\label{obvious}
The following points are clear from the definition.
\begin{enumerate}
\item
$\hpi_I(b)=b$ if and only if $b\in(\bI^{\pm1})^*$.
\item If the
word $b$ is a prefix of the word $b'$ then $\hpi_I(b)$ is a prefix of
$\hpi_I(b')$.
\item $\hpi_I$ sends $\bS^*$ (positive words) to $\bI^*$.
\item $\hpi_I$ commutes with the map $(\bS^{\pm1})^*\to(\bS^{\pm1})^*$
 induced by $\bs\mapsto\bs\inv$ for $\bs\in\bS^{\pm1}$.
\item We denote by \index{l@$\hl$} $\hl(b)$ the length of
$b\in(\bS^{\pm1})^*$.
For any word $b$ we have $\hl(\hpi_I(b))\le \hl(b)$, with equality
if and only if $b\in (\bI^{\pm1})^*$.
\end{enumerate}
\end{remark}

An efficient way to compute $t_I(w_j)$ is given by the following lemma.
\begin{lemma}\label{not good} In the setting of Definition~\ref{retract}, for $1\le j\le k$
we have
\begin{enumerate}
\item $t_I(w_j)$ is equal to the product of the
$s_l$ such that $l$ is bad and $1\le l\le j$,  
\item  $t_I(w_{j-1})s_j=s_{f(j)}t_I(w_j)$.
\item  For all $j$, we have $w_j = s_{f(1)}\cdots s_{f(j)}t_I(w_j)$.
\end{enumerate}
\end{lemma}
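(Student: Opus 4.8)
The plan is to prove the three assertions simultaneously by induction on $j$, starting from the trivial base $j=0$ (where $w_0=1$, $t_I(w_0)=1$, and the products in (1) and (3) are empty). The only inputs are the conjugation identity $\lexp{w}{s}=wsw\inv$, Lemma~\ref{lemma 1}, and the defining property of $t_I$: it is the unique $I$-reduced element of its coset, so $W_I w_{j-1}=W_I\,t_I(w_{j-1})$.

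The heart of the argument is (2), together with a recursion for $t_I(w_j)$. Write $u=t_I(w_{j-1})$; by definition $u$ is $I$-reduced, so Lemma~\ref{lemma 1} applies to $u$ and $s_j$. If $\lexp{u}{s_j}\in W_I$ — which by Definition~\ref{retract} is exactly the case ``$j$ good'' — then Lemma~\ref{lemma 1} gives $\lexp{u}{s_j}=s_{f(j)}\in I$, hence $us_j=s_{f(j)}u$. Multiplying the coset identity $W_I w_{j-1}=W_I u$ on the right by $s_j$ yields $W_I w_j=W_I u s_j=W_I s_{f(j)}u=W_I u$, and since $u$ is $I$-reduced this forces $t_I(w_j)=u=t_I(w_{j-1})$; therefore $t_I(w_{j-1})s_j=s_{f(j)}u=s_{f(j)}t_I(w_j)$, which is (2). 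If instead $\lexp{u}{s_j}\notin W_I$ — the case ``$j$ bad'', so $s_{f(j)}=1$ — then Lemma~\ref{lemma 1} tells us $us_j$ is $I$-reduced, and $W_I w_j=W_I u s_j$ then gives $t_I(w_j)=us_j=t_I(w_{j-1})s_j=s_{f(j)}t_I(w_j)$, again (2). Thus (2) holds in both cases, and along the way we have shown $t_I(w_j)=t_I(w_{j-1})$ when $j$ is good and $t_I(w_j)=t_I(w_{j-1})s_j$ when $j$ is bad.

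Assertion (1) is now immediate by induction: if $t_I(w_{j-1})$ is the product of the $s_l$ with $l$ bad and $1\le l\le j-1$ taken in increasing order, then the dichotomy just established shows $t_I(w_j)$ is obtained by appending $s_j$ precisely when $j$ is bad, which is exactly the product of the $s_l$ with $l$ bad and $1\le l\le j$. Assertion (3) likewise follows by induction from (2): $w_j=w_{j-1}s_j=s_{f(1)}\cdots s_{f(j-1)}\,t_I(w_{j-1})\,s_j=s_{f(1)}\cdots s_{f(j)}\,t_I(w_j)$.

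I do not anticipate a genuine obstacle here; the care required is purely bookkeeping. One must correctly match the two alternatives of Lemma~\ref{lemma 1} with the ``good''/``bad'' dichotomy of Definition~\ref{retract}, keep track of the left cosets $W_I w_j$ and invoke uniqueness of the $I$-reduced representative each time, and check that the ``increasing order'' of the product in (1) is consistent with the fact that the induction proceeds by increasing $j$ (so new bad letters are appended on the right).
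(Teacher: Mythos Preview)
Your proof is correct and follows essentially the same approach as the paper's: both argue by induction on $j$, use Lemma~\ref{lemma 1} to split into the good/bad cases, and establish in each case the recursion $t_I(w_j)=t_I(w_{j-1})$ (good) or $t_I(w_j)=t_I(w_{j-1})s_j$ (bad), from which (1), (2), (3) follow. The only cosmetic difference is that the paper writes $w_{j-1}=v\,t_I(w_{j-1})$ with $v\in W_I$ and tracks this decomposition, whereas you phrase the same step via the coset identity $W_Iw_j=W_I\,t_I(w_{j-1})s_j$ and uniqueness of the $I$-reduced representative; the content is identical.
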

\begin{proof}
 We prove items (1) and (2) at the same time by induction on $j$.  Assume that $t_I(w_{j-1})$
is the product of the $s_l$ for $l$ bad and $l\le j-1$. We write $w_{j-1}=v
t_I(w_{j-1})$ with $v\in W_I$. If $j$ is good then $w_j=\lexp{w_{j-1}}s_j
w_{j-1}=\lexp{w_{j-1}}s_j vt_I(w_{j-1})$ with $\lexp{w_{j-1}}s_j v\in W_I$
so that $t_I(w_j)=t_I(w_{j-1})$, hence
$t_I(w_{j-1})s_j=s_{f(j)}t_I(w_{j-1})=s_{f(j)}t_I(w_j)$. If $j$ is not
good then we write $w_j=v t_I(w_{j-1})s_j$. We have to prove that
$t_I(w_{j-1})s_j$ is $I$-reduced, so is equal to $t_I(w_j)$, which is equal to
$s_{f(j)}t_I(w_j)$ since $s_{f(j)}=1$ in this case. 
If $t_I(w_{j-1})s_j$ was not $I$-reduced,
by Lemma~\ref{lemma 1} $\lexp{t_I(w_{j-1})}s_j$ would be in $W_I$ and then
also $\lexp{w_{j-1}}s_j\in W_I$, contradicting $j$ bad.

Now, (3) results immediately by induction from (2).
\end{proof}
\begin{lemma}\label{pi_I(reduced)}
If $b=\bs_1^{\ve_1}|\ldots|\bs_k^{\ve_k}\in(\bS^{\pm1})^*$,
where $\ve_i=\pm 1$, is such that  $s_1|\ldots|s_k$,
is  a reduced decomposition of an $I$-reduced element of
$W$, then $\hpi_I(b)$ is the empty word.
\end{lemma}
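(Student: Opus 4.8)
The plan is to reduce the claim about the whole word $b$ to a purely combinatorial statement about $W$, using the characterization of ``good'' indices from Lemma~\ref{lemma 1} together with the bookkeeping of Lemma~\ref{not good}. Since $s_1|\ldots|s_k$ is a reduced decomposition of an $I$-reduced element $w=w_k$, every prefix $w_j = s_1\cdots s_j$ is obtained without cancellation, so $\ell_S(w_j) = j$. The key observation is that for such a word, no index $j$ can be good: if $j$ were good, then by Lemma~\ref{not good}(1) and (2) the element $t_I(w_{j-1})$ would be a product of fewer than $j-1$ reflections, hence $\ell_S(t_I(w_{j-1})) < j-1 = \ell_S(w_{j-1})$, which forces the coset $W_I w_{j-1}$ to be a proper one; but I must rule this out.

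First I would show by induction on $j$ that every $j\in\{1,\ldots,k\}$ is bad, equivalently (by Lemma~\ref{not good}(1)) that $t_I(w_j) = w_j$ for all $j$, i.e.\ each $w_j$ is itself $I$-reduced. The base case $j=0$ is trivial. For the inductive step, assume $w_{j-1}$ is $I$-reduced. Since $w = w_k$ is $I$-reduced and $w_{j-1}$ is a prefix of a reduced word for $w$, I want to deduce $w_j = w_{j-1}s_j$ is $I$-reduced. This is exactly where the hypothesis that $w$ (not just each prefix a priori) is $I$-reduced does the work: if $w_j$ were not $I$-reduced, then by Lemma~\ref{lemma 1} applied to the $I$-reduced element $w_{j-1}$ and the generator $s_j$, we would have $\lexp{w_{j-1}}s_j \in I \subseteq W_I$, so $s'w_{j-1}s_j = w_{j-1}$ for some $s'\in I$, giving $\ell_S(s' w_j) = \ell_S(w_{j-1}) = \ell_S(w_j) - 1 < \ell_S(w_j)$; but then completing $w_j|s_{j+1}|\cdots|s_k$ to the reduced word for $w$ shows $\ell_S(s'w) \le \ell_S(s'w_j) + (k-j) < k = \ell_S(w)$, contradicting that $w$ is $I$-reduced. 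Hence $w_j$ is $I$-reduced, completing the induction.

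Once every index is bad, Definition~\ref{retract} sets $s_{f(j)} = 1$ for all $j$ and the retraction $\hpi_I(b)$ is the word whose letters are indexed by the good $j$'s — of which there are none — so $\hpi_I(b)$ is the empty word, as claimed.

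I expect the only real subtlety to be the inductive step above, specifically making precise that ``$w$ is $I$-reduced'' propagates down to every prefix along a fixed reduced expression; the rest is just unwinding Definitions~\ref{retract} and the bookkeeping of Lemma~\ref{not good}. In fact one can phrase this propagation cleanly as a standalone remark: if $w = uv$ with $\ell_S(w) = \ell_S(u) + \ell_S(v)$ and $w$ is $I$-reduced, then so is $u$ — because $\ell_S(iu) \ge \ell_S(iw) - \ell_S(v) = \ell_S(i) + \ell_S(w) - \ell_S(v) = \ell_S(i) + \ell_S(u)$ for every $i \in W_I$, using the triangle inequality $\ell_S(iw) \le \ell_S(iu) + \ell_S(v)$ and $I$-reducedness of $w$. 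Applying this with $u = w_j$ and $v = s_{j+1}\cdots s_k$ gives directly that each $w_j$ is $I$-reduced, and then $j$ is bad for every $j$ by Lemma~\ref{not good}(1), whence $\hpi_I(b)$ is empty.
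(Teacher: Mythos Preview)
Your proof is correct and follows the same approach as the paper: show that every prefix $w_j$ is $I$-reduced, so by Lemma~\ref{lemma 1} (with $t_I(w_{j-1})=w_{j-1}$) every index is bad and $\hpi_I(b)$ is empty. The paper's proof is a one-liner asserting exactly this; your triangle-inequality argument in the last paragraph is a clean justification of the fact the paper takes for granted.

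One small caveat: the heuristic in your first paragraph is not right as stated. If $j$ were the \emph{first} good index, then by Lemma~\ref{not good}(1) the element $t_I(w_{j-1})$ would be the product of \emph{all} of $s_1,\ldots,s_{j-1}$ (they are all bad), so it has exactly $j-1$ factors, not fewer. Fortunately you abandon this line and give the correct argument afterwards, so this does not affect the validity of the proof.
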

\begin{proof}
This follows from the definition of $\hpi_I$ by using that any prefix of an
$I$-reduced element is $I$-reduced and so, using Lemma~\ref{lemma 1},
all indices are bad.
\end{proof}
We  now give another definition of  $\hpi_I$. 

We call reflections of $W$ \index{T@$T, T_I$} the $W$-conjugates of the
elements of $S$ and denote their set by $T$ (and similarly denote by $T_I$ the
set of reflections of $W_I$).

As in \cite[Chapter V, \S 4]{Bbk} we consider a faithful representation of $W$
on  a vector space with a basis $\Pi=\{\alpha_s\}_{s\in S}$ whose elements are
called simple roots. We call root system of $W$ the set \index{Phi@$\Phi,
\Phi^+$} \index{Pi@$\Pi$}$ \Phi$ of images of $\Pi$ under~$W$. We call
positive  roots, the elements of $\Phi$ which are linear combinations of $\Pi$
with positive coefficients, and we denote their set by $\Phi^+$; similarly, we
denote \index{PhiI@$\Phi_I, \Phi_I^+$} \index{PiI@$\Pi_I$} by
$\Phi_I,\Phi_I^+,\Pi_I$ the subsets associated with $W_I$. For any
$\alpha\in\Phi$ we denote the associated reflection by
\index{salpha@$s_\alpha$} $s_\alpha$, and for $t\in T$ we denote the
corresponding positive root by \index{alphat@$\alpha_t$} $\alpha_t$.

Let \index{Phi@$\Phi^*$} $\Phi^*$ denote the set of finite sequences of elements of $\Phi$, and let
\index{Pi@$(\pm\Pi)^*$} $(\pm\Pi)^*$ be  the subset of $\Phi^*$
whose  terms  are  in  $\Pi$ or $-\Pi$.  
For  \index{alpha@$\underline\alpha$} $\underline\alpha=\alpha_1|\ldots|\alpha_n\in\Phi^*$  we  denote by
\index{prod@$\product(\underline\alpha)$} $\product(\underline\alpha)$      the      product      $s_{\alpha_1}\cdots
s_{\alpha_n}\in  W$ and for $w\in  W$ we write $\lexp w{\underline\alpha}$ 
for the sequence
$\lexp  w{\alpha_1}|\ldots|\lexp w{\alpha_n}$.  For $\underline\alpha\in
\Phi^*$  we denote by $\underline\alpha\cap\Phi_I$
the sequence obtained by keeping only the terms lying in $\Phi_I$.
We define \index{N@$\vN$} $\vN:\Phi^*\to \Phi^*$ by $\vN(\alpha_1|\cdots|\alpha_n)=
\alpha_1|\lexp{s_{\alpha_1}}{\alpha_2}|
\cdots|\lexp{s_{\alpha_1}s_{\alpha_2}\ldots s_{\alpha_{n-1}}}{\alpha_n}=
\alpha_1|\lexp{\product(\alpha_1)}\alpha_2|\ldots|
\lexp{\product(\alpha_1|\ldots|\alpha_{n-1})}\alpha_n$. 
\begin{lemma}\label{vecN} $\vN$ has the following properties for any sequence $\underline\alpha$
\begin{enumerate}
\item $\vN(\underline\alpha|\underline\alpha')=\vN(\underline\alpha)|\lexp{\product(\underline\alpha)}\vN(\underline\alpha')$
for any sequence $\underline\alpha'$.
\item $\product(\vN(\underline\alpha))=\product(\underline\alpha)\inv$.
\item For any $w\in W$ we have $\lexp
 w\vN(\underline\alpha)=\vN(\lexp w{\underline\alpha})$.
\item $\vN(\vN(\underline\alpha))=\underline\alpha$.
\end{enumerate}
\end{lemma}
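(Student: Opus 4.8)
The plan is to verify the four properties of $\vN$ essentially directly from the defining formula, using (1) as the workhorse.

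First I would prove (1). Writing $\underline\alpha=\alpha_1|\cdots|\alpha_m$ and $\underline\alpha'=\alpha'_1|\cdots|\alpha'_n$, the definition gives that the $i$-th term of $\vN(\underline\alpha|\underline\alpha')$ for $i\le m$ is $\lexp{\product(\alpha_1|\cdots|\alpha_{i-1})}{\alpha_i}$, which is exactly the $i$-th term of $\vN(\underline\alpha)$; and for the $j$-th new term (with $1\le j\le n$) it is $\lexp{\product(\alpha_1|\cdots|\alpha_m|\alpha'_1|\cdots|\alpha'_{j-1})}{\alpha'_j}=\lexp{\product(\underline\alpha)}{\bigl(\lexp{\product(\alpha'_1|\cdots|\alpha'_{j-1})}{\alpha'_j}\bigr)}$, which is $\lexp{\product(\underline\alpha)}{}$ applied to the $j$-th term of $\vN(\underline\alpha')$. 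This uses only that $\product$ of a concatenation is the product of the $\product$'s, which is immediate from its definition. Property (3) is equally immediate termwise: conjugating the $i$-th term $\lexp{\product(\alpha_1|\cdots|\alpha_{i-1})}{\alpha_i}$ by $w$ gives $\lexp{w\product(\alpha_1|\cdots|\alpha_{i-1})}{\alpha_i}$; on the other hand the $i$-th term of $\vN(\lexp w{\underline\alpha})$ is $\lexp{\product(\lexp w{\alpha_1}|\cdots|\lexp w{\alpha_{i-1}})}{(\lexp w{\alpha_i})}$, and since $s_{\lexp w\alpha}=w s_\alpha w\inv$ we have $\product(\lexp w{\alpha_1}|\cdots|\lexp w{\alpha_{i-1}})=w\,\product(\alpha_1|\cdots|\alpha_{i-1})\,w\inv$, so the two agree.

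Next, (2) I would prove by induction on the length $n$ of $\underline\alpha$ using (1). For $n=0$ (or $n=1$) it is trivial. For the inductive step write $\underline\alpha=\underline\beta|\alpha_n$; then by (1), $\vN(\underline\alpha)=\vN(\underline\beta)|\lexp{\product(\underline\beta)}{\alpha_n}$, so $\product(\vN(\underline\alpha))=\product(\vN(\underline\beta))\cdot s_{\lexp{\product(\underline\beta)}{\alpha_n}}=\product(\underline\beta)\inv\cdot \product(\underline\beta)s_{\alpha_n}\product(\underline\beta)\inv$ by the induction hypothesis and the conjugation formula for reflections; this telescopes to $s_{\alpha_n}\product(\underline\beta)\inv=\product(\underline\beta|\alpha_n)\inv=\product(\underline\alpha)\inv$.

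Finally, (4) I would also prove by induction on $n$, again splitting $\underline\alpha=\underline\beta|\alpha_n$ and applying (1): $\vN(\underline\alpha)=\vN(\underline\beta)|\gamma$ where $\gamma=\lexp{\product(\underline\beta)}{\alpha_n}$, and by (2), $\product(\vN(\underline\beta))=\product(\underline\beta)\inv$. Applying (1) once more, $\vN(\vN(\underline\alpha))=\vN(\vN(\underline\beta)|\gamma)=\vN(\vN(\underline\beta))\,|\,\lexp{\product(\vN(\underline\beta))}{\gamma}=\underline\beta\,|\,\lexp{\product(\underline\beta)\inv}{\gamma}$ by the induction hypothesis, and $\lexp{\product(\underline\beta)\inv}{\gamma}=\lexp{\product(\underline\beta)\inv\product(\underline\beta)}{\alpha_n}=\alpha_n$, giving $\vN(\vN(\underline\alpha))=\underline\beta|\alpha_n=\underline\alpha$. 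None of these steps is a real obstacle; the only thing to be careful about is bookkeeping the interaction between $\product$ and conjugation (the identity $s_{\lexp w\alpha}=w s_\alpha w\inv$ and its consequence for $\product$ of a conjugated sequence), and making sure the base case of the inductions is the empty sequence so that everything is well-defined; I would state that identity explicitly at the start of the proof and then the rest is formal.
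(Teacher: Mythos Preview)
Your proof is correct and essentially the same as the paper's: both treat (1) and (3) as immediate from the definition, and both use the identity $s_{\lexp w\alpha}=\lexp w s_\alpha$ to handle (2) and (4). The only cosmetic difference is that you package (2) and (4) as inductions via (1), whereas the paper writes out the telescoping product and the $i$-th term directly; these are the same computation in different dress.
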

\begin{proof}
Properties  (1) and (3) are clear. It results from
the equality $s_{\lexp w\alpha}=\lexp w s_\alpha$ that
$\product(\vN(\underline\alpha))=s_{\alpha_1}\cdot
\lexp{s_{\alpha_1}}s_{\alpha_2}\cdot\ldots\cdot
\lexp{s_{\alpha_1}s_{\alpha_2}\ldots
s_{\alpha_{n-1}}}s_{\alpha_n}=s_{\alpha_n}s_{\alpha_{n-1}}\ldots
s_{\alpha_1}=
\product(\underline\alpha)\inv$, whence (2). 
Now the $i$-th term of $\vN(\vN(\underline\alpha))$ is the image of
$\lexp{s_{\alpha_1}\cdots s_{\alpha_{i-1}}}\alpha_i$ by the product $\prod_{j=1}^{j=i-1}
\lexp{s_{\alpha_1}\cdots s_{\alpha_{j-1}}}s_{\alpha_i}=s_{\alpha_{i-1}}s_{\alpha_{i-2}}\cdots
s_{\alpha_1}$ hence is equal to $\alpha_i$, whence (4).
\end{proof}
We define \index{p@$p,p^*$} $p:\bS^{\pm1}\to\pm\Pi$ by $\bs\mapsto\alpha_s$ and
$\bs\inv\mapsto-\alpha_s$ for $\bs\in\bS$ and we extend $p$ to a map $p^*$ from
$(\bS^{\pm1})^*$ to $(\pm\Pi)^*$ in the obvious way. 
\begin{lemma}\label{simple}
A positive word $b\in\bS^*$ has a simple image in B (that is an image
in $\bW$) if and only if all terms in $\vN(p^*(b))$ are
positive.
\end{lemma}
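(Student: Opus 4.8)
The plan is to relate the sequence $\vN(p^*(b))$ to the successive "inversion roots" picked up when reading $b$ letter by letter, and then to invoke the classical characterisation of simple braids via reduced words in $W$. Write $b=\bs_1|\ldots|\bs_k\in\bS^*$ and put $w_j=s_1\cdots s_j$. The key observation is that the $j$-th term of $\vN(p^*(b))$ is $\lexp{s_1\cdots s_{j-1}}\alpha_{s_j}=\lexp{w_{j-1}}\alpha_{s_j}$, and by Lemma~\ref{vecN}(2) we have $\product(\vN(p^*(b)))=\product(p^*(b))\inv=(s_1\cdots s_k)\inv=w_k\inv$. More importantly, the partial products of $\vN(p^*(b))$ read from the left give $\lexp{w_{j-1}}\alpha_{s_j}$, and the reflection $s_{\lexp{w_{j-1}}\alpha_{s_j}}=\lexp{w_{j-1}}s_{s_j}$ is precisely the reflection that is "crossed" at step $j$. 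Hence the term $\lexp{w_{j-1}}\alpha_{s_j}$ is positive if and only if $\ell_S(w_{j-1}s_j)>\ell_S(w_{j-1})$, i.e. if and only if the expression $s_1\cdots s_k$ is reduced at step $j$ in the usual sense (adding $s_j$ increases the length).

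First I would record this dictionary precisely: by the standard theory of Coxeter groups (\cite[Ch.~V]{Bbk}), for $w\in W$ and $s\in S$ one has $\lexp w\alpha_s\in\Phi^+$ iff $\ell_S(ws)>\ell_S(w)$, and $\lexp w\alpha_s\in-\Phi^+$ otherwise; apply this with $w=w_{j-1}$. Thus \emph{all} terms of $\vN(p^*(b))$ are positive $\iff$ $\ell_S(w_j)=\ell_S(w_{j-1})+1$ for every $j$ $\iff$ $s_1|\ldots|s_k$ is a reduced word in $W$, necessarily of an element of length $k=\lZ$-length of the positive word $b$. Now I invoke the definition of $\bW$ recalled in the introduction: $\bw\in\bW$ is the unique element of $B^+$ with $\pr(\bw)=w$ and $\lZ(\bw)=\ell_S(w)$, and it is obtained by lifting a reduced expression of $w$. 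So the image of $b$ in $B$ lies in $\bW$ exactly when $b$ is (equal in $B^+$ to) the lift of a reduced expression, which, since $b$ is positive and $\lZ(b)=k$, happens iff $\ell_S(\pr(b))=k$, iff $s_1\cdots s_k$ is reduced. Chaining the two equivalences gives the statement.

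The one point needing a little care — and the main (mild) obstacle — is the implication "$\ell_S(\pr(b))=k\Rightarrow$ image of $b$ in $B$ is in $\bW$": a priori $\bW$ consists of lifts of \emph{particular} reduced words, so I must know that \emph{any} positive word whose letters spell a reduced expression of $w$ represents the canonical lift $\bw$. This is exactly Matsumoto's/Tits' lemma together with the braid relations of $(**)$: any two reduced expressions of $w$ differ by braid moves, and each braid move $\underbrace{sts\cdots}=\underbrace{tst\cdots}$ is a defining relation of $B$, so all these positive words are equal in $B^+$, hence equal to $\bw$. I would cite this (e.g. via \cite{VanderLek} or the discussion around Proposition~\ref{vdl}) rather than reprove it. With that in hand, both directions of the "if and only if" follow, completing the proof.
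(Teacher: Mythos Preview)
Your argument is correct and follows essentially the same route as the paper's proof: both compute the $j$-th term of $\vN(p^*(b))$ as $\lexp{w_{j-1}}\alpha_{s_j}$, invoke the standard fact that this root is positive iff $\ell_S(w_{j-1}s_j)=\ell_S(w_{j-1})+1$, and conclude by induction that all terms are positive iff $s_1|\cdots|s_k$ is reduced. Your extra care about Matsumoto's lemma is already absorbed in the paper's introduction, where $\bw$ is \emph{defined} as the unique positive element with $\pr(\bw)=w$ and $\lZ(\bw)=\ell_S(w)$; granting that uniqueness, the equivalence ``image of $b$ lies in $\bW$ $\iff$ $\ell_S(\pr(b))=k$'' is immediate, which is why the paper omits that discussion.
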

\begin{proof}
Let $b=\bs_1|\cdots|\bs_k$ with $\bs_i\in \bS$. As is well known $l(s_1\ldots
s_k)=l(s_1\ldots s_{k-1})+1$ if and only if $\lexp{s_1\ldots s_{k-1}}\alpha_k \in\Phi^+$.
This gives the result by induction on $k$.
\end{proof}
The following proposition is a reformulation of the definition of retraction
in \cite[Definition 2.1]{godelle} and of Definition~\ref{retract}.
\begin{proposition}\label{other way} For $b\in(\bS^{\pm1})^*$,
we can define $\hpi_I(b)$ as
$p^{*-1}(\vN(\vN(p^*(b))\cap \Phi_I))$; this is well-defined since
$\vN(\vN(p^*(b))\cap \Phi_I)\subseteq (\pm \Pi_I)^*$. 
\end{proposition}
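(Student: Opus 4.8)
The plan is to reconcile the combinatorial definition of $\hpi_I$ (Definition~\ref{retract}), which walks through the word $b=\bs_1^{\ve_1}|\cdots|\bs_k^{\ve_k}$ keeping track of $t_I(w_{j-1})$ and recording a letter exactly when $j$ is good, with the root-theoretic formula $p^{*-1}(\vN(\vN(p^*(b))\cap\Phi_I))$. First I would unwind $\vN(p^*(b))$: by Lemma~\ref{vecN}(1) applied inductively, its $j$-th term is $\lexp{\product(p^*(b)_1|\cdots|p^*(b)_{j-1})}(\ve_j\alpha_{s_j})$, and since $\product$ of the first $j-1$ terms equals $(s_1\cdots s_{j-1})\inv=w_{j-1}\inv$, this $j$-th term is $\ve_j\,\lexp{w_{j-1}\inv}\alpha_{s_j}$. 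So $\vN(p^*(b))$ records, up to sign $\ve_j$, the roots $w_{j-1}\inv\alpha_{s_j}$ — the roots "crossed" by the word. This is the step I expect to be routine but must be done carefully with the $\inv$'s and signs.

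Next I would identify which of these terms lie in $\Phi_I$. The claim is that $w_{j-1}\inv\alpha_{s_j}\in\Phi_I$ if and only if $j$ is good in the sense of Definition~\ref{retract}. Writing $w_{j-1}=v\,t_I(w_{j-1})$ with $v\in W_I$, we have $w_{j-1}\inv\alpha_{s_j}=t_I(w_{j-1})\inv v\inv\alpha_{s_j}$; since $t_I(w_{j-1})$ is $I$-reduced, $t_I(w_{j-1})\inv$ sends $\Phi_I$ into $\Phi_I$ (indeed $I$-reduced-on-the-right elements send $\Phi_I^+$ into $\Phi^+$ outside... — more precisely $\ell_S(t_I(w_{j-1})\cdot)$ facts give $t_I(w_{j-1})\inv\Phi_I\subseteq\Phi$ has a term in $\Phi_I$ iff it was fixed, the cleanest route is: $w_{j-1}\inv\alpha_{s_j}\in\Phi_I \iff s_{w_{j-1}\inv\alpha_{s_j}}=\lexp{w_{j-1}}{s_{\alpha_{s_j}}}\inv\in W_I \iff \lexp{t_I(w_{j-1})}s_j\in W_I$, using that conjugation by $v\in W_I$ preserves $W_I$ and $s_{\lexp w\alpha}=\lexp w{s_\alpha}$). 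By Lemma~\ref{lemma 1} applied to the $I$-reduced element $t_I(w_{j-1})$, this is precisely the condition that $j$ is good, and in that case $\lexp{t_I(w_{j-1})}s_j=s_{f(j)}\in I$, so the corresponding term of $\vN(p^*(b))$ is $\ve_j\alpha_{s_{f(j)}}\in\pm\Pi_I$.

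Thus $\vN(p^*(b))\cap\Phi_I$ is the subsequence, over good $j$ in increasing order, of the terms $\ve_j\alpha_{s_{f(j)}}$; in particular it lies in $(\pm\Pi_I)^*$, which establishes the well-definedness claim. Applying $\vN$ again: by Lemma~\ref{vecN}(1), the $r$-th term (for the $r$-th good index $j$) becomes $\lexp{\prod\text{ of earlier }s_{\alpha}}{(\ve_j\alpha_{s_{f(j)}})}$; but all these $s_\alpha=s_{\alpha_{s_{f(i)}}}=s_{f(i)}\in W_I$, so this stays in $\pm\Pi_I$ — and here I would argue it is exactly $\ve_j\alpha_{s_{f(j)}}$ up to the requisite twist, or rather show directly that $\vN(\vN(p^*(b))\cap\Phi_I)$ has the same terms (with signs $\ve_j$) that Definition~\ref{retract} records, namely $\alpha_{s_{f(j)}}$ with sign $\ve_j$ for good $j$ in increasing order — this last comparison being cleanest if I note that the $\vN$-operation on a sequence already lying in $(\pm\Pi_I)^*$ coincides, after applying $p^{*-1}$, with the identity composed with the sign-forgetting-and-recording bookkeeping that matches $\hpi_I$ on words in $(\bI^{\pm1})^*$, for which $\hpi_I$ is the identity by Remark~\ref{obvious}(1). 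The one genuine subtlety — and the step I flag as the main obstacle — is matching the \emph{order} and the \emph{signs} of the recorded simple roots through the double application of $\vN$, since $\vN$ reverses products (Lemma~\ref{vecN}(2)) and twists later terms by earlier ones; I would resolve this by checking that on a word already in $(\bI^{\pm1})^*$ the formula $p^{*-1}(\vN(\vN(p^*(b))))=b$ holds by Lemma~\ref{vecN}(4), and that reducing the general case to this one is exactly what the good/bad dichotomy accomplishes, since the bad steps contribute nothing to $\vN(p^*(b))\cap\Phi_I$ and the good steps reproduce, letter by letter and in order, the word $\bs_{f(1)}^{\ve_1}\cdots$ that Definition~\ref{retract} outputs.
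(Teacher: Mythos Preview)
There is a genuine computational error that breaks the middle of your argument. The $j$-th term of $\vN(p^*(b))$ is $\ve_j\,\lexp{w_{j-1}}\alpha_{s_j}$, not $\ve_j\,\lexp{w_{j-1}\inv}\alpha_{s_j}$: the prefactor $\product$ in the definition of $\vN$ applied to $p^*(b)$ gives $s_1\cdots s_{j-1}=w_{j-1}$, not its inverse. More seriously, for $j$ good this term is \emph{not} $\ve_j\alpha_{s_{f(j)}}$. Writing $w_{j-1}=\pi_I(w_{j-1})\,t_I(w_{j-1})$ with $\pi_I(w_{j-1})\in W_I$, the term is $\ve_j\,\lexp{\pi_I(w_{j-1})}\alpha_{s_{f(j)}}$, which lies in $\Phi_I$ but in general not in $\pm\Pi_I$ (take for instance $I=S=\{s,t\}$ with $m_{s,t}=3$ and $b=\bs|\bt$: the second term of $\vN(p^*(b))$ is $s(\alpha_t)=\alpha_s+\alpha_t$). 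So your claim that $\vN(p^*(b))\cap\Phi_I$ already lies in $(\pm\Pi_I)^*$ is false, and the well-definedness does not follow at that point.

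The repair is exactly the identity your final paragraph gropes toward but does not state: using Lemma~\ref{not good}(3), the factor $\pi_I(w_{j-1})$ equals $s_{f(1)}\cdots s_{f(j-1)}$, which is the product over earlier good indices of the $s_{f(\cdot)}$'s; hence $\vN(p^*(b))\cap\Phi_I=\vN(p^*(\hpi_I(b)))$ term by term. Then Lemma~\ref{vecN}(4) gives $\vN(\vN(p^*(b))\cap\Phi_I)=p^*(\hpi_I(b))\in(\pm\Pi_I)^*$, and applying $p^{*-1}$ finishes. The paper's proof carries out the same computation packaged differently: it sets $A_i=\product((\alpha_1|\cdots|\alpha_{i-1})\cap\Phi_I)$ and proves inductively that $A_i$ cancels the $\pi_I(w_{j-1})$ twist, yielding $\lexp{A_i}\alpha_i=\lexp{t_I(w_{i-1})}p(\bs_i^{\ve_i})\in\pm\Pi_I$ directly. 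Both routes hinge on the same inductive bookkeeping from Lemma~\ref{not good}; your proposal skipped it.
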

\begin{proof}
We retain the notation of Definition~\ref{retract}.
For $i=1,\ldots,k$, let $\alpha_i=\lexp{w_i}p(\bs_i^{\varepsilon_i})$, that is $\alpha_1|\cdots|
\alpha_k=\vN(p^*(b))$. For any good index $i$, let
$A_i= \product((\alpha_1|\cdots|\alpha_{i-1})\cap \Phi_I)$,
so that $\lexp {A_i}\alpha_i$ (for $i$ good) are the
terms of the sequence $\vN(\vN(p^*(b))\cap \Phi_I)$.
We prove by induction on good indices $i$ that for $i$ good we have $A_iw_i=
t_I(w_i)s_i$. Assume the property for $i$ and let $j$ be the next good index.
We have $A_j=A_it_i$, hence $A_jw_j=A_it_iw_is_{i+1}\cdots
s_j=A_iw_{i-1}s_{i+1}\cdots s_j=t_I(w_i)s_{i+1}\cdots s_j$, the second
equality since $t_iw_i=\lexp{w_i}s_i w_i=w_is_i=w_{i-1}$, and the last equality
by induction. We have $A_jw_j=t_I(w_i)
s_{i+1}\cdots s_j=t_I(w_j)s_j$, the second equality by Lemma~\ref{not good},
whence the assertion.

We deduce that for $i$ good we have $\lexp{A_i}\alpha_i=\lexp
{A_iw_i}p(\bs_i^{\varepsilon_i})=     \lexp{t_I(w_i)}p(\bs_i^{\varepsilon_i})$,
which is in $\pm\Pi_I$ by Lemma~\ref{lemma 1}, since ${t_I(w_i)}$ being
$I$-reduced $\lexp{t_I(w_i)} s_i\in I$ is equivalent to
$\lexp{t_I(w_i)}\alpha_{s_i}\in\Pi_I$.
We get the
proposition since by definition the
retraction $\hpi_I(b)$ is $p^{*-1}$ of the sequence  of the
$\lexp{t_I(w_i)}p(\bs_i^{\varepsilon_i})$ for $i$ good.
\end{proof}
\begin{corollary} \label{transitivity} For $I,J$ subsets of $S$ we have
$\hpi_I\circ\hpi_J=\hpi_{I\cap J}$. In particular if $J\subseteq I$ we
have $\hpi_J\circ\hpi_I=\hpi_J$.
\end{corollary}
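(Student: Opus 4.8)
The plan is to use the characterization of $\hpi_I$ given in Proposition~\ref{other way}, which expresses the retraction entirely in terms of the operator $\vN$ on sequences of roots and the operation $\underline\alpha\mapsto\underline\alpha\cap\Phi_I$ of keeping only those terms lying in $\Phi_I$. Since $p^*$ and $p^{*-1}$ are mutually inverse bijections between $(\bS^{\pm1})^*$ (after identifying letters with $\pm$-simple roots) and $(\pm\Pi)^*$, and since $\hpi_J(b)\in(\bI_J^{\pm1})^*$ means $\vN(\vN(p^*(b))\cap\Phi_J)$ lives in $(\pm\Pi_J)^*$, the composition $\hpi_I\circ\hpi_J$ translates, under $p^*$, into the map $\underline\alpha\mapsto\vN\bigl(\vN\bigl(\vN(\vN(\underline\alpha)\cap\Phi_J)\bigr)\cap\Phi_I\bigr)$ on sequences in $(\pm\Pi)^*$. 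Using $\vN\circ\vN=\Id$ (Lemma~\ref{vecN}(4)), the two innermost $\vN$'s cancel, so this is $\vN\bigl((\vN(\underline\alpha)\cap\Phi_J)\cap\Phi_I\bigr)$.

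The key step is then the elementary set-theoretic observation that for any sequence $\underline\beta\in\Phi^*$ one has $(\underline\beta\cap\Phi_J)\cap\Phi_I=\underline\beta\cap(\Phi_I\cap\Phi_J)=\underline\beta\cap\Phi_{I\cap J}$, the last equality because $\Phi_I\cap\Phi_J=\Phi_{I\cap J}$ for the standard root subsystems (a standard fact about Coxeter root systems: a root is a non-negative combination of $\Pi_I$ and also of $\Pi_J$ precisely when its support lies in $I\cap J$). Applying this with $\underline\beta=\vN(\underline\alpha)$ gives $\vN\bigl(\vN(\underline\alpha)\cap\Phi_{I\cap J}\bigr)$, which under $p^{*-1}$ is exactly $\hpi_{I\cap J}(b)$ by Proposition~\ref{other way} again. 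This proves $\hpi_I\circ\hpi_J=\hpi_{I\cap J}$.

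For the ``in particular'' clause, when $J\subseteq I$ we have $I\cap J=J$, so $\hpi_J\circ\hpi_I=\hpi_{J\cap I}=\hpi_J$; alternatively it follows from Remark~\ref{obvious}(1), since $\hpi_I(b)$ already lies in $(\bI^{\pm1})^*$ and then $\hpi_J$ restricted to such words behaves as on words over a subalphabet. I would present the general statement first and deduce the special case as a one-line corollary.

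I do not anticipate a serious obstacle here: the only non-formal ingredient is the identity $\Phi_I\cap\Phi_J=\Phi_{I\cap J}$, which is standard (e.g.\ it follows from the fact that the parabolic subgroups $W_I$ satisfy $W_I\cap W_J=W_{I\cap J}$ together with the correspondence between positive roots in $\Phi_I$ and reflections in $W_I$, or directly from \cite[Chapter V, \S4]{Bbk}). Everything else is bookkeeping with $\vN$, its involutivity, and the compatibility of $p^*$ with these operations, all of which are already recorded in Lemma~\ref{vecN} and Proposition~\ref{other way}. The mild care needed is to make sure the intermediate sequences genuinely lie in the smaller spaces $(\pm\Pi_J)^*$ and $(\pm\Pi_{I\cap J})^*$ so that the compositions are legitimate, but this is exactly the well-definedness already guaranteed by Proposition~\ref{other way}.
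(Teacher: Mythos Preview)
Your proposal is correct and follows essentially the same argument as the paper: apply Proposition~\ref{other way} to unfold $\hpi_I\circ\hpi_J$, cancel the inner pair of $\vN$'s via Lemma~\ref{vecN}(4), and use $\Phi_I\cap\Phi_J=\Phi_{I\cap J}$ to recognise the result as $\hpi_{I\cap J}$. The only difference is that you spell out the justification for $\Phi_I\cap\Phi_J=\Phi_{I\cap J}$, which the paper's proof uses without comment in its final equality.
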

\begin{proof}
 By Proposition~\ref{other way} we have
 $$\begin{aligned}\hpi_I\circ\hpi_J(b)&=
  p^{*-1}(\vN(\vN(p^*\left (p^{*-1}(\vN(\vN(p^*(b))\cap \Phi_J))\right ))\cap
  \Phi_I))\cr
  &=p^{*-1}(\vN\left (\vN(\vN\left (\vN(p^*(b))\cap \Phi_J\right))\cap \Phi_I\right))\cr
  &=p^{*-1}(\vN(\vN(p^*(b))\cap \Phi_J\cap \Phi_I))\cr
  &=\hpi_{I\cap J}(b)
 \end{aligned}
 $$
 where the equality before last is from Lemma~\ref{vecN}(4).
\end{proof}
We  define  a  map \index{pr@$\hpr$} $\hpr$  from  $(\bS^{\pm1})^*$  to  $W$ by
$\product\circ p^*$ (this factors obviously through the map $\pr:B\to W$ of
the introduction).
\begin{proposition}\label{prtI}
For $b\in (\bS^{\pm1})^*$ we have $\hpr(b)=\hpr(\hpi_I(b))t_I(\hpr(b))$.
\end{proposition}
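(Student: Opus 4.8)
The plan is to deduce the identity directly from Lemma~\ref{not good}(3), the only additional ingredient being an identification of $\hpr(\hpi_I(b))$ with the product $s_{f(1)}\cdots s_{f(k)}$ that appears there.

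First I would unwind the definitions in the notation of Definition~\ref{retract}, writing $b=\bs_1^{\ve_1}|\ldots|\bs_k^{\ve_k}$. Since $\hpr=\product\circ p^*$ and $p^*(b)$ is the sequence of the $\pm\alpha_{s_i}$, and since $s_\alpha=s_{-\alpha}$, the product $\product(p^*(b))$ is insensitive to the signs $\ve_i$ and equals $s_1\cdots s_k=w_k$; thus $\hpr(b)=w_k$ and $t_I(\hpr(b))=t_I(w_k)$. The same observation applied to $\hpi_I(b)$, which by Definition~\ref{retract} is the word with letters $\bs_{f(j)}^{\ve_j}$ for the good indices $j$ in increasing order, shows that $\hpr(\hpi_I(b))=\product(p^*(\hpi_I(b)))$ is the product of the $s_{f(j)}$ over good $j$, taken in increasing order — again the signs $\ve_j$ drop out because $\hpr$ sees a simple root only through its reflection. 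As $s_{f(l)}=1$ for $l$ bad, this product is unchanged by inserting the bad indices, so $\hpr(\hpi_I(b))=s_{f(1)}s_{f(2)}\cdots s_{f(k)}$.

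Then I would invoke Lemma~\ref{not good}(3) with $j=k$, which gives $w_k=s_{f(1)}\cdots s_{f(k)}\,t_I(w_k)$. Combining with the two identifications above yields $\hpr(b)=w_k=\hpr(\hpi_I(b))\,t_I(\hpr(b))$, which is the claim. There is essentially no obstacle here; the only point needing a moment's care is exactly that $\hpr$ is built from $\product$ and hence is insensitive to the signs $\ve_j$ and to the bad indices, which is what makes $\hpr(\hpi_I(b))$ coincide with the positive-word product of the $s_{f(j)}$ in Lemma~\ref{not good}(3). (Alternatively one could argue through Proposition~\ref{other way} together with Lemma~\ref{vecN}(2), but the route via Lemma~\ref{not good} is the most economical.)
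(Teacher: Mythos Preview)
Your proof is correct and essentially the same as the paper's: both rest on Lemma~\ref{not good} together with the observation that $\hpr$ ignores the signs~$\ve_j$ (so that $\hpr(b_j)=w_j$ and $\hpr(\hpi_I(b_j))=s_{f(1)}\cdots s_{f(j)}$). The only difference is packaging: you invoke Lemma~\ref{not good}(3) directly at $j=k$, whereas the paper re-runs the induction using Lemma~\ref{not good}(2), which amounts to re-deriving (3) in the form needed.
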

\begin{proof}
With the notation of Lemma~\ref{not good} we prove by induction on $j$ that  
$\hpr(b_j)=\hpr(\hpi_I(b_j))t_I(\hpr(b_j))$ where
$b_j=\bs_1^{\ve_1}|\cdots|\bs_j^{\ve_j}$, so that $w_j=\hpr(b_j)$. We have
$\hpr(b_j)=\hpr(b_{j-1})s_j=\hpr(\hpi_I(b_{j-1}))t_I(\hpr(b_{j-1}))s_j=
\hpr(\hpi_I(b_{j-1}))s_{f(j)}t_I(\hpr(b_j))$,
the second equality by induction and the third by Lemma~\ref{not good}(2).
We get the result since $\hpr(\hpi_I(b_j))=\hpr(\hpi_I(b_{j-1}))s_{f(j)}$ by
definition of $\hpi_I$.
\end{proof}
\begin{proposition}\label{pi(bb')} We have $\hpi_I(b|b')=\hpi_I(b)|
p^{*-1}(\vN(\vN(\lexp{t_I(\hpr(b))}p^*(b'))\cap \Phi_I))$.
In particular if $\hpr(b)\in W_I$ then $\hpi_I(b|b')=\hpi_I(b)|\hpi_I(b')$.
\end{proposition}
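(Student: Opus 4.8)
The cleanest route is to compute directly with the reformulation in Proposition~\ref{other way}, together with the splitting property of $\vN$ in Lemma~\ref{vecN}(1). Write $\underline\alpha = p^*(b)$ and $\underline\alpha' = p^*(b')$, so that $p^*(b|b') = \underline\alpha|\underline\alpha'$. By Lemma~\ref{vecN}(1) we have
$$\vN(p^*(b|b')) = \vN(\underline\alpha|\underline\alpha') = \vN(\underline\alpha)\,|\,\lexp{\product(\underline\alpha)}{\vN(\underline\alpha')}.$$
Now intersect with $\Phi_I$: since the intersection operation $\underline\beta\mapsto\underline\beta\cap\Phi_I$ simply deletes terms, it commutes with concatenation, so
$$\vN(p^*(b|b'))\cap\Phi_I = \bigl(\vN(\underline\alpha)\cap\Phi_I\bigr)\ \Big|\ \bigl(\lexp{\product(\underline\alpha)}{\vN(\underline\alpha')}\cap\Phi_I\bigr).$$

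**The role of $t_I(\hpr(b))$.** The point where I expect to do the real work is identifying $\product(\underline\alpha)$ with the relevant $W$-element. We have $\product(\vN(\underline\alpha)) = \product(\underline\alpha)\inv = \hpr(b)\inv$ by Lemma~\ref{vecN}(2) and the definition of $\hpr$. But I do not want to conjugate by all of $\hpr(b)$; I want to conjugate the tail by $t_I(\hpr(b))$, which differs from $\hpr(b)$ by a left factor in $W_I$. The key observation is that the $W_I$-part acts trivially \emph{on the subsequence already living in $\Phi_I$ after the second $\vN$} in a way that matches the retraction; more precisely, one shows
$$\vN\bigl(\lexp{\product(\underline\alpha)}{\vN(\underline\alpha')}\cap\Phi_I\bigr) = \lexp{t_I(\hpr(b))\inv}{\vN\bigl(\vN(\underline\alpha')\cap\Phi_I\bigr)}$$
after applying the outer $\vN$ and using Lemma~\ref{vecN}(3)–(4) to pull the conjugation outside, together with the fact (from Proposition~\ref{prtI} and its proof, or directly from Lemma~\ref{not good}) that the ``$W_I$-correction'' between $\product(\underline\alpha)\inv = \hpr(b)\inv$ and $t_I(\hpr(b))$ is exactly absorbed when one restricts to $\Phi_I$ and re-applies $\vN$. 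Concretely: $\hpr(b) = \hpr(\hpi_I(b))\,t_I(\hpr(b))$ by Proposition~\ref{prtI}, and $\hpr(\hpi_I(b))\in W_I$ stabilises $\Phi_I$ but its contribution is precisely the part recorded by $\hpi_I(b)$ — this is the bookkeeping that needs to be checked against the definition of $\vN(\vN(-)\cap\Phi_I)$.

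**Assembling the formula.** Once the tail term is rewritten as $\lexp{t_I(\hpr(b))}{p^*(b')}$ fed through $\vN(\vN(-)\cap\Phi_I)$ — note the conjugation commutes past both $\vN$'s by Lemma~\ref{vecN}(3) — applying $p^{*-1}$ to the concatenation gives
$$\hpi_I(b|b') = \hpi_I(b)\ \big|\ p^{*-1}\bigl(\vN(\vN(\lexp{t_I(\hpr(b))}{p^*(b')})\cap\Phi_I)\bigr),$$
as claimed; well-definedness (landing in $(\pm\Pi_I)^*$) is inherited from Proposition~\ref{other way}. An alternative, perhaps more transparent, proof is by induction on the length of $b'$ using Remark~\ref{obvious}(2): the prefix property forces $\hpi_I(b|b') = \hpi_I(b)|(\text{something})$, and one identifies the something letter-by-letter from Definition~\ref{retract}, observing that processing $b'$ after $b$ sees the running coset representative $t_I(w_{|b|+r})$, whose value at the start is $t_I(\hpr(b))$ — so the suffix of $\hpi_I(b|b')$ is computed by exactly the same recipe as $\hpi_I$ applied to $\lexp{t_I(\hpr(b))}{b'}$ (the conjugation shifting the initial running element from $\hpr(b)$ to $t_I(\hpr(b))$). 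The ``in particular'' clause is then immediate: if $\hpr(b)\in W_I$ then $t_I(\hpr(b)) = 1$, conjugation by $1$ is trivial, and the suffix is just $p^{*-1}(\vN(\vN(p^*(b'))\cap\Phi_I)) = \hpi_I(b')$. The main obstacle throughout is the $t_I$ versus full-$\hpr(b)$ discrepancy; I would isolate it as a small lemma saying $\vN(\lexp{v}{\underline\gamma}\cap\Phi_I) = \lexp{v}{\vN(\underline\gamma\cap\Phi_I)}$ for $v$ a reduced-$I$ element applied to a sequence $\underline\gamma$ of positive roots (which is really the content of Lemma~\ref{lemma 1} repeated), and everything else is formal manipulation with Lemma~\ref{vecN}.
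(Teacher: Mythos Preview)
Your approach is essentially the paper's: split $\vN(p^*(b|b'))$ via Lemma~\ref{vecN}(1), intersect with $\Phi_I$, apply the outer $\vN$, and use Proposition~\ref{prtI} to trade $\hpr(b)$ for $t_I(\hpr(b))$. However, your displayed intermediate identity
\[
\vN\bigl(\lexp{\hpr(b)}{\vN(\underline\alpha')}\cap\Phi_I\bigr) \;=\; \lexp{t_I(\hpr(b))\inv}{\vN\bigl(\vN(\underline\alpha')\cap\Phi_I\bigr)}
\]
is false as stated, and so is your proposed ``small lemma'' at the end (an $I$-reduced element $v$ does \emph{not} stabilise $\Phi_I$, so $\lexp{v}\underline\gamma\cap\Phi_I\ne\lexp{v}(\underline\gamma\cap\Phi_I)$ in general). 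The mechanism you are groping for --- and which the paper makes explicit --- is that the \emph{second} application of Lemma~\ref{vecN}(1), to the concatenation
\[
\bigl(\vN(\underline\alpha)\cap\Phi_I\bigr)\ \big|\ \bigl(\lexp{\hpr(b)}\vN(\underline\alpha')\cap\Phi_I\bigr),
\]
produces a left conjugation of the second block by $\product\bigl(\vN(\underline\alpha)\cap\Phi_I\bigr)$. By Lemma~\ref{vecN}(2) together with Proposition~\ref{other way} this product equals $\hpr(\hpi_I(b))\inv$. Since $\hpr(\hpi_I(b))\in W_I$ normalises $\Phi_I$, one can push this conjugation inside $\vN$ and past $\cap\,\Phi_I$ via Lemma~\ref{vecN}(3); it then cancels the $W_I$-part of $\hpr(b)=\hpr(\hpi_I(b))\,t_I(\hpr(b))$ (Proposition~\ref{prtI}), leaving $t_I(\hpr(b))$ alone. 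That is the whole computation; once you write this step correctly the rest is, as you say, formal.
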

\begin{proof}
 By Lemma~\ref{vecN}(1) we get 
 $\vN(p^*(b|b')\cap \Phi_I)=\vN(p^*(b)\cap \Phi_I)
|\lexp{\hpr(b)}\vN(p^*(b')\cap \Phi_I)$ where we have used that
 $\product(p^*(b))=\hpr(b)$. Using again Lemma~\ref{vecN}(1) we
get $\vN(\vN(p^*(b|b')\cap \Phi_I))=\vN(\vN(p^*(b)\cap \Phi_I)) 
|\lexp{\product(\vN(p^*(b))\cap \Phi_I)}\vN(\lexp{\hpr(b)}\vN(p^*(b'))\cap
\Phi_I)$. Applying $p^{*-1}$to this equality and 
using Proposition~\ref{other way}, we get
$$\hpi_I(b|b')=\hpi_I(b)|p^{*-1}(\lexp{\product(\vN(p^*(b))\cap
 \Phi_I)}\vN(\lexp{\hpr(b)}\vN(p^*(b'))\cap \Phi_I)).$$ 
The right-hand side becomes 
$\hpi_I(b)|p^{*-1}(\lexp{\hpr(\hpi_I(b))\inv}
\vN(\lexp{\hpr(b)}\vN(p^*(b'))\cap \Phi_I))$ using  Lemma~\ref{vecN}(2).
Using Lemma~\ref{vecN}(3)
and that $\hpr(\hpi_I(b))$ normalises $\Phi_I$ this formula becomes
$\hpi_I(b)|p^{*-1}(\vN(\lexp{\hpr(\hpi_I(b))\inv\hpr(b)}\vN(p^*(b'))\cap
 \Phi_I))$.
We get the proposition using once more Lemma~\ref{vecN}(3) and then
Proposition~\ref{prtI}.
\end{proof}
\begin{corollary}\label{pi(b1..bn)} For  $b_1,\ldots b_n\in(\bS^{\pm 1})^*$ 
we have $$\hpi_I(b_1|\cdots|b_n)=\prod_{i=1}^{i=n} p^{*-1}(\vN(\vN(\lexp{t_I(\hpr(b_1\cdots b_{i-1}))}p^*(b_i))\cap\Phi_I)).$$
If all $b_i$ are positive words that have simple images in $B$ the same is true for each term 
of the product on the right-hand side.
\end{corollary}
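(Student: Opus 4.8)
I would prove the two assertions separately. For the first, I argue by induction on $n$, upgrading Proposition~\ref{pi(bb')} to the stated product formula. The cases $n\le 1$ follow from Proposition~\ref{other way}, noting that $\hpr$ of the empty word is $1$ and $t_I(1)=1$, so the $i=1$ factor equals $p^{*-1}(\vN(\vN(p^*(b_1))\cap\Phi_I))=\hpi_I(b_1)$. For the inductive step, apply Proposition~\ref{pi(bb')} with $b=b_1|\cdots|b_{n-1}$ and $b'=b_n$ to get
$$\hpi_I(b_1|\cdots|b_n)=\hpi_I(b_1|\cdots|b_{n-1})\,|\,p^{*-1}(\vN(\vN(\lexp{t_I(\hpr(b_1\cdots b_{n-1}))}p^*(b_n))\cap\Phi_I)),$$
and substitute the induction hypothesis for $\hpi_I(b_1|\cdots|b_{n-1})$. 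Writing $T_i$ for the $i$-th factor of the product, this computation also records, as an equality of words, the identity $\hpi_I(b_1\cdots b_i)=\hpi_I(b_1\cdots b_{i-1})\,|\,T_i$ for every $i$, which will be used below.

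For the second assertion, assume every $b_i$ is a positive word with simple image. Each $b_1\cdots b_i$ is then positive, so $\hpi_I(b_1\cdots b_i)\in\bI^*$ by Remark~\ref{obvious}(3); combined with $\hpi_I(b_1\cdots b_i)=\hpi_I(b_1\cdots b_{i-1})\,|\,T_i$ this shows each $T_i$ is a positive word in $\bI^*$. By Lemma~\ref{simple} it now suffices to prove that every term of $\vN(p^*(T_i))$ is a positive root. Set $w=t_I(\hpr(b_1\cdots b_{i-1}))$, which is $I$-reduced. Since $p^*(T_i)=\vN(\vN(\lexp w p^*(b_i))\cap\Phi_I)$, applying $\vN$, using $\vN\circ\vN=\Id$ (Lemma~\ref{vecN}(4)) and then Lemma~\ref{vecN}(3) gives
$$\vN(p^*(T_i))=\vN(\lexp w p^*(b_i))\cap\Phi_I=\bigl(\lexp w\vN(p^*(b_i))\bigr)\cap\Phi_I.$$
Because $b_i$ has simple image, every term of $\vN(p^*(b_i))$ is positive by Lemma~\ref{simple}, so the proof reduces to the claim that an $I$-reduced element $w$ carries no positive root to a negative root lying in $\Phi_I$.

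This last claim is the only point that steps outside the formalism already set up, and is the main (though short) obstacle. To prove it, unwind the definition: $w$ being $I$-reduced means $\ell_S(s'w)>\ell_S(w)$, equivalently $w^{-1}\alpha_{s'}\in\Phi^+$, for every $s'\in I$; since each element of $\Phi_I^+$ is a non-negative combination, not all coefficients zero, of the simple roots $\alpha_{s'}$ with $s'\in I$, it follows that $w^{-1}(\Phi_I^+)\subseteq\Phi^+$. Now if $\beta\in\Phi^+$ satisfies $\lexp w\beta\in\Phi_I$ but $\lexp w\beta\notin\Phi_I^+$, then $-\lexp w\beta\in\Phi_I^+$, hence $-\beta=w^{-1}(-\lexp w\beta)\in\Phi^+$, contradicting $\beta\in\Phi^+$. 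Therefore every term of $\vN(p^*(T_i))$ is positive, and Lemma~\ref{simple} shows each $T_i$ has simple image, which completes the proof.
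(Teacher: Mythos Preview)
Your proof is correct and follows essentially the same approach as the paper: the first assertion is obtained by iterating Proposition~\ref{pi(bb')}, and for the second you compute $\vN(p^*(T_i))=\lexp{w}\vN(p^*(b_i))\cap\Phi_I$ and invoke Lemma~\ref{simple} together with the fact that an $I$-reduced element sends no positive root to a negative root of $\Phi_I$. You supply more detail than the paper at two points the paper leaves implicit --- the positivity of each $T_i$ (via Remark~\ref{obvious}(3)) and the root-theoretic justification of the $I$-reduced claim --- but the structure of the argument is the same.
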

Note that if all $b_i$ are in $\bS^{\pm1}$ one recovers Definition~\ref{retract}.
\begin{proof}
The formula is obtained by recursively applying Proposition~\ref{pi(bb')}.
We prove the second assertion. By Lemma~\ref{simple} 
a positive word $b$ has a simple image in $B$ if
and only if all terms of $\vN(p^*(b))$ are positive. Since $\vN$ is an
involution by Lemma~\ref{vecN}(4), 
$\vN(\lexp{t_I(\hpr(b_1\cdots b_{i-1}))}p^*(b_i))\cap\Phi_I$ is
equal to $\vN(p^*(b'_i))$, where $b'_i$ is the $i$-th term of the product on the
right-hand side of the statement; 
and all terms of $\vN(\lexp{t_I(\hpr(b_1\cdots b_{i-1}))}p^*(b_i))\cap\Phi_I=
\lexp{t_I(\hpr(b_1\cdots b_{i-1}))}\vN(p^*(b_i))\cap\Phi_I$ are positive by the
simplicity assumption on the image of $b_i$, using the fact that $t_I(\hpr(b_1\cdots b_{i-1}))$
is $I$-reduced.
\end{proof}

\begin{lemma} \label{IredJ} Let $I,J\subseteq S$ and let $w\in W$ be $I$-reduced-$J$;
then $I_1=I\cap\lexp w J$ is the unique maximal subset of $I$ such that
$I_1^w\subseteq J$. Equivalently $J_1= I^w\cap J$ is the unique maximal subset
of $J$ such that $\lexp w J_1\subseteq I$.
\end{lemma}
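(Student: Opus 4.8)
The plan is to work entirely at the level of root systems, since the map $w \mapsto \lexp w{(\cdot)}$ is a bijection $\Phi \to \Phi$ restricting to a bijection $\Pi_I \to \lexp w{\Pi_I}$, and the whole statement is really about which simple roots of $W_I$ are carried by $w$ into $\Phi_J$. First I would record the key consequence of $w$ being $I$-reduced-$J$: for $s \in I$ one has $\ell_S(sw) > \ell_S(w)$, hence $\lexp w{\alpha_s} \in \Phi^+$, and symmetrically for $s\in J$ one has $\lexp {w\inv}{\alpha_s}\in\Phi^+$, i.e.\ $\lexp w{\Phi_J^-} \subseteq \Phi^-$. So $w$ sends $\Pi_I$ into $\Phi^+$ and sends $\Phi_J^+$ into... well, I need the statement that $w$ being reduced-$J$ means $\lexp{w\inv}{}$ sends $\Pi_J$ into $\Phi^+$; equivalently $\lexp w{}$ sends $\Phi^+ \setminus \Phi_J^+$... let me instead use the clean fact: $w$ is reduced-$J$ iff $\lexp w{\Phi_J^+}\subseteq\Phi^+$ iff $\lexp w{\alpha}\in\Phi^+$ for all $\alpha\in\Pi_J$.

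Next I would prove the substantive claim: $I_1 := \{s\in I : \lexp w{\alpha_s}\in\Phi_J\}$ satisfies $\lexp w{\Pi_{I_1}} = \Pi_J \cap \lexp w{\Pi_I}$, equivalently $I_1^w \subseteq J$, and $I_1$ is a subset of $S$ (not merely a set of roots). For this, take $s\in I_1$. Then $\lexp w{\alpha_s}\in\Phi_J$; since $\lexp w{\alpha_s}\in\Phi^+$ (from $I$-reduced) it lies in $\Phi_J^+$. I claim it is in fact in $\Pi_J$. Suppose not; write it as a sum of at least two elements of $\Pi_J$ with positive coefficients. Pulling back by $w\inv$: $\alpha_s = \lexp{w\inv}{(\lexp w{\alpha_s})}$ is a positive combination of the roots $\lexp{w\inv}{\beta}$ for $\beta\in\Pi_J$; but each such root is positive because $w$ is reduced-$J$ (so $w\inv$ is $J$-reduced, so $\lexp{w\inv}{\Pi_J}\subseteq\Phi^+$). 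Then $\alpha_s$, a simple root, is a nonnegative combination of at least two distinct positive roots — impossible. Hence $\lexp w{\alpha_s}\in\Pi_J$, so $s_{\lexp w{\alpha_s}} = \lexp w{s}$ is a reflection in $W_J$ lying in $J$, i.e.\ $s^w\in J$. This shows $I_1^w\subseteq J$ and that $I_1$ really is a subset of $S$ (since $\lexp w{}$ carries $\Pi_{I_1}$ into $\Pi_J$, the parabolic $W_{I_1}$ is mapped into $W_J$ and $W_{I_1}^w = W_{\lexp w{I_1}}$ with $\lexp w{I_1}\subseteq J$).

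For maximality and uniqueness: suppose $K\subseteq I$ with $K^w\subseteq J$. Then for $s\in K$, $\lexp w{\alpha_s}$ is a positive root (as $s\in I$) whose associated reflection $\lexp w{s}$ lies in $W_J$, hence $\lexp w{\alpha_s}\in\Phi_J^+$, so $s\in I_1$ by definition. Thus $K\subseteq I_1$, giving both that $I_1$ is the unique maximal such subset and that $I\cap\lexp w J = I_1$ in the precise sense that $I_1 = \{s\in I : \lexp w{s}\in W_J\}$. The symmetric statement about $J_1 = I^w\cap J$ follows by applying everything just proved to $w\inv$, which is $J$-reduced-$I$, noting $\lexp w{I_1} = J_1$ from $\lexp w{\Pi_{I_1}}\subseteq\Pi_J$ together with maximality applied in both directions. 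The main obstacle is the step showing $\lexp w{\alpha_s}\in\Pi_J$ rather than merely $\Phi_J^+$; everything rests on that, and the proof of it is exactly the "simple root cannot be a sum of $\geq 2$ positive roots" argument combined with using both the $I$-reduced and reduced-$J$ hypotheses simultaneously. Once that is in place, maximality and uniqueness are formal, and the equivalence of the two formulations is just the symmetry $w\leftrightarrow w\inv$.
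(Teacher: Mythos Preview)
Your argument is correct but proves more than the lemma requires; the paper's own proof is simply ``This is clear.'' Unwinding the definition, $I_1 = I\cap\lexp w J = \{s\in I : s = \lexp w j\text{ for some }j\in J\} = \{s\in I : s^w\in J\}$, so $I_1^w\subseteq J$ holds tautologically; and if $K\subseteq I$ satisfies $K^w\subseteq J$ then each $s\in K$ has $s^w\in J$, hence $s\in I_1$, giving maximality and uniqueness at once. The $I$-reduced-$J$ hypothesis is not used in this step --- it is present only because the lemma is invoked in that context (and is needed for the remark following it, where $w$ becomes an $I_1$-ribbon-$J_1$).

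What you actually establish is the stronger fact that for $s\in I$ with $s^w\in W_J$ (equivalently $w^{-1}(\alpha_s)\in\Phi_J$) one already has $s^w\in J$. This does require $w$ to be $I$-reduced-$J$, and your ``a simple root cannot be a sum of $\ge 2$ positive roots'' argument is a valid proof. But the lemma intersects $I$ with $\lexp w J$, not with $\lexp w W_J$, so this extra work is unnecessary here. The stronger statement is essentially Lemma~\ref{lemma 1} applied on the other side, and it is what underlies Proposition~\ref{solomon} and Corollary~\ref{cor_solomon} later in the paper.

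One notational caution: in the paper $\lexp w\alpha$ denotes $w(\alpha)$, so that $s_{\lexp w\alpha}=\lexp w s_\alpha$. The condition $\ell_S(sw)>\ell_S(w)$ is equivalent to $w^{-1}(\alpha_s)\in\Phi^+$, not to $w(\alpha_s)\in\Phi^+$; several of your displayed implications have the direction of the $w$-action reversed. The underlying mathematics survives once this is straightened out.
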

\begin{proof} This is clear.
\end{proof}

\begin{definition} Let $I\subseteq S$;
an element $w\in W$ which is $I$-reduced and such that $I^w\subseteq S$
is called an $I$-ribbon. If $J=I^w$ we also call it an
$I$-ribbon-$J$ or a ribbon-$J$, in order to specify $J$.

In this situation $w$ is also reduced-$J$ and $w\inv$ induces a bijection
\index{phi@$\varphi_w$}
$\varphi_w:J\xrightarrow\sim I$, which lifts naturally to
a bijection from $(\bJ^{\pm1})^*$ to $(\bI^{\pm1})^*$ that
we still denote by $\varphi_w$.
%
%
\end{definition}
\begin{remark}
Under the assumptions of Lemma~\ref{IredJ}, the element $w$ is a
 $I_1$-ribbon-$J_1$;
indeed we have $I_1^w=J_1$ and $w$ is $I_1$-reduced-$J_1$.
Moreover $I_1$ and $J_1$
are the maximal subsets of $I$ and $J$ respectively with that property.
\end{remark}
\begin{proposition}\label{ribbon}
For $I,J\subseteq S$, let $b,b'\in(\bS^{\pm1})^*$ be such that $\hpr(b)$ is an
$I$-ribbon-$J$.
Then  $\hpi_I(b|b')=\hpi_I(b)|\varphi_{\hpr(b)}(\hpi_J(b'))$.
\end{proposition}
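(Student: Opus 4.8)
The plan is to deduce Proposition~\ref{ribbon} from the general product formula in Proposition~\ref{pi(bb')}, specializing to the case where $w := \hpr(b)$ is an $I$-ribbon-$J$. Recall that Proposition~\ref{pi(bb')} gives
\[
\hpi_I(b|b') = \hpi_I(b) \mid p^{*-1}\bigl(\vN(\vN(\lexp{t_I(\hpr(b))}p^*(b'))\cap \Phi_I)\bigr),
\]
so the entire content of the proposition is the identification of the second term with $\varphi_w(\hpi_J(b'))$. Since $w$ is $I$-reduced, the unique element of minimal length in $W_I w$ is $w$ itself, i.e.\ $t_I(\hpr(b)) = w$. Hence the second term is $p^{*-1}(\vN(\vN(\lexp{w}p^*(b'))\cap \Phi_I))$, and I must show this equals $\varphi_w(\hpi_J(b'))$, where by Proposition~\ref{other way} the right-hand side is $\varphi_w\bigl(p^{*-1}(\vN(\vN(p^*(b'))\cap \Phi_J))\bigr)$.

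The key point is to track how $w$ acts on roots. Because $w$ is an $I$-ribbon-$J$, conjugation by $w$ sends $W_J$ isomorphically onto $W_I$ via $\varphi_w$, and correspondingly the linear action of $w$ on the ambient space sends $\Phi_J$ bijectively onto $\Phi_I$ (since $w$ is $I$-reduced and $w\inv$ is $J$-reduced, $w$ sends positive roots of $\Phi_J$ to positive roots of $\Phi_I$ and maps simple roots $\Pi_J$ to $\Pi_I$ — this is exactly what makes $\varphi_w$ well-defined on generators). Therefore, for any sequence $\underline\beta \in \Phi^*$, we have $\lexp{w}(\underline\beta \cap \Phi_J) = (\lexp{w}\underline\beta)\cap \Phi_I$: applying $w$ commutes with the operation of keeping only the terms in $\Phi_J$ (resp.\ $\Phi_I$), because $w$ matches up $\Phi_J$ with $\Phi_I$ exactly. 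First I would record this as the central lemma-step.

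Next I would combine this with the equivariance of $\vN$ from Lemma~\ref{vecN}(3), namely $\lexp{w}\vN(\underline\gamma) = \vN(\lexp{w}\underline\gamma)$. Starting from $p^{*-1}(\vN(\vN(\lexp{w}p^*(b'))\cap \Phi_I))$: by Lemma~\ref{vecN}(3), $\vN(\lexp{w}p^*(b')) = \lexp{w}\vN(p^*(b'))$, so $\vN(\lexp{w}p^*(b'))\cap \Phi_I = \lexp{w}\vN(p^*(b')) \cap \Phi_I = \lexp{w}\bigl(\vN(p^*(b'))\cap \Phi_J\bigr)$ by the matching step above. Applying $\vN$ again and using Lemma~\ref{vecN}(3) once more, this becomes $\lexp{w}\vN\bigl(\vN(p^*(b'))\cap \Phi_J\bigr)$. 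Now $\vN(\vN(p^*(b'))\cap \Phi_J)$ lies in $(\pm\Pi_J)^*$ by Proposition~\ref{other way}, and its image under $p^{*-1}$ is $\hpi_J(b')$; the sequence $\lexp{w}\bigl(\vN(\vN(p^*(b'))\cap \Phi_J)\bigr)$ then lies in $(\pm\Pi_I)^*$, and applying $p^{*-1}$ to it is, term by term, precisely the effect of $\varphi_w$ on $\hpi_J(b')$ — because $\varphi_w$ was defined to be the lift of the bijection $w\inv : I \to J$ (equivalently $w : \Pi_J \to \Pi_I$) to words, compatible with signs. This yields $p^{*-1}(\vN(\vN(\lexp{w}p^*(b'))\cap \Phi_I)) = \varphi_w(\hpi_J(b'))$, completing the proof.

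I expect the main obstacle to be the bookkeeping that makes the ``matching step'' ($\lexp{w}(\underline\beta\cap\Phi_J) = (\lexp{w}\underline\beta)\cap\Phi_I$) and the final identification of $p^{*-1}\circ\lexp{w}(-)$ with $\varphi_w\circ p^{*-1}(-)$ on sequences in $(\pm\Pi_J)^*$ fully rigorous: one must check that $w$ genuinely restricts to a bijection $\Phi_J \to \Phi_I$ taking $\Pi_J$ to $\Pi_I$ and respecting the sign decomposition $\pm\Pi_J \to \pm\Pi_I$. This follows from the definition of $I$-ribbon-$J$ together with the standard fact that an $I$-reduced element sends $\Phi^+$-roots outside $\Phi_I$ to positive roots, but since $w$ is both $I$-reduced and reduced-$J$ it sends $\Pi_J$ into $\Phi^+ \cap \lexp{w}\Phi_J = \Phi_I^+$, and minimal-length considerations force the image to be exactly $\Pi_I$. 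Once this is in hand, everything else is a direct chain of substitutions using Lemma~\ref{vecN}(3),(4) and Proposition~\ref{other way}.
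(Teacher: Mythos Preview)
Your proposal is correct and follows essentially the same approach as the paper: both start from Proposition~\ref{pi(bb')}, use that $t_I(\hpr(b))=\hpr(b)$ since $\hpr(b)$ is $I$-reduced, invoke $\lexp{w}\Phi_J=\Phi_I$ together with Lemma~\ref{vecN}(3) to pull the $w$-action outside, and then identify $p^{*-1}\circ\lexp{w}$ with $\varphi_w\circ p^{*-1}$ on $(\pm\Pi_J)^*$. Your write-up is simply more explicit about the intermediate bookkeeping than the paper's three-line version.
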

\begin{proof}
Proposition~\ref{pi(bb')} gives
$\hpi_I(b|b')=\hpi_I(b)|p^{*-1}(\vN(\vN(\lexp{\hpr(b)}p^*(b'))\cap \Phi_I))$.
Using that $\Phi_I^{\hpr(b)}=\Phi_J$ the second term of the right-hand side is equal
to
$$p^{*-1}(\lexp{\hpr(b)}\vN(\vN(p^*(b'))\cap \Phi_J))=
 \varphi_{\hpr(b)}(p^{*-1}(\vN(\vN(p^*(b'))\cap \Phi_J)))=\varphi_{\hpr(b)}(\hpi_J(b')).$$
\end{proof}
If we lift an expression $w=s_1\cdots s_k$ to a word $\hat w\in \bS^*$, the
image \index{N@$N$}  $N(w)$  in   $\BZ/2\BZ[T]$  of   $\vN(p^*(\hat  w))$,  obtained  by
associating to each root the corresponding reflection, depends only on $w$.
Indeed $N(w)=\sum_{i=1,\ldots,k}\lexp{s_1\cdots s_{i-1}}s_i \in\BZ/2\BZ[T]$
can  also be considered as  the subset of $T$  consisting of elements which
appear   an  odd  number  of   times  in  the  sequence  $\{\lexp{s_1\cdots
s_{i-1}}s_i\}_{1\le i\le k}$; it is known that $N$ depends only on $w$ (see
\cite[Chapter  IV, \S 1.4,  Lemme 2]{Bbk}). It  is also shown  in loc.\ cit.\ 
that  $N$ is injective  and  that  $|N(w)|=\ell_S(w)$.  For  $v,v'\in W$ one has
$N(vv')=N(v)+\lexp v N(v')$.

The  next proposition is \cite[Lemma 2]{solomon}. The proof we give, in the
setting  of  the  elementary  Coxeter  group  theory,  is  shorter that the
original proof.
\begin{proposition}\label{solomon} Let $I,J$ be subsets of $S$ and
$w\in W$ be $I$-reduced-$J$. Then $W_J\cap W_I^w=W_{J_1}$ where $J_1=I^w\cap J$.
\end{proposition}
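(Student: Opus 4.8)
The plan is to prove the two containments $W_{J_1}\subseteq W_J\cap W_I^w$ and $W_J\cap W_I^w\subseteq W_{J_1}$ separately, using the characterisation of length via the set $N(\cdot)\subseteq T$ and the behaviour of $N$ under products recalled just before the statement. The easy inclusion is $W_{J_1}\subseteq W_J\cap W_I^w$: by definition $J_1\subseteq J$, so $W_{J_1}\subseteq W_J$, and since $\lexp w{J_1}\subseteq I$ (Lemma~\ref{IredJ}, or rather the remark following it, applied with the roles of $I$ and $J$ exchanged) we get $W_{J_1}=W_{\lexp w{J_1}}^{w}\subseteq W_I^w$. Here I use that conjugation by $w$ maps $W_{J_1}$ isomorphically onto $W_{\lexp w{J_1}}$.

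For the reverse inclusion, take $v\in W_J\cap W_I^w$, so $v\in W_J$ and $\lexp {w\inv}v\in W_I$. The aim is to show that every reflection occurring in $N(v)$ already lies in $T_{J_1}$, because $N$ is injective and $W_{J_1}$ is exactly the set of elements all of whose reflections lie in $T_{J_1}$. First, since $v\in W_J$, all reflections in $N(v)$ lie in $T_J$, i.e.\ every $\alpha_t$ for $t\in\supp_T(v)$ lies in $\Phi_J^+$. Second, I want to exploit $\lexp{w\inv}v\in W_I$. Choose a reduced expression $v=s_{i_1}\cdots s_{i_r}$ with $s_{i_j}\in J$ and write a reduced expression $w\inv$ obtained from one for $w$; because $w$ is reduced-$J$, the product $wv$ has a reduced expression got by concatenation, so $N(wv)=N(w)+\lexp w{N(v)}$ with the union disjoint, and the reflections appearing in $\lexp w{N(v)}$ are precisely the $\lexp w t$ for $t$ a reflection of $v$. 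Now $\lexp{w\inv}v\in W_I$ means $N(\lexp{w\inv}v)\subseteq T_I$; using $N(\lexp{w\inv}v)=N(w\inv)+\lexp{w\inv}N(v)$ and again the disjointness coming from $w\inv$ being $I$-reduced (equivalently $w$ is $I$-reduced, so $\ell_S(w\inv v)=\ell_S(v)-\ell_S(w\inv)$? — here one must be a little careful, and instead argue directly on roots) one concludes that for each reflection $t$ of $v$, the root $\lexp{w\inv}\alpha_t$ is a positive root lying in $\Phi_I$. So each such $\alpha_t$ satisfies simultaneously $\alpha_t\in\Phi_J^+$ and $\lexp{w\inv}\alpha_t\in\Phi_I^+$. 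By Lemma~\ref{IredJ}, the subset of $J$ whose positive span contains all such roots is contained in the maximal subset $J_1$ of $J$ with $\lexp w{J_1}\subseteq I$; more precisely the simple reflections of $J$ that can appear as $t$ (equivalently, whose simple root can appear in the positive-root expansion of some $\alpha_t$ with $\lexp{w\inv}\alpha_t\in\Phi_I$) are exactly those in $J_1$. Hence $\supp_T(v)\subseteq T_{J_1}$, and therefore $v\in W_{J_1}$.

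The step I expect to be the main obstacle is making the last sentence precise: concluding that a reflection $t$ with $\alpha_t\in\Phi_J^+$ and $\lexp{w\inv}\alpha_t\in\Phi_I^+$ must actually lie in $W_{J_1}$ rather than merely in $W_J$. The clean way is to observe that $\lexp{w\inv}:\Phi_J\to\Phi$ is a bijection onto its image which, restricted to $\Phi_{J_1}$, lands in $\Phi_I$, and that the preimage under $\lexp{w\inv}$ of $\Phi_I\cap\lexp{w\inv}\Phi_J = \lexp{w\inv}(\Phi_I^w\cap\Phi_J)$ is governed by the maximality statement of Lemma~\ref{IredJ}: $\Phi_I^w\cap\Phi_J=\Phi_{J_1}$ on the level of root subsystems would be exactly what we want, but proving that is essentially equivalent to the Proposition. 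So instead I would argue by a minimal-counterexample / length induction on $\ell_S(w)$: if $w=1$ the statement is trivial ($J_1=I\cap J$ and $W_J\cap W_I=W_{I\cap J}$ is standard). If $\ell_S(w)>0$, pick $s\in S$ with $\ell_S(sw)<\ell_S(w)$; since $w$ is $I$-reduced, $s\notin I$, and $sw$ is still $I$-reduced, with $sw$ being $I$-reduced-$J'$ for $J'=I^{sw}\cap\text{(its right descent set)}$ — one tracks how $J_1$ changes and reduces to the shorter element. This induction, combined with the disjoint-union formula $N(vv')=N(v)+\lexp v N(v')$ and injectivity of $N$, is what turns the root-theoretic observations into the stated group equality.
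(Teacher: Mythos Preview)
Your proposal has the right instinct (use the cocycle property of $N$) but does not close, and the gaps are genuine rather than cosmetic.

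\medskip
\textbf{First gap: the computation with $N$.} The formula $N(ab)=N(a)+\lexp a N(b)$ holds for products, not conjugates. Your line $N(\lexp{w\inv}v)=N(w\inv)+\lexp{w\inv}N(v)$ is simply false, as you half-acknowledge. The clean way to extract information is the one the paper uses: write the hypothesis as $wy=xw$ with $y\in W_J$, $x\in W_I$, and compute $N$ of each side as a product. Because $w$ is $I$-reduced-$J$, both $N(wy)=N(w)\coprod\lexp w N(y)$ and $N(xw)=N(x)\coprod\lexp x N(w)$ are \emph{disjoint} unions. Since $w$ is $I$-reduced, $N(w)\cap T_I=\emptyset$, so $N(w)\subseteq\lexp x N(w)$; cardinality forces equality, and hence $\lexp w N(y)=N(x)\subseteq T_I$. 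This is the statement you were reaching for, obtained without ever writing $N$ of a conjugate.

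\medskip
\textbf{Second gap: the conclusion is circular.} Even granting $\lexp w N(y)\subseteq T_I$, your plan is to say that each $t\in N(y)$ satisfies $\alpha_t\in\Phi_J$ and $\lexp w\alpha_t\in\Phi_I$, hence $\alpha_t\in\Phi_J\cap\Phi_I^w$, hence $t\in T_{J_1}$. But the equality $\Phi_J\cap\Phi_I^w=\Phi_{J_1}$ is exactly Corollary~\ref{cor_solomon}, which in the paper is \emph{deduced from} the proposition you are proving. You noticed this yourself and proposed an induction on $\ell_S(w)$ as a workaround, but that induction is not set up: after removing a left descent $s$ of $w$ you lose the reduced-$J$ property, and your $J'$ is undefined.

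\medskip
\textbf{What actually works.} Having established $\lexp w N(y)\subseteq T_I$, the paper does induction on $\ell_S(y)$, not on $\ell_S(w)$. Write $y=sy'$ with $s\in J$ and $\ell_S(y')=\ell_S(y)-1$. Then $\lexp w s\in\lexp w N(y)\subseteq T_I\subseteq W_I$, and Lemma~\ref{lemma 1} (applied to the $I$-reduced element $w$) upgrades this to $\lexp w s\in I$, i.e.\ $s\in J_1$. Writing $ws=tw$ with $t\in I$ one checks $\lexp w N(y')\subseteq T_I$ and concludes by induction. This is the missing idea: Lemma~\ref{lemma 1} is what lets you pass from ``$\lexp w s\in W_I$'' to ``$\lexp w s\in I$'' one simple reflection at a time, breaking the circularity.
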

\begin{proof}
If $y\in W_J\cap W_I^w$ then $wy\in W_Iw$ so we can write $wy=xw$ with $x\in
W_I$. We get thus $N(w)\coprod\lexp wN(y)=N(x)\coprod\lexp xN(w)$ where
the sums are disjoint because $w$ is reduced on both sides. But since
$w$ is $I$-reduced $N(w)$ does not meet $W_I$, thus $N(w)\subseteq \lexp xN(w)$
and we must have equality since these sets have same cardinality. Thus in
turn we must have $\lexp wN(y)=N(x)$. We claim that by induction on $\ell_S(y)$
the assertion $\lexp wN(y)\subseteq W_I$ implies $y\in W_{J_1}$. Let
$y=sy'$ where $s\in J$ and $\ell_S(y')=\ell_S(y)-1$. By Lemma~\ref{lemma 1} 
the fact that $\lexp ws\in W_I$ implies $\lexp ws\in I$ thus $s\in J_1$,
and $ws=tw$ for some $t\in I$.
Using $N(sy')=s\coprod \lexp sN(y')$ we get $\lexp{ws}N(y')\subseteq W_I$,
and since $ws=tw$ this implies $\lexp wN(y')\subseteq W_I$ 
and we conclude by induction.
\end{proof}
\begin{corollary}\label{cor_solomon}
Let $I,J$ be subsets of $S$ and
$w\in W$ be $I$-reduced-$J$. Then $\Phi_J\cap \Phi_I^w=\Phi_{J_1}$ where $J_1=I^w\cap J$.
\end{corollary}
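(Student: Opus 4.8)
The plan is to deduce Corollary~\ref{cor_solomon} directly from Proposition~\ref{solomon} by passing from the group-theoretic statement about reflection subgroups to the corresponding statement about root subsystems. Recall that for any subset $K\subseteq S$ the root subsystem $\Phi_K$ is exactly the set of roots $\alpha\in\Phi$ whose reflection $s_\alpha$ lies in $W_K$; equivalently $\Phi_K=\{\alpha_t\mid t\in T_K\}\cup\{-\alpha_t\mid t\in T_K\}$, and $T_K$ is precisely the set of reflections of $W$ lying in $W_K$. So the three root subsystems appearing in the statement are determined by the three reflection subgroups $W_J\cap W_I^w$, $W_I^w$ (whose set of reflections is $T_I^w$, since conjugation sends reflections to reflections), and $W_{J_1}$.

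First I would observe that $\alpha\in\Phi_J\cap\Phi_I^w$ if and only if $s_\alpha\in W_J$ and $s_\alpha\in W_I^w$: the membership $\alpha\in\Phi_I^w=\lexp w{\Phi_I}$ is equivalent to $s_\alpha=\lexp w{s_{w^{-1}\alpha}}\in W_I^w$ because $w$ normalises nothing special here but conjugation by $w$ carries $\Phi_I$ to $\Phi_I^w$ and $s_{\lexp w\beta}=\lexp w{s_\beta}$ (the identity already used in the proof of Lemma~\ref{vecN}(2)). Hence $\Phi_J\cap\Phi_I^w$ is the set of roots whose reflection lies in $W_J\cap W_I^w$. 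By Proposition~\ref{solomon} this intersection equals $W_{J_1}$, so $\Phi_J\cap\Phi_I^w$ is the set of roots whose reflection lies in $W_{J_1}$, which is $\Phi_{J_1}$.

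The only genuinely needed input beyond Proposition~\ref{solomon} is the standard fact that a root is determined up to sign by its reflection, and that $\Phi_K$ is the full set of roots with reflection in $W_K$ — equivalently, that $W_K$ contains no reflections other than those of the form $s_\alpha$ with $\alpha\in\Phi_K$. This is part of the elementary theory of parabolic subgroups of Coxeter groups (the reflections of $W_K$ are exactly $T\cap W_K$, see \cite[Chapter V, \S 4]{Bbk}), so I would simply cite it rather than reprove it. I do not anticipate a real obstacle here: the corollary is a routine translation, and the genuine content has already been established in Proposition~\ref{solomon}. The one point to be mildly careful about is keeping signs straight — $\Phi_K$ contains both $\alpha_t$ and $-\alpha_t$ for each $t\in T_K$ — but since the map $\alpha\mapsto s_\alpha$ is two-to-one in exactly the same way on all three sets $\Phi_J\cap\Phi_I^w$, $\Phi_I^w$, $\Phi_{J_1}$, this causes no trouble.
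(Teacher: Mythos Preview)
Your proof is correct and follows exactly the same route as the paper's own proof: reduce to Proposition~\ref{solomon} via the equivalence $\alpha\in\Phi_K\iff s_\alpha\in W_K$ for a standard parabolic $W_K$. The paper's proof is simply the one-line version of what you have written out in detail.
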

\begin{proof}
We have $\alpha\in\Phi_J\cap\Phi_I^w$ if and only if $s_\alpha\in W_J\cap W_I^w=W_{J_1}$ by
Proposition~\ref{solomon}, which in turn is equivalent to $\alpha\in \Phi_{J_1}$.  
\end{proof}
\begin{proposition} \label{pi_I(rb)V2} Let $I,J\subseteq S$ and
$b\in(\bS^{\pm1})^*$ be such that $\hpr(b)$ is  $I$-reduced-$J$.
Let $J_1=I^{\hpr(b)}\cap J$;
Then for any word $b'\in(\bJ^{\pm1})^*$,  we have $\hpi_I(b|b') = 
\hpi_I(b)|\varphi_{\hpr(b)}(\hpi_{J_1}(b'))$.    
\end{proposition}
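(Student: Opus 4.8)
The plan is to reduce Proposition~\ref{pi_I(rb)V2} to the already-proved Proposition~\ref{ribbon} by restricting the ambient root system along the ribbon structure. Since $\hpr(b)$ is $I$-reduced-$J$, Lemma~\ref{IredJ} tells us that $J_1 = I^{\hpr(b)}\cap J$ is the unique maximal subset of $J$ with $\lexp{\hpr(b)}J_1\subseteq I$, and the remark following that lemma says $\hpr(b)$ is an $I_1$-ribbon-$J_1$, where $I_1 = I\cap\lexp{\hpr(b)}J$. So the word $b$ has the property that $\hpr(b)$ is an $I_1$-ribbon-$J_1$, and Proposition~\ref{ribbon} applied with the pair $(I_1,J_1)$ in place of $(I,J)$ gives
$$\hpi_{I_1}(b|b') = \hpi_{I_1}(b)|\varphi_{\hpr(b)}(\hpi_{J_1}(b')).$$
The task is then to pass from $\hpi_{I_1}$ back to $\hpi_I$ on both sides, and to check that the right-hand factor $\varphi_{\hpr(b)}(\hpi_{J_1}(b'))$ is unchanged.

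First I would handle the first factor. Since $b'\in(\bJ^{\pm1})^*$, the word $b|b'$ has $\hpi_I(b|b') = \hpi_I(b)|(\text{something in }(\bI^{\pm1})^*)$ by Proposition~\ref{pi(bb')}; more directly, one can note $\hpi_I(b) = \hpi_{I_1}(b)$ and $\hpi_J(b') = \hpi_{J_1}(b')$ need to be established. For $\hpi_I(b) = \hpi_{I_1}(b)$: this is \emph{not} automatic from Corollary~\ref{transitivity} since $I_1\subseteq I$ gives only $\hpi_{I_1}\circ\hpi_I = \hpi_{I_1}$, so I would instead argue using Proposition~\ref{other way}. The sequence $\vN(p^*(b))\cap\Phi_I$ versus $\vN(p^*(b))\cap\Phi_{I_1}$: I claim these coincide. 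Indeed, a root $\alpha$ of the form $\lexp{t_I(w_i)}p(\bs_i^{\ve_i})$ appearing as a ``good'' term satisfies $\alpha\in\Pi_I$, and I would show it in fact lies in $\Pi_{I_1}$ using that $\hpr(b)$ being $I$-reduced-$J$ forces the reflections entering $\hpi_I(b)$ to lie in $W_I$ but also to be compatible with the $J$-side; but this is exactly the content one extracts from Corollary~\ref{cor_solomon}: $\Phi_J\cap\Phi_I^{\hpr(b)} = \Phi_{J_1}$, equivalently $\Phi_{J_1}^{\hpr(b)\inv}=\Phi_J\cap\Phi_I$ reinterpreted. Honestly, the cleanest route is: $\hpi_J(b') = \hpi_{J_1}(b')$ because $b'\in(\bJ^{\pm1})^*$, and for such words $\hpr(b')\in W_J$, so by repeated use of Proposition~\ref{pi(bb')} the letters contributing to $\hpi_J(b')$ are conjugates staying inside $\Phi_J$; Corollary~\ref{transitivity} then gives $\hpi_{J_1}(b') = \hpi_{J_1}(\hpi_J(b'))$, and one observes $\hpi_J(b')\in(\bJ_1^{\pm1})^*$ so $\hpi_{J_1}$ fixes it — wait, that needs $J_1$-reducedness of the support. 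I would instead simply invoke Proposition~\ref{ribbon} directly with $(I,J)$ replaced by $(I_1,J_1)$ and then reconcile.

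The cleaner reconciliation: apply Proposition~\ref{pi(bb')} to get $\hpi_I(b|b') = \hpi_I(b)|p^{*-1}(\vN(\vN(\lexp{t_I(\hpr(b))}p^*(b'))\cap\Phi_I))$, and since $\hpr(b)$ is $I$-reduced-$J$ we have $t_I(\hpr(b)) = \hpr(b)$, so the second factor is $p^{*-1}(\vN(\vN(\lexp{\hpr(b)}p^*(b'))\cap\Phi_I))$. Now $b'\in(\bJ^{\pm1})^*$ means all terms of $p^*(b')$ lie in $\pm\Pi_J$, hence all terms of $\vN(p^*(b'))$ lie in $\Phi_J$, hence all terms of $\lexp{\hpr(b)}\vN(p^*(b'))$ lie in $\Phi_I^{\hpr(b)}$. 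Therefore $\lexp{\hpr(b)}\vN(p^*(b'))\cap\Phi_I$ keeps precisely those terms lying in $\Phi_I\cap\Phi_I^{\hpr(b)}$... no: $\vN(\lexp{\hpr(b)}p^*(b')) = \lexp{\hpr(b)}\vN(p^*(b'))$ by Lemma~\ref{vecN}(3), and intersecting with $\Phi_I$ keeps the terms in $\Phi_I\cap\Phi_J^{\hpr(b)}$; but $\Phi_J^{\hpr(b)}$ is not what we want — rather a term $\lexp{\hpr(b)}\beta$ with $\beta\in\Phi_J$ lies in $\Phi_I$ iff $\beta\in\Phi_J\cap\Phi_I^{\hpr(b)\inv}$. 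Apply Corollary~\ref{cor_solomon} with the roles flipped (using $\hpr(b)\inv$ is $J$-reduced-$I$): $\Phi_J\cap\Phi_I^{\hpr(b)\inv}$... I need $\Phi_I\cap\Phi_J^{\hpr(b)}$. Since $\hpr(b)\inv$ is $J$-reduced-$I$, Corollary~\ref{cor_solomon} gives $\Phi_I\cap\Phi_J^{\hpr(b)\inv} = \Phi_{I_1'}$ where $I_1' = J^{\hpr(b)\inv}\cap I = \lexp{\hpr(b)\inv}J\cap I = I_1$. Hmm, I want $\Phi_I\cap\Phi_J^{\hpr(b)}$, meaning $\lexp{\hpr(b)}\Phi_J\cap\Phi_I$; a term $\lexp{\hpr(b)}\beta$, $\beta\in\Phi_J$, is in $\Phi_I$ iff $\beta\in\lexp{\hpr(b)\inv}\Phi_I\cap\Phi_J = \Phi_I^{\hpr(b)}\cap\Phi_J = \Phi_{J_1}$ by Corollary~\ref{cor_solomon} directly. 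So $\lexp{\hpr(b)}\vN(p^*(b'))\cap\Phi_I = \lexp{\hpr(b)}(\vN(p^*(b'))\cap\Phi_{J_1})$. Plugging in: the second factor of $\hpi_I(b|b')$ equals $p^{*-1}(\vN(\lexp{\hpr(b)}(\vN(p^*(b'))\cap\Phi_{J_1})))$. Since $\hpr(b)$ normalises $\Phi_{J_1}$ sending it to $\Phi_{I_1}$, and using Lemma~\ref{vecN}(3), this is $p^{*-1}(\lexp{\hpr(b)}\vN(\vN(p^*(b'))\cap\Phi_{J_1})) = \varphi_{\hpr(b)}(p^{*-1}(\vN(\vN(p^*(b'))\cap\Phi_{J_1}))) = \varphi_{\hpr(b)}(\hpi_{J_1}(b'))$ by Proposition~\ref{other way} — here $\varphi_{\hpr(b)}$ on words makes sense because $\hpr(b)$ restricts to the ribbon bijection $J_1\xrightarrow{\sim}I_1$ and $\hpi_{J_1}(b')\in(\bJ_1^{\pm1})^*$. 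This gives the result. The main obstacle is purely bookkeeping: tracking which of the two Solomon-type identities ($\Phi_I^w\cap\Phi_J = \Phi_{J_1}$) to invoke and in which direction, and being careful that $\varphi_{\hpr(b)}$ as a map on words is only defined on $(\bJ_1^{\pm1})^*$, which is exactly where $\hpi_{J_1}(b')$ lives.

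\begin{proof}
Since $\hpr(b)$ is $I$-reduced-$J$, we have $t_I(\hpr(b))=\hpr(b)$, so
Proposition~\ref{pi(bb')} gives
$$\hpi_I(b|b')=\hpi_I(b)\,|\,p^{*-1}\bigl(\vN(\vN(\lexp{\hpr(b)}p^*(b'))\cap\Phi_I)\bigr).$$
Because $b'\in(\bJ^{\pm1})^*$, all terms of $p^*(b')$ lie in $\pm\Pi_J$, hence
by Lemma~\ref{vecN}(3) we have $\vN(\lexp{\hpr(b)}p^*(b'))=\lexp{\hpr(b)}\vN(p^*(b'))$
with all terms of $\vN(p^*(b'))$ lying in $\Phi_J$. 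A term $\lexp{\hpr(b)}\beta$ with
$\beta\in\Phi_J$ lies in $\Phi_I$ if and only if $\beta\in\Phi_J\cap\Phi_I^{\hpr(b)}$,
which by Corollary~\ref{cor_solomon} equals $\Phi_{J_1}$. Therefore
$$\lexp{\hpr(b)}\vN(p^*(b'))\cap\Phi_I=\lexp{\hpr(b)}\bigl(\vN(p^*(b'))\cap\Phi_{J_1}\bigr).$$
Applying $\vN$ and using Lemma~\ref{vecN}(3) together with the fact that $\hpr(b)$ sends
$\Phi_{J_1}$ to $\Phi_{I_1}$ (where $I_1=I\cap\lexp{\hpr(b)}J$, so that $\hpr(b)$ is an
$I_1$-ribbon-$J_1$), the second factor above becomes
$$p^{*-1}\bigl(\lexp{\hpr(b)}\vN(\vN(p^*(b'))\cap\Phi_{J_1})\bigr)
=\varphi_{\hpr(b)}\bigl(p^{*-1}(\vN(\vN(p^*(b'))\cap\Phi_{J_1}))\bigr)
=\varphi_{\hpr(b)}(\hpi_{J_1}(b')),$$
using Proposition~\ref{other way} in the last step; this makes sense since
$\hpi_{J_1}(b')\in(\bJ_1^{\pm1})^*$ and $\varphi_{\hpr(b)}$ is defined on $(\bJ_1^{\pm1})^*$.
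This proves the claimed formula.
\end{proof}
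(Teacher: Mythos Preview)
Your proof is correct and follows essentially the same route as the paper's: apply Proposition~\ref{pi(bb')} using $t_I(\hpr(b))=\hpr(b)$, commute $\vN$ past $\lexp{\hpr(b)}$ via Lemma~\ref{vecN}(3), invoke Corollary~\ref{cor_solomon} to replace the intersection $\Phi_J\cap\Phi_I^{\hpr(b)}$ by $\Phi_{J_1}$, and then read off $\varphi_{\hpr(b)}(\hpi_{J_1}(b'))$ via Proposition~\ref{other way}. The only cosmetic difference is the order in which you pull out the two applications of Lemma~\ref{vecN}(3) relative to the Solomon step.
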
 
\begin{proof}
Using that $t_i(\hpr(b)=\hpr(b)$ since $\hpr(b)$ is $I$-reduced, and using
Lemma~\ref{vecN}(3) twice, Proposition~\ref{pi(bb')} becomes in our case
$\hpi_I(b|b')=\hpi_I(b)|p^{*-1}(\lexp{\hpr(b)}\vN(\vN(p^*(b'))\cap
\Phi_I^{\hpr(b)}))$. Now by assumption $\vN(p^*(b'))\subseteq \Phi_J$, and by
Corollary~\ref{cor_solomon} we have $\Phi_J\cap\Phi_I^{\hpr(b)}=\Phi_{J_1}$.
The second term is thus 
$p^{*-1}(\lexp{\hpr(b)}\vN(\vN(p^*(b'))\cap\Phi_{J_1}))$,
and   we   conclude   by
Proposition~\ref{pi(bb')} since
$p^{*-1}(\lexp{\hpr(b)}\vN(\vN(p^*(b'))\cap\Phi_{J_1}))$   is  equal  to  $
\varphi_{\hpr(b)}(p^{*-1}(\vN(\vN(p^*(b'))\cap\Phi_{J_1})))$,  that  is  to
$\varphi_{\hpr(b)}(\hpi_{J_1}(b'))$.
\end{proof}

\section{Retraction on braids}\label{section2}
We say that a subgroup of $W$ is parabolic if it is conjugate to a standard  
parabolic subgroup of $W$.

As a direct application of Proposition~\ref{solomon},  we get
\begin{lemma}\label{solomon2} 
the  intersection of any two parabolic
subgroups  $D$ and $Q$ of $W$ is a parabolic subgroup of each of them.
\end{lemma}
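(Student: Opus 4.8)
The statement concerns two parabolic subgroups $D$ and $Q$ of $W$, and asks to show their intersection is parabolic in each. Since parabolicity is invariant under conjugation, I may conjugate the whole picture so that $D = W_I$ is standard, and then $Q = \lexp{w}{W_J}$ for some standard $W_J$ and some $w \in W$. Now the key observation is that within a single coset $W_I w W_J$, I can replace $w$ by a convenient representative without changing either $W_I$ or $\lexp{w}{W_J}$: on the right, right-multiplying $w$ by an element of $W_J$ leaves $\lexp{w}{W_J} = \{wxw^{-1}\mid x\in W_J\}$ unchanged, and on the left, left-multiplying $w$ by an element of $W_I$ leaves $W_I$ unchanged. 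Hence I may assume $w$ is the distinguished $I$-reduced-$J$ representative of its double coset $W_I w W_J$.

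With $w$ taken $I$-reduced-$J$, Proposition~\ref{solomon} applies directly: $W_I \cap \lexp{w}{W_J} = W_I \cap W_I^w \cap \lexp{w}{W_J}$... more precisely I want $\lexp{w}{W_J}\cap W_I$, so I rewrite $\lexp{w}{W_J}\cap W_I = \lexp{w}(W_J \cap W_I^{w})$ after conjugating by $w^{-1}$; wait — rather, note $y \in \lexp{w}{W_J}\cap W_I$ iff $\lexp{w^{-1}}{y}\in W_J$ and $y\in W_I$, i.e. $\lexp{w^{-1}}{y}\in W_J\cap \lexp{w^{-1}}{W_I} = W_J \cap W_I^{w^{-1}}$. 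Applying Proposition~\ref{solomon} with the roles arranged so that $w^{-1}$ is $J$-reduced-$I$ (equivalently, from the cited proposition itself, since $W_J \cap W_I^{w} = W_{J_1}$ with $J_1 = I^w\cap J$), we get $W_I\cap \lexp{w}{W_J} = \lexp{w}{W_{J_1}}$ where $J_1 = I^w \cap J \subseteq J$. This exhibits the intersection as a conjugate of the standard parabolic $W_{J_1}$ of $W$, hence parabolic in $W$; and since $W_{J_1}\subseteq W_J$, conjugating back, $W_I\cap\lexp{w}{W_J} = \lexp{w}{W_{J_1}}$ is parabolic in $Q = \lexp{w}{W_J}$. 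By the symmetric argument (or directly from Proposition~\ref{solomon}, which already gives $W_J\cap W_I^w = W_{J_1}$, a standard parabolic of $W_I$), the intersection is also parabolic in $D$.

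Concretely: after reducing to $D = W_I$, $Q = \lexp{w}{W_J}$ with $w$ $I$-reduced-$J$, set $J_1 = I^w \cap J$ and $I_1 = I \cap \lexp{w}{J}$ (so $I_1 = \lexp{w}{J_1}$ by Lemma~\ref{IredJ}). Then Proposition~\ref{solomon} gives $D\cap Q = W_{I_1} = \lexp{w}{W_{J_1}}$; since $W_{I_1}$ is a standard parabolic of $W_I = D$ and $W_{J_1}$ is a standard parabolic of $W_J$, the subgroup $\lexp{w}{W_{J_1}}$ is a (standard-in-the-conjugated-picture) parabolic of $Q$. Conjugating everything back by the element that made $D$ standard completes the proof.

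**Main obstacle.** There is essentially no obstacle: the content is entirely in Proposition~\ref{solomon}, already proved above, and the only real step is the bookkeeping of reducing an arbitrary pair of parabolics to the standard-versus-conjugate-of-standard situation and choosing the $I$-reduced-$J$ double-coset representative. The one point requiring a line of care is verifying that passing to the $I$-reduced-$J$ representative genuinely leaves both $D$ and $Q$ unchanged (right $W_J$-translations fix $\lexp{w}{W_J}$; left $W_I$-translations fix $W_I$ trivially), and then tracking that the resulting $W_{J_1}$, being contained in $W_J$, really is parabolic in $Q$ and not merely in $W$.
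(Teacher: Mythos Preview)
Your approach is essentially the paper's: conjugate one parabolic to be standard, write the conjugator of the other in the form $iwj$ with $w$ $I$-reduced-$J$, and apply Proposition~\ref{solomon}. One point to tighten: left-multiplying $w$ by $i\in W_I$ does \emph{not} leave $Q=\lexp{w}{W_J}$ unchanged (it becomes $\lexp{i}{Q}$), contrary to what you assert; what actually saves the reduction is that $D\cap Q=\lexp{i}\bigl(W_I\cap\lexp{w}{W_J}\bigr)$ since $i$ normalises $W_I$, and conjugation by $i$ preserves being parabolic in $D$ and carries parabolics of $\lexp{w}{W_J}$ to parabolics of $Q$. With that correction your argument is complete and coincides with the paper's.
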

\begin{proof} Up to conjugacy, we may assume that $Q$ is standard, say $Q =
W_J$.  Assume $D = W_I^v$. Write $w =  iwj$ with $i\in W_I$, $j\in W_J$ and
$w$ $I$-reduced-$J$. Then $D\cap Q = (W_J\cap W_I^w)^v$, and we conclude by
Proposition~\ref{solomon}.
\end{proof}
\begin{lemma}\label{intersection parabolics}
For any parabolic subgroups $Q,D$ of $W$ where $D$ is dihedral
the set $T\cap Q\cap D$ is either empty, or is reduced to one
reflection or is the set $T\cap D$ of all reflections of $D$.
\end{lemma}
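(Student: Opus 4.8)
The statement concerns $T \cap Q \cap D$ where $D$ is a dihedral parabolic subgroup. The first reduction is to apply Lemma~\ref{solomon2}: since $D$ and $Q$ are parabolic, $Q \cap D$ is a parabolic subgroup of $D$. Because $D$ is dihedral — that is, conjugate to some $W_{\{s,t\}}$ with $m_{s,t}$ possibly infinite — its parabolic subgroups (up to conjugacy in $D$) are exhausted by: the trivial group $\{1\}$, the rank-one parabolics (each generated by a single reflection), and all of $D$ itself. So I would argue that $Q \cap D$ is one of these three types, and then read off what $T$ contributes in each case.

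First I would handle the trivial case: if $Q \cap D = \{1\}$ then $T \cap Q \cap D = \emptyset$, the first alternative. Next, if $Q \cap D = D$, then $T \cap Q \cap D = T \cap D$, the set of all reflections of $D$ — this is the third alternative (one should note that $T \cap D$ is exactly the reflection set of $D$, since $D$ being parabolic, $D = W_K^v$ for a rank-$2$ subset $K$, and the reflections of $W$ lying in $W_K^v$ are precisely the $v$-conjugates of $T_K$, by the standard fact that $T \cap W_K = T_K$). The remaining case is when $Q \cap D$ is a proper nontrivial parabolic of $D$, hence conjugate in $D$ to a rank-one parabolic $\langle r \rangle$ for a reflection $r$. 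Here I need that $Q \cap D$ itself — not just a $D$-conjugate of it — is generated by a single reflection; but since $Q \cap D$ is already conjugate in $D$ to $\langle r\rangle$, it equals $\langle r' \rangle$ for the corresponding conjugate reflection $r' \in D$, and then $T \cap Q \cap D = \{r'\}$, a single reflection, giving the second alternative.

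The main obstacle, such as it is, is being careful about the claim that every parabolic subgroup of a dihedral Coxeter group $D \cong I_2(m)$ (allowing $m = \infty$) is, up to conjugacy in $D$, either $\{1\}$, a rank-one reflection subgroup, or $D$ itself. For standard parabolics of $W_{\{s,t\}}$ this is immediate from the definition ($\emptyset$, $\{s\}$, $\{t\}$, or $\{s,t\}$), and $W_{\{s\}}$ and $W_{\{t\}}$ are conjugate in $D$ exactly when $m_{s,t}$ is odd — but in any event both are rank-one reflection subgroups, which is all I need. Since by definition a parabolic subgroup of $D$ is a $D$-conjugate of a standard one, the classification follows. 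The only mild subtlety is that Lemma~\ref{solomon2} gives $Q \cap D$ as a parabolic subgroup of $D$ in the sense used there, so I should make sure that notion agrees with "$D$-conjugate of a standard parabolic of $D$"; inspecting the proof of Lemma~\ref{solomon2}, it produces $(W_J \cap W_I^w)^v = W_{J_1}^v$ intersected appropriately, which is visibly of that form. With that in hand the three cases above are exhaustive and the proof is complete.
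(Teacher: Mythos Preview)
Your proof is correct and follows essentially the same approach as the paper: invoke Lemma~\ref{solomon2} to see that $Q\cap D$ is a parabolic subgroup of the dihedral group $D$, then classify the parabolic subgroups of a dihedral group as trivial, rank one, or all of $D$, yielding the three alternatives. The paper's version is terser, but the argument is the same.
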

\begin{proof}
By Lemma~\ref{solomon2}  the  intersection of the two parabolic
subgroups  $D$ and $Q$ of $W$ is a parabolic subgroup of each of them. The
possible  parabolic subgroups of  $D$ are $\{1\}$,  a subgroup generated by
one reflection, or $D$ itself. This gives the three cases of the statement.
\end{proof}
\begin{lemma} \label{braid relation} Let $b,b'\in(\bS^{\pm1})^*$ and assume
one of the following for two words $r$ and $r'$ in $(\bS^{\pm1})^*$:
\begin{enumerate}
\item $r,r'$ are the two members of a braid relation,
\item $r=\bs|\bs\inv$ or
$\bs\inv|\bs$ and $r'$ is the empty word,
\end{enumerate}
then the two words 
$\hpi_I(b|r|b')$ and $\hpi_I(b|r'|b')$ are either equal or differ by a 
relation  in $(\bI^{\pm1})^*$ of the same type as the relation $r\equiv r'$.
\end{lemma}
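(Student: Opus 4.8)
The plan is to reduce everything to the word-level formula for $\hpi_I$ of a concatenation, namely Corollary~\ref{pi(b1..bn)}, and to exploit that the only way $r$ and $r'$ interact with the surrounding words is through the common value $w = \hpr(b)$ on the left and, separately, through what $\hpr(b|r) = \hpr(b|r')$ does on the right. First I would observe that since $r$ and $r'$ are the two members of a braid relation, or an inverse pair against the empty word, we have $\hpr(b|r) = \hpr(b|r')$ in $W$; call this common element $w'$, and set $w = \hpr(b)$. By Corollary~\ref{pi(b1..bn)} applied to the decomposition $(b,r,b')$ (resp. $(b,r',b')$), we get
\[
\hpi_I(b|r|b') = \hpi_I(b) \mid p^{*-1}\bigl(\vN(\vN(\lexp{t_I(w)}p^*(r))\cap\Phi_I)\bigr) \mid p^{*-1}\bigl(\vN(\vN(\lexp{t_I(w')}p^*(b'))\cap\Phi_I)\bigr),
\]
and the analogous formula for $r'$, in which the first and third factors are \emph{literally identical} to those for $r$ (the third because $\hpr(b|r) = \hpr(b|r') = w'$). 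So the entire content of the lemma is a statement about the middle factor alone: I must compare
\[
\hpi_I(b|r) \text{ and } \hpi_I(b|r'),
\]
or more precisely the two words $u := p^{*-1}(\vN(\vN(\lexp{t_I(w)}p^*(r))\cap\Phi_I))$ and $u' := p^{*-1}(\vN(\vN(\lexp{t_I(w)}p^*(r'))\cap\Phi_I))$ sitting between the fixed prefix $\hpi_I(b)$ and the fixed suffix, and show that $u$ and $u'$ are either equal, or are related by a relation in $(\bI^{\pm1})^*$ of the same type as $r \equiv r'$.

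For case (2) this is immediate: if $r = \bs|\bs\inv$ then $p^*(r) = \alpha_s|{-\alpha_s}$, and $\vN$ turns the second entry into $\lexp{s_{\alpha_s}}(-\alpha_s) = \alpha_s$, so $\lexp{t_I(w)}p^*(r)$ produces after $\vN$ a two-term sequence $\beta|\beta$ with $\beta = \lexp{t_I(w)}\alpha_s$; intersecting with $\Phi_I$ keeps either both or neither, and applying $\vN$ again gives either the empty sequence or $\lexp{t_I(w)}\alpha_s | \lexp{t_I(w)s_{\alpha_s}}(\cdot)$ which unwinds to a pair $\gamma|{-\gamma}$ of opposite roots; hence $u$ is either empty or of the form $\bt|\bt\inv$ (or $\bt\inv|\bt$) with $\bt\in\bI^{\pm1}$ — exactly a relation of type (2). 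The main work is case (1). Here I would argue that the set $\Phi_r := \vN(\lexp{t_I(w)}p^*(r))$ (equivalently the multiset of reflections it determines, via the $N$-map discussion preceding Proposition~\ref{solomon}) depends only on $\product(\lexp{t_I(w)}p^*(r))$ together with which reflections are crossed — and since $r$ and $r'$ are the two sides of an $m_{s,t}$-braid relation, the underlying \emph{set} of reflections $\{s_{\alpha_1},\dots,s_{\alpha_m}\}$ crossed is the same for $r$ and for $r'$ (it is the set of reflections of the rank-$2$ parabolic generated by the images of $\bs,\bt$, conjugated by $t_I(w)$); only their \emph{order} differs, and in fact reverses, as one passes from $r$ to $r'$. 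Now apply Lemma~\ref{intersection parabolics}: this set of reflections, call it $T_D$ with $D$ the corresponding dihedral parabolic, meets $W_I$ (equivalently $\Phi_I$) in either the empty set, a single reflection, or all of $T_D$. In the first case $\vN(\lexp{t_I(w)}p^*(r))\cap\Phi_I$ is empty for both $r$ and $r'$, so $u = u' = $ empty word. In the single-reflection case, exactly one entry survives for each of $r$, $r'$, and it is the \emph{same} root $\gamma\in\Phi_I$ (possibly with different sign, but the sign is pinned down by requiring the surviving entry to be the actual $i$-th term of $\vN$ of the truncation, which is $\lexp{\text{(something $I$-reduced)}}(\pm\alpha)$), giving $u = u' = \bt^{\pm1}$ a single letter. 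In the last case, all $m$ entries survive; here the surviving subsequence of $\vN(\vN(\cdots))$ is, by the usual $\vN$-bookkeeping (Lemma~\ref{vecN}, parts (1),(2),(4)), precisely a lift into $(\bI^{\pm1})^*$ of a reduced word for a longest element of the dihedral group $D\cap W_I^{\,\cdot} $ read one way for $r$ and the opposite way for $r'$ — i.e. $u$ and $u'$ are the two sides of the braid relation of that dihedral group, which has the same parameter $m_{s,t}$ (or a divisor, but in the all-reflections case it is exactly $m$), so $u\equiv u'$ is a braid relation in $(\bI^{\pm1})^*$, of the same type as $r\equiv r'$.

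The hard part will be the bookkeeping in the all-reflections case: tracking signs and orders through the double application of $\vN$ and the truncation/intersection with $\Phi_I$, so as to recognize the surviving subword as genuinely the two sides of a braid relation (with the correct common parameter), rather than merely ``some'' relation among the two letters. I expect this is where one must be careful that $t_I(w)$ being $I$-reduced forces all the intermediate roots that land in $\Phi_I$ to be \emph{positive} at the right moments, so that the letters produced are honest generators $\bt\in\bI$ and their arrangement is a genuine alternating braid word; the inverse-letter case (2) folds in as the degenerate instance where the ``dihedral group'' is infinite or the root pair simply cancels. Once these three cases are laid out, the lemma follows by reassembling with the fixed prefix $\hpi_I(b)$ and fixed suffix as at the start.
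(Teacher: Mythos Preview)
Your plan is essentially the paper's proof: reduce via Corollary~\ref{pi(b1..bn)} to the middle piece, then invoke Lemma~\ref{intersection parabolics} to get the three cases (empty, one reflection, all reflections of the dihedral). The reduction and the trichotomy are handled exactly as the paper does.

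The ``hard part'' you anticipate in the all-reflections case is actually the easy part, and this is the one place your plan is less clean than the paper's. Once \emph{every} root of $\lexp w\vN(p^*(r))$ lies in $\Phi_I$, the intersection does nothing, and by Lemma~\ref{vecN}(4) (and (3)) you get
\[
\vN\bigl(\lexp w\vN(p^*(r))\cap\Phi_I\bigr)=\vN\bigl(\lexp w\vN(p^*(r))\bigr)=\lexp w\vN(\vN(p^*(r)))=\lexp w p^*(r),
\]
and likewise for $r'$. Since $w=t_I(\hpr(b))$ is $I$-reduced and $\lexp w s,\lexp w t\in I$ (Lemma~\ref{lemma 1}), the sequences $\lexp w p^*(r)$ and $\lexp w p^*(r')$ lie in $\Pi_I^*$ and are literally the two members of a braid relation in $(\bI^{\pm1})^*$. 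No sign-tracking or longest-element bookkeeping is needed.

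Two small clean-ups: in the single-reflection case your worry about ``possibly with different sign'' is unfounded, since for case~(1) both $r,r'$ are positive words, so every term of $\vN(p^*(r))$ and $\vN(p^*(r'))$ is a \emph{positive} root of the dihedral; the single survivor is the same positive root $\gamma$ in both orders, and $\vN(\gamma)=\gamma$. In case~(2) your computation $\vN(\alpha_s\,|\,{-\alpha_s})=\alpha_s\,|\,\alpha_s$ is correct; applying $\lexp w$ gives $\beta\,|\,\beta$, so either both terms survive or neither does, and if both survive then $\vN(\beta\,|\,\beta)=\beta\,|\,{-\beta}$, which after $p^{*-1}$ is a cancellation pair in $\bI^{\pm1}$.
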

\begin{proof}
Applying Corollary~\ref{pi(b1..bn)} to $b|r|b'$ and $b|r'|b'$, 
since $\hpr(r)=\hpr(r')$, it is sufficient to compare
$p^{*-1}(\vN(\vN(\lexp{t_I(\hpr(b))}p^*(r))\cap \Phi_I))$ and
$p^{*-1}(\vN(\vN(\lexp{t_I(\hpr(b))}p^*(r'))\cap \Phi_I))$.
Using Lemma~\ref{vecN}(3) it is equivalent to compare for any $I$-reduced 
element $w\in W$ the words
$p^{*-1}(\vN(\lexp w\vN(p^*(r))\cap\Phi_I))$ and 
$p^{*-1}(\vN(\lexp w\vN(p^*(r'))\cap\Phi_I))$.

In case (1)
the  sequences in $\vN(p^*(r))$  and $\vN(p^*(r'))$ are  reversed from each
other  and  consist  of  all  the  positive  roots  of  a standard dihedral
parabolic.  Hence  the  roots  in  $\vN(p^*(r))\cap\lexp{w\inv}\Phi_I$  and
$\vN(p^*(r'))\cap\lexp{w\inv}\Phi_I$  are  all  the  positive  roots in the
intersection  of that  dihedral subgroup  with $\lexp{w\inv}W_I$.  By 
Lemma~\ref{intersection  parabolics} there are three cases for this set of roots:
either it is empty or it is reduced to one root or it is the set of all the
positive roots of a dihedral parabolic subroup of $\lexp{w\inv}W_I$.

In  the  first  two  cases  we  get  clearly  the equality of the sequences
$\vN(\lexp w\vN(p^*(r))\cap\Phi_I)$ and $\vN(\lexp
w\vN(p^*(r'))\cap\Phi_I)$.   In   the   third   case   we  have  $\vN(\lexp
w\vN(p^*(r))\cap\Phi_I)=\vN(\lexp   w\vN(p^*(r)))=\lexp   w   p^*(r)$   and
$\vN(\lexp    w\vN(p^*(r'))\cap\Phi_I)=\vN(\lexp   w\vN(p^*(r')))=\lexp   w
p^*(r')$  and  these  two  sequences  are  reversed from each other and are
sequences  in $\Pi_I^*$. Thus applying $p^{*-1}$ gives the two members of a
braid relation in $B_I$, whence the result.

In case (2) we have $\lexp w\vN(p^*(r))=\alpha|-\alpha$ for some root
$\alpha$. If $\alpha\notin \Phi_I$, then $\lexp w\vN(p^*(r))\cap\Phi_I)$
is the empty sequence and 
$\hpi_I(b|r|b')$ and $\hpi_I(b|r'|b')$ are equal. If $\alpha\in \Phi_I$ then 
the two words
$p^{*-1}(\vN(\lexp w\vN(p^*(r))\cap\Phi_I))$ and 
$p^{*-1}(\vN(\lexp w\vN(p^*(r'))\cap\Phi_I))$
differ by a relation of type $\bs|\bs\inv\equiv \emptyset$ or
$\bs\inv|\bs\equiv\emptyset$ with $\bs\in\bI$.
\end{proof}
\begin{proposition}\label{bien defini} For $b\in(\bS^{\pm1})^*$ of
image $\bb$ in $B$, the image in $B$ of $\hpi_I(b)$ depends only on $\bb$.
 We call it \index{piI@$\bpi_I$} $\bpi_I(\bb)$.
\end{proposition}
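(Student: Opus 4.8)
The plan is to show that if two words $b,b'\in(\bS^{\pm1})^*$ have the same image $\bb$ in $B$, then $\hpi_I(b)$ and $\hpi_I(b')$ have the same image in $B_I$; the proposed element $\bpi_I(\bb)$ is then defined to be this common image. The key observation is that $b$ and $b'$ represent the same element of $B$ if and only if one can be transformed into the other by a finite sequence of elementary moves: insertion/deletion of a subword of the form $\bs|\bs\inv$ or $\bs\inv|\bs$, and replacement of a subword $r$ by $r'$ where $r\equiv r'$ is a braid relation. So it suffices to check invariance of the image of $\hpi_I$ under a single such move.

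A single elementary move replaces a word $b|r|b'$ by $b|r'|b'$ where $(r,r')$ is one of the two types in Lemma~\ref{braid relation}. By that lemma, the words $\hpi_I(b|r|b')$ and $\hpi_I(b|r'|b')$ are either equal, or differ by a relation in $(\bI^{\pm1})^*$ which is itself of type (1) (a braid relation in $B_I$) or of type (2) (insertion/deletion of $\bs|\bs\inv$ with $\bs\in\bI$). In either case, by the presentation $(**)$ of the Artin group $B_I$, the two words have the same image in $B_I$, hence in $B$. Note here that all braid relations appearing in $(\bI^{\pm1})^*$ correspond to pairs $s,t\in I$ with $m_{s,t}\ne\infty$, which are genuine relations of $B_I$; and the free-reduction moves are trivially valid in any group. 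This gives invariance under one move, and hence, by induction on the length of a sequence of moves connecting $b$ to $b'$, invariance of the image under arbitrary such transformations.

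Therefore the image of $\hpi_I(b)$ in $B$ depends only on $\bb=\bpr$-preimage data, i.e.\ only on $\bb$, and we may legitimately set $\bpi_I(\bb)$ to be that image; since $\hpi_I(b)\in(\bI^{\pm1})^*$ by construction, $\bpi_I(\bb)$ lies in $B_I$. The only real content is Lemma~\ref{braid relation}, which is already proved; the remaining work is the standard reduction of ``same image in $B$'' to ``connected by elementary moves,'' which is immediate from the group presentation $(**)$ (a word in a presented group is trivial iff it reduces to the empty word via relators and free cancellation, and two words are equal iff their ``difference'' is trivial). So I do not anticipate a genuine obstacle here; the one point to be careful about is that the moves of type (2) — free insertion/deletion — must be allowed in addition to the defining braid relations, precisely because $B$ is a group and not just a monoid, and Lemma~\ref{braid relation} has been set up to cover exactly this case.

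\begin{proof}
Two words $b,b'\in(\bS^{\pm1})^*$ have the same image in $B$ if and only if
one can pass from $b$ to $b'$ by a finite sequence of transformations, each of
which replaces a factor $r$ of the current word by a factor $r'$, where
$r\equiv r'$ is one of the two types of relations listed in
Lemma~\ref{braid relation}: either a braid relation $(*\!*)$, or a move
$\bs|\bs\inv\equiv\emptyset$ or $\bs\inv|\bs\equiv\emptyset$. This is the
standard description of equality in the group presented by $(*\!*)$.

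It is therefore enough to show that a single such transformation does not
change the image of $\hpi_I$ in $B$. Writing the word before the
transformation as $b|r|b'$ and after it as $b|r'|b'$, Lemma~\ref{braid relation}
tells us that $\hpi_I(b|r|b')$ and $\hpi_I(b|r'|b')$ are either equal as words,
or differ by a relation in $(\bI^{\pm1})^*$ of type (1) or (2). In the first
case the images in $B_I\subseteq B$ obviously coincide. In the second case, a
type (1) relation in $(\bI^{\pm1})^*$ is a braid relation between two
generators $\bs,\bt$ with $s,t\in I$ and $m_{s,t}\neq\infty$, hence a defining
relation of $B_I$; and a type (2) relation holds in any group. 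In all cases the
images in $B$ agree.

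By induction on the length of a chain of transformations connecting two words
with the same image in $B$, the image of $\hpi_I$ in $B$ is the same for all of
them. Hence for $\bb\in B$ the image in $B$ of $\hpi_I(b)$, for any
$b\in(\bS^{\pm1})^*$ representing $\bb$, depends only on $\bb$; we denote it by
$\bpi_I(\bb)$. Since $\hpi_I(b)\in(\bI^{\pm1})^*$, we have $\bpi_I(\bb)\in B_I$.
\end{proof}
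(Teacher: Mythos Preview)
Your proof is correct and follows exactly the same approach as the paper, which simply states that the result is an immediate consequence of Lemma~\ref{braid relation}. You have merely made explicit the standard reduction to elementary moves and the induction on the length of a chain of such moves, which the paper leaves implicit.
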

\begin{proof}
This is an immediate consequence of Lemma~\ref{braid
relation}.
\end{proof}

The following proposition was first proved by Van der Lek.  In \cite{VanderLek}, $S$ is assumed to be finite but the
result for infinite $S$ is an almost immediate consequence. 
\begin{proposition}\label{vdl}For  $I\subseteq S$, the natural  morphism from the braid group
of $W_I$ to $B_I$ is an isomorphism.
\end{proposition}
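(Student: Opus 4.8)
The plan is to produce an explicit inverse morphism using the retraction $\bpi_I$ on braids, which has just been constructed in Proposition~\ref{bien defini}. Write $B(W_I)$ for the Artin group attached to the Coxeter system $(W_I, I)$, with standard generators $\{\tilde s\}_{s\in I}$. The defining relations of $B(W_I)$ in $(**)$ involve only pairs $s,t\in I$ with $m_{s,t}\ne\infty$, and each such relation is literally one of the relations of $B$; hence sending $\tilde s\mapsto \bs\in\bI$ is well-defined and gives the ``natural morphism'' $\iota\colon B(W_I)\to B_I$ of the statement. It is surjective by construction since its image contains $\bI$, which generates $B_I$. So the whole content is injectivity of $\iota$.

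For injectivity I would construct a retraction $\rho\colon B\to B(W_I)$ with $\rho\circ\iota=\Id$. First note that $\bpi_I$ restricted to $B_I$ is the identity: any $\bb\in B_I$ is represented by a word $b\in(\bI^{\pm1})^*$, and $\hpi_I(b)=b$ by Remark~\ref{obvious}(1), so $\bpi_I(\bb)=\bb$. This already shows $\bpi_I\colon B\to B_I$ is a (set-theoretic) retraction onto $B_I$, but not yet that $B_I$ ``is'' the abstract Artin group. The key point is that the word-level map $\hpi_I\colon(\bS^{\pm1})^*\to(\bI^{\pm1})^*$ descends not only to a map $B\to B$ (Proposition~\ref{bien defini}) but, more precisely, to a map $B\to B(W_I)$: by Lemma~\ref{braid relation}, if two words in $(\bS^{\pm1})^*$ differ by one defining relation of $B$ (a braid relation, or a free cancellation $\bs|\bs\inv\equiv\emptyset$), then their $\hpi_I$-images differ by a relation of the same type among the generators $\bI$ — and such a relation is exactly a defining relation of $B(W_I)$ (for the braid-relation case one checks that $\bs|\bs\inv\equiv\emptyset$-type moves and $I$-braid relations generate the kernel of the free group on $\bI^{\pm1}$ onto $B(W_I)$). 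Therefore the composite $(\bS^{\pm1})^*\xrightarrow{\hpi_I}(\bI^{\pm1})^*\to B(W_I)$ factors through $B$, giving a group homomorphism $\rho\colon B\to B(W_I)$ with $\rho|_{(\bI^{\pm1})^*}$ equal to the canonical quotient $(\bI^{\pm1})^*\to B(W_I)$.

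It then remains to verify $\rho\circ\iota=\Id_{B(W_I)}$. For a generator $\tilde s$ with $s\in I$ we have $\iota(\tilde s)=\bs$, and $\rho(\bs)$ is the class in $B(W_I)$ of $\hpi_I(\bs)=\bs$, i.e. $\tilde s$; since the $\tilde s$ generate $B(W_I)$ and $\rho,\iota$ are homomorphisms, $\rho\circ\iota=\Id$. Hence $\iota$ is injective, and being also surjective it is an isomorphism. The infinite-$S$ remark in the statement is handled by the standard direct-limit argument: $W$, $B$, $W_I$, $B_I$ and all the relations involved only ever mention finitely many elements of $S$ at a time, so the finite case of Van der Lek's theorem implies the general case. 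I expect the only genuine subtlety to be the bookkeeping in the previous paragraph — namely confirming that the ``relation of the same type'' produced by Lemma~\ref{braid relation} is among the defining relators of $B(W_I)$ and that these suffice to present $B(W_I)$ as a quotient of the free group on $\bI^{\pm1}$; everything else is formal.
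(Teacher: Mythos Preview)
Your approach is essentially the same as the paper's: both use Lemma~\ref{braid relation} to show that $\hpi_I$ carries defining relations of $B$ to defining relations of $B(W_I)$, hence induces a well-defined map into $B(W_I)$, and then use Remark~\ref{obvious}(1) to conclude injectivity of $\iota$. The paper phrases this as ``apply $\hpi_I$ to a chain of elementary equivalences $b\equiv\ldots\equiv\emptyset$ in $(\bS^{\pm1})^*$ and observe the endpoints are fixed''; you phrase it as ``$\rho\circ\iota=\Id$''. Same idea.

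One slip to correct: your map $\rho\colon B\to B(W_I)$ is \emph{not} a group homomorphism. Indeed $\iota\circ\rho=\bpi_I$, and the paper emphasises that $\bpi_I$ is not multiplicative (cf.\ Proposition~\ref{pi(bb')}: $\hpi_I(b|b')\ne\hpi_I(b)|\hpi_I(b')$ in general). So you cannot verify $\rho\circ\iota=\Id$ by checking it on generators and invoking the homomorphism property. The fix is immediate and is exactly what the paper does implicitly: check it on \emph{all} elements directly. Any $\bb\in B(W_I)$ is represented by some word $b\in(\bI^{\pm1})^*$; then $\iota(\bb)$ is represented by the same word, and $\rho(\iota(\bb))$ is the class of $\hpi_I(b)=b$ by Remark~\ref{obvious}(1), i.e.\ $\bb$ again.

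A minor point: your closing direct-limit remark for infinite $S$ is unnecessary here. The paper's comment about Van der Lek assuming $S$ finite refers to his original topological proof; the present argument is purely combinatorial and uses finiteness of $S$ nowhere.
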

\begin{proof}The morphism is clearly surjective.
We show the injectivity. Let $b$ be a word in $(\bI^{\pm1})^*$. We have to show that if 
$b\equiv\emptyset$ in $B$, then the same equivalence occurs using only
relations in $(\bI^{\pm1})^*$. Let $b=b_1\equiv b_2\equiv\ldots\equiv\emptyset$
be a chain of equivalences in $B$ by relations of the type considered in
Lemma~\ref{braid relation}. Apply $\hpi_I$ to this chain. The first and
the last term are unchanged, and by Lemma~\ref{braid relation} each other
equivalence is mapped to an equivalence in $(\bI^{\pm1})^*$. This proves the
result.
\end{proof}
Statements of Section~\ref{section1} translate to the following (sometimes
weaker) properties of $\bpi_I$:
\begin{proposition}\label{fourre-tout} For $\bb\in B$ we have the following:
\begin{enumerate}
\item\label{ft6}
$\bpi_I(\bb)=\bb$ if and only if $\bb\in B_I$.
\item\label{ft7} $\bpi_I$ sends $B^+$  to $B_I^+$.
\item\label{ft8} If $\bb\in B^+$ left-divides $\bb'\in B^+$
then $\bpi_I(\bb)$ left-divides $\bpi_I(\bb')$.
\item\label{ft9} $\bpi_I$ commutes with the automorphism $\inverse: B\to B$ \index{inv@$\inverse$}
 induced by $\bs\mapsto\bs\inv$ for $\bs\in\bS$.
\item\label{ft5} If $\bb\in\bW$ and $\pr(\bb)$ is $I$-reduced, then $\bpi_I(\bb)=1$.
\item\label{ft1} $\pr(\bb)=\pr(\bpi_I(\bb))t_I(\pr(\bb))$.
\item\label{ft2} Assume that $\pr(\bb)\in W_I$; then for $\bb'$ in $B$  we have
$\bpi_I(\bb\bb')=\bpi_I(\bb)\bpi_I(\bb')$. In particular the restriction of
$\bpi_I$ to $\pr\inv(W_I)$ is a group homomorphism.
\item\label{ft3} Let $\bb'\in B$ and let  $I,J\subseteq S$ be such that $\pr(\bb)$ is an
$I$-ribbon-$J$. Then $\bpi_I(\bb\bb')=
\bpi_I(\bb)\varphi_{\pr(\bb)}(\bpi_J(\bb'))$.
\item\label{ft4}  For $I,J\subseteq S$ such that $\pr(\bb)$ is $I$-reduced-$J$ and any
$\bj\in B_J$, we have $\bpi_I(\bb\bj) =
\bpi_I(\bb)\varphi_{\pr(\bb)}(\bpi_{J_1}(\bj))$ where
$J_1=I^{\pr(\bb)}\cap J$.
\item\label{ft10}  If $\bb$ is a product of $n$ elements of $\bW$, then
$\bpi_I(\bb)$ is a product of at most $n$ elements of $\bW_I$.
\end{enumerate}
\end{proposition}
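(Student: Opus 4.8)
The plan is to derive each of the ten items from its word-level counterpart in Section~\ref{section1} by passing to $B$ via Proposition~\ref{bien defini}. Concretely, given $\bb\in B$ one fixes a word $b\in(\bS^{\pm1})^*$ of image $\bb$, applies the relevant statement of Section~\ref{section1} to $b$, and takes images in $B$; Proposition~\ref{bien defini} guarantees that the image of $\hpi_I(b)$ is $\bpi_I(\bb)$, independently of the choice of $b$. A preliminary observation used throughout is that $\hpr$ factors through $\pr$, so $\hpr(b)=\pr(\bb)$ for any word $b$ representing $\bb$; hence the hypotheses ``$\pr(\bb)\in W_I$'', ``$\pr(\bb)$ is $I$-reduced'', ``$\pr(\bb)$ is an $I$-ribbon-$J$'', ``$\pr(\bb)$ is $I$-reduced-$J$'' are literally the hypotheses used at word level.

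Items (1)--(6) are then immediate translations. For (1): $\bpi_I(\bb)\in B_I$ always, since $\hpi_I(b)\in(\bI^{\pm1})^*$; conversely, if $\bb\in B_I$ one represents $\bb$ by a word in $(\bI^{\pm1})^*$ and applies Remark~\ref{obvious}(1). Item (2) uses Remark~\ref{obvious}(3) on a positive representative. Item (3): writing $\bb'=\bb\bu$ with $\bu\in B^+$, choose positive words $b,u$ for $\bb,\bu$; then $\hpi_I(b)$ is a prefix of $\hpi_I(b|u)$ by Remark~\ref{obvious}(2), and both are positive words by Remark~\ref{obvious}(3), so $\bpi_I(\bb)$ left-divides $\bpi_I(\bb')$, witnessed by a positive element. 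Item (4) is Remark~\ref{obvious}(4). Item (5): lift a reduced expression of the $I$-reduced element $\pr(\bb)$ to a word representing $\bb\in\bW$ and apply Lemma~\ref{pi_I(reduced)}. Item (6) is Proposition~\ref{prtI}, using $\hpr(\hpi_I(b))=\pr(\bpi_I(\bb))$.

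For items (7)--(9) one represents $\bb,\bb'$ by words $b,b'$ and works with the concatenation $b|b'$: item (7) is the last assertion of Proposition~\ref{pi(bb')} (the ``in particular'' clause follows since $\bb,\bb'\in\pr\inv(W_I)$ forces $\pr(\bb)\in W_I$), item (8) is Proposition~\ref{ribbon}, and item (9) is Proposition~\ref{pi_I(rb)V2}. The one point to check is that the right-hand sides are well-defined in $B$: when $w=\pr(\bb)$ is an $I$-ribbon-$J$, conjugation by $w$ is an isomorphism of the Coxeter matrix of $W_J$ onto that of $W_I$, so the word-level bijection $\varphi_w$ carries braid relations to braid relations and hence induces a homomorphism $B_J\to B_I$; this makes sense of $\varphi_{\pr(\bb)}(\bpi_J(\bb'))$, and similarly for $J_1$ in the $I$-reduced-$J$ case. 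Taking images in $B$ of the word identities then gives (8) and (9).

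Item (10) is the one requiring slightly more than bookkeeping. Writing $\bb=\bb_1\cdots\bb_n$ with $\bb_i\in\bW$, let $b_i$ be the positive word lifting a reduced expression of $\pr(\bb_i)$, so each $b_i$ has simple image. By Corollary~\ref{pi(b1..bn)}, $\hpi_I(b_1|\cdots|b_n)=b'_1|\cdots|b'_n$, and its \emph{second} assertion says each $b'_i$ is a positive word whose image in $B$ is simple, hence lies in $\bW\cap B_I$. To conclude one identifies $\bW\cap B_I=\bW_I$: a simple braid $\bw\in B_I$ has $w=\pr(\bw)\in\pr(B_I)=W_I$ and, being the unique positive $\lZ$-length-$\ell_S(w)$ lift of $w$, already lies in $B_I^+$ and is the lift of $w$ in $\bW_I$ --- equivalently, by Proposition~\ref{vdl}, the simple braids of the braid group $B_I$ of $W_I$ are exactly the elements of $\bW_I$. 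Thus $\bpi_I(\bb)=\bb'_1\cdots\bb'_n$ is a product of $n$ (possibly trivial) elements of $\bW_I$. The main obstacle, such as it is, lies precisely in invoking the simplicity half of Corollary~\ref{pi(b1..bn)} together with this identification of the simple elements of $B_I$; everything else reduces to quoting Section~\ref{section1} through Proposition~\ref{bien defini}.
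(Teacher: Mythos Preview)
Your proof is correct and follows exactly the paper's approach: each item is reduced to its word-level analogue in Section~\ref{section1} (Remark~\ref{obvious}, Lemma~\ref{pi_I(reduced)}, Propositions~\ref{prtI}, \ref{pi(bb')}, \ref{ribbon}, \ref{pi_I(rb)V2}, and Corollary~\ref{pi(b1..bn)}) via Proposition~\ref{bien defini}. The paper's proof is a one-paragraph list of citations to these same results; your version simply makes explicit the bookkeeping (e.g.\ that $\varphi_w$ descends to $B_J\to B_I$, and that $\bW\cap B_I=\bW_I$) that the paper leaves tacit.
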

\begin{proof} (\ref{ft6}),  and (\ref{ft9}) are (1) and (4) of 
Remark~\ref{obvious}. (\ref{ft7}), (\ref{ft8}) follow respectively from 
(3) and (2) of Remark~\ref{obvious}. (\ref{ft5}) is Lemma~\ref{pi_I(reduced)},
(\ref{ft1}) is Proposition~\ref{prtI}, (\ref{ft2}) is Proposition~\ref{pi(bb')}, (\ref{ft3}) is
Proposition~\ref{ribbon}, and (\ref{ft4}) is Proposition~\ref{pi_I(rb)V2}.
Finally (\ref{ft10}) follows from Corollary~\ref{pi(b1..bn)}.
\end{proof}
\begin{lemma}\label{mult2}   Let  $\bb\in  B$  be  such  that  $\pr(\bb)\in
N_W(W_I)$;  then there exists a unique $w\in W$ such that $w$  is
an $I$-ribbon-$I$ and such that
$\pr(\bb)\in W_I w$.  For any $\bb'\in B$ we have
$\bpi_I(\bb\bb')=\bpi_I(\bb)\varphi_w(\bpi_I(\bb'))$.
\end{lemma}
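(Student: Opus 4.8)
The plan is to deduce everything from parts (\ref{ft2}) and (\ref{ft3}) of Proposition~\ref{fourre-tout}, after isolating the ``ribbon part'' of $\pr(\bb)$.

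For the first assertion, I would take $w:=t_I(\pr(\bb))$, the unique $I$-reduced element of the coset $W_I\pr(\bb)$, and write $\pr(\bb)=iw$ with $i\in W_I$. Since $\pr(\bb)$ normalises $W_I$ and $i\in W_I$, we get $wW_Iw\inv=i\inv\pr(\bb)W_I\pr(\bb)\inv i=i\inv W_Ii=W_I$, so $w\in N_W(W_I)$ as well; hence $W_Iw=wW_I$, and $w$, being the shortest element of the left coset $W_I\pr(\bb)=W_Iw$, is also the shortest element of $wW_I$, i.e.\ $w$ is $I$-reduced\,-$I$. It then remains to show $I^w=I$, so that $w$ is an $I$-ribbon\,-$I$. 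For this I would invoke Proposition~\ref{solomon} with $J=I$ and with $w$: since $W_I^w=W_I$ we have $W_I=W_I\cap W_I^w=W_{I_1}$ with $I_1=I^w\cap I$, whence $I_1=I$ and $I\subseteq I^w$, equivalently $\lexp wI\subseteq I$. Applying the same argument to $w\inv$ (which is again $I$-reduced\,-$I$ and normalises $W_I$) gives $I\subseteq\lexp wI$, so $\lexp wI=I$ and therefore $I^w=I\subseteq S$. Uniqueness is immediate, since any $I$-ribbon\,-$I$ lying in $W_I\pr(\bb)$ is in particular an $I$-reduced element of that coset, hence equals $t_I(\pr(\bb))=w$. (Alternatively $I^w=I$ follows from the root system: $w$ normalises $\Phi_I$ and, being $I$-reduced\,-$I$, preserves $\Phi_I^+$, hence permutes $\Pi_I$.)

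For the formula, let $\bw\in B$ be a lift of $w$ (for instance its simple lift; any lift works) and set $\bc:=\bb\bw\inv$, so that $\pr(\bc)=iww\inv=i\in W_I$ and $\bb=\bc\bw$. Then for $\bb'\in B$ I would compute: by Proposition~\ref{fourre-tout}(\ref{ft2}), since $\pr(\bc)\in W_I$, $\bpi_I(\bb\bb')=\bpi_I(\bc\,\bw\,\bb')=\bpi_I(\bc)\bpi_I(\bw\bb')$; by Proposition~\ref{fourre-tout}(\ref{ft3}), since $\pr(\bw)=w$ is an $I$-ribbon\,-$I$, $\bpi_I(\bw\bb')=\bpi_I(\bw)\varphi_w(\bpi_I(\bb'))$; and by (\ref{ft2}) once more, $\bpi_I(\bc)\bpi_I(\bw)=\bpi_I(\bc\bw)=\bpi_I(\bb)$. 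Combining the three identities yields $\bpi_I(\bb\bb')=\bpi_I(\bb)\varphi_w(\bpi_I(\bb'))$. The coset bookkeeping and the three invocations of Proposition~\ref{fourre-tout} are routine; the only substantive point, and the step I expect to be the main obstacle, is the identity $I^w=I$, which is settled as above via Proposition~\ref{solomon} (or the root-system remark).
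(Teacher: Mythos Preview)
Your argument is correct and follows the same line as the paper's: take $w=t_I(\pr(\bb))$, lift it, split $\bb=\bc\bw$, and chain Proposition~\ref{fourre-tout}(\ref{ft2}) and (\ref{ft3}). The only noteworthy difference is in the verification that $w$ is an $I$-ribbon-$I$: the paper simply quotes an external reference (\cite[Lemme 6.1.7]{digne-michel}, implicit in \cite{Brink-Howlett}), whereas you supply a self-contained proof via Proposition~\ref{solomon} (or the root-system remark), which is a small improvement in self-containment. A minor variant in the final step: the paper takes $\bw\in\bW$, uses (\ref{ft5}) to get $\bpi_I(\bw)=1$, and then recovers $\bpi_I(\bb_1)=\bpi_I(\bb)$ by specialising the already-established formula at $\bb'=1$; your route via a second application of (\ref{ft2}) is equivalent and has the mild advantage that any lift of $w$ works.
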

\begin{proof} The unicity of $w$ comes from the fact that $w$ is the
$I-$reduced element in $W_I\pr(\bb)$.
By \cite[Lemme 6.1.7]{digne-michel} (the result is implicit in \cite{Brink-Howlett})
the $I$-reduced element $w$ in $W_I\pr(\bb)$ is an $I$-ribbon-$I$. Let $\bw\in \bW$ be
the lift of $w$ and $\bb_1=\bb\bw\inv$; then $\pr(\bb_1)\in W_I$ and, 
by items (\ref{ft2}) and (\ref{ft3}) of Proposition~\ref{fourre-tout}, we have
$\bpi_I(\bb\bb')=\bpi_I(\bb_1)\bpi_I(\bw\bb')=\bpi_I(\bb_1) \varphi_w(\bpi_I(\bb'))$.
The equality of the lemma
follows since  $\bpi_I(\bb)=\bpi_I(\bb_1)$ by the last equality applied with
$\bb'=1$.
\end{proof}
\begin{definition}\label{tailInArtinGroup} Given $I\subseteq S$,
\begin{itemize}
\item \index{tI@$\bt_I$}
For $\bb$ in $B$, we define $\bt_I(\bb)=\bpi_I(\bb)\inv\bb$.
\item
For $w\in W$, we define \index{piI@$\pi_I$} $\pi_I(w)=w t_I(w)\inv$.
\end{itemize}
\end{definition}
The following lemma shows that the definitions $t_I,\bt_i$ and $\pi_I,\bpi_I$
are compatible.
\begin{lemma} \label{Icoset}
Let $I\subseteq S$ and $\bb$ be in $B$. 
\begin{enumerate}
\item  $\bpi_I(\bt_I(\bb))=1$.
\item $\pr(\bt_I(\bb))=t_I(\pr(\bb))$. In particular, $\pr(\bt_I(\bb))$  is $I$-reduced.
\item $\pr(\bpi_I(\bb))=\pi_I(\pr(\bb))$.
\item If $\bb'$ is in $B$, then $B_I\bb = B_I\bb'\iff \bt_I(\bb) = \bt_I(\bb')$.
\end{enumerate}
\end{lemma}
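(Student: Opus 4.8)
The plan is to prove the four assertions of Lemma~\ref{Icoset} essentially in the order listed, bootstrapping each from the explicit formula $\bt_I(\bb)=\bpi_I(\bb)\inv\bb$ and from the results of Section~\ref{section1} transported to $B$ via Proposition~\ref{bien defini}.

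For (1), the natural approach is to pick a word $b\in(\bS^{\pm1})^*$ representing $\bb$ and to observe that $\hpi_I(b)$ is then a word representing $\bpi_I(\bb)$, so that $\hpi_I(b)\inv|b$ represents $\bt_I(\bb)$. It thus suffices to show $\hpi_I(\hpi_I(b)\inv|b)$ represents the trivial element. Here I would use Corollary~\ref{transitivity}, or more directly the multiplicativity formula of Corollary~\ref{pi(b1..bn)} applied to the concatenation $\hpi_I(b)\inv|b$: the first factor lies in $(\bI^{\pm1})^*$ so $\hpr$ of it is in $W_I$, hence $t_I(\hpr(\hpi_I(b)\inv))=1$ and the formula collapses to $\hpi_I(\hpi_I(b)\inv)|\hpi_I(b)=\hpi_I(b)\inv|\hpi_I(b)$, which represents $1$ in $B_I$. (The only subtlety is that $\hpi_I(\hpi_I(b)\inv)=\hpi_I(b)\inv$ by Remark~\ref{obvious}(1),(4).) Alternatively, since $\bpi_I$ restricted to $\pr\inv(W_I)$ need not apply, one can invoke Proposition~\ref{fourre-tout}(\ref{ft2}) directly once one knows $\pr(\bpi_I(\bb)\inv)\in W_I$, which is part (3) below; but to avoid circularity it is cleanest to argue at the word level as above.

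For (2), apply $\pr$ to $\bb=\bpi_I(\bb)\bt_I(\bb)$ and use Proposition~\ref{fourre-tout}(\ref{ft1}), i.e.\ $\pr(\bb)=\pr(\bpi_I(\bb))\,t_I(\pr(\bb))$; combining with $\pr(\bb)=\pr(\bpi_I(\bb))\pr(\bt_I(\bb))$ and cancelling $\pr(\bpi_I(\bb))$ in $W$ gives $\pr(\bt_I(\bb))=t_I(\pr(\bb))$, which is $I$-reduced by definition of $t_I$. Part (3) is then immediate: $\pr(\bpi_I(\bb))=\pr(\bb)\,\pr(\bt_I(\bb))\inv=\pr(\bb)\,t_I(\pr(\bb))\inv=\pi_I(\pr(\bb))$, using Definition~\ref{tailInArtinGroup} for $\pi_I$. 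For (4), the forward direction is trivial since $\bpi_I$ is well-defined on $B$ (Proposition~\ref{bien defini}): if $B_I\bb=B_I\bb'$ then $\bb'=\bg\bb$ with $\bg\in B_I$, and $\bpi_I(\bg\bb)=\bpi_I(\bg)\bpi_I(\bb)=\bg\bpi_I(\bb)$ by Proposition~\ref{fourre-tout}(\ref{ft2}) (applicable since $\pr(\bg)\in W_I$) together with Proposition~\ref{fourre-tout}(\ref{ft6}), so $\bt_I(\bb')=\bpi_I(\bb')\inv\bb'=\bpi_I(\bb)\inv\bg\inv\bg\bb=\bt_I(\bb)$. Conversely if $\bt_I(\bb)=\bt_I(\bb')$ then $\bpi_I(\bb)\inv\bb=\bpi_I(\bb')\inv\bb'$, so $\bb'=\bpi_I(\bb')\bpi_I(\bb)\inv\bb$ with $\bpi_I(\bb')\bpi_I(\bb)\inv\in B_I$, whence $B_I\bb=B_I\bb'$.

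The main obstacle is really only in part (1): one must be careful that the word-level computation of $\hpi_I(\hpi_I(b)\inv|b)$ is legitimate, i.e.\ that prepending $\hpi_I(b)\inv$ (a word on $\bI^{\pm1}$) to $b$ is handled correctly by the retraction. This is exactly what Proposition~\ref{pi(bb')} guarantees (the ``in particular'' clause, since $\hpr(\hpi_I(b)\inv)\in W_I$), so no genuine difficulty remains; the rest is bookkeeping with $\pr$, $t_I$ and the coset manipulations, all of which are formal.
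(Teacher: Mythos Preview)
Your proof is correct and follows essentially the same route as the paper for parts (2), (3), and (4). For part (1), your word-level argument is also correct, but the circularity concern that leads you to it is unfounded: you do not need part (3) to know that $\pr(\bpi_I(\bb)\inv)\in W_I$, since $\bpi_I(\bb)\in B_I$ holds by construction (the retraction outputs words on $\bI^{\pm1}$, cf.\ Proposition~\ref{fourre-tout}(\ref{ft6})), and hence $\pr(\bpi_I(\bb))\in W_I$ automatically. The paper therefore applies Proposition~\ref{fourre-tout}(\ref{ft2}) directly at the braid level to write $\bpi_I(\bb)=\bpi_I(\bpi_I(\bb)\bt_I(\bb))=\bpi_I(\bpi_I(\bb))\bpi_I(\bt_I(\bb))=\bpi_I(\bb)\bpi_I(\bt_I(\bb))$, which is precisely the braid-level incarnation of your word computation. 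Part (3) asserts the finer statement that $\pr(\bpi_I(\bb))$ equals the specific element $\pi_I(\pr(\bb))$, not merely that it lies in $W_I$.
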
 
Note that (3) can be  seen as a  part of the commutative diagram in Proposition~\ref{commutative1}.  

\begin{proof}   We   have   $\bpi_I(\bb)   =  \bpi_I(\bpi_I(\bb)\bt_I(\bb))
=\bpi_I(\bpi_I(\bb))\bpi_I(\bt_I(\bb))   =  \bpi_I(\bb)\bpi_I(\bt_I(\bb))$,
 the  second equality by Proposition~\ref{fourre-tout}(\ref{ft2}).  So $\bpi_I(\bt_I(\bb)) = 1$. The
second and third items come from
$\pr(\bpi_I(\bb))\pr(\bt_I(\bb))=\pr(\bb)=\pr(\bpi_I(\bb))t_I(\pr(\bb))$
where  the first  equality is  by definition  and the second by 
Proposition~\ref{fourre-tout}(\ref{ft1}). For (4), clearly, if $\bt_I(\bb)=\bt_I(\bb')$
then $B_I\bb =
B_I\bb'$. Conversely, assume $\bi \bb = \bi'\bb'$ with $\bi,\bi'$ in $B_I$.
Then  $\bi\bpi_I(\bb)\bt_I(\bb)=\bi'\bpi_I(\bb')\bt_I(\bb')$, thus  to prove
that $\bt_I(\bb)=\bt_I(\bb')$ it suffices to prove that
 $\bi\bpi_I(\bb)=\bi'\bpi_I(\bb')$.   But   by   Proposition~\ref{fourre-tout}(\ref{ft2})  we  have
$\bi\bpi_I(\bb)=\bpi_I(\bi\bb)$  and $\bi'\bpi_I(\bb')=\bpi_I(\bi'\bb')$ so
they are equal by assumption.
\end{proof}
Note that $\bt_I(B^+)\not\subset B^+$. For instance, in type $A_2$ with
$\bS=\{\bs,\bt\}$ and $I=\{s\}$, we have $\pi_I(\bt\bt\bs)=\bs$, thus
$\bt_I(\bt\bt\bs)=\bs\inv\bt\bt\bs$.

\begin{proposition}\label{commute pr}
For $w\in W$, $\pi_I(w)$ is the unique
element $v\in W_I$ such that $N(w)\cap T_I=N(v)$.
\end{proposition}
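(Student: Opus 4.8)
The plan is to exploit the length-additive factorisation of $w$ across the coset $W_I w$, together with the cocycle identity $N(vv')=N(v)+\lexp v{N(v')}$ recalled just before the statement.

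First I would record the elementary coset-representative fact: since $t_I(w)$ is the element of minimal length in $W_I w$, writing $w=v\,t_I(w)$ with $v\in W_I$ forces $\ell_S(w)=\ell_S(v)+\ell_S(t_I(w))$, and $v=w\,t_I(w)\inv=\pi_I(w)$ by Definition~\ref{tailInArtinGroup}. So $w=\pi_I(w)\,t_I(w)$ with $\pi_I(w)\in W_I$, with $t_I(w)$ being $I$-reduced, and with the two lengths adding up to $\ell_S(w)$.

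Next I would apply the identity $N(w)=N(\pi_I(w))+\lexp{\pi_I(w)}{N(t_I(w))}$; because $|N(\pi_I(w))|=\ell_S(\pi_I(w))$, $|N(t_I(w))|=\ell_S(t_I(w))$ and these add to $\ell_S(w)=|N(w)|$, the symmetric difference is in fact a disjoint union $N(w)=N(\pi_I(w))\sqcup\lexp{\pi_I(w)}{N(t_I(w))}$. Now intersect with $T_I$. On the one hand $N(\pi_I(w))\subseteq T_I$ since $\pi_I(w)\in W_I$. On the other hand, $t_I(w)$ being $I$-reduced means $N(t_I(w))$ contains no reflection of $W_I$ — the standard fact already invoked in the proof of Proposition~\ref{solomon} — and since $\pi_I(w)\in W_I$ normalises $W_I$, hence $T_I$, we get $\lexp{\pi_I(w)}{N(t_I(w))}\cap T_I=\emptyset$. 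Therefore $N(w)\cap T_I=N(\pi_I(w))$, which proves the existence statement with $v=\pi_I(w)$.

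Uniqueness is then immediate: $N$ is injective on $W$ by \cite[Chapter IV, \S 1.4, Lemme 2]{Bbk}, so a fortiori injective on $W_I$, whence there is at most one $v\in W_I$ with $N(v)=N(w)\cap T_I$. I do not expect a genuine obstacle here; the only points requiring a word of care are that the cocycle sum really is a disjoint union — which is exactly the length-additivity of the factorisation $w=\pi_I(w)\,t_I(w)$ — and the standard translation ``$u$ is $I$-reduced $\iff N(u)\cap T_I=\emptyset$''.
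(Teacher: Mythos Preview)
Your proof is correct and follows essentially the same route as the paper: factor $w=\pi_I(w)\,t_I(w)$ with lengths additive, apply the cocycle identity for $N$, and use that $N(t_I(w))\cap T_I=\emptyset$ together with injectivity of $N$. The only cosmetic difference is that the paper spells out the argument for $N(t_I(w))\cap T_I=\emptyset$ in terms of prefixes of a reduced expression remaining $I$-reduced, whereas you cite it as the standard fact already used (without proof) in Proposition~\ref{solomon}; and you make the disjoint-union step explicit via cardinality, which the paper leaves implicit.
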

\begin{proof}Since $N$ is injective it
is  sufficient  to  prove  that  $v$  defined  by $w=vt_I(w)$ satisfies
$N(v)=N(w)\cap  T_I$. We have  $N(w)=N(v) + \lexp  v N(t_I(w))$. We want to
show  that $\lexp  v N(t_I(w))$  does not  meet $W_I$  or equivalently that
$N(t_I(w))$ does not meet $W_I$. But if $t_I(w)=s_1\ldots s_k$ is a reduced
expression  this is  equivalent to  $s_1\ldots s_j  s_{j-1}\ldots s_1\notin
W_I$  for  all  $j$  or  equivalently  $s_1\ldots  s_j\notin  W_I s_1\ldots
s_{j-1}$, which is a consequence of $s_1\ldots s_j$ being $I$-reduced.
\end{proof}

Since $B^+$ is a locally Garside monoid, elements have a greatest common
left-divisor  (left-gcd) and if they have  a common right-multiple they have a
right-lcm.  Each element has a Garside  normal form, which for  $\bb\in B^+$ is a
sequence $(\bb_1,\ldots,\bb_n)$ with $\bb_i\in\bW$ such that
$\bb=\bb_1\cdots\bb_n$,  uniquely defined by the  property that $\bb_i$ is the
greatest left-divisor in $\bW$ of $\bb_i \bb_{i+1}$ and by the
property that no $\bb_i$ is $1$.
\begin{proposition} \label{normal form}
For $\bb\in  B^+$, let $(\bb_1,\cdots, \bb_n)$ and
$(\bi_1,\cdots,  \bi_m)$ be the Garside  normal forms of $\bb$
and $\bpi_I(\bb)$ respectively. Then, $m\leq n$ and for $1\leq i\leq m$, the element
$\bpi_I(\bb_1\cdots \bb_i)$ left divides $\bi_1\cdots\bi_i$.
\end{proposition}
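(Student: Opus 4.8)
The plan is to induct on $n$, the length of the Garside normal form of $\bb$, using the multiplicativity properties of $\bpi_I$ from Proposition~\ref{fourre-tout} together with the compatibility with left-divisibility (item~(\ref{ft8})). The base case $n=0$ (or $n=1$) is immediate. For the inductive step, write $\bb=\bb_1\bb'$ where $\bb'=\bb_2\cdots\bb_n$ has normal form $(\bb_2,\ldots,\bb_n)$, and set $\bc=\bpi_I(\bb_1)$, which lies in $\bW_I$ by Proposition~\ref{fourre-tout}(\ref{ft10}) (a product of at most one simple). The key decomposition is $\pr(\bb_1)=\pr(\bc)\,t_I(\pr(\bb_1))$ coming from Proposition~\ref{fourre-tout}(\ref{ft1}); writing $w=t_I(\pr(\bb_1))$, which is $I$-reduced, we want to control $\bpi_I(\bb_1\bb')=\bpi_I(\bc\,\bb_1'\,\bb')$ where $\bb_1'$ is a lift witnessing that the simple $\bb_1$ factors through the $I$-reduced part.

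First I would handle the multiplicative bookkeeping: by Proposition~\ref{fourre-tout}(\ref{ft2}), since $\pr(\bc)\in W_I$, we get $\bpi_I(\bb)=\bc\cdot\bpi_I(\bb_1'\bb')$, so it suffices to prove the statement for $\bb_1'\bb'$ whose $\pr$-image begins with the $I$-reduced element $w$. Here is where I would invoke Proposition~\ref{fourre-tout}(\ref{ft4}) in the form adapted to prefixes: more precisely, one needs that $\bpi_I$ of a product $\bu\bv$ with $\pr(\bu)$ $I$-reduced is obtained by first replacing $\bv$ by $\bpi_{J_1}$ applied to its relevant part and transporting via $\varphi$. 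Combined with the inductive hypothesis applied to $\bb'$ (or to a ribbon-translate of it inside $B_{J_1}$), this should give the divisibility $\bpi_I(\bb_1\cdots\bb_i)\mid \bi_1\cdots\bi_i$ for each $i$. The inequality $m\le n$ then follows from the last assertion: since $\bpi_I(\bb)=\bpi_I(\bb_1\cdots\bb_n)$ left-divides $\bi_1\cdots\bi_m$ trivially (equality), and more usefully, applying the left-divisibility at each stage shows that $\bpi_I(\bb_1\cdots\bb_i)$ is a left-divisor of the length-$i$ prefix of the normal form of $\bpi_I(\bb)$, forcing the normal form of $\bpi_I(\bb)$ to have at most $n$ terms.

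The cleanest route may actually be to prove directly, by induction on $n$, the single statement: \emph{for all $i$, $\bpi_I(\bb_1\cdots\bb_i)$ left-divides $\bi_1\cdots\bi_i$ (the prefix of length $i$ of the normal form of $\bpi_I(\bb)$, with the convention that this prefix is all of $\bpi_I(\bb)$ once $i\ge m$).} The case $i=n$ is the trivial equality $\bpi_I(\bb)=\bpi_I(\bb)$. For $i<n$, note $\bpi_I(\bb_1\cdots\bb_i)$ left-divides $\bpi_I(\bb_1\cdots\bb_{i+1})$ by Proposition~\ref{fourre-tout}(\ref{ft8}) since $\bb_1\cdots\bb_i$ left-divides $\bb_1\cdots\bb_{i+1}$ in $B^+$; so a downward induction on $i$ from $i=n$ reduces everything to showing that each $\bpi_I(\bb_1\cdots\bb_i)\in B_I^+$ is \emph{simple}, i.e.\ lies in $\bW_I$ — and here one should be careful, because that is generally false. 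So the actual content is the sharper claim that $\bpi_I(\bb_1\cdots\bb_i)$, even when not simple, has a normal form of length $\le i$ (that is Proposition~\ref{fourre-tout}(\ref{ft10})), and that its normal form is a prefix of that of $\bpi_I(\bb)$; the latter is the divisibility assertion, obtained because in $B_I^+$, if $\bx$ left-divides $\by$ and both have normal forms of lengths $\le i$ and we know $\bx$ agrees with the first $\le i$ simples of $\by$'s construction — this requires checking that the greatest simple left-divisor is respected, i.e.\ that $\bpi_I$ does not "lose" initial simples.

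The main obstacle, then, is precisely this last point: showing that the normal form of $\bpi_I(\bb_1\cdots\bb_i)$ is an \emph{initial segment} of the normal form of $\bpi_I(\bb)$, not merely that it left-divides. The natural tool is to argue at the level of greatest simple left-divisors: the first simple $\bi_1$ of $\bpi_I(\bb)$ is $\leftgcd(\bpi_I(\bb),\Delta_I)$ (when $W_I$ is spherical) or, in the locally Garside setting, the greatest element of $\bW_I$ left-dividing $\bpi_I(\bb)$. Using that $\bpi_I$ commutes with left-gcd's against simples — which should follow from the biclosed-set description of the retraction promised later in Section~\ref{section2}, or more elementarily from Proposition~\ref{fourre-tout}(\ref{ft8}) applied to $\bb_1$ itself being simple — one gets that $\bpi_I(\bb_1)$ is a left-divisor of $\bi_1\cdots\bi_{?}$, and inductively peels off the normal form from the left. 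I would expect to need Proposition~\ref{fourre-tout}(\ref{ft2}) once more to strip $\bpi_I(\bb_1)$ and recurse on $\bpi_I(\bb_2\cdots\bb_n)$ with the residual factor $\bpi_I(\bb_1)\inv\bi_1$ absorbed appropriately. Getting the induction hypothesis in exactly the right form — strong enough to recurse, weak enough to be true given that $\bpi_I$ is not a morphism — is the delicate part.
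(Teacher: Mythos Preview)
You correctly identify the two ingredients that actually matter --- Proposition~\ref{fourre-tout}(\ref{ft8}) (compatibility with left-division) and Proposition~\ref{fourre-tout}(\ref{ft10}) (a product of $n$ simples retracts to a product of at most $n$ simples) --- but you never put them together, and the various inductive schemes you sketch do not close. The missing step is a standard fact from Garside theory that you circle around without stating: \emph{if $\bx\in B_I^+$ left-divides $\by\in B_I^+$ and $\bx$ is a product of at most $i$ simples, then $\bx$ left-divides the product $\bi_1\cdots\bi_i$ of the first $i$ terms of the normal form of $\by$.} The paper cites this as \cite[III,~1.14]{DDGKM}. Once you have it, the proof is three lines: (\ref{ft10}) applied to $\bb=\bb_1\cdots\bb_n$ gives $m\le n$ directly; (\ref{ft8}) gives that $\bpi_I(\bb_1\cdots\bb_i)$ left-divides $\bpi_I(\bb)=\bi_1\cdots\bi_m$; and (\ref{ft10}) applied to $\bb_1\cdots\bb_i$ says this divisor is a product of $\le i$ simples, so the Garside lemma forces it to left-divide $\bi_1\cdots\bi_i$.

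Several of your detours are headed in the wrong direction. The first plan (splitting off $\bb_1$, using the ribbon machinery (\ref{ft4}) and recursing inside $B_{J_1}$) would require controlling how normal forms behave under $\varphi_w$ and under passing to a sub-parabolic, which is more than you need. The ``downward induction'' and the final paragraph both drift toward proving that the normal form of $\bpi_I(\bb_1\cdots\bb_i)$ is a \emph{prefix} of that of $\bpi_I(\bb)$; that is strictly stronger than the statement and in fact false in general (already visible from the examples after Proposition~\ref{Dyer}, where $\bpi_I$ does not commute with lcm's or gcd's beyond the simple case). Your derivation of $m\le n$ from the divisibility assertion is also backwards: $m\le n$ comes immediately from (\ref{ft10}), not as a consequence of the second claim.
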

\begin{proof}
 As  a particular case of Proposition~\ref{fourre-tout}(\ref{ft10})
the normal form of $\bpi_I(\bb_1\ldots \bb_n)$ has $n$ terms or less, since
the product of $n$ simple braids has not more than $n$ terms in its normal
 form. Now by Proposition~\ref{fourre-tout}(\ref{ft8}) we have that $\bpi_I(\bb_1\cdots\bb_i)$
left-divides $\bi_1\cdots\bi_m$ and since it has $i$ simple terms by
Proposition~\ref{fourre-tout}(\ref{ft10}), by
\cite[III, 1.14]{DDGKM} it divides $\bi_1\cdots\bi_i$.
\end{proof}

The  next two propositions assume $W$ finite.  In this case $B^+$ is a Garside
monoid with Garside element the lift $\Delta\in\bW$ of the longest element of
$W$.  The elements of $\bW$ are  both the left-divisors and the right-divisors
of $\Delta$. Similarly $B_I^+$ is a Garside monoid with Garside element $\Delta_I$, the lift
of the longest element of $W_I$.
\begin{proposition} \label{pi(deltab)} Assume that $W$ is finite. 
Then for any $\bb\in B$, $I\subseteq S$ and $i\in\BZ$ we have
$\bpi_I(\bb\Delta^i)=\bpi_I(\bb)\Delta_I^i$.
\end{proposition}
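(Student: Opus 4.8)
The plan is to reduce the general statement to the two special cases $\bb \Delta^i$ with $i > 0$ and $i < 0$, and in each case to exploit the fact that $\Delta$ is a Garside element, so that $\pr(\Delta)$ is the longest element $w_0$ of $W$ and $\pr(\Delta_I) = w_{0,I}$, the longest element of $W_I$. First I would observe that it suffices to treat $i = \pm 1$: indeed, once we know $\bpi_I(\bb\Delta) = \bpi_I(\bb)\Delta_I$ for all $\bb$, an immediate induction gives the case $i > 0$, and the case $i < 0$ follows by writing $\bb = (\bb\Delta^{-1})\Delta$ and reindexing, or directly by the analogous one-step identity $\bpi_I(\bb\Delta^{-1}) = \bpi_I(\bb)\Delta_I^{-1}$. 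Note also that the asserted three-term equality $\bpi_I(\bb\Delta^i) = \bpi_I(\bb)\bpi_I(\Delta^i) = \bpi_I(\bb)\Delta_I^i$ contains two claims: that $\bpi_I(\Delta^i) = \Delta_I^i$, and that $\bpi_I$ is multiplicative on the product $\bb \cdot \Delta^i$. For the first, since $\pr(\Delta^i) \in W_I$ trivially (in fact $\pr(\Delta^i)$ is $1$ or $w_0$), Proposition~\ref{fourre-tout}(\ref{ft2}) reduces $\bpi_I(\Delta^i)$ to a power of $\bpi_I(\Delta)$ or of $\bpi_I(\Delta^{-1})$, so I only need $\bpi_I(\Delta) = \Delta_I$; and that in turn should follow by applying $\hpi_I$ to a reduced word for $w_0$ and using the structure of $N(w_0) = T$.

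For the multiplicativity step, the key point is that $w_0$ normalises $W_I$: one has $w_0 W_I w_0 = W_J$ where $J = -w_0(I)$ is the image of $I$ under the longest-element involution, and $w_0$ restricted to $B_I$ is (up to the ribbon bijection $\varphi$) exactly the map intertwining $B_I$ and $B_J$. So the cleanest route is to factor $\bb = \bb' \Delta^{-k}$ with $\bb' \in B^+$ and $k \geq 0$ chosen so that we only ever need to multiply positive braids by $\Delta$; then $\bb \Delta = \bb' \Delta^{1-k}$ and everything reduces to understanding $\bpi_I(\bb' \Delta)$ for $\bb' \in B^+$. Here I would use that $\Delta$ absorbs any simple braid on the left (there exists a simple $\bc$ with $\bc \Delta = \Delta \bw$ for any prescribed simple $\bw$ on the right), together with Proposition~\ref{fourre-tout}(\ref{ft3}): since $\pr(\Delta) = w_0$ is an $I$-ribbon-$J$, we get $\bpi_I(\Delta \bb'') = \Delta_I \, \varphi_{w_0}(\bpi_J(\bb''))$, and then push the $\Delta$ past $\bb'$ from right to left, each step governed by a ribbon identity, collecting the $\varphi$'s which compose to the identity because $w_0^2 = 1$.

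The step I expect to be the main obstacle is bookkeeping the ribbon bijections and the switch between $I$ and $J = -w_0(I)$ so that they cancel correctly: each time $\Delta$ hops over a simple factor, the relevant standard parabolic alternates between $I$ and $J$, and one must check that after an even number of hops one is back to $I$ with $\varphi_{w_0} \circ \varphi_{w_0} = \mathrm{Id}$. An alternative that sidesteps most of this is to work entirely inside $\pr\inv(W_I)$: conjugation by $\Delta$ (equivalently, the automorphism $\bw \mapsto \Delta \bw \Delta^{-1}$ of $B$) preserves $\pr\inv(W_I) \cap$ (nothing fails here since $w_0 W_I w_0 = W_J$, not $W_I$ in general) — so this does not quite work unless $w_0$ centralises $W_I$. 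Therefore I will commit to the ribbon-composition argument, and the heart of the proof is the verification, via repeated application of Proposition~\ref{fourre-tout}(\ref{ft2}) and (\ref{ft3}) and the Garside identity $\bw\Delta = \Delta\bw'$ (with $\bw' $ the simple braid of $\pr(\bw')= w_0\pr(\bw)w_0$), that $\bpi_I(\bb\Delta) = \bpi_I(\bb)\Delta_I$ for $\bb \in B^+$, followed by the reduction of the general case to this one as sketched above.
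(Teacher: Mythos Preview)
Your overall plan has the right shape (reduce to $i=1$, use the ribbon/normaliser structure attached to $\Delta$), but two of the key assertions are false as stated and derail the argument. First, you write ``$\pr(\Delta^i)\in W_I$ trivially''; in fact $\pr(\Delta)=w_0$ lies in $W_I$ only when $I=S$, so you cannot invoke Proposition~\ref{fourre-tout}(\ref{ft2}) to split $\bpi_I(\Delta^i)$ as a power of $\bpi_I(\Delta)$. Second, you claim ``$\pr(\Delta)=w_0$ is an $I$-ribbon-$J$'', but an $I$-ribbon must be $I$-reduced, and $w_0$ is not $I$-reduced for any non-empty $I$; consequently the formula $\bpi_I(\Delta\bb'')=\Delta_I\,\varphi_{w_0}(\bpi_J(\bb''))$ is not a direct instance of Proposition~\ref{fourre-tout}(\ref{ft3}). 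The detour through $B^+$ and the ``step-by-step pushing of $\Delta$ across the normal form'' does not help here and risks circularity when you try to recover the negative-power case from the positive one.

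The paper's proof repairs exactly these points with a single stroke: it uses the global automorphism $\varphi=(\cdot)^{\Delta}$ to write $\bb\Delta=\Delta\,\varphi^{-1}(\bb)$ for \emph{all} $\bb\in B$ (no reduction to $B^+$ needed), then factors $\Delta=\Delta_I\bw$ where $w=\pr(\bw)$ is the $I$-reduced element in $W_I w_0$; it is this $w$, not $w_0$, that is the $I$-ribbon-$\varphi(I)$. Now Proposition~\ref{fourre-tout}(\ref{ft2}) applies to peel off $\Delta_I$, and Proposition~\ref{fourre-tout}(\ref{ft3}) applies once to $\bw$, yielding $\bpi_I(\bb\Delta)=\Delta_I\,\varphi_w(\bpi_{\varphi(I)}(\varphi^{-1}(\bb)))$. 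The remaining step is the observation that $\varphi_w\circ\bpi_{\varphi(I)}\circ\varphi^{-1}=\varphi_I\circ\bpi_I$, where $\varphi_I$ is conjugation by $\Delta_I$; then $\Delta_I\varphi_I(\bpi_I(\bb))=\bpi_I(\bb)\Delta_I$. This avoids all the bookkeeping you were worried about, handles arbitrary $\bb$ directly, and makes the $i<0$ case an immediate consequence by writing $\bpi_I(\bb)=\bpi_I((\bb\Delta^i)\Delta^{-i})$.
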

\begin{proof}
One deduces the proposition for $i<0$ from the result for $-i$ by writing 
$\bpi_I(\bb)=\bpi_I(\bb\Delta^i\Delta^{-i})=\bpi_I(\bb\Delta^i)\Delta_I^{-i}$.
The proposition for $i\in\BN$
is obtained by iterating the case where $i=1$, so we assume now $i=1$.

We  denote by $\varphi$  the automorphism of  $B$ induced by conjugating by
$\Delta$.  Define $\bw$  by $\Delta=\Delta_I\bw$;  then $w=\pr(\bw)$  is a
 $I$-ribbon-$\varphi(I)$.
We  get $\bpi_I(\bb\Delta)=\bpi_I(\Delta\varphi\inv(\bb))=
\bpi_I(\Delta_I\bw\varphi\inv(\bb))=
\Delta_I\bpi_I(\bw\varphi\inv(\bb))=
\Delta_I\varphi_w(\bpi_{\varphi(I)}(\varphi\inv(\bb)))$ where the third
equality is by Proposition~\ref{fourre-tout}(\ref{ft2}) and the last
by Proposition~\ref{fourre-tout} items (\ref{ft5}) and (\ref{ft3}).  
Now the automorphism
$\varphi$ clearly commutes with $\bpi_I$, that is for any $\bb\in B$ we have
$\varphi_w\bpi_{\varphi(I)}(\varphi\inv(\bb))=\varphi_I(\bpi_I(\bb))$  where
$\varphi_I$ is automorphism of $B_I$ induced by $\Delta_I$. The proposition
follows since $\Delta_I\varphi_I(\bpi_I(\bb))= \bpi_I(\bb)\Delta_I$.
\end{proof}
\begin{proposition} Assume that $W$ is finite. We recall that for
$\bb\in B$ we define $\sup(\bb)=\min\{j\mid \bb\inv\Delta^j\in B^+\}$ and 
$\inf(\bb)=\max\{j\mid \Delta^{-j}\bb\in B^+\}$. We
have $\sup(\bpi_I(\bb))\le\sup(\bb)$ and $\inf(\bpi_I(\bb))\ge\inf(\bb)$.
\end{proposition}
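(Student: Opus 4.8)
The plan is to deduce everything from Proposition~\ref{pi(deltab)} ($\bpi_I(\bb\Delta^i)=\bpi_I(\bb)\Delta_I^i$ for all $\bb\in B$, $i\in\BZ$) together with Proposition~\ref{fourre-tout}(\ref{ft7}), that $\bpi_I$ sends $B^+$ into $B_I^+$. First I would record the auxiliary facts I need. Writing $\tau$ for conjugation by $\Delta$ on $B$ and $\tau_I$ for conjugation by $\Delta_I$ on $B_I$, one has $\tau^{\pm1}(B^+)=B^+$ and $\tau_I^{\pm1}(B_I^+)=B_I^+$ (because $\Delta\bs\Delta\inv\in\bS$), and $\Delta^k\bx=\tau^k(\bx)\Delta^k$ for every $\bx$ and every $k\in\BZ$, and likewise in $B_I$. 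I would also note that the automorphism $\inverse$ carries $B^+$ onto $(B^+)\inv$ and $B_I^+$ onto $(B_I^+)\inv$ (reverse a positive word and invert it), so that combining Proposition~\ref{fourre-tout} items (\ref{ft9}) and (\ref{ft7}) gives $\bpi_I((B^+)\inv)\subseteq(B_I^+)\inv$.

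For the lower bound on $\inf$ I would set $k=\inf(\bb)$, so that $\bp:=\Delta^{-k}\bb\in B^+$ and $\bb=\Delta^k\bp=\tau^k(\bp)\Delta^k$ with $\tau^k(\bp)\in B^+$. By Proposition~\ref{pi(deltab)}, $\bpi_I(\bb)=\bpi_I(\tau^k(\bp))\Delta_I^k$, and $\bpi_I(\tau^k(\bp))\in B_I^+$ by Proposition~\ref{fourre-tout}(\ref{ft7}); hence $\bpi_I(\bb)\Delta_I^{-k}\in B_I^+$, and applying $\tau_I^{-k}$ (which preserves $B_I^+$) turns this into $\Delta_I^{-k}\bpi_I(\bb)\in B_I^+$, i.e.\ $\inf(\bpi_I(\bb))\ge k=\inf(\bb)$.

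For the upper bound on $\sup$ I would set $s=\sup(\bb)$, so that $\bp:=\bb\inv\Delta^s\in B^+$, whence $\bb=\Delta^s\bp\inv=\tau^s(\bp)\inv\Delta^s$ with $\tau^s(\bp)\in B^+$ (using that $\tau^s$ is a group automorphism). Proposition~\ref{pi(deltab)} gives $\bpi_I(\bb)=\bpi_I(\tau^s(\bp)\inv)\Delta_I^s$; since $\tau^s(\bp)\inv\in(B^+)\inv$, the preliminary remark gives $\bpi_I(\tau^s(\bp)\inv)\in(B_I^+)\inv$, so $\bpi_I(\bb)\Delta_I^{-s}\in(B_I^+)\inv$, equivalently $\Delta_I^s\bpi_I(\bb)\inv\in B_I^+$; applying $\tau_I^{-s}$ rewrites this as $\bpi_I(\bb)\inv\Delta_I^s\in B_I^+$, i.e.\ $\sup(\bpi_I(\bb))\le s=\sup(\bb)$.

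I expect the genuine obstacle to be the $\sup$ half. Since $\bpi_I$ is a \emph{left} retraction it interacts well with left-divisibility by $\Delta$, which makes the $\inf$ inequality almost immediate; controlling $\sup$ forces one to know in addition that $\bpi_I$ carries $(B^+)\inv$ into $(B_I^+)\inv$, and this is precisely where the compatibility of $\bpi_I$ with the inversion automorphism $\inverse$ is indispensable. What remains is pure bookkeeping: translating between the left-sided and right-sided forms of the divisibility conditions defining $\inf$ and $\sup$, done by conjugating with the appropriate power of $\Delta_I$ through $\tau_I$.
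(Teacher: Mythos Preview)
Your proof is correct and follows essentially the same approach as the paper: both arguments reduce to Proposition~\ref{pi(deltab)} together with Proposition~\ref{fourre-tout}(\ref{ft7}) and~(\ref{ft9}), using that conjugation by powers of $\Delta$ (resp.\ $\Delta_I$) preserves $B^+$ (resp.\ $B_I^+$) to switch between left- and right-divisibility conditions. The only cosmetic difference is that the paper rewrites the condition $\bb\inv\Delta^j\in B^+$ via the antiautomorphism $\rev$ (using $\rev(\bb\inv)=\inverse(\bb)$ and $\rev(\Delta)=\Delta$) to obtain $\inverse(\bb)\Delta^j\in B^+$ before applying $\bpi_I$, whereas you first express $\bb$ in the form $x\Delta^k$ and then invoke your observation $\bpi_I((B^+)\inv)\subseteq(B_I^+)\inv$; these are equivalent pieces of bookkeeping.
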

\begin{proof}
The reversed of the word $\bb\inv$ is $\inverse(\bb)$, where $\inverse$ is as
in Proposition~\ref{fourre-tout}(\ref{ft9}), and $\Delta$ is equal to its reversed, thus
the condition $\bb\inv\Delta^j\in B^+$ is equivalent
to $\Delta^j\inverse(\bb)\in B^+$, which is itself equivalent to
$\inverse(\bb)\Delta^j\in B^+$.
We have $\bpi_I( \inverse(\bb)\Delta^j)=\bpi_I(\inverse(\bb))\Delta_I^j=
\inverse(\bpi_I(\bb))\Delta_I^j\in B_I^+$,
the first equality by Proposition~\ref{fourre-tout}(\ref{ft2}) and the second
one by Remark~\ref{fourre-tout}(\ref{ft9}); 
arguing as at the beginning but in $B_I$  the last is equivalent
to $\bpi_I(\bb)\inv\Delta_I^j\in B_I^+$, which concludes.

The proof for $\inf$ is similar but simpler: we don't have to use reversing.
\end{proof}
\begin{lemma}\label{piw=1}
Any $\bb\in B$ can be written $\bp\bi\bw$ where $\bp$ is pure,
$\bi\in\bW_I$ (the lift of $W_I$) and $\pr(\bw)$ is $I$-reduced; in such a
decomposition we have $\bpi_I(\bb)=\bpi_I(\bp)\bi$.
\end{lemma}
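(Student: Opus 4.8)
The plan is to first establish that a decomposition of the stated form exists, and then to compute $\bpi_I$ using the multiplicativity properties already available.

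\emph{Existence of the decomposition.} First I would take any word $b\in(\bS^{\pm1})^*$ representing $\bb$ and look at its image $w=\pr(\bb)\in W$. Write $w = i_0\, v$ where $i_0\in W_I$ and $v = t_I(w)$ is the $I$-reduced representative of the coset $W_I w$. Lift $v$ to a simple braid $\bw\in\bW$; then $\pr(\bw)=v$ is $I$-reduced by construction. Let $\bi$ be the lift of $i_0$ to $\bW_I$ and set $\bp = \bb\,\bw\inv\,\bi\inv$. Since $\pr(\bp) = w\, v\inv\, i_0\inv = i_0\, v\, v\inv\, i_0\inv = 1$, the element $\bp$ lies in $P$, so $\bp$ is pure. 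Thus $\bb = \bp\,\bi\,\bw$ is of the required form. (Here I am only asserting \emph{some} such decomposition exists; the lemma does not claim uniqueness, and indeed there is none.)

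\emph{Computation of $\bpi_I$.} Now suppose $\bb = \bp\,\bi\,\bw$ is \emph{any} decomposition with $\bp\in P$, $\bi\in\bW_I$, and $\pr(\bw)$ $I$-reduced. Since $\bp$ is pure, $\pr(\bp)=1\in W_I$, so by Proposition~\ref{fourre-tout}(\ref{ft2}) applied to the product $\bp\cdot(\bi\bw)$ we get $\bpi_I(\bb)=\bpi_I(\bp)\,\bpi_I(\bi\bw)$. Next, $\pr(\bi)\in W_I$ as well, so applying Proposition~\ref{fourre-tout}(\ref{ft2}) again to $\bi\cdot\bw$ gives $\bpi_I(\bi\bw)=\bpi_I(\bi)\,\bpi_I(\bw)=\bi\,\bpi_I(\bw)$, the last equality by Proposition~\ref{fourre-tout}(\ref{ft6}) since $\bi\in B_I$. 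Finally, $\bw\in\bW$ and $\pr(\bw)$ is $I$-reduced, so Proposition~\ref{fourre-tout}(\ref{ft5}) gives $\bpi_I(\bw)=1$. Combining these, $\bpi_I(\bb)=\bpi_I(\bp)\,\bi$, as claimed.

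\emph{Main obstacle.} There is no serious obstacle here: the existence half is a routine manipulation of cosets in $W$ followed by lifting, and the computation half is a direct chain of applications of Proposition~\ref{fourre-tout}. The one point requiring a little care is to phrase the argument so that the formula $\bpi_I(\bb)=\bpi_I(\bp)\bi$ is derived for an \emph{arbitrary} decomposition of the stated type, not merely the one produced in the existence step — but this is automatic since the computation used only the defining properties ($\bp$ pure, $\bi\in\bW_I$, $\pr(\bw)$ $I$-reduced) and none of the specific choices made while constructing the decomposition.
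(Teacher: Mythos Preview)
Your proof is correct and follows essentially the same approach as the paper: the paper dismisses the existence step with ``it is clear'' and then applies Proposition~\ref{fourre-tout}(\ref{ft2}) and (\ref{ft5}) exactly as you do, only compressing the two applications of (\ref{ft2}) into one line. Your version is more explicit about the existence of the decomposition, but the argument is the same.
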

\begin{proof}
It is clear that any element $\bb\in B$ can be written $\bp\bi\bw$ as above.
 By Proposition~\ref{fourre-tout}(\ref{ft2}) we have $\bpi_I(\bb)=\bpi_I(\bp)\bpi_I(\bi)\bpi_I(\bw)=
 \bpi_I(\bp)\bi\bpi_I(\bw)$  and  by Lemma~\ref{fourre-tout}(\ref{ft5}) $\bpi_I(\bw)=1$.
\end{proof}

The following lemma is \cite[Proposition 2.3(3)]{blufstein-paris}.
\begin{lemma}\label{pi_i(p)}
If $\bp$ is a pure braid then $\bpi_I(\bp)$ is pure.
\end{lemma}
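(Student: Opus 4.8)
The plan is to reduce the statement to the already-established compatibility of $\bpi_I$ with the projection $\pr$ to $W$. Recall that $\bp$ being pure means $\pr(\bp)=1$, and that $\bpi_I(\bp)$ automatically lies in $B_I$ by the very construction of the retraction (the word-level map $\hpi_I$ takes values in $(\bI^{\pm1})^*$). Since the inclusion $W_I\hookrightarrow W$ is injective, an element of $B_I$ is pure in $B$ if and only if it lies in the kernel of $\pr\colon B_I\to W_I$, i.e.\ is pure in $B_I$; so it suffices to prove that $\pr(\bpi_I(\bp))=1$.

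To compute $\pr(\bpi_I(\bp))$ I would invoke Proposition~\ref{fourre-tout}(\ref{ft1}) (equivalently Proposition~\ref{prtI}), which gives $\pr(\bb)=\pr(\bpi_I(\bb))\,t_I(\pr(\bb))$ for every $\bb\in B$; one could equally cite Lemma~\ref{Icoset}(3), namely $\pr(\bpi_I(\bb))=\pi_I(\pr(\bb))$. Specializing to $\bb=\bp$ and using $\pr(\bp)=1$, this reads $\pr(\bpi_I(\bp))=\pr(\bpi_I(\bp))\,t_I(1)$. Since $1$ is the unique element of minimal length in the coset $W_I\cdot 1=W_I$, we have $t_I(1)=1$, hence $\pr(\bpi_I(\bp))=1$, which finishes the argument.

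There is essentially no obstacle here: all the substance is contained in the earlier results relating $\bpi_I$ to $t_I$ and $\pr$. The only points requiring (routine) care are the observation that $\bpi_I$ maps $B$ into $B_I$, the identity $t_I(1)=1$, and the remark that, thanks to the injectivity of $W_I\hookrightarrow W$, purity in $B_I$ and purity in $B$ agree for elements of $B_I$.
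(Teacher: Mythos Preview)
Your proposal is correct and follows essentially the same route as the paper: the paper applies Lemma~\ref{Icoset}(2) to get $\pr(\bt_I(\bp))=t_I(1)=1$ and then reads off that $\bpi_I(\bp)=\bp\,\bt_I(\bp)^{-1}$ is pure, while you use the companion identity Lemma~\ref{Icoset}(3) (equivalently Proposition~\ref{fourre-tout}(\ref{ft1})) to compute $\pr(\bpi_I(\bp))$ directly. One slip to fix: your specialised equation should read $1=\pr(\bpi_I(\bp))\,t_I(1)$, not $\pr(\bpi_I(\bp))=\pr(\bpi_I(\bp))\,t_I(1)$; as written it is vacuous and does not yield $\pr(\bpi_I(\bp))=1$.
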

\begin{proof}
We have $\bp=\bpi_I(\bp)\bt_I(\bp)$ so it is sufficient to show that $\bt_I(\bp)$
is pure. By Lemma~\ref{Icoset}(2) we have $\pr(\bt_I(\bp))=t_I(\pr(\bp))=1$,
whence the result.
\end{proof}

The following lemma is \cite[Lemma 3.2]{godelle}
\begin{lemma}\label{lemme 9} Let $I, J$ be subsets of $S$ and let
$\bb\in B$ be such that $\pr(\bb)$ is $I$-reduced-$J$; then $\bpi_I(\bb B_J)=
 \bpi_I(\bb)B_{I_1}$ where $I_1=\lexp {\pr(\bb)} J\cap I$.
\end{lemma}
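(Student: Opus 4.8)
The plan is to reduce the coset identity $\bpi_I(\bb B_J) = \bpi_I(\bb)B_{I_1}$ to the factorisation formula already at our disposal, namely Proposition~\ref{fourre-tout}(\ref{ft4}). Since $\pr(\bb)$ is $I$-reduced-$J$, for any $\bj\in B_J$ that proposition gives
\[
\bpi_I(\bb\bj) = \bpi_I(\bb)\,\varphi_{\pr(\bb)}(\bpi_{J_1}(\bj)),
\]
where $J_1 = I^{\pr(\bb)}\cap J$. Here one has to match conventions: the statement of the lemma names $I_1 = \lexp{\pr(\bb)}J\cap I$, and by Lemma~\ref{IredJ} the bijection $\varphi_{\pr(\bb)}\colon J_1\xrightarrow{\sim} I_1$ (restriction of $\varphi_{\pr(\bb)}\colon J\xrightarrow{\sim}\pr(\bb) \text{-image}$) carries the parabolic $B_{J_1}$ onto $B_{I_1}$. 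So the first step is simply to record that $\varphi_{\pr(\bb)}$ sends $B_{J_1}$ bijectively onto $B_{I_1}$, which is immediate from the definition of $\varphi_w$ on letters together with $I_1^{\pr(\bb)^{-1}} = J_1$.

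Next I would establish the two inclusions. For $\bpi_I(\bb B_J)\subseteq \bpi_I(\bb)B_{I_1}$: take any $\bj\in B_J$; then $\bpi_{J_1}(\bj)\in B_{J_1}$ by Proposition~\ref{fourre-tout}(\ref{ft6}) applied inside $B_J$ (or just because $\bpi_{J_1}$ lands in $B_{J_1}$), hence $\varphi_{\pr(\bb)}(\bpi_{J_1}(\bj))\in B_{I_1}$ by the first step, and the displayed formula gives $\bpi_I(\bb\bj)\in\bpi_I(\bb)B_{I_1}$. For the reverse inclusion $\bpi_I(\bb)B_{I_1}\subseteq \bpi_I(\bb B_J)$: given $\bi_1\in B_{I_1}$, set $\bj = \varphi_{\pr(\bb)}^{-1}(\bi_1)\in B_{J_1}\subseteq B_J$; since $\bj\in B_{J_1}$ we have $\bpi_{J_1}(\bj)=\bj$ by Proposition~\ref{fourre-tout}(\ref{ft6}), so the displayed formula yields $\bpi_I(\bb\bj) = \bpi_I(\bb)\varphi_{\pr(\bb)}(\bj) = \bpi_I(\bb)\bi_1$, exhibiting $\bpi_I(\bb)\bi_1$ as an element of $\bpi_I(\bb B_J)$. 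Combining the two inclusions gives the claimed equality.

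The only real content beyond bookkeeping is the identification $\varphi_{\pr(\bb)}(B_{J_1}) = B_{I_1}$ and the matching of $J_1 = I^{\pr(\bb)}\cap J$ with $I_1 = \lexp{\pr(\bb)}J\cap I$ via $w\mapsto\lexp w{}$; both are consequences of Lemma~\ref{IredJ} and the remark following the definition of $I$-ribbon. The step I expect to need the most care is making sure that $\varphi_{\pr(\bb)}$, which is defined as the lift of the bijection $J\xrightarrow{\sim}I^{\pr(\bb)}$, does restrict to an \emph{isomorphism of Artin groups} $B_{J_1}\to B_{I_1}$ — this uses that $\pr(\bb)$ conjugates the Coxeter submatrix on $J_1$ to that on $I_1$, which is exactly the ribbon property recorded in the Remark after Lemma~\ref{IredJ}, together with Proposition~\ref{vdl} to know $B_{J_1}$ and $B_{I_1}$ are honest parabolic Artin subgroups on those generators. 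Everything else is a direct application of Proposition~\ref{fourre-tout}(\ref{ft4}) and~(\ref{ft6}).
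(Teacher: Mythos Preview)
Your proposal is correct and follows essentially the same route as the paper: both apply Proposition~\ref{fourre-tout}(\ref{ft4}) to obtain $\bpi_I(\bb\bj)=\bpi_I(\bb)\varphi_{\pr(\bb)}(\bpi_{J_1}(\bj))$, then use that $\bpi_{J_1}(B_J)=B_{J_1}$ and $\varphi_{\pr(\bb)}(B_{J_1})=B_{I_1}$. The paper simply compresses your two inclusions into the single chain $\bpi_I(\bb B_J)=\bpi_I(\bb)\varphi_{\pr(\bb)}(\bpi_{J_1}(B_J))=\bpi_I(\bb)\varphi_{\pr(\bb)}(B_{J_1})=\bpi_I(\bb)B_{I_1}$; your more explicit treatment of the surjectivity (choosing $\bj=\varphi_{\pr(\bb)}^{-1}(\bi_1)$) and your care about why $\varphi_{\pr(\bb)}$ is only defined on $J_1$ (via the ribbon property from the Remark after Lemma~\ref{IredJ}) are just unpacking what the paper leaves implicit.
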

\begin{proof}
Applying  Proposition~\ref{fourre-tout}(\ref{ft4})
we get $\bpi_I(\bb B_J)=\bpi_I(\bb)\varphi_{\pr(\bb)}(\bpi_{J_1}(B_J))=
\bpi_I(\bb)\varphi_{\pr(\bb)}(B_{J_1})=\bpi_I(\bb)B_{I_1}$ where
$J_1=I_1^{\pr(\bb)}$.
\end{proof}
The following is \cite[Theorem 1.1]{blufstein-paris}.
\begin{proposition}\label{parabolic inside} Let $I,J$ be subsets of $S$ and
let  $\bb\in B$ be such  that $\lexp \bb  B_J\subseteq B_I$. Then there exists
$\bb'\in  B_I$ and $K\subseteq I$ such that $\lexp \bb B_J=\lexp{\bb'}B_K$.
In other terms a parabolic subgroup of $B$ lying inside $B_I$ is a parabolic subgroup of $B_I$.
\end{proposition}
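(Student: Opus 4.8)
The plan is to reduce the statement to the combination of the retraction machinery developed above (especially Proposition~\ref{fourre-tout} and Lemma~\ref{lemme 9}) together with the description of the coset representative $\bt_I$. First I would use Lemma~\ref{piw=1}: write $\bb=\bp\bi\bw$ with $\bp$ pure, $\bi\in\bW_I$ and $\pr(\bw)$ $I$-reduced. Replacing $\bb$ by $\bi\inv\bp\inv\bb=\bw$ on the left does not change $\lexp\bb B_J$ up to a parabolic subgroup of $B_I$ (conjugating by $\bi\bp\inv$; but $\bp$ is not in $B_I$, so this is exactly the subtle point). So more carefully: since $\bi\in B_I$, we have $\lexp\bb B_J=\lexp{\bi}(\lexp{\bp\bw}B_J)$; it suffices to treat $\lexp{\bp\bw}B_J$, i.e.\ we may assume $\bb=\bp\bw$ with $\pr(\bb)=\pr(\bw)$ being $I$-reduced. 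Then replace $J$ by its "$\bb$-relevant" part: writing $\pr(\bw)$ as $I$-reduced-$J'$ for the appropriate conjugate $J'$ of $J$ (after absorbing a reduced-$J$ part into $B_J$), we may assume $\pr(\bb)$ is $I$-reduced-$J$.

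Next, with $\pr(\bb)$ $I$-reduced-$J$, set $J_1=I^{\pr(\bb)}\cap J$ and $I_1=\lexp{\pr(\bb)}J_1\subseteq I$. By Lemma~\ref{lemme 9}, $\bpi_I(\bb B_J)=\bpi_I(\bb)B_{I_1}$, and more precisely Proposition~\ref{fourre-tout}(\ref{ft4}) gives, for $\bj\in B_J$,
$$\bpi_I(\bb\bj)=\bpi_I(\bb)\,\varphi_{\pr(\bb)}(\bpi_{J_1}(\bj)).$$
Now I claim $\lexp\bb B_J\subseteq B_I$ forces $\bpi_{J_1}=\mathrm{Id}$ on $B_J$, i.e.\ $J=J_1$. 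Indeed, given $\bj\in B_J$, the hypothesis gives $\bb\bj\bb\inv\in B_I$, hence $\bb\bj=\bk\bb$ for some $\bk\in B_I$; applying $\bpi_I$ and $\bt_I$ and using Lemma~\ref{Icoset} plus Proposition~\ref{fourre-tout}(\ref{ft2}) one extracts that $\bt_I(\bb\bj)=\bt_I(\bb)$, i.e.\ $\bj$ lies in the stabiliser of the right coset $B_I\bb$. Comparing with the factorisation above and the fact that $\bt_I(\bb\bj)=\varphi_{\pr(\bb)}(\bpi_{J_1}(\bj))\inv\,\bb\,\bj$, this pins down $\bpi_{J_1}(\bj)$ in terms of $\bj$; in particular if $\bpi_{J_1}(\bj)=1$ then $\bj$ conjugates $\bb$ to an element of $B_I$ with the same $\bpi_I$, and iterating / using that $B_{J_1}=\ker(\bt_I\text{-stabiliser restricted})$ one gets that $B_J$ normalises things only through $B_{J_1}$, forcing $J=J_1$ (otherwise a generator $\bs$ of $B_J$ outside $B_{J_1}$ would produce an element of $\lexp\bb B_J$ genuinely outside $B_I$ by the length estimate in Remark~\ref{obvious}(5)).

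Once $J=J_1$, $\pr(\bb)$ is an $I_1$-ribbon-$J$ (by the Remark after Lemma~\ref{IredJ}), so $\varphi_{\pr(\bb)}\colon B_J\xrightarrow{\sim}B_{I_1}$ and $\bpi_J=\mathrm{Id}$ on $B_J$; hence $\bb\bj\bb\inv=\bpi_I(\bb)\,\varphi_{\pr(\bb)}(\bj)\,\bt_I(\bb)\,\bj\inv\bb\inv$. A direct computation using $\bb=\bpi_I(\bb)\bt_I(\bb)$, $\pr(\bt_I(\bb))$ $I$-reduced, and that $\bt_I(\bb)$ commutes past $B_J$ into $B_{I_1}\subseteq B_I$ (because $\pr(\bt_I(\bb))$ is an $I_1$-ribbon, using Lemma~\ref{mult2}-type reasoning applied to $\bt_I(\bb)$) yields $\lexp\bb B_J=\lexp{\bpi_I(\bb)}B_{I_1}$. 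Taking $\bb'=\bpi_I(\bb)\in B_I$ (which lies in $B_I$ by Proposition~\ref{fourre-tout}(\ref{ft5}), since here $\pr(\bpi_I(\bb))=\pi_I(\pr(\bb))$ and one checks $\bpi_I(\bb)\in B_I$ always) and $K=I_1\subseteq I$ gives the statement. The main obstacle I anticipate is the middle step — rigorously showing $J=J_1$, i.e.\ that the whole of $B_J$ (not just $B_{J_1}$) must conjugate into $B_I$ only if $J$ was already "all ribbon"; this is where the convexity/length control of $\hpi_I$ from Remark~\ref{obvious}(5) and the coset-stabiliser description of $\bt_I$ (Lemma~\ref{Icoset}(4)) have to be combined carefully, presumably by picking a single generator $\bs\in\bS_J\setminus\bS_{J_1}$ and showing $\bb\bs\bb\inv\notin B_I$ directly from $\hpi_I(b\,s\,b\inv)\neq b\,s\,b\inv$ at the word level.
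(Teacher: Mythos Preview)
Your reduction step contains a genuine error. From $\bb=\bp\bi\bw$ you write $\lexp\bb B_J=\lexp{\bi}(\lexp{\bp\bw}B_J)$, but conjugation composes left to right: $\lexp\bb B_J=\lexp{\bp}(\lexp{\bi}(\lexp{\bw}B_J))$, with $\bp$ on the \emph{outside}. Since $\bp$ need not lie in $B_I$, you cannot strip it off and still know the inner subgroup lies in $B_I$. The paper fixes this by choosing a different decomposition: project to $W$, write $w=\pr(\bb)=iw'j$ with $i\in W_I$, $j\in W_J$, $w'$ $I$-reduced-$J$, and then lift as $\bb=\bi\bp\bw'\bj$ with $\bp$ pure. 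Now $\bj$ is absorbed into $B_J$ and $\bi\in B_I$ is outermost, so one legitimately reduces to $\lexp{\bp\bw'}B_J\subseteq B_I$.

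Your ``main obstacle'' $J=J_1$ is where the paper's route diverges most from yours. The paper does not argue in $B$ at all: since $\lexp{w'}W_J\subseteq W_I$ with $w'$ $I$-reduced, Lemma~\ref{lemma 1} applied to each $s\in J$ gives $\lexp{w'}s\in I$ immediately, so $K:=\lexp{w'}J\subseteq I$ and $w'$ is an $I$-ribbon-$J$ for free. Then $\lexp{\bw'}B_J=B_K$ (Lemma~\ref{ribbon in B}), and one is left with $\lexp\bp B_K\subseteq B_I$ for $\bp$ pure; applying $\bpi_I$ (a homomorphism on $\pr^{-1}(W_I)$ by Proposition~\ref{fourre-tout}(\ref{ft2})) gives $\lexp\bp B_K=\lexp{\bpi_I(\bp)}B_K$, and $\bb'=\bi\,\bpi_I(\bp)$ finishes. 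Your attempt to force $J=J_1$ inside $B$ can actually be completed --- from your own identity $\bt_I(\bb\bj)=\bt_I(\bb)$ and Proposition~\ref{fourre-tout}(\ref{ft4}) one gets $\lexp{\bt_I(\bb)}\bj=\varphi_{\pr(\bb)}(\bpi_{J_1}(\bj))$, and for $\bs\in\bJ$ with $s\notin J_1$ Proposition~\ref{fourre-tout}(\ref{ft5}) gives $\bpi_{J_1}(\bs)=1$, hence $\lexp{\bt_I(\bb)}\bs=1$, a contradiction --- but you never write this down, and the word-level argument you sketch instead via Remark~\ref{obvious}(5) is not the right tool. In short: fix the order of the decomposition so the $B_I$-factor is outermost, and recognise that the ribbon property is a one-line Coxeter group fact (Lemma~\ref{lemma 1}) rather than something to be wrested from the Artin group.
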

\begin{proof}
Let $w=\pr(\bb)$. Applying $\pr$ we get $\lexp wW_J\subseteq W_I$. Write $w=iw' j$ where $i\in W_I, j\in W_J$ and $w'$ is $I$-reduced-$J$. We still have
$\lexp{w'}W_J\subseteq   W_I$  and  by Lemma~\ref{lemma 1} we  have
$\lexp{w'}J=K$  for some $K\subseteq I$. Lifting $i,j,w'$ to $\bi,\bj,\bw'\in
\bW$  we can write  $\bb=\bi\bp\bw'\bj$ where $\bp$  is pure. We still have
$\lexp{\bp\bw'}B_J\subseteq   B_I$,   and   since   $\lexp{w'}J=K$   we  have
$\lexp{\bw'}B_J=B_K$. We thus get $\lexp\bp B_K\subseteq B_I$. Thus $\lexp\bp
B_K=\bpi_I(\lexp\bp  B_K)=\bpi_I(\bp)B_K\bpi_I(\bp)\inv$,
the last equality from Lemma 3 since $\bp$ is pure and
$\pr(B_K)\subseteq W_I$.
This  proves the statement with $\bb'=\bi\bpi_I(\bp)$.
\end{proof}
\subsection*{The map $\bN$}
The following proposition is \cite[Proposition~2.1]{digne}. We recall it here,  as it is unpublished.
\begin{proposition}\label{digne2.1}
\phantom{a}
\begin{enumerate}
\item There exists a well defined map \index{N@$\bN$} $\bN:B\to \BZ[T]$ such that
$$\bN(\bs_1^{\varepsilon_1}\cdots \bs_k^{\varepsilon_k})=\sum_{i=1}^{i=k} \varepsilon_i
\lexp{s_1\cdots s_{i-1}}s_i$$
for $\bs_i\in\bS$ and $\varepsilon_i=\pm1$.
\item The map $\bN\rtimes\pr$ is group homomorphism from $B$ to
$\BZ[T]\rtimes W$, where the action of $W$ on $\BZ[T]$
extends linearly the conjugation action of $W$ on $T$.
\end{enumerate}
\end{proposition}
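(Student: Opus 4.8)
The plan is to produce $\bN\rtimes\pr$ as a genuine group homomorphism out of $B$ through the universal property of the presentation $(**)$, and to recover $\bN$ as its first coordinate; this delivers simultaneously the well-definedness asserted in part (1) and the homomorphism property asserted in part (2).

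First I would work in the group $G=\BZ[T]\rtimes W$, where $W$ acts on the free abelian group $\BZ[T]$ by permuting the basis $T$ via conjugation, extended $\BZ$-linearly; explicitly $(n,w)(n',w')=(n+\lexp w{n'},ww')$ and $(n,w)\inv=(-\lexp{w\inv}n,w\inv)$. Since $B$ is presented by $\bS$ subject only to the braid relations $(**)$, a group homomorphism $B\to G$ amounts to a map $\psi\colon\bS\to G$ under which the two members of each braid relation receive the same value. I take $\psi(\bs)=(s,s)$, where in the first coordinate $s$ denotes the basis vector of $\BZ[T]$ indexed by the reflection $s\in S\subseteq T$; note $\psi(\bs)\inv=(-s,s)$ because $\lexp ss=s$ in $W$. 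A one-line induction on $k$, using only the multiplication rule of $G$, then gives for generators $\bs_1,\dots,\bs_k\in\bS$ and signs $\ve_i=\pm1$
$$\psi(\bs_1)^{\ve_1}\cdots\psi(\bs_k)^{\ve_k}=\Bigl(\,\sum_{i=1}^{k}\ve_i\,\lexp{s_1\cdots s_{i-1}}{s_i}\,,\ s_1\cdots s_k\,\Bigr)\quad\text{in }G,$$
where $\psi(\bs_i)^{\ve_i}$ means $\psi(\bs_i)$ or $\psi(\bs_i)\inv$ according to the sign.

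The heart of the matter — the only step that is not formal — is verifying the braid relations, i.e.\ that for $s\neq t$ with $m=m_{s,t}<\infty$ one has $\psi(\bs)\psi(\bt)\psi(\bs)\cdots=\psi(\bt)\psi(\bs)\psi(\bt)\cdots$ ($m$ factors on each side) in $G$. I would compare the two coordinates separately. In the $W$-coordinate both sides equal the longest element $w_0$ of the dihedral group $W_{\{s,t\}}$, which has length $m$, by the Coxeter relation $(*)$. In the $\BZ[T]$-coordinate, evaluate the displayed identity on the reduced word $\underbrace{sts\cdots}_m$ of $w_0$: the reflections $t_i=s_1\cdots s_{i-1}\,s_i\,s_{i-1}\cdots s_1$, $1\le i\le m$, are pairwise distinct because $N$ is injective and $|N(w_0)|=\ell_S(w_0)=m$, and each lies in $T_{\{s,t\}}$, the set of reflections of the order-$2m$ dihedral group $W_{\{s,t\}}$, which has exactly $m$ elements; hence $\{t_1,\dots,t_m\}=T_{\{s,t\}}$, each appearing once, and the first coordinate equals $\sum_{r\in T_{\{s,t\}}}r$. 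The identical computation applied to $\underbrace{tst\cdots}_m$ produces the same sum. This settles the braid relations, so $\psi$ extends to a group homomorphism $\psi\colon B\to G$.

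Finally I would harvest the statements. Let $\bN\colon B\to\BZ[T]$ be $\psi$ followed by the projection $G\to\BZ[T]$; it is well defined, and since for any word $b=\bs_1^{\ve_1}\cdots\bs_k^{\ve_k}$ representing $\bb\in B$ we have $\psi(\bb)=\psi(\bs_1)^{\ve_1}\cdots\psi(\bs_k)^{\ve_k}$, the displayed identity gives exactly the formula of part (1). For part (2): $\psi$ followed by the projection $G\to W$ is a group homomorphism $B\to W$ agreeing with $\pr$ on $\bS$, hence equal to $\pr$; therefore $\psi=\bN\rtimes\pr$ as a map $B\to\BZ[T]\rtimes W$, and it is a group homomorphism because $\psi$ is. I expect the only genuine obstacle to be the dihedral bookkeeping in the braid-relation check — the equality $\{t_1,\dots,t_m\}=T_{\{s,t\}}$, resting on injectivity of $N$ and $|N(w)|=\ell_S(w)$ — while everything else is routine computation in the semidirect product.
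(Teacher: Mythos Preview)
Your proof is correct and follows essentially the same approach as the paper: define $\psi(\bs)=(s,s)$, check the braid relations by observing that in $\BZ[T]\rtimes W$ both sides have second component the longest element of $W_{\{s,t\}}$ and first component the sum of all reflections of that dihedral group, and then recover $\bN$ as the first projection. Your write-up is slightly more explicit in justifying why the first component equals $\sum_{r\in T_{\{s,t\}}}r$ (via injectivity of $N$ and $|N(w)|=\ell_S(w)$), but the argument is the same.
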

\begin{proof}
We prove that the map $\bs\mapsto (s,s)$ for $\bs\in\bS$ extends to 
a group morphism $f:B\to \BZ[T]\rtimes W$.
Composing this morphism with the first projection will give a map $\bN$ which satisfies
(1). It is sufficient to show that the braid relations ($**$) are satisfied, that is for any
$\bs,\bt\in\bS$, we have the equality $f(\bs)f(\bt)\cdots=f(\bt)f(\bs)\cdots$, where there
are $m_{s,t}$ terms on both sides. In $\BZ[T]\rtimes W$ the first component of
$(s,s)(t,t)\cdots$ (with $m_{s,t}$ factors) is the sum of all reflections of the dihedral
group generated by $s$ and $t$ and the second component is the longest element of this
dihedral group. So we get the same result by swapping $s$ and $t$, whence the
proposition.
\end{proof}
The map $\bN$ lifts $N$ to the braid group:
for $\bb\in B$ the set $N(\pr(\bb))$
is the reduction modulo 2 of $\bN(\bb)$.
For $\bw\in\bW$ we can
identify $\bN(\bw)$ to $N(w)$:
one gets $\bN(\bw)$ from $N(w)$ by lifting $\BZ/2\BZ$ to $\{0,1\}\subseteq\BZ$.
If we interpret $\bN$ as a map $B\to
\BZ[\Phi^+]$, then $\bN(\bb)$ is the sum of the sequence $\vN(p^*(b))$ for any word
$b$ representing $\bb$.
\begin{lemma}\label{generators}We say that $\bw\in \bW$ is $I$-reduced if $\pr(\bw)$ is $I$-reduced.
\begin{itemize}
\item The pure braid group $P$ is generated by $P_I=P\cap B_I$ and
the elements $\bi\bw\bs^2\bw\inv\bi\inv$  for $\bs\in\bS$ and
$\bi\in \bW_I$ and $\bw\in\bW$ such that $\bw\bs$ is an $I$-reduced element.
\item The group $\pr\inv(W_I)$ is
generated by $B_I$ and the elements $\bw\bs^2\bw\inv$ with $\bs\in\bS$ and 
$\bw\bs\in\bW$ an $I$-reduced element.
\item If  $\bs\in\bS$ and if $\bw\bs\in\bW$ is an $I$-reduced element then
$\bpi_I(\bw\bs^2\bw\inv)=1$.
\end{itemize}
\end{lemma}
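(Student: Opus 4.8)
The plan is to get (1) and (2) from Reidemeister--Schreier generating sets, and (3) from a direct computation with the root-theoretic formula for $\hpi_I$ in Proposition~\ref{other way}. Throughout I would use the elementary remark that \emph{if $\bw\bs\in\bW$ is $I$-reduced then so is $\bw$}: if some $i\in I$ had $\ell_S(iw)<\ell_S(w)$, then $\ell_S(iws)\le\ell_S(iw)+1=\ell_S(w)<\ell_S(ws)$, contradicting that $ws$ is $I$-reduced.

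For (1), I would start from the standard Reidemeister--Schreier generating set of $P$ relative to the transversal $\bW$ of $P\backslash B\cong W$. Its generators are the elements $\bv\bs\,\overline{vs}\inv$ with $v\in W$, $\bs\in\bS$, where $\overline{vs}\in\bW$ is the lift of $vs$; such an element is trivial when $\ell_S(vs)>\ell_S(v)$, and when $\ell_S(vs)<\ell_S(v)$ it equals $\overline{vs}\,\bs^2\,\overline{vs}\inv$ because then $\bv=\overline{vs}\,\bs$. Reindexing by $u:=vs$, this shows that $P$ is generated by the $\bu\bs^2\bu\inv$ with $\bs\in\bS$, $u\in W$ and $\bu\bs\in\bW$. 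Now I would factor such a $u$ as $u=vw$ with $v\in W_I$ and $w=t_I(u)$ the $I$-reduced representative of $W_Iu$, so $\ell_S(u)=\ell_S(v)+\ell_S(w)$ and hence $\bu=\bv\bw$, giving $\bu\bs^2\bu\inv=\bv\,(\bw\bs^2\bw\inv)\,\bv\inv$ with $\bv\in\bW_I$; a short length count (using $\ell_S(us)=\ell_S(u)+1$) shows $\ell_S(ws)=\ell_S(w)+1$, so $\bw\bs\in\bW$. By Lemma~\ref{lemma 1} there are two cases. If $ws$ is $I$-reduced, then $\bw\bs^2\bw\inv$ is one of the generators listed in (1) (with $\bi=\bv$). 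If $ws$ is not $I$-reduced, Lemma~\ref{lemma 1} gives $s':=\lexp w s\in I$ and $ws=s'w$, so $\bw\bs$ and $\bs'\bw$ are both the simple braid lifting $ws=s'w$, hence $\bw\bs=\bs'\bw$ in $B$ and $\bw\bs^2\bw\inv=\bs'\bw\bs\bw\inv=(\bs')^2\in P_I$, whence $\bv(\bs')^2\bv\inv\in P_I$. In either case the generator lies in the subgroup of $P$ described in (1); since that subgroup is obviously contained in $P$, this gives (1). (It is exactly the second case that makes $P_I$ necessary among the generators.)

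For (2), I would deduce it from (1). Since $\bI$ generates $B_I$ and $\pr(B_I)=W_I$, one has $\pr\inv(W_I)=B_I\cdot P$. By (1), $P$ is generated by $P_I\subseteq B_I$ together with elements $\bi\bw\bs^2\bw\inv\bi\inv$ with $\bi\in\bW_I\subseteq B_I$, each of which is a $B_I$-conjugate of one of the $\bw\bs^2\bw\inv$ occurring in (2). Hence $P$, and therefore $\pr\inv(W_I)=B_I\cdot P$, lies in the subgroup $H$ generated by $B_I$ and those $\bw\bs^2\bw\inv$; as $H\subseteq\pr\inv(W_I)$ trivially, $\pr\inv(W_I)=H$.

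For (3), I would set $w=\pr(\bw)$, so that $ws=\pr(\bw\bs)$ is $I$-reduced and hence so is $w$. Fix a reduced word $b=\bt_1|\cdots|\bt_\ell\in\bS^*$ representing $\bw$ and put $d=\bt_\ell\inv|\cdots|\bt_1\inv$, a reduced word for $\bw\inv$; then $c:=b|\bs|\bs|d$ represents $\bw\bs^2\bw\inv$. Since $b$ is a reduced decomposition of the $I$-reduced element $w$, Lemma~\ref{pi_I(reduced)} gives $\hpi_I(b)=\emptyset$, and $t_I(w)=w$, so Proposition~\ref{pi(bb')} yields
$$\hpi_I(c)=p^{*-1}\bigl(\vN(\vN(\lexp w{p^*(\bs|\bs|d)})\cap\Phi_I)\bigr).$$
Using Lemma~\ref{vecN}(3) to pull out the $w$-action and Lemma~\ref{vecN}(1) with $\product(\alpha_s|\alpha_s)=1$, one gets $\vN(\lexp w{p^*(\bs|\bs|d)})=\lexp w{\alpha_s}|-\lexp w{\alpha_s}|\lexp w{\vN(p^*(d))}$. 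The first two terms have associated reflection $\lexp w s$, which is not in $W_I$ (otherwise Lemma~\ref{lemma 1} would force $ws$ not to be $I$-reduced). The reflections of the terms of $\lexp w{\vN(p^*(d))}$ form $\lexp w{N(w\inv)}$ (as $d$ is reduced, the reflections of the terms of $\vN(p^*(d))$ form $N(w\inv)$), and the cocycle identity $N(1)=N(w)+\lexp w{N(w\inv)}$ forces $\lexp w{N(w\inv)}=N(w)$; finally $N(w)\cap T_I=\emptyset$, because $w$ is $I$-reduced so $\pi_I(w)=w\,t_I(w)\inv=1$ and thus $N(w)\cap T_I=N(1)=\emptyset$ by Proposition~\ref{commute pr}. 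Since a root lies in $\Phi_I$ precisely when its reflection lies in $T_I$, no term of $\vN(\lexp w{p^*(\bs|\bs|d)})$ lies in $\Phi_I$; hence the intersection is the empty sequence, $\hpi_I(c)=\emptyset$, and $\bpi_I(\bw\bs^2\bw\inv)=1$. The main things to get right are that the "bad" generator in (1) truly lands in $P_I$ (which is what forces $P_I$ into the generating set) and the reflection bookkeeping in (3), especially the identity $\lexp w{N(w\inv)}=N(w)$; the rest is routine.
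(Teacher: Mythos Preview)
Your argument for (1) and (2) via Reidemeister--Schreier is essentially the paper's, and in fact you are more careful than the paper in one spot: when $\bw$ is $I$-reduced but $\bw\bs$ is not, you explicitly use Lemma~\ref{lemma 1} to show $\bw\bs^2\bw\inv=(\bs')^2\in P_I$, whereas the paper simply writes ``We get then the first item of the lemma by writing $\bw=\bi\bv$ where $\bi\in\bW_I$ and $\bv$ is $I$-reduced'' and leaves this case implicit. Your length count to ensure $\bw\bs\in\bW$ after factoring $u=vw$ is also a point the paper skips.

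For (3) the two proofs genuinely differ. The paper uses a two-line algebraic trick: since $\bw\bs^2\bw\inv$ is pure, Proposition~\ref{fourre-tout}(\ref{ft2}) gives
\[
\bpi_I(\bw\bs^2\bw\inv)\,\bpi_I(\bw\bs\inv)=\bpi_I\bigl((\bw\bs^2\bw\inv)(\bw\bs\inv)\bigr)=\bpi_I(\bw\bs),
\]
and Lemma~\ref{pi_I(reduced)} (equivalently Proposition~\ref{fourre-tout}(\ref{ft5})) makes both $\bpi_I(\bw\bs)$ and $\bpi_I(\bw\bs\inv)$ trivial, since the underlying $S$-word $t_1|\cdots|t_\ell|s$ is a reduced expression for the $I$-reduced element $ws$. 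You instead compute $\hpi_I$ directly via Proposition~\ref{other way}, identifying the reflections appearing in $\vN(\lexp w p^*(\bs|\bs|d))$ as $\lexp w s$ together with $\lexp w N(w\inv)=N(w)$, and invoking Proposition~\ref{commute pr} to see none lie in $T_I$. Your computation is correct (and self-contained in the root picture); the paper's route is shorter because it exploits the multiplicativity of $\bpi_I$ on $\pr\inv(W_I)$, which had already been established.
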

\begin{proof}
We first prove that $P$ is generated by the $\bw\bs\bw\inv$ with
$\bw,\bs,\bw\bs\in\bW$: if
in the Reidemeister-Shreier method we take $\bW$ as
representatives of the $P$-cosets in~$B$, we get that $P$ is generated by the
$\bw\bs\bv\inv$ where $\bw\in \bW$ and $\bs\in\bS$ and $\bv\in\bW$ is the representative of $\bw\bs$; if
$\bw\bs$ is in $\bW$ we have $\bv=\bw$, otherwise $\bw=\bu\bs$ for some
$\bu\in\bW$ and $\bv=\bu$ so that $\bw\bs\bv\inv=\bu\bs\bu\inv$. We get then
the first item of the lemma by writing
$\bw=\bi\bv$ where $\bi\in\bW_I$ and $\bv$ is $I$-reduced.

We get the second item from the first
using that an element of $\pr\inv(W_I)$ is the product of an element
of $B_I$ by a pure braid.

For the third item,
by Proposition~\ref{fourre-tout}(\ref{ft2})  we have 
$\bpi_I(\bw\bs^2\bw\inv)\bpi_I(\bw\bs\inv)=\bpi_I(\bw\bs)$, and  by 
Proposition~\ref{fourre-tout}(\ref{ft5}) we obtain $\bpi_I(\bw\bs\inv)=\bpi_I(\bw\bs)=1$. Thus, $\bpi_I(\bw\bs^2\bw\inv)=1$.
\end{proof}
\begin{lemma}\label{bsb}
For $\bb\in B$ and $\bs\in \bS$, we have $\bN(\bb\bs^2\bb\inv)=2t$ where
$t=\pr(\bb\bs\bb\inv)$.
\end{lemma}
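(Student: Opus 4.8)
The plan is to reduce everything to the cocycle identity coming from Proposition~\ref{digne2.1}(2). Since $f=\bN\rtimes\pr\colon B\to\BZ[T]\rtimes W$ is a group homomorphism and multiplication in the semidirect product is $(a,w)(a',w')=(a+\lexp w{a'},ww')$, for all $\bx,\by\in B$ we have
\[
\bN(\bx\by)=\bN(\bx)+\lexp{\pr(\bx)}{\bN(\by)}.
\]
Everything below is just this identity together with the defining formula of Proposition~\ref{digne2.1}(1) on generators.

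First I would record two elementary consequences. Applying the formula of Proposition~\ref{digne2.1}(1) to the word $\bs|\bs$ gives $\bN(\bs^2)=s+\lexp s s=2s$, using $\bN(\bs)=s$ and $\lexp s s=s$. Next, applying the cocycle identity to $\bb\bb\inv=1$ and using $\bN(1)=0$ gives $\bN(\bb)+\lexp{\pr(\bb)}{\bN(\bb\inv)}=0$, that is $\lexp{\pr(\bb)}{\bN(\bb\inv)}=-\bN(\bb)$.

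Now I would expand $\bN(\bb\bs^2\bb\inv)$ by two applications of the cocycle identity:
\[
\bN(\bb\bs^2\bb\inv)=\bN(\bb)+\lexp{\pr(\bb)}{\bN(\bs^2)}+\lexp{\pr(\bb)\pr(\bs^2)}{\bN(\bb\inv)}.
\]
Since $\pr(\bs^2)=s^2=1$ in $W$, the exponent in the last term is simply $\pr(\bb)$, so by the previous paragraph the first and last terms cancel and we are left with $\bN(\bb\bs^2\bb\inv)=\lexp{\pr(\bb)}{\bN(\bs^2)}=2\,\lexp{\pr(\bb)}s$. Finally $\lexp{\pr(\bb)}s=\pr(\bb)\,s\,\pr(\bb)\inv=\pr(\bb\bs\bb\inv)=t$, which is a $W$-conjugate of $s\in S$, hence a genuine reflection, so the right-hand side is $2t$ as an element of $\BZ[T]$.

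I do not expect a real obstacle here; the computation is routine once Proposition~\ref{digne2.1} is available. The only two points that need a little care are (i) using that $\pr$ kills $\bs^2$, which is what makes the $W$-translates of $\bN(\bb)$ and $\bN(\bb\inv)$ coincide and cancel, and (ii) noting that $\lexp{\pr(\bb)}s$ lies in $T$, so that the expression ``$2t$'' makes sense in $\BZ[T]$.
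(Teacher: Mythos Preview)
Your proof is correct and is essentially the same as the paper's: both compute $\bN(\bs^2)=2s$, derive $\lexp{\pr(\bb)}\bN(\bb\inv)=-\bN(\bb)$ from $\bb\bb\inv=1$, and then expand $\bN(\bb\bs^2\bb\inv)$ using the multiplicativity of $\bN\rtimes\pr$ (you phrase this as the cocycle identity for $\bN$, the paper writes out the semidirect-product multiplication). The only cosmetic difference is that the paper carries the second coordinate along while you project to the first from the outset.
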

\begin{proof}
Using that $\bN\rtimes \pr$ is a morphism, that is 
$$(\bN\rtimes\pr)(\bb\bb')=(\bN(\bb)+\lexp{\pr(\bb)}\bN(\bb'),\pr(\bb\bb')),$$
and setting $b=\pr(\bb)$, we get
\begin{equation}\label{equation}
(\bN(\bb\bs^2\bb\inv),1)=(\bN(\bb),b)(\bN(\bs^2),1)(\bN(\bb\inv),b\inv)
\end{equation}
 From $(0,1)=(\bN(\bb),b)(\bN(\bb\inv),b\inv)$ we have
 $\lexp{b}{\bN}(\bb\inv)=-\bN(\bb)$. Using this and $\bN(\bs^2)=2s$
in equation (\ref{equation}) we get 
 $$(\bN(\bb\bs^2\bb\inv),1)=(
\bN(\bb)+2\cdot\lexp{b}s+\lexp{b}{\bN}(\bb\inv),1)
=(2\cdot\lexp{b}s,1)=(2t,1).$$
\end{proof}
\begin{proposition}\label{commutative1}
The following diagram is commutative:
$$\begin{CD}
B@>\bN\times\pr>>\BZ[T]\rtimes W\\
 @VV\bpi_I V @VV {\mathrm{proj}}_{T_I}\times \pi_I V\\
B_I @>\bN\times \pr>> \BZ [T_I]\rtimes W_I
\end{CD}$$
\end{proposition}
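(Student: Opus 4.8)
The plan is to verify commutativity of the square on each of the two factors of $\BZ[T]\rtimes W$ separately. On the $W$-factor, commutativity of the diagram is precisely the identity $\pr(\bpi_I(\bb))=\pi_I(\pr(\bb))$ for all $\bb\in B$, which is Lemma~\ref{Icoset}(3). So the only thing left to prove is the statement on the $\BZ[T]$-factor: for every $\bb\in B$ one has $\bN(\bpi_I(\bb))=\mathrm{proj}_{T_I}(\bN(\bb))$, where $\mathrm{proj}_{T_I}\colon\BZ[T]\to\BZ[T_I]$ is the projection killing the basis vectors outside $T_I$.

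For this I would pass to the root-sequence description of $\bN$ recalled just before Lemma~\ref{generators}: for any word $b$ representing $\bb$, $\bN(\bb)$ is the sum of the terms of the sequence $\vN(p^*(b))$, viewed in $\BZ[\Phi^+]=\BZ[T]$. Under this identification the reflections in $T_I$ are exactly those whose (positive) root lies in $\Phi_I$, so $\mathrm{proj}_{T_I}$ corresponds, at the level of root sequences, to discarding the terms that do not lie in $\Phi_I$; hence $\mathrm{proj}_{T_I}(\bN(\bb))$ is the sum of the subsequence $\vN(p^*(b))\cap\Phi_I$.

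On the other side, $\hpi_I(b)$ is a word representing $\bpi_I(\bb)$ (Proposition~\ref{bien defini}), so $\bN(\bpi_I(\bb))$ is the sum of the terms of $\vN(p^*(\hpi_I(b)))$. By Proposition~\ref{other way} we have $p^*(\hpi_I(b))=\vN(\vN(p^*(b))\cap\Phi_I)$; applying $\vN$ once more and using that $\vN$ is an involution (Lemma~\ref{vecN}(4)) gives $\vN(p^*(\hpi_I(b)))=\vN(p^*(b))\cap\Phi_I$. Therefore $\bN(\bpi_I(\bb))$ is again the sum of $\vN(p^*(b))\cap\Phi_I$, which matches the expression obtained above for $\mathrm{proj}_{T_I}(\bN(\bb))$. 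Combined with Lemma~\ref{Icoset}(3), this proves that the diagram commutes.

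The argument is short and is essentially a recombination of facts already established; the points that deserve a line of justification are (i) that $\mathrm{proj}_{T_I}$ really corresponds to intersecting the root sequence with $\Phi_I$ rather than with $\Phi_I^+$ — a term $-\gamma$ with $\gamma\in\Phi_I^+$ still contributes to the $T_I$-part of $\bN$, merely with a minus sign — and (ii) the double application of $\vN$ together with Lemma~\ref{vecN}(4). I do not anticipate a serious obstacle here: in contrast with the results on $\bpi_I$ of a product, $\bpi_I$ enters only through its defining formula, so no delicate compatibility statement is needed. As a sanity check, reducing the identity $\bN(\bpi_I(\bb))=\mathrm{proj}_{T_I}(\bN(\bb))$ modulo $2$ recovers Proposition~\ref{commute pr}.
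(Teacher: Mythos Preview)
Your reduction to the identity $\bN(\bpi_I(\bb))=\mathrm{proj}_{T_I}(\bN(\bb))$ is correct, and the sequence-level fact $\vN(p^*(\hpi_I(b)))=\vN(p^*(b))\cap\Phi_I$ that you derive from Proposition~\ref{other way} and Lemma~\ref{vecN}(4) is also correct. The gap is in the bridge between the two. The assertion you cite from the paper, that ``$\bN(\bb)$ is the sum of the sequence $\vN(p^*(b))$'', is false as stated: for $b=\bs|\bs$ one has $\vN(p^*(b))=\alpha_s\,|\,-\alpha_s$, which sums to $0$ in $\BZ[\Phi^+]$, whereas $\bN(\bs^2)=2s$. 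The point is that the sign in the $i$-th summand $\ve_i\cdot{}^{s_1\cdots s_{i-1}}s_i$ of $\bN(\bb)$ is the word-sign $\ve_i$, while the sign of the $i$-th root $\beta_i=\ve_i\cdot{}^{s_1\cdots s_{i-1}}\alpha_{s_i}$ in $\vN(p^*(b))$ also records whether ${}^{s_1\cdots s_{i-1}}\alpha_{s_i}$ is positive; the two do not agree in general, so computing both sides of your target identity as ``the sum of $\vN(p^*(b))\cap\Phi_I$'' does not establish their equality.

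The repair is short and keeps your strategy. What the root sequence does give is $s_{\beta_i}={}^{s_1\cdots s_{i-1}}s_i=:t_i$, hence $t_i\in T_I$ exactly when $\beta_i\in\Phi_I$, i.e.\ exactly when $i$ is good in the sense of Definition~\ref{retract}. Thus $\mathrm{proj}_{T_I}(\bN(\bb))=\sum_{i\ \text{good}}\ve_i t_i$. On the other side $\hpi_I(b)$ is the subword on the good letters with the \emph{same} exponents $\ve_i$, and by Lemma~\ref{not good}(3) the image in $W$ of the prefix of $\hpi_I(b)$ preceding the letter coming from a good index $j$ is $s_{f(1)}\cdots s_{f(j-1)}=w_{j-1}t_I(w_{j-1})^{-1}$; combined with $s_{f(j)}={}^{t_I(w_{j-1})}s_j$ this yields ${}^{s_{f(1)}\cdots s_{f(j-1)}}s_{f(j)}=t_j$, so that $\bN(\bpi_I(\bb))=\sum_{i\ \text{good}}\ve_i t_i$ as well. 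This direct argument differs from the paper's proof, which instead writes $\bb$ as a product of a pure element, an element of $B_I$, and an $I$-reduced simple, and treats each factor separately (the pure part via the generators of Lemma~\ref{generators}); once the sign issue is corrected, your route is the shorter one.
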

\begin{proof}
Writing  an element  of $\bb\in  B$ as  $\bb_1\bb_2\bb_3$ where  $\bb_1$ is
pure,  $\bb_2\in B_I$ and $\bb_3\in\bW$  is $I$-reduced,
 and applying Proposition~\ref{fourre-tout}(\ref{ft2}), we can reduce to the case of an element
of one of the above 3 forms.

For an element of $B_I$ the proposition is trivial since $\bpi_I$ is the
identity on $B_I$.

For $\bw\in\bW$  an $I$-reduced element, we know the 
commutativity with $\pr$ by Lemma~\ref{Icoset}(3),
and we have $\bpi_I(\bw)=1$. All that remains is to prove that $\bN(\bw)$ has trivial
coefficients on $T_I$. Since $\bw\in\bW$,  the lift of $N(w)$ is $\bN(\bw)$
and we conclude since $N(w)\cap W_I=\emptyset$ by Proposition~\ref{commute
pr}.

For  a  pure  braid,  we  can,  using
Proposition~\ref{fourre-tout}(\ref{ft2}) and Lemma~\ref{generators}, reduce to an element of the form $\bw\bs^2\bw\inv$
where $\bw\bs$ is $I$-reduced.
By Lemma~\ref{bsb} we have $\bN(\bw\bs^2\bw\inv)=2t$ where
 $t=\pr(\bw\bs\bw\inv)$.
By Lemma~\ref{generators} $\bpi_I(\bw\bs^2\bw\inv)=1$
and by Lemma~\ref{lemma 1} we have $t\notin W_I$.
Thus the commutation holds.
\end{proof}

\subsection*{Closed subsets}
For $w\in W$ the set $N(w)$ is in bijection
with the set $\Phi_w=\{\alpha\in\Phi^+\mid
w\inv(\alpha)\notin\Phi^+\}$ since $N(w)=\{s_\alpha\mid\alpha\in\Phi_w\}$. 
We now recall results of \cite{dyer}.
We say that a set $\Gamma\subseteq\Phi^+$ is closed if for any
$\alpha,\beta\in\Gamma$ and $a,b\in\BR_{>0}$, if $a\alpha+b\beta\in\Phi$
then $a\alpha+b\beta\in\Gamma$. We call closure of $\Gamma$, denoted by
\index{Ga@$\overline\Gamma$} $\overline\Gamma$, the smallest closed set containing $\Gamma$. We
say that $\Gamma$ is biclosed if it is closed and its complement $\complement\Gamma$ in
$\Phi^+$ is closed. Thanks to the bijection between reflections and positive roots we
can use the same words for subsets $A\subseteq T$
(closed, notation $\bar A$ for the closure, 
biclosed and notation \index{C@$\complement$} $\complement A$ for the complement in $ T$).
The notion of closed for a set $A\subseteq T$ can be given in purely
group-theoretic terms: a set $A$
is closed if and only if given any two reflections $s,s'\in W$, the intersection of $A$
with the dihedral group $\langle s,s'\rangle$ is closed.  In a dihedral group
$W=\langle s,s'\rangle$, the group theoretic definition of closed is as follows: 
we give a total order on $ T$ by $s < ss's <ss'ss's<\ldots< s'ss'<s'$
(exchanging $s$ and $s'$ gives the opposite
order); then a set $A$ is closed if and only if whenever it contains $t,t'\in T$ with
$t<t'$ it contains any $t''$ such that $t<t''<t'$ (see
\cite[(2.2)]{dyer1} for the equivalence of the group-theoretic condition with
closed assuming the existence of a reflection order and see
\cite[Proposition 2.3]{dyer1} for the existence of a reflection order).
\begin{lemma} \label{inclusionN} For $\bw,\bw'\in\bW$ it is equivalent that
$\bw$ left-divides $\bw'$ in the monoid $B^+$ or that $N(w)\subseteq N(w')$.
\end{lemma}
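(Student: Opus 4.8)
The plan is to prove both implications using the characterisation of left-divisibility in the Garside monoid $B^+$ together with the fact that, for $\bw\in\bW$, the element $\bN(\bw)$ is the lift of $N(w)$, and $|N(w)|=\ell_S(w)$ so that the length of a simple braid is read off from $N(w)$.

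First I would handle the direction: if $\bw$ left-divides $\bw'$, then $N(w)\subseteq N(w')$. Write $\bw'=\bw\bw''$ for some $\bw''\in B^+$; since $\bw,\bw'$ are both simple, so is $\bw''$ (a left-divisor of a simple element in $\bW$ is again in $\bW$, and the corresponding quotient is in $\bW$ too, by the lattice property of $\bW$ inside the Garside structure). Applying $\pr$ gives $w'=ww''$ in $W$. Moreover $\ell_S(w')=\lZ(\bw')=\lZ(\bw)+\lZ(\bw'')=\ell_S(w)+\ell_S(w'')$, so the expression $w'=ww''$ is reduced. Using the cocycle formula $N(vv')=N(v)\coprod\lexp vN(v')$ (valid precisely because the product is reduced, as recalled in the excerpt before Proposition~\ref{solomon}), we get $N(w')=N(w)\coprod\lexp w N(w'')$, whence $N(w)\subseteq N(w')$.

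Conversely, suppose $N(w)\subseteq N(w')$. I would argue by induction on $\ell_S(w)$ that $\bw$ left-divides $\bw'$. If $\ell_S(w)=0$ there is nothing to prove. Otherwise pick $s\in S$ with $\ell_S(sw)<\ell_S(w)$, equivalently $s\in N(w)\subseteq N(w')$, so also $\ell_S(sw')<\ell_S(w')$. In the monoid, $s\in N(w')$ means $\bs$ left-divides $\bw'$ (this is the content we are proving in the base length-one case, and it is standard: $\bs$ left-divides $\bw'$ iff $\ell_S(sw')<\ell_S(w')$, since left-divisibility of simples corresponds to the weak order on $W$ — one can also cite \cite[Chapter IV]{Bbk} via the identification of $\bW$ with $W$ in the Garside lattice). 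Write $\bw=\bs\bw_1$ and $\bw'=\bs\bw_1'$ with $\bw_1,\bw_1'\in\bW$, $w_1=sw$, $w_1'=sw'$, both reduced factorisations. Then $N(w)=\{s\}\coprod\lexp s N(w_1)$ and similarly for $w'$, so $N(w)\subseteq N(w')$ gives $\lexp sN(w_1)\subseteq\lexp sN(w_1')$, hence $N(w_1)\subseteq N(w_1')$. By the induction hypothesis $\bw_1$ left-divides $\bw_1'$, and left-multiplying by $\bs$ shows $\bw$ left-divides $\bw'$.

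The main obstacle is the clean handling of the base case / the equivalence "$\bs\mid\bw'$ in $B^+$ iff $s\in N(w')$", i.e.\ translating weak-order facts in $W$ into left-divisibility in $B^+$; this rests on the fact that the lift $\bW$ of $W$ is exactly the set of simple elements and that left-divisibility among simples matches the weak Bruhat order, which is part of the Garside-theoretic description of $B^+$ recalled in the paper (and in \cite{DDGKM}). Everything else is the additivity of $N$ along reduced products and the additivity of $\lZ$, both already available in the excerpt.
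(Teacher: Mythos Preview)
Your proof is correct and follows essentially the same approach as the paper: the forward direction via the cocycle formula $N(wv)=N(w)\cup\lexp w N(v)$ for a reduced product, and the converse by induction, peeling off a simple left-divisor $\bs$ with $s\in N(w)\subseteq N(w')$ and reducing to $N(w_1)\subseteq N(w'_1)$. The only cosmetic differences are that the paper inducts on $\ell_S(w')$ rather than $\ell_S(w)$ (both decrease) and cites \cite[Lemma~2.1.6(ii)]{digne-michel} for the equivalence $\bs\mid\bw\Leftrightarrow s\in N(w)$ that you flagged as the main point to justify.
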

\begin{proof}
The assumption that $\bw$ left-divides $\bw'$ is equivalent in $W$ 
to the existence of $v\in W$ such that $w'=wv$ and
$\ell_S(w)+\ell_S(v)=\ell_S(w')$. By the formula $N(wv)=N(w)\cup \lexp wN(v)$ it is clear
that left-divisibility implies $N(w)\subseteq N(w')$. We show the converse
by induction on $\ell_S(w')$. Let $\bs\in \bS$ be a left-divisor of $\bw$, equivalently
$\ell_S(sw)=\ell_S(w)-1$ or equivalently $s\in N(w)$ 
(see \cite[Lemma 2.1.6(ii)]{digne-michel}). It follows that
$\bs$ is also a left-divisor of $\bw'$. Let $w=sw_1$ and $w'=sw'_1$.
 From the formulae $N(w)=\{s\}\cup\lexp sN(w_1)$ and 
$N(w')=\{s\}\cup\lexp sN(w'_1)$ it follows that $N(w_1)\subseteq N(w'_1)$
and we conclude by induction.
\end{proof}
More generally, using that divisibilty in $\bW$ projects to the
weak order in $W$, we have
\begin{proposition}[Dyer]\label{Dyer}\phantom{}
\begin{itemize}
\item A set $A\subseteq T$ is biclosed finite if and only if it is an $N(w)$ for
some $w\in W$.
\item Let $\bw,\bw'\in\bW$ be two simple braids which have a common right-multiple
in the monoid $B^+$. 
Then their right-lcm is an element of $\bW$ such that 
$$N(\pr(\rightlcm(\bw,\bw')))= \overline{N(w)\cup N(w')}.$$
\item Let $\bw,\bw'\in\bW$ be two simple braids. Then 
$$\complement N(\pr(\leftgcd(\bw,\bw')))=\overline{\complement(N(w)\cap N(w'))}.$$
\end{itemize}
\end{proposition}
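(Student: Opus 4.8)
The plan is to derive all three items from Lemma~\ref{inclusionN} together with the group-theoretic/dihedral characterisation of closed sets recalled above. For the first item, one direction is classical: for $w \in W$ the set $N(w)$ is biclosed (this is exactly the standard fact that $\Phi_w$ and its complement in $\Phi^+$ are ``closed'' in the root-theoretic sense, which one checks dihedral subgroup by dihedral subgroup using that $N(w)\cap\langle s,s'\rangle$ is an initial or final segment of the reflection order on that dihedral group). For the converse, given a finite biclosed $A\subseteq T$, I would argue by induction on $|A|$: if $A=\emptyset$ take $w=1$; otherwise pick a reflection $s\in S$ lying in $A$ and minimal (in an arbitrary fixed reflection order it is enough to show $A$ contains a simple reflection — this follows from biclosedness, since if $A$ met no simple reflection then $\complement A$ would contain all of $\Pi$, hence by closedness all of $\Phi^+$, forcing $A=\emptyset$). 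Then $\{s\}\cup{}^s(A\setminus\{s\})$... more precisely one checks that $^s(A\setminus\{s\})$, transported back into $\Phi^+$, is again finite biclosed of cardinality $|A|-1$; by induction it equals $N(w_1)$ for some $w_1$, and then $A=N(sw_1)$ with $\ell_S(sw_1)=\ell_S(w_1)+1$ via the formula $N(sw_1)=\{s\}\sqcup{}^sN(w_1)$.

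For the second item: by Lemma~\ref{inclusionN}, $\bw$ and $\bw'$ left-divide $\bv:=\rightlcm(\bw,\bw')$ in $B^+$ iff $N(w)\subseteq N(v)$ and $N(w')\subseteq N(v)$, so $N(w)\cup N(w')\subseteq N(v)$; since $N(v)$ is closed (first item), $\overline{N(w)\cup N(w')}\subseteq N(v)$. Conversely I would show $\overline{N(w)\cup N(w')}$ is biclosed — closure is by definition, and biclosedness requires checking that the complement is closed, which is again a dihedral computation: intersecting with any $\langle s,s'\rangle$, the sets $N(w)\cap\langle s,s'\rangle$ and $N(w')\cap\langle s,s'\rangle$ are each an initial segment for one of the two reflection orders on that dihedral group, so their union's closure is still an initial segment, hence has closed complement. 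Being finite biclosed, $\overline{N(w)\cup N(w')}=N(v')$ for some $v'\in W$ with $\bv'\in\bW$; since $N(w),N(w')\subseteq N(v')$, by Lemma~\ref{inclusionN} both $\bw,\bw'$ left-divide $\bv'$, so $\bv$ left-divides $\bv'$, giving $N(v)\subseteq N(v')$ and hence equality; in particular $\bv=\bv'\in\bW$.

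The third item is the mirror statement and I would deduce it either by the analogous argument with complements, or more slickly by applying the second item inside a spherical standard parabolic large enough to contain both $\bw,\bw'$ (if $W$ is not spherical one works in $W_{\supp(w)\cup\supp(w')}$, which is finite since $w,w'$ have finite length, hence lie in a finite standard parabolic): there $\bw \mapsto \Delta\bw\inv$ is an anti-automorphism of the lattice of simple braids swapping left-gcd with right-lcm and sending $N(w)$ to $\complement N(w)$, so the identity for $\leftgcd$ falls out of the one for $\rightlcm$. The main obstacle I anticipate is the biclosedness (not merely closedness) of $\overline{N(w)\cup N(w')}$ and the passage from an abstract finite biclosed set to an actual $N(w)$; both reduce, via the stated dihedral criterion, to the elementary observation that a union of two ``intervals'' one of which is an initial segment and the other a final segment (with respect to a common reflection order, after possibly reversing) has a closure whose complement is again such an interval — a finite, purely order-theoretic check on dihedral groups, but one that must be done carefully since the two reflection orders coming from $\bw$ and $\bw'$ need not coincide.
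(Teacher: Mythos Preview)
The paper does not give a proof here; it simply cites Dyer \cite[4.1, 1.5]{dyer}. Your proposal therefore attempts strictly more than the paper does, and the outline for the first item is essentially the standard argument. However, the arguments for the second and third items contain genuine gaps.

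In item~2, your dihedral reduction for the biclosedness of $\overline{N(w)\cup N(w')}$ does not go through as stated. To check that the complement of $\overline{N(w)\cup N(w')}$ is closed, you need to understand $\overline{N(w)\cup N(w')}\cap D$ for every dihedral reflection subgroup $D$; but the closure operator does \emph{not} in general commute with restriction to~$D$: taking the closure of $N(w)\cup N(w')$ in the full root system may add a root $a\alpha+b\beta$ lying in $\Phi_D$ from roots $\alpha,\beta$ which are not in $\Phi_D$. Thus knowing that the closure \emph{within $D$} of $(N(w)\cup N(w'))\cap D$ is biclosed in $D$ (which is what your order-theoretic check establishes) does not let you conclude that $\overline{N(w)\cup N(w')}\cap D$ is biclosed in $D$. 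Dyer's actual argument for this biclosedness is more delicate; it is precisely the content of the cited results in~\cite{dyer}. (Incidentally, even the preliminary claim that $N(w)\cap D$ is an \emph{initial} segment of a reflection order on~$D$---not merely an interval---is already a theorem of Dyer on reflection subgroups, not something immediate from the definitions.)

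In item~3, your ``slick'' reduction to the spherical case is incorrect: you assert that $W_{\supp(w)\cup\supp(w')}$ is finite ``since $w,w'$ have finite length, hence lie in a finite standard parabolic''. This is false---already in the infinite dihedral group every nontrivial element has support~$S$ and $W_S=W$ is infinite. The alternative approach (``analogous argument with complements'') inherits the same unresolved closure-versus-restriction issue as item~2.
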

\begin{proof}
See \cite[4.1, 1.5]{dyer}.
\end{proof}
\begin{proposition} Let $\bw,\bw'\in\bW$ be two simple braids. 
Then for $I\subseteq S$ we have
$\bpi_I(\rightlcm(\bw,\bw'))=\rightlcm(\bpi_I(\bw),\bpi_I(\bw'))$ and
$\bpi_I(\leftgcd(\bw,\bw'))=\leftgcd(\bpi_I(\bw),\bpi_I(\bw'))$.
\end{proposition}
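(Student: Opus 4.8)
The plan is to transport everything to the level of $W$ via $\pr$ and to encode the relevant simple braids by their sets of reflections $N(-)\subseteq T$, then combine Dyer's theorem (Proposition~\ref{Dyer}) with Proposition~\ref{commute pr} and Lemma~\ref{Icoset}(3). First observe that $\bpi_I$ sends a simple braid to a simple braid lying in $\bW_I$: this is the case $n=1$ of Proposition~\ref{fourre-tout}(\ref{ft10}). Hence all braids appearing in the statement are simple, and since $\pr$ is injective on $\bW$ (uniqueness of lifts) and $N$ is injective on $W$, it suffices to show that the two sides of each identity have the same image under $N\circ\pr$. For the right-lcm, assume it exists — equivalently that $\bw,\bw'$ admit a common right-multiple — and put $\bv=\rightlcm(\bw,\bw')$. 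By Proposition~\ref{Dyer}, $N(\pr(\bv))=\overline{N(w)\cup N(w')}$, so by Lemma~\ref{Icoset}(3) and Proposition~\ref{commute pr},
$$N(\pr(\bpi_I(\bv)))=N(\pi_I(\pr(\bv)))=N(\pr(\bv))\cap T_I=\overline{N(w)\cup N(w')}\cap T_I.$$
On the other hand $\bpi_I(\bw)$ and $\bpi_I(\bw')$ left-divide $\bpi_I(\bv)$ by Proposition~\ref{fourre-tout}(\ref{ft8}), hence admit a common right-multiple, and applying Proposition~\ref{Dyer} to the Coxeter system $(W_I,I)$ — for which $N$ agrees with the $N$ of $W$, since $N(v)\subseteq T_I$ when $v\in W_I$ — yields $N(\pr(\rightlcm(\bpi_I(\bw),\bpi_I(\bw'))))=\overline{(N(w)\cap T_I)\cup(N(w')\cap T_I)}$, the closure now taken inside $\Phi_I^+$. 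Thus the right-lcm identity amounts to the equality $\overline{A}\cap\Phi_I^+=\overline{A\cap\Phi_I^+}$ of subsets of $\Phi_I^+$, where $A=N(w)\cup N(w')$ and the closure on the right is that of the subsystem $\Phi_I$. Symmetrically, using the third bullet of Proposition~\ref{Dyer} and taking complements in $T$ resp.\ $T_I$, the left-gcd identity amounts to the same equality for $A=\complement(N(w)\cap N(w'))$.

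So the heart of the matter is the following lemma, valid for an arbitrary $A\subseteq\Phi^+$ and $I\subseteq S$: one has $\overline{A}\cap\Phi_I^+=\overline{A\cap\Phi_I^+}$, the right-hand closure being computed in $\Phi_I$. The inclusion $\supseteq$ is clear because $\Phi_I^+$ is itself closed in $\Phi^+$, so $\overline A\cap\Phi_I^+$ is a closed subset of $\Phi_I^+$ containing $A\cap\Phi_I^+$; in fact this half is already supplied for free by Proposition~\ref{fourre-tout}(\ref{ft8}) (namely $\rightlcm(\bpi_I(\bw),\bpi_I(\bw'))$ left-divides $\bpi_I(\rightlcm(\bw,\bw'))$, and dually $\bpi_I(\leftgcd(\bw,\bw'))$ left-divides $\leftgcd(\bpi_I(\bw),\bpi_I(\bw'))$). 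For the reverse inclusion I would write $\overline A=\bigcup_n A_n$ with $A_0=A$ and $A_{n+1}=A_n\cup\{a\alpha+b\beta\mid\alpha,\beta\in A_n,\ a,b\in\BR_{>0},\ a\alpha+b\beta\in\Phi\}$, and prove by induction on $n$ that $A_n\cap\Phi_I^+\subseteq\overline{A\cap\Phi_I^+}$. The induction step rests on the elementary remark that a positive root $\gamma\in\Phi_I$ which can be written $\gamma=a\alpha+b\beta$ with $\alpha,\beta\in\Phi^+$ and $a,b>0$ forces $\alpha,\beta\in\Phi_I$: looking at the coefficient of $\alpha_s$ for $s\notin I$, the left-hand side is $0$ while the right-hand side is a sum of two non-negative numbers, so both vanish. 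Hence a newly produced $\gamma\in A_{n+1}\cap\Phi_I^+$ already comes from $\alpha,\beta\in A_n\cap\Phi_I^+\subseteq\overline{A\cap\Phi_I^+}$, and closedness of the latter set in $\Phi_I$ puts $\gamma$ in it.

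Once the lemma is in hand, $N\circ\pr$ takes the same value on the two sides of each of the two asserted identities; injectivity of $N$ on $W$ then identifies the two elements of $W$, and uniqueness of the lift to $\bW$ identifies the two simple braids, giving the right-lcm identity (and the left-gcd identity along the same lines via complements). I expect the only genuine obstacle to be the lemma, and inside it the simple-root-support observation guaranteeing that the closure process cannot re-enter $\Phi_I$ through roots lying outside $\Phi_I$; everything else is organisational bookkeeping combining Dyer's theorem with Proposition~\ref{commute pr} and Lemma~\ref{Icoset}(3).
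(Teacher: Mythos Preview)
Your proposal is correct and follows essentially the same route as the paper: both reduce to the identity $\overline{A}\cap T_I=\overline{A\cap T_I}$ (equivalently in $\Phi_I^+$) via Proposition~\ref{Dyer} together with the characterisation $N(\pr(\bpi_I(\bw)))=N(w)\cap T_I$ from Proposition~\ref{commute pr}, and both justify that identity by the observation that if $a\alpha+b\beta\in\Phi_I$ with $\alpha,\beta\in\Phi^+$ and $a,b>0$ then $\alpha,\beta\in\Phi_I$. Your write-up is simply more explicit (you spell out why $\bpi_I(\bw)$ is simple, why the right-lcm on the $\bpi_I$ side exists, and give the inductive description of the closure), while the paper compresses these points into a couple of sentences.
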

\begin{proof}
By   Proposition~\ref{commute   pr},  $\bpi_I(\bw)$   is  characterised  by
 $N(\pr(\bpi_I(\bw)))=N(w)\cap T_I$,  that    is    $(\Phi_I)_{\bpi_I(w)}=
\Phi_w\cap\Phi_I$. 

Thus, for the $\rightlcm$, from Proposition~\ref{Dyer} we have to prove that
taking the closure  of a  set of  positive roots  commutes with  intersecting with
$\Phi_I^+$,  which  is  clear  since  if  for  $\alpha,\beta\in\Phi^+$  and
$a,b\in\BR_{>0}$,  the  root  $a\alpha+b\beta$  is  in  $\BR_{\ge 0}\Pi_I$ then
$\alpha$ and $\beta$ also. This can be written as a formula
$\overline{A\cap T_I}=\overline A\cap T_I$.

For the $\leftgcd$, if we set $A=N(w)\cap N(w')$, we have
$\overline{\complement A}\cap T_I=\overline{\complement A\cap T_I}=
\overline{\complement_I(A\cap T_I)}$ where the first equality is the previous
observation and in the second equality $\complement_I$ denotes the complement in $ T_I$;
this proves the proposition.
\end{proof}
By Proposition~\ref{fourre-tout}(\ref{ft8}), for any positive braids $\bb$ and $\bb'$, 
$\rightlcm(\bpi_I(\bb),\bpi_I(\bb'))$ is a left-divisor of
$\bpi_I(\rightlcm(\bb,\bb'))$ and 
$\bpi_I(\leftgcd(\bb,\bb'))$  is a left-divisor of
$\leftgcd(\bpi_I(\bb),\bpi_I(\bb'))$;
but equality does not generally hold for non-simple braids.

For example, in the braid group of $\mathfrak S_3$
with $\bS=\{\bs,\bt\}$, taking $I=\{s\}$, $\bb=\bs\bt\bt$ and
$\bb'=\bt$ we have $\bpi_I(\bb)=\bs$, $\bpi_I(\bb')=1$, hence  
$\rightlcm(\bpi_I(\bb),\bpi_I(\bb'))=\bs$ but
$\bpi_I(\rightlcm(\bb,\bb'))=\bpi_I(\bs\bt\bs\bt\bs)=\bs\bs$.

A counterexample for the gcd is obtained with $\bb=\bt\bt\bs$, $\bb'=\bs$ and $I=\{s\}$.
We have $\bpi_I(\bb)=\bpi_I(\bb')=\bs$ hence $\leftgcd(\bpi_I(\bb),\bpi_I(\bb'))=\bs$
but $\leftgcd(\bb,\bb')=1$ whose retraction is $1$. 
\section{Retraction on the other side}\label{section3}
Definition~\ref{retract} of $\bpi_I$ is based on right-cosets $W_Iw\in
W_I\backslash W$ and the ``left $I$-tail'' $t_I(w)$. It
is well-behaved regarding the left-divisibility of words: if $b\in(\bS^{\pm 1})^*$ 
is a prefix of $b'$
then $\hpi_I(b)$ is a prefix of $\hpi_I(b')$.  Using left-cosets
$wW_I\in W/W_I$ and the counterpart \index{trI@$t^r_I$} $t^r_I(w)$  of $t_I(w)$,
we obtain another retraction that we
denote by \index{pirI@$\bpi^r_I$} $\bpi^r_I$.  A natural
question is the connection between the two retractions. The next lemma gives an answer to this question.
\begin{lemma}\label{rightProj}
We have $\bpi^r_I(\bb)=\rev(\bpi_I(\rev(\bb))) =
(\bpi_I(\bb\inv))\inv$, where \index{rev@$\rev$} $\rev$ is the unique
antiautomorphism of $B$ that fixes $\bS$. 
\end{lemma}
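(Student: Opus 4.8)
The plan is to establish the two claimed equalities separately, both by reducing to the level of words and invoking the word-level description of $\hpi_I$ given in Proposition~\ref{other way} (or equivalently Definition~\ref{retract}). First I would set up the combinatorial counterpart of the reversal and inversion operations on $(\bS^{\pm1})^*$: for a word $b=\bs_1^{\ve_1}|\cdots|\bs_k^{\ve_k}$, write $\rev(b)=\bs_k^{\ve_k}|\cdots|\bs_1^{\ve_1}$ and $\inverse(b)=\bs_1^{-\ve_1}|\cdots|\bs_k^{-\ve_k}$; note that $b\inv$ is represented by the word $\rev(\inverse(b))=\inverse(\rev(b))$. The definition of $\bpi^r_I$ is, by construction, exactly ``apply $\hpi_I$ but reading the word from the right using the left-cosets $wW_I$ and the right $I$-tail $t^r_I(w)$''; so the first equality $\bpi^r_I(\bb)=\rev(\bpi_I(\rev(\bb)))$ should follow essentially by unwinding definitions, once one checks that the good/bad index data for $b$ read-from-the-right matches the good/bad index data for $\rev(b)$ read-from-the-left. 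Concretely, $w\mapsto t^r_I(w)$ and $w\mapsto t_I(w)$ are intertwined by $w\mapsto w\inv$ (the minimal-length element of $wW_I$ is the inverse of the minimal-length element of $W_I w\inv$), and the condition ``$\lexp{t_I(v)}s\in W_I$'' is symmetric enough under these substitutions that the two retractions produce reversed words. I expect this first equality to be close to a definitional matter, though it needs to be stated carefully since the paper only sketched the definition of $\bpi^r_I$.

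For the second equality, $\rev(\bpi_I(\rev(\bb)))=(\bpi_I(\bb\inv))\inv$, I would work at the word level and prove the identity $\rev(\hpi_I(\rev(b)))=\inverse(\hpi_I(\inverse(b)))$ for every word $b\in(\bS^{\pm1})^*$; passing to images in $B$ and using that $\rev\circ\inverse$ represents $\bb\mapsto\bb\inv$ then gives the claim. Here Remark~\ref{obvious}(4) (equivalently Proposition~\ref{fourre-tout}(\ref{ft9})) already tells us $\hpi_I$ commutes with $\inverse$, so the content is that $\rev\circ\hpi_I\circ\rev$ also commutes with $\inverse$, which reduces to the first equality combined with that remark. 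So the cleanest route is: (i) prove $\bpi^r_I(\bb)=\rev(\bpi_I(\rev(\bb)))$ from the definition of $\bpi^r_I$; (ii) observe $\rev(\bb)=\inverse(\bb\inv)$ in $B$ and use $\hpi_I\circ\inverse=\inverse\circ\hpi_I$ (Remark~\ref{obvious}(4)) together with $\rev\circ\inverse=\inverse\circ\rev=(\ \cdot\ )\inv$ on $B$ to rewrite $\rev(\bpi_I(\rev(\bb)))=\rev(\bpi_I(\inverse(\bb\inv)))=\rev(\inverse(\bpi_I(\bb\inv)))=(\bpi_I(\bb\inv))\inv$.

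The main obstacle, I expect, is step (i): matching the definition of $\bpi^r_I$ with the reversed version of $\hpi_I$. One must be precise about what ``the counterpart $t^r_I(w)$'' means and verify that when Definition~\ref{retract} is run on $\rev(b)$, the index that is $j$-th from the left corresponds to the index that is $j$-th from the right in the right-to-left pass defining $\bpi^r_I(b)$, with matching simple-generator labels $\bs_{f(j)}$ and matching signs $\ve_j$. The key algebraic fact making this work is the identity $t_I(w\inv)=t^r_I(w)\inv$ for $w\in W$, together with Lemma~\ref{lemma 1} applied symmetrically: for $w$ that is reduced-$I$, the element $\lexp s w$ lies in $W_I$ iff $sw$ fails to be reduced-$I$, and in that case $\lexp s w\in I$. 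Once this dictionary is in place the equality of words is a matter of bookkeeping, and the rest of the lemma follows formally as above.
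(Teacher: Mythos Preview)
Your proposal is correct and follows essentially the same route as the paper. The paper's own proof is two sentences: it declares the first equality ``clear'' (from the definition of $\bpi^r_I$ as the left/right mirror of $\bpi_I$), and for the second equality it invokes Proposition~\ref{fourre-tout}(\ref{ft9}) together with the identity $\bb\inv=\inverse(\rev(\bb))$ --- exactly your step (ii) computation $\rev(\bpi_I(\rev(\bb)))=\rev(\bpi_I(\inverse(\bb\inv)))=\rev(\inverse(\bpi_I(\bb\inv)))=(\bpi_I(\bb\inv))\inv$. Your discussion of step~(i) simply unpacks what the paper takes as definitional.
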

\begin{proof}
The first equality is clear and the second equality
 follows from Proposition~\ref{fourre-tout}(\ref{ft9}) and from the equality
 $\bb\inv=\inverse(\rev(\bb))$.
\end{proof}

For $\bb\in B$ we define 
$\bt_I^r(\bb)  = \bb\bpi^r_I(\bb)\inv$; it satisfies the counterpart result of
Lemma~\ref{Icoset}.

The right-retraction counterpart of Proposition~\ref{fourre-tout}(\ref{ft4}) is
\begin{proposition}\label{counterpart}
Let $I,J\subseteq S$ and $\bb\in B$ be such that $\pr(\bb)$ is $I$-reduced-$J$;
then for $\bi\in B_I$ and for $I_1=I\cap \lexp{\pr(\bb)}J$, we have
$\bpi_J^r(\bi\bb)=\varphi\inv_{\pr(\bb)}(\bpi^r_{I_1}(\bi))\bpi_J^r(\bb)$.
\end{proposition}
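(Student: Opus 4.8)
The plan is to deduce Proposition~\ref{counterpart} from its left-handed analogue Proposition~\ref{fourre-tout}(\ref{ft4}) by applying the anti-automorphism $\rev$, exactly as Lemma~\ref{rightProj} relates $\bpi^r_I$ to $\bpi_I$. First I would record the elementary compatibilities of $\rev$ with the data in the statement: $\rev$ exchanges left and right cosets, it sends $B_I$ to itself, and if $w=\pr(\bb)$ is $I$-reduced-$J$ then $w\inv=\pr(\rev(\bb))$ is $J$-reduced-$I$ (since $\rev$ reverses words and $\pr(\rev(\bb))=\pr(\bb)\inv=w\inv$). Likewise $\rev(\bi\bb)=\rev(\bb)\rev(\bi)$ with $\rev(\bi)\in B_J$... wait, $\rev(\bi)\in B_I$; so I should instead write $\rev(\bi\bb)=\rev(\bb)\rev(\bi)$ where $\rev(\bb)$ has projection $w\inv$ which is $J$-reduced-$I$ and $\rev(\bi)\in B_I$.

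Next I would apply Proposition~\ref{fourre-tout}(\ref{ft4}) with the roles of $I$ and $J$ swapped and with $\bb$ replaced by $\rev(\bb)$ and $\bj$ replaced by $\rev(\bi)\in B_I$: this gives $\bpi_J(\rev(\bb)\rev(\bi)) = \bpi_J(\rev(\bb))\,\varphi_{w\inv}(\bpi_{I_1'}(\rev(\bi)))$, where $I_1' = J^{w\inv}\cap I = \lexp{w\inv}J\cap I$. One must check that this set $I_1'$ coincides with the set $I_1 = I\cap\lexp{\pr(\bb)}J$ appearing in the statement; here $\pr(\bb)=w$, so $\lexp{\pr(\bb)}J = \lexp w J$, which is $w J w\inv$, whereas $\lexp{w\inv}J = w\inv J w$. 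These are a priori different, so I need to be careful: in the left-handed Proposition the relevant set is $J_1 = I^{\pr(\bb)}\cap J$, and $I^{\pr(\bb)}$ means $\pr(\bb)\inv I\,\pr(\bb)$ in the paper's exponent convention (as in Lemma~\ref{IredJ}, where $I_1 = I\cap\lexp w J$ with $\lexp w J = wJw\inv$, and $J_1 = I^w\cap J$ with $I^w = w\inv I w$). Applying Proposition~\ref{fourre-tout}(\ref{ft4}) to $\rev(\bb)$ (projection $w\inv$) and to $\bj = \rev(\bi)\in B_I$ — so the "$J$" of that proposition is our $I$ and the ambient "$I$" is our $J$ — the index becomes $(\text{our }I)^{w\inv}\cap(\text{our }I)$... no: it is $(\text{ambient }I)^{\pr(\rev\bb)}\cap(\text{the }J\text{ of the prop})$, i.e. $J^{w\inv}\cap I = (wJw\inv)\cap I = \lexp w J\cap I = I_1$. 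Good — so the index really is $I_1$, matching the statement.

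Having identified the index, I apply $\rev$ to both sides of $\bpi_J(\rev(\bb)\rev(\bi)) = \bpi_J(\rev(\bb))\,\varphi_{w\inv}(\bpi_{I_1}(\rev(\bi)))$. The left side is $\rev(\bpi_J(\rev(\bi\bb))) = \bpi_J^r(\bi\bb)$ by Lemma~\ref{rightProj}. On the right, $\rev$ reverses the product, so I get $\rev(\varphi_{w\inv}(\bpi_{I_1}(\rev(\bi))))\cdot\rev(\bpi_J(\rev(\bb))) = \rev(\varphi_{w\inv}(\bpi_{I_1}(\rev(\bi))))\cdot\bpi_J^r(\bb)$, again by Lemma~\ref{rightProj}. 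It remains to identify the first factor with $\varphi\inv_{\pr(\bb)}(\bpi^r_{I_1}(\bi))$. The bijection $\varphi_{w\inv}: I \to I_1$ (wait: $\varphi_w$ for a ribbon $w$ with $I^w\subseteq S$ is the map $J\to I$ induced by $w\inv$; here $w\inv$ is a $J_1$-ribbon — by the Remark after Lemma~\ref{IredJ}, restricting to the maximal subsets — and $\varphi_{w\inv}$ maps onto $I_1$). I would check on generators $\bi\mapsto\bar\bi$ of $B_{I_1}$ that $\rev\circ\varphi_{w\inv}\circ\rev = \varphi_{w\inv}$ as maps on words (since $\varphi_{w\inv}$ is the natural lift of a bijection of generators, it commutes with $\rev$), so the first factor equals $\varphi_{w\inv}(\rev(\bpi_{I_1}(\rev(\bi)))) = \varphi_{w\inv}(\bpi^r_{I_1}(\bi))$, and finally $\varphi_{w\inv} = \varphi_w\inv = \varphi_{\pr(\bb)}\inv$.

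The routine parts are the bookkeeping with $\rev$ and the exponent conventions; the one genuine point requiring care is the identification of the index $I_1$ and of the ribbon map $\varphi_{w\inv}$ with $\varphi_{\pr(\bb)}\inv$, i.e. making sure that the left/right swap sends the set $J_1 = I^{\pr(\bb)}\cap J$ of Proposition~\ref{fourre-tout}(\ref{ft4}) precisely to $I_1 = I\cap\lexp{\pr(\bb)}J$ and the map $\varphi_{\pr(\bb)}$ to its inverse. I expect that to be the main (though modest) obstacle; once it is settled the proposition follows by a two-line computation. I would write it up as: by Lemma~\ref{rightProj}, $\bpi_J^r(\bi\bb) = \rev(\bpi_J(\rev(\bi\bb))) = \rev(\bpi_J(\rev(\bb)\rev(\bi)))$; since $\pr(\rev(\bb)) = \pr(\bb)\inv$ is $J$-reduced-$I$ and $\rev(\bi)\in B_I$, Proposition~\ref{fourre-tout}(\ref{ft4}) gives $\bpi_J(\rev(\bb)\rev(\bi)) = \bpi_J(\rev(\bb))\,\varphi_{\pr(\bb)\inv}(\bpi_{I_1}(\rev(\bi)))$ with $I_1 = I^{\pr(\bb)\inv}\cap I$... — here I will present the exponent chase carefully — and applying $\rev$, using Lemma~\ref{rightProj} again, the commutation of $\varphi$ with $\rev$, and $\varphi_{\pr(\bb)\inv} = \varphi_{\pr(\bb)}\inv$, yields the claimed formula.
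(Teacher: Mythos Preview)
Your approach is correct and is exactly what the paper intends: the proposition is stated there without proof as ``the right-retraction counterpart of Proposition~\ref{fourre-tout}(\ref{ft4})'', meaning it is obtained by the left/right symmetry encoded in Lemma~\ref{rightProj}, which is precisely your argument via $\rev$. Your index computation $J^{\pr(\bb)\inv}\cap I=\lexp{\pr(\bb)}J\cap I=I_1$ and the identification $\varphi_{\pr(\bb)\inv}=\varphi_{\pr(\bb)}\inv$ are right (note the typo in your final summary line, where you wrote $I^{\pr(\bb)\inv}\cap I$ instead of $J^{\pr(\bb)\inv}\cap I$; your earlier computation had it correctly).
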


\begin{proposition}\label{ribboninv} For $I,J\subseteq S$ and $\bb\in B$
be such that $\pr(\bb)$ is an $I$-ribbon-$J$, we have 
$\bpi_I(\bb)=\varphi_{\pr(\bb)}(\bpi_J(\bb\inv))\inv$, in particular
$$\bpi_I(\bb) = 1 \iff \bpi_J(\bb^{-1}) = 1.$$
\end{proposition}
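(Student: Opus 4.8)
The plan is to reduce everything to Proposition~\ref{counterpart} and the relation between $\bpi^r_I$ and $\bpi_I$ given by Lemma~\ref{rightProj}. First I would observe that the equivalence $\bpi_I(\bb)=1\iff\bpi_J(\bb\inv)=1$ is an immediate consequence of the displayed formula $\bpi_I(\bb)=\varphi_{\pr(\bb)}(\bpi_J(\bb\inv))\inv$, since $\varphi_{\pr(\bb)}$ is a bijection (it sends $\bJ^{\pm1}$ to $\bI^{\pm1}$) and therefore sends $1$ to $1$ and nothing else to $1$. So the whole content is the first formula, and that is what I would concentrate on.

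To prove $\bpi_I(\bb)=\varphi_{\pr(\bb)}(\bpi_J(\bb\inv))\inv$, the key step is to apply Proposition~\ref{fourre-tout}(\ref{ft3}) in the degenerate case $\bb'=1$: since $\pr(\bb)$ is an $I$-ribbon-$J$, we have $\bpi_I(\bb)=\bpi_I(\bb\cdot 1)=\bpi_I(\bb)\varphi_{\pr(\bb)}(\bpi_J(1))=\bpi_I(\bb)$, which is a tautology and not enough. Instead I would use the ribbon hypothesis on $\bb$ together with the fact that $\pr(\bb\inv)=\pr(\bb)\inv$ is then a $J$-ribbon-$I$ (an $I$-ribbon-$J$ element $w$ satisfies that $w\inv$ is reduced-$J$, $J$-reduced and $\lexp{w\inv}I=J$, so $w\inv$ is a $J$-ribbon-$I$ with $\varphi_{w\inv}=\varphi_w\inv$). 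Writing $w=\pr(\bb)$, apply Proposition~\ref{fourre-tout}(\ref{ft3}) with the roles of $I$ and $J$ exchanged to the product $\bb\inv\cdot\bb=1$: we get
$$1=\bpi_J(\bb\inv\bb)=\bpi_J(\bb\inv)\,\varphi_{w\inv}\bigl(\bpi_I(\bb)\bigr).$$
Rearranging and using $\varphi_{w\inv}=\varphi_w\inv$ gives $\varphi_w\inv(\bpi_I(\bb))=\bpi_J(\bb\inv)\inv$, hence $\bpi_I(\bb)=\varphi_w(\bpi_J(\bb\inv)\inv)=\varphi_w(\bpi_J(\bb\inv))\inv$, which is exactly the claimed identity.

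The main point to get right is the bookkeeping: checking that $\pr(\bb\inv)$ really is a $J$-ribbon-$I$ with inverse ribbon automorphism, and checking that Proposition~\ref{fourre-tout}(\ref{ft3}) applies to the pair $(J,I)$ and the factorization $\bb\inv\cdot\bb$ — i.e., that the first factor $\bb\inv$ has $\pr(\bb\inv)$ a $J$-ribbon-$I$, which is precisely what we verified. Alternatively, and perhaps more cleanly, one can derive the formula from Lemma~\ref{rightProj} and Proposition~\ref{counterpart}: since a ribbon is in particular $I$-reduced-$J$, Proposition~\ref{counterpart} with $\bi=1$ and $I_1=I$ (as $\lexp wJ=I$ when $w$ is an $I$-ribbon-$J$, wait — here $I_1=I\cap\lexp wJ=I$) relates $\bpi^r_J$ to $\bpi^r_I$, and then Lemma~\ref{rightProj} converts this back into a statement about $\bpi_J(\bb\inv)$ and $\bpi_I(\bb)$. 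Either route is short; I would present the first one since it uses only Proposition~\ref{fourre-tout}(\ref{ft3}). I do not expect any serious obstacle: everything reduces to a one-line application of the ribbon multiplicativity formula to $\bb\inv\bb=1$ once the symmetry $I\leftrightarrow J$, $\bb\leftrightarrow\bb\inv$ of the ribbon condition is recorded.
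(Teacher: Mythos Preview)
Your argument is correct and is essentially the paper's proof: apply Proposition~\ref{fourre-tout}(\ref{ft3}) to the trivial product and read off the identity. The only difference is that the paper applies it to $\bb\cdot\bb\inv$ rather than to $\bb\inv\cdot\bb$. Since $\pr(\bb)$ is already an $I$-ribbon-$J$, one gets directly
\[
1=\bpi_I(\bb\bb\inv)=\bpi_I(\bb)\,\varphi_{\pr(\bb)}(\bpi_J(\bb\inv)),
\]
and the formula follows. Your route via $\bb\inv\cdot\bb$ works too, but it requires the extra (easy) verification that $\pr(\bb)\inv$ is a $J$-ribbon-$I$ with $\varphi_{\pr(\bb)\inv}=\varphi_{\pr(\bb)}\inv$; the paper's choice sidesteps this. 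In short, the ``degenerate case $\bb'=1$'' you discarded was not quite the right degeneration: taking $\bb'=\bb\inv$ in Proposition~\ref{fourre-tout}(\ref{ft3}) is the one-line proof.
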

\begin{proof}
We have $1 = \bpi_I(1) =
\bpi_I(\bb\bb^{-1})=\bpi_I(\bb)\varphi_{\pr(\bb)}(\bpi_J(\bb^{-1}))$, 
where the last equality is by Proposition~\ref{fourre-tout}(\ref{ft3}), 
whence the proposition.
\end{proof}
 
\begin{proposition}\label{ajout F} Let $I,J \subseteq S$  and $\bb$ in $B$ be such
that $\pr(\bb)$ is  reduced-$J$. Let $J_1=I^{\pr(\bb)}\cap J$; then 
$\pr(\bt_I(\bb))$ is a ribbon-$J_1$ and for any $\bj\in B_J$ 
one has $$\bpi_I(\bb\bj)=\bpi_I(\bb)\varphi_{\pr(\bt_I(\bb))}(\bpi_{J_1}(\bj))=
\bpi_I(\bb)\bpi_{I_1}(\bt_I(\bb)\bj)$$ where
$I_1=\lexp{\pr(\bt_I(\bb)}J_1=I^{\pr(\bpi_I(\bb))}\cap \lexp{\pr(\bt_I(\bb))}J$.
In particular, when $\bpi_I(\bb) = 1$ we have $\bpi_I(\bb\bj) = \bpi_{I_1}(\bb
 \bj)$.    
\end{proposition}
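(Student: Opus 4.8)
The plan is to derive everything from Proposition~\ref{fourre-tout}(\ref{ft4}) (equivalently Proposition~\ref{pi_I(rb)V2}), applied not to $\bb$ itself but to its ``$I$-tail'' $\bt_I(\bb)$, via the factorisation $\bb=\bpi_I(\bb)\bt_I(\bb)$. First I would set $w=\pr(\bb)$ and $v=\pr(\bt_I(\bb))$; by Lemma~\ref{Icoset}(2) we have $v=t_I(w)$, so $v$ is $I$-reduced, while $\pr(\bpi_I(\bb))=\pi_I(w)\in W_I$ by Lemma~\ref{Icoset}(3) and $\bpi_I(\bt_I(\bb))=1$ by Lemma~\ref{Icoset}(1). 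The one genuinely new Coxeter-combinatorial point is that $v$ is still reduced-$J$: writing $w=\pi_I(w)v$ with $\ell_S(w)=\ell_S(\pi_I(w))+\ell_S(v)$, for $j\in W_J$ the chain $\ell_S(w)+\ell_S(j)=\ell_S(wj)\le\ell_S(\pi_I(w))+\ell_S(vj)\le\ell_S(\pi_I(w))+\ell_S(v)+\ell_S(j)=\ell_S(w)+\ell_S(j)$ is forced to be an equality throughout, whence $\ell_S(vj)=\ell_S(v)+\ell_S(j)$. So $v$ is $I$-reduced-$J$.

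Granting that, the ribbon assertion is immediate: applying Lemma~\ref{IredJ} and the Remark following it to the $I$-reduced-$J$ element $v$ shows that $\pr(\bt_I(\bb))=v$ is an $I_1$-ribbon-$J_1$, with $J_1$ the maximal subset of $J$ having $\lexp{v}{J_1}\subseteq I$ and $I_1=\lexp{v}{J_1}\subseteq I$ the corresponding subset of $I$; identifying $J_1$ and $I_1$ with the expressions written in the statement in terms of $\pr(\bb)$ and $\pr(\bpi_I(\bb))$ is then a direct computation using $\pr(\bb)=\pr(\bpi_I(\bb))\,\pr(\bt_I(\bb))$. In particular $v$ is also $I_1$-reduced-$J$, since $I_1\subseteq I$.

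For the displayed formula, I would first apply Proposition~\ref{fourre-tout}(\ref{ft2}) to $\bb=\bpi_I(\bb)\bt_I(\bb)$ (legitimate since $\pr(\bpi_I(\bb))\in W_I$), together with the idempotency of $\bpi_I$ (Proposition~\ref{fourre-tout}(\ref{ft6})), to get $\bpi_I(\bb\bj)=\bpi_I(\bb)\,\bpi_I(\bt_I(\bb)\bj)$ for any $\bj\in B_J$. Since $v$ is $I$-reduced-$J$, Proposition~\ref{fourre-tout}(\ref{ft4}) applied to $\bt_I(\bb)$ gives $\bpi_I(\bt_I(\bb)\bj)=\bpi_I(\bt_I(\bb))\,\varphi_v(\bpi_{J_1}(\bj))=\varphi_{\pr(\bt_I(\bb))}(\bpi_{J_1}(\bj))$ using $\bpi_I(\bt_I(\bb))=1$; this is the first equality. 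For the second, apply Proposition~\ref{fourre-tout}(\ref{ft4}) once more but with $B_{I_1}$ in place of $B_I$ (valid because $v$ is $I_1$-reduced-$J$): $\bpi_{I_1}(\bt_I(\bb)\bj)=\bpi_{I_1}(\bt_I(\bb))\,\varphi_v(\bpi_{I_1^v\cap J}(\bj))$, where $\bpi_{I_1}(\bt_I(\bb))=\bpi_{I_1}(\bpi_I(\bt_I(\bb)))=1$ because $\bpi_{I_1}\circ\bpi_I=\bpi_{I_1}$ (which descends from Corollary~\ref{transitivity} since $I_1\subseteq I$), and $I_1^v\cap J=J_1$ because $I_1=\lexp{v}{J_1}$. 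Hence $\bpi_{I_1}(\bt_I(\bb)\bj)=\varphi_v(\bpi_{J_1}(\bj))=\bpi_I(\bt_I(\bb)\bj)$, and multiplying on the left by $\bpi_I(\bb)$ yields $\bpi_I(\bb)\bpi_{I_1}(\bt_I(\bb)\bj)=\bpi_I(\bb\bj)$. The ``in particular'' clause is the case $\bpi_I(\bb)=1$, where $\bb=\bt_I(\bb)$ and the middle terms collapse: $\bpi_I(\bb\bj)=\bpi_I(\bt_I(\bb)\bj)=\bpi_{I_1}(\bt_I(\bb)\bj)=\bpi_{I_1}(\bb\bj)$.

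I expect the main obstacle to be not the applications of Proposition~\ref{fourre-tout}(\ref{ft4}) themselves but the combinatorial bookkeeping: one must carefully match the intrinsic intersection data of the $I$-reduced-$J$ element $v=\pr(\bt_I(\bb))$ (namely $I^v\cap J$ and $I\cap\lexp{v}{J}$, which govern its ribbon structure) with the expressions written in the statement in terms of $\pr(\bb)$ and $\pr(\bpi_I(\bb))$, being careful that $\pr(\bb)$ and $\pr(\bt_I(\bb))$ differ by the $W_I$-factor $\pr(\bpi_I(\bb))$, which need not normalise $I$ as a set of simple reflections. It is worth double-checking these identifications against a small example (e.g.\ in type $A_4$ with $I$ of rank $2$) before fixing the precise form of $J_1$ and $I_1$.
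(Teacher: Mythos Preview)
Your proof is essentially the same as the paper's: factor $\bb=\bpi_I(\bb)\bt_I(\bb)$, use Proposition~\ref{fourre-tout}(\ref{ft2}) to pull off $\bpi_I(\bb)$, then apply Proposition~\ref{fourre-tout}(\ref{ft4}) to $\bt_I(\bb)$ (once with $I$, once with $I_1$, invoking Corollary~\ref{transitivity} for $\bpi_{I_1}(\bt_I(\bb))=1$). You supply the explicit length argument showing $v=\pr(\bt_I(\bb))$ is reduced-$J$, which the paper simply asserts; your caveat about carefully matching $I^v\cap J$ with the statement's $J_1=I^{\pr(\bb)}\cap J$ is well placed, since the paper glosses over exactly this identification.
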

\begin{proof}
For any $s\in J_1$ we have
$\lexp{\pr(\bb)}s\in I$. Writing $\pr(\bb)=
\pr(\bpi_I(\bb))\pr(\bt_I(\bb))$ we deduce that $\lexp{\pr(\bt_I(\bb))}s$
lies in $W_I$, hence in $I$ by Lemma~\ref{lemma 1}.
Since $\bt_I(\bb)$ is $I$-reduced-$J$ we deduce that it 
is an $I_1$-ribbon-$J_1$.
Writing $\bpi_I(\bb\bj)=\bpi_I(\bb)\bpi_I(\bt_I(\bb)\bj)$,
 we can apply Proposition~\ref{fourre-tout}(\ref{ft4}) with $\bt_I(\bb)$ for $\bb$,
whence $\bpi_I(\bb\bj)=
\bpi_I(\bb)\varphi_{\pr(\bt_I(\bb))}(\bpi_{J_1}(\bj))$. The equality 
$\varphi_{\pr(\bt_I(\bb))}(\bpi_{J_1}(\bj))=\bpi_{I_1}(\bt_I(\bb)\bj)$ comes from
 Proposition~\ref{fourre-tout}(\ref{ft4}) with $\bt_I(\bb)$ for $\bb$ and $I_1$ for $I$, taking in account
 the fact that $\bpi_{I_1}(\bt_I(\bb))=1$ by Corollary~\ref{transitivity}.
\end{proof}
The right-counterpart of Proposition~\ref{ajout F} is
\begin{proposition} \label{pi_I(rb)V2modif} Let $I,J\subseteq S$ and $\bb$ in
$B$ be such
that $\pr(\bb)$ is  $I$-reduced.   Let $I_1=I\cap \lexp{\pr(\bb)}J$; then
for any $\bi\in B_I$ we have $\pi^r_J(\bi\bb) =   \pi^r_{J_1}\big(\bi
t_J^r(\bb)\big) \pi^r_J(\bb)$, where
$J_1=I_1^{\pr(\bt_J^r(\bb)}=I^{\pr(\bt^r_I(\bb))}\cap \lexp{\pr(\bpi^r_I(\bb))}J$.
In particular, when $\pi^r_J(\bb) = 1$ we have
$\pi^r_J(\bi\bb) = \pi^r_{J_1}(\bi \bb)$. 
\end{proposition}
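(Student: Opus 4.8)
The plan is to deduce this statement from its already-proved left analogue, Proposition~\ref{ajout F}, by transporting through the anti-automorphism $\bb\mapsto\bb\inv$ of $B$. The bridge is Lemma~\ref{rightProj}, which gives $\bpi^r_K(\bc)=(\bpi_K(\bc\inv))\inv$ for every $K\subseteq S$ and $\bc\in B$; applied to $\bc=\bi\bb$ it turns the left-hand side into $\bpi^r_J(\bi\bb)=(\bpi_J(\bb\inv\bi\inv))\inv$. Since $\pr(\bb)$ is $I$-reduced, $\pr(\bb\inv)=\pr(\bb)\inv$ is reduced-$I$ and $\bi\inv\in B_I$, so Proposition~\ref{ajout F} applies to the product $\bb\inv\bi\inv$ with $(\bb\inv,J,I,\bi\inv)$ playing the roles of $(\bb,I,J,\bj)$ there.

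First I would track the index sets produced by that application. The set there denoted $J_1=I^{\pr(\bb)}\cap J$ becomes, after substitution, $J^{\pr(\bb\inv)}\cap I=\lexp{\pr(\bb)}{J}\cap I=I_1$ (using $X^{w\inv}=\lexp{w}{X}$), so the present $I_1$ is the one playing that role; Proposition~\ref{ajout F} then exhibits $\pr(\bt_J(\bb\inv))$ as an $\tilde I_1$-ribbon-$I_1$, where $\tilde I_1=\lexp{\pr(\bt_J(\bb\inv))}{I_1}=J^{\pr(\bpi_J(\bb\inv))}\cap\lexp{\pr(\bt_J(\bb\inv))}{I}$, together with the formula
$$\bpi_J(\bb\inv\bi\inv)=\bpi_J(\bb\inv)\,\bpi_{\tilde I_1}\bigl(\bt_J(\bb\inv)\bi\inv\bigr).$$

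Next I would invert this and pass to right-retraction notation. By Lemma~\ref{rightProj} one has $\bpi_J(\bb\inv)\inv=\bpi^r_J(\bb)$, and hence $\bt_J(\bb\inv)=\bpi_J(\bb\inv)\inv\bb\inv=\bpi^r_J(\bb)\bb\inv=\bt^r_J(\bb)\inv$; therefore $\bt_J(\bb\inv)\bi\inv=(\bi\,\bt^r_J(\bb))\inv$, so $\bigl(\bpi_{\tilde I_1}(\bt_J(\bb\inv)\bi\inv)\bigr)\inv=\bpi^r_{\tilde I_1}(\bi\,\bt^r_J(\bb))$ by Lemma~\ref{rightProj} again. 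Inverting the displayed identity then yields $\bpi^r_J(\bi\bb)=\bpi^r_{\tilde I_1}(\bi\,\bt^r_J(\bb))\,\bpi^r_J(\bb)$. It remains to check that $\tilde I_1$ equals the $J_1$ of the statement: since $\pr$ is a homomorphism it commutes with inversion, so $\pr(\bpi_J(\bb\inv))=\pr(\bpi^r_J(\bb))\inv$ and $\pr(\bt_J(\bb\inv))=\pr(\bt^r_J(\bb))\inv$, whence $\tilde I_1=\lexp{\pr(\bpi^r_J(\bb))}{J}\cap I^{\pr(\bt^r_J(\bb))}$ and also $\tilde I_1=I_1^{\pr(\bt^r_J(\bb))}$ --- exactly the two descriptions of $J_1$. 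The last sentence of the proposition is the case $\bpi^r_J(\bb)=1$, where $\bt^r_J(\bb)=\bb$.

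The only real work is this index bookkeeping: matching the $J_1$ of Proposition~\ref{ajout F} with the present $I_1$, and the $I_1$ there with the $J_1$ here. This comes down to keeping the conventions $X^w=w\inv Xw$ and $\lexp{w}{X}=wXw\inv$ straight together with the fact that $\pr$ commutes with inversion, so I do not expect any genuine obstacle. Alternatively one could argue directly, mirroring the proof of Proposition~\ref{ajout F} but substituting Proposition~\ref{counterpart} for Proposition~\ref{fourre-tout}(\ref{ft4}); the inversion route is preferable here because it reuses Proposition~\ref{ajout F} without modification.
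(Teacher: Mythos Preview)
Your proof is correct and is exactly the approach the paper intends: the paper gives no proof at all, merely announcing the proposition as ``the right-counterpart of Proposition~\ref{ajout F}'', and the transport via $\bb\mapsto\bb\inv$ using Lemma~\ref{rightProj} is precisely how such a right-counterpart is obtained. Your index bookkeeping is sound; note incidentally that your derivation yields $J_1=\lexp{\pr(\bpi^r_J(\bb))}J\cap I^{\pr(\bt^r_J(\bb))}$ with subscript $J$, whereas the paper's statement writes subscript $I$ in the second description of $J_1$ --- this is a typo in the paper, and your version is the correct one.
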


\begin{proposition}\label{prepdbcoset}Let $I,J\subseteq S$ and $\bb\in B$;
set $\bb_1 = \bt^r_J(\bt_I(\bb))$.  Then 
\begin{enumerate}
\item $\bpi_I(\bb_1)=\bpi^r_J(\bb_1)=1$. In particular $\pr(\bb_1)$ is $I$-reduced-$J$.
\item  Let $I_1= I\cap\lexp{\pr(\bb_1)}J$ and $J_1=I^{\pr(\bb_1)}\cap J$;
then 
$$\bpi^r_J(\bb)=\varphi\inv_{\pr(\bb_1)}(\bpi^r_{I_1}(\bpi_I(\bb)))\bpi^r_J(\bt_I(\bb)).$$
\end{enumerate}
\end{proposition}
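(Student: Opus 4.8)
The plan is to set $\bh=\bt_I(\bb)$, so that by definition $\bb_1=\bt^r_J(\bh)=\bh\,\bpi^r_J(\bh)\inv$; writing $\bj=\bpi^r_J(\bh)$, which lies in $B_J$, we get the factorisation $\bh=\bb_1\bj$. First I would record the cheap facts: $\bpi^r_J(\bb_1)=\bpi^r_J(\bt^r_J(\bh))=1$ by the right‑hand counterpart of Lemma~\ref{Icoset}(1), and $\pr(\bb_1)=t^r_J(t_I(\pr(\bb)))$ by Lemma~\ref{Icoset}(2) and its right‑hand counterpart. This latter element is reduced‑$J$, being a minimal‑length coset representative, and it is $I$‑reduced: $t_I(\pr(\bb))$ is $I$‑reduced, and for any $I$‑reduced $v\in W$ the representative $t^r_J(v)$ of $vW_J$ is again $I$‑reduced (if some $s\in I$ were a left descent of $t^r_J(v)$, then from $v=t^r_J(v)j_0$ with $j_0\in W_J$ and $\ell_S(v)=\ell_S(t^r_J(v))+\ell_S(j_0)$ we would get $\ell_S(sv)<\ell_S(v)$). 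So $\pr(\bb_1)$ is $I$‑reduced‑$J$, and by the Remark after Lemma~\ref{IredJ} it is an $I_1$‑ribbon‑$J_1$ with $I_1,J_1$ exactly as in the statement; the final clause of~(1) then follows once $\bpi_I(\bb_1)=1$ is established.

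For that, I would apply Proposition~\ref{fourre-tout}(\ref{ft4}) to $\bh=\bb_1\bj$ (legitimate since $\pr(\bb_1)$ is $I$‑reduced‑$J$ and $\bj\in B_J$): using $\bpi_I(\bh)=\bpi_I(\bt_I(\bb))=1$ from Lemma~\ref{Icoset}(1) this gives
$$1=\bpi_I(\bb_1)\,\varphi_{\pr(\bb_1)}(\bpi_{J_1}(\bj)),\qquad\text{hence}\qquad \bpi_I(\bb_1)=\varphi_{\pr(\bb_1)}(\bpi_{J_1}(\bj))\inv\in B_{I_1}.$$
It thus suffices to prove $\bpi_{J_1}(\bj)=1$. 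Since $\bpi_I(\bb_1)\in B_{I_1}$, Corollary~\ref{transitivity} together with Proposition~\ref{fourre-tout}(\ref{ft6}) gives $\bpi_{I_1}(\bb_1)=\bpi_{I_1}(\bpi_I(\bb_1))=\bpi_I(\bb_1)$; on the other hand Proposition~\ref{ribboninv}, applied to the $I_1$‑ribbon‑$J_1$ element $\bb_1$, gives $\bpi_{I_1}(\bb_1)=\varphi_{\pr(\bb_1)}(\bpi_{J_1}(\bb_1\inv))\inv$. Comparing these with the displayed expression for $\bpi_I(\bb_1)$ and using injectivity of $\varphi_{\pr(\bb_1)}$ yields $\bpi_{J_1}(\bj)=\bpi_{J_1}(\bb_1\inv)$. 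But $\bpi_{J_1}(\bb_1\inv)=\bpi^r_{J_1}(\bb_1)\inv$ by Lemma~\ref{rightProj}, and $\bpi^r_{J_1}(\bb_1)=\bpi^r_{J_1}(\bpi^r_J(\bb_1))=1$ by the right‑hand counterpart of Corollary~\ref{transitivity} (as $J_1\subseteq J$) together with $\bpi^r_J(\bb_1)=1$. Hence $\bpi_{J_1}(\bj)=1$, so $\bpi_I(\bb_1)=1$, which finishes~(1).

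For~(2), I would write $\bb=\bpi_I(\bb)\,\bh=\bpi_I(\bb)\,\bb_1\,\bj$ and apply $\bpi^r_J$. As $\bj\in B_J$, the right‑hand counterparts of Proposition~\ref{fourre-tout}(\ref{ft2}) and~(\ref{ft6}) give $\bpi^r_J(\bb)=\bpi^r_J(\bpi_I(\bb)\,\bb_1)\,\bj$. Since $\pr(\bb_1)$ is $I$‑reduced‑$J$ and $\bpi_I(\bb)\in B_I$, Proposition~\ref{counterpart} gives $\bpi^r_J(\bpi_I(\bb)\,\bb_1)=\varphi\inv_{\pr(\bb_1)}(\bpi^r_{I_1}(\bpi_I(\bb)))\,\bpi^r_J(\bb_1)$, which by~(1) equals $\varphi\inv_{\pr(\bb_1)}(\bpi^r_{I_1}(\bpi_I(\bb)))$. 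Recalling $\bj=\bpi^r_J(\bt_I(\bb))$, this is precisely the asserted formula.

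The step I expect to be the real obstacle is $\bpi_I(\bb_1)=1$: its reflection in $W$ (namely that $\pr(\bb_1)$ is $I$‑reduced) is elementary but does not by itself force $\bpi_I(\bb_1)=1$ in $B$, and the key is the ribbon‑inversion identity of Proposition~\ref{ribboninv}, which transports the (immediate) vanishing of $\bpi^r_J(\bb_1)$ to the vanishing of $\bpi_I(\bb_1)$. A secondary nuisance is checking that the index sets produced by Propositions~\ref{fourre-tout}(\ref{ft4}), \ref{ribboninv} and~\ref{counterpart} match $I_1,J_1$ of the statement; this works out because $\pr(\bb_1)$ is $I$‑reduced‑$J$, equivalently an $I_1$‑ribbon‑$J_1$.
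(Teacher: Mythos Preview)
Your proof is correct and follows essentially the same strategy as the paper's: establish that $\pr(\bb_1)$ is $I$-reduced-$J$, use transitivity and Lemma~\ref{rightProj} to get $\bpi_{J_1}(\bb_1\inv)=1$, invoke Proposition~\ref{ribboninv} to obtain $\bpi_{I_1}(\bb_1)=1$, show $\bpi_I(\bb_1)\in B_{I_1}$, and conclude via $\bpi_I(\bb_1)=\bpi_{I_1}(\bb_1)$; part~(2) is handled identically in both. The one noteworthy difference is that, to show $\bpi_I(\bb_1)\in B_{I_1}$, the paper introduces $\bt_I(\bb_1)$ and appeals to Proposition~\ref{ajout F}, whereas you apply Proposition~\ref{fourre-tout}(\ref{ft4}) directly to $\bh=\bb_1\bj$ (legitimate since you have already shown $\pr(\bb_1)$ is $I$-reduced-$J$); this is a mild streamlining. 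Your detour through the equality $\bpi_{J_1}(\bj)=\bpi_{J_1}(\bb_1\inv)$ is unnecessary---once you have $\bpi_I(\bb_1)\in B_{I_1}$ and $\bpi_{I_1}(\bb_1)=1$ from Proposition~\ref{ribboninv} you are done---but it is harmless.
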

\begin{proof}
 First,  $\bpi_I(\bt_I(\bb)) = 1$ and
$\pr(\bt_I(\bb))$ is $I$-reduced by Lemma~\ref{Icoset}. For a similar reason,
$\pr(\bb_1)$ is reduced-$J$, hence it is  $I$-reduced-$J$ since it
has to left-divide (be smaller for the weak order than) 
$\pr(\bt_I(\bb))$ in $W$.  By definition of $\bb_1$ we have
$\bpi^r_J(\bb_1)=1$.
Since $J_1\subseteq J$ one has
$\bpi^r_{J_1}(\bb_1) = \bpi^r_{J_1}(\bpi^r_J(\bb_1))= 1$. Applying 
Lemma~\ref{rightProj} we get
 $\bpi_{J_1}(\bb_1^{-1}) = 1$ and by Propositions~\ref{ribboninv} and~\ref{ajout F}, we get that
 $\bpi_{I_1}(\bb_1) = 1$. Since $\pr(\bb_1)$ is $I$-reduced-$J$, applying
 again Lemma~\ref{Icoset}, we get $\pr(\bt_I(\bb_1)) = \pr(\bb_1)$ and
 $\bpi_I(\bt_I(\bb_1)) = 1$. By Proposition~\ref{ajout F} we deduce
 that $\bpi_I\big(\bt_I(\bb_1)\bpi_J^r(\bt_I(\bb))\big) =
 \bpi_{I_1}\big(\bt_I(\bb_1)\bpi_J^r(\bt_I(\bb))\big)$, and therefore belongs to
 $B_{I_1}$.  But now $1 = \bpi_I(\bt_I(\bb)) =
\bpi_I\big(\bpi_I(\bb_1)\bt_I(\bb_1)\bpi_J^r(\bt_I(\bb))\big) = \bpi_I(\bb_1)
\bpi_I\big(\bt_I(\bb_1)\bpi_J^r(\bt_I(\bb))\big)$. So $\bpi_I(\bb_1)$ is equal to
$\big(\bpi_I\big(\bt_I(\bb_1)\bpi_J^r(\bt_I(\bb))\big)\big)^{-1}$ and belongs to $B_{I_1}$ too. This imposes
$\bpi_I(\bb_1)  = \bpi_{I_1}(\bpi_I(\bb_1))=\bpi_{I_1}(\bb_1)=1$, the second
equality by Corollary~\ref{transitivity}.
This concludes the proof of (1).

To prove (2) we decompose $\bb$ as $\bpi_I(\bb)\bb_1\bpi_J^r(\bt_I(\bb))$.
 Applying $\bpi^r_J$, we get
$\bpi_J^r(\bb)=\bpi_J^r(\bpi_I(\bb)\bb_1)\bpi_J^r(\bt_I(\bb))$  (since a  term
 in $B_J$  factors  out  on  the  right  in  $\bpi_J^r$)  and  then we apply
Proposition~\ref{counterpart}  with  $\bpi_I(\bb)$  for $\bb'$ and
$\bb_1$ for $\bb$.
\end{proof}

\begin{corollary} Let $I,J\subseteq S$ and $\bb\in B$ be such that
$\pr(\bb)$ is a $I$-ribbon-$J$; then
$\bpi_I(\bb)=\varphi_{\pr(\bb)}(\bpi^r_J(\bb))$.
\end{corollary}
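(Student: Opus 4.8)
The plan is to obtain the corollary as an immediate consequence of Proposition~\ref{ribboninv} and Lemma~\ref{rightProj}, together with the fact that $\varphi_{\pr(\bb)}$ is a group isomorphism and hence commutes with taking inverses. First I would invoke Proposition~\ref{ribboninv}: since $\pr(\bb)$ is an $I$-ribbon-$J$ it gives $\bpi_I(\bb)=\varphi_{\pr(\bb)}(\bpi_J(\bb\inv))\inv$. The next step is to move the inverse inside $\varphi_{\pr(\bb)}$. The map $\varphi_{\pr(\bb)}$ is induced by the conjugation isomorphism $W_J\xrightarrow{\sim}W_I$, which preserves the relevant submatrix of the Coxeter matrix, so it lifts to a group isomorphism $B_J\to B_I$; in particular $\varphi_{\pr(\bb)}(x)\inv=\varphi_{\pr(\bb)}(x\inv)$ for $x\in B_J$, and applying this with $x=\bpi_J(\bb\inv)$ gives $\bpi_I(\bb)=\varphi_{\pr(\bb)}(\bpi_J(\bb\inv)\inv)$. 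Finally, Lemma~\ref{rightProj} states precisely that $\bpi^r_J(\bb)=\bpi_J(\bb\inv)\inv$, and substituting this in yields $\bpi_I(\bb)=\varphi_{\pr(\bb)}(\bpi^r_J(\bb))$, as required.

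There is essentially no obstacle: the argument is just three substitutions. The only point that deserves an explicit mention is that $\varphi_w$ is a genuine group homomorphism $B_J\to B_I$ and not merely the bijection of word sets appearing in the definition of ribbon, so that it commutes with inversion; this is standard and is the same fact already implicitly used whenever $\varphi_w$ occurs in Proposition~\ref{fourre-tout}(\ref{ft3}) and in Proposition~\ref{ribboninv} itself. (Equivalently, one can read the corollary off Proposition~\ref{fourre-tout}(\ref{ft3}) applied with $\bb\inv$ in place of $\bb'$, which yields $1=\bpi_I(\bb)\varphi_{\pr(\bb)}(\bpi_J(\bb\inv))$, and then conclude via Lemma~\ref{rightProj} exactly as above; this is really the same proof.)
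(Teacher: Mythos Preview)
Your proof is correct and follows the same route as the paper: apply Proposition~\ref{ribboninv}, use Lemma~\ref{rightProj} to rewrite $\bpi_J(\bb\inv)$ in terms of $\bpi^r_J(\bb)$, and use that $\varphi_{\pr(\bb)}$ is a group homomorphism to handle the inverse. The only cosmetic difference is the order in which you move the inverse through $\varphi_{\pr(\bb)}$ versus substitute via Lemma~\ref{rightProj}; you are also slightly more explicit than the paper in justifying that $\varphi_{\pr(\bb)}$ commutes with inversion.
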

\begin{proof}
 By Proposition~\ref{ribboninv}, we have
$\bpi_I(\bb)=\varphi_{\pr(\bb)}(\bpi_J(\bb\inv))\inv=\varphi_{\pr(\bb)}(\bpi_J^r(\bb)\inv)\inv$,
the last equality by Proposition~\ref{rightProj}. 
\end{proof}

In general $\bt^r_J(\bt_I(\bb))\ne \bt_I(\bt^r_J(\bb))$. For instance in the 
braid group of
type $A_2$,  with $\bS=\{\bs,\bt\}$, let $I=\{s\}= J$  and
$\bb=\bs\bt^2\bs$. We have $\bt^r_J(\bt_I(\bb)) = \bs\inv\bt^2\bs$  and
$\bt_I(\bt^r_J(\bb)) = \bs\bt^2\bs\inv$  which differ since $\bs^2\bt^2\neq
\bt^2\bs^2$. In particular the double-coset $B_I\bb B_J$ generally does not contain
a unique element  $\bb_0$ such that $\bpi_I(\bb_0) = \bpi_J(\bb_0) = 1$.

\begin{proposition}\label{propdbcoset}
Let  $I,J\subseteq S$. Let $\bb_0, \bb_1$ be in $B$ such that $\bpi_I(\bb_0) =
\bpi^r_J(\bb_0)  = \bpi_I(\bb_1) = \bpi^r_J(\bb_1) = 1$. Let $I_1=I\cap\lexp{\pr(\bb_0)}J$ and
$J_1=I^{\pr(\bb_0)}\cap J$; then the following are equivalent:
\begin{enumerate}
\item $B_I\bb_0 B_J = B_I\bb_1 B_J$.
\item $B_{I_1}\bb_0 B_{J_1} = B_{I_1}\bb_1 B_{J_1}$.
\end{enumerate}
Furthermore for any $\bi\in B_I$ and  $\bj\in B_J$ such that 
$\bi\bb_0  = \bb_1\bj$ we have $\bi\in B_{I_1}$, $\bj\in B_{J_1}$
and $\bi=\varphi_{\pr(\bb_0)}(\bj)$.  
\end{proposition}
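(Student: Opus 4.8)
The plan is to reduce the assertion to a single equation $\bi\bb_0=\bb_1\bj$ and apply the left and right retractions $\bpi_I$ and $\bpi^r_J$ to it. First the preliminaries: the hypotheses $\bpi_I(\bb_0)=\bpi^r_J(\bb_0)=1$ give $\bb_0=\bt_I(\bb_0)=\bt^r_J(\bb_0)$, so by Lemma~\ref{Icoset} and its right-hand counterpart $\pr(\bb_0)$ is $I$-reduced-$J$; by the Remark after Lemma~\ref{IredJ} it is then an $I_1$-ribbon-$J_1$ for exactly the sets $I_1=I\cap\lexp{\pr(\bb_0)}J$ and $J_1=I^{\pr(\bb_0)}\cap J$ of the statement, so $\varphi_{\pr(\bb_0)}$ restricts to a group isomorphism $B_{J_1}\xrightarrow{\sim}B_{I_1}$ with inverse $\varphi\inv_{\pr(\bb_0)}$. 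The implication (2)$\Rightarrow$(1) is immediate from $B_{I_1}\subseteq B_I$ and $B_{J_1}\subseteq B_J$, so the content lies entirely in the \emph{Furthermore} part, from which (1)$\Rightarrow$(2) will follow.

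For the \emph{Furthermore} part, let $\bi\in B_I$, $\bj\in B_J$ satisfy $\bi\bb_0=\bb_1\bj$ and rewrite this as $\bb_1=\bi\bb_0\bj\inv$. Applying $\bpi_I$ and using that $\pr(\bi)\in W_I$, Proposition~\ref{fourre-tout}(\ref{ft2}) pulls $\bi$ out on the left; then Proposition~\ref{fourre-tout}(\ref{ft4}) applies, since $\pr(\bb_0)$ is $I$-reduced-$J$ and $\bj\inv\in B_J$, and together with $\bpi_I(\bb_0)=1$ it gives
$$1=\bpi_I(\bb_1)=\bi\,\varphi_{\pr(\bb_0)}\bigl(\bpi_{J_1}(\bj\inv)\bigr),$$
so $\bi=\varphi_{\pr(\bb_0)}\bigl(\bpi_{J_1}(\bj\inv)\bigr)\inv$; since $\bpi_{J_1}(\bj\inv)\in B_{J_1}$ and $\varphi_{\pr(\bb_0)}$ maps $B_{J_1}$ into $B_{I_1}$, this shows $\bi\in B_{I_1}$. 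Symmetrically, applying $\bpi^r_J$ to $\bb_1=\bi\bb_0\bj\inv$: the factor $\bj\inv\in B_J$ comes out on the right, and Proposition~\ref{counterpart} applies ($\pr(\bb_0)$ being $I$-reduced-$J$); with $\bpi^r_J(\bb_0)=1$ this gives
$$1=\bpi^r_J(\bb_1)=\varphi\inv_{\pr(\bb_0)}\bigl(\bpi^r_{I_1}(\bi)\bigr)\,\bj\inv,$$
hence $\bj=\varphi\inv_{\pr(\bb_0)}\bigl(\bpi^r_{I_1}(\bi)\bigr)\in B_{J_1}$.

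It then remains to combine the two: since $\bi\in B_{I_1}$ we have $\bpi^r_{I_1}(\bi)=\bi$ (the right-hand analogue of Proposition~\ref{fourre-tout}(\ref{ft6}), obtained via Lemma~\ref{rightProj}), so the second display reads $\bj=\varphi\inv_{\pr(\bb_0)}(\bi)$, that is $\bi=\varphi_{\pr(\bb_0)}(\bj)$; this completes the \emph{Furthermore} part. Finally (1)$\Rightarrow$(2): if $B_I\bb_0B_J=B_I\bb_1B_J$, write $\bb_1=\bi\bb_0\bj\inv$ with $\bi\in B_I$, $\bj\in B_J$; by what was just proved $\bi\in B_{I_1}$ and $\bj\in B_{J_1}$, so $\bb_1\in B_{I_1}\bb_0B_{J_1}$, and since distinct double cosets are disjoint this forces $B_{I_1}\bb_1B_{J_1}=B_{I_1}\bb_0B_{J_1}$. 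The only delicate point — and it is purely bookkeeping — is to check that the sets $I_1,J_1$ produced by Propositions~\ref{fourre-tout}(\ref{ft4}) and~\ref{counterpart} are precisely those named in the statement, and to treat $\varphi_{\pr(\bb_0)}$ as a group isomorphism on $B_{J_1}$ and $B_{I_1}$; there is no real obstacle, the argument being an assembly of earlier results.
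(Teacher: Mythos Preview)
Your proof is correct and follows essentially the same strategy as the paper's: apply the left retraction $\bpi_I$ and the right retraction $\bpi^r_J$ to the relation linking $\bb_0$ and $\bb_1$, invoking Proposition~\ref{fourre-tout}(\ref{ft4}) and its right counterpart Proposition~\ref{counterpart}.

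There is one minor but pleasant difference worth noting. The paper applies the retractions to the two sides of $\bi\bb_0=\bb_1\bj$ separately; when computing $\bpi_I(\bb_1\bj)$ it lands in $B_{I\cap\lexp{\pr(\bb_1)}J}$ and must then argue that $\pr(\bb_0)=\pr(\bb_1)$ (as the unique $I$-reduced-$J$ element of the common $W_I\backslash W/W_J$-coset) to identify this with $B_{I_1}$. You instead rewrite as $\bb_1=\bi\bb_0\bj\inv$ and apply $\bpi_I$ and $\bpi^r_J$ to that single expression; since only $\bb_0$ appears in the middle, $\pr(\bb_0)$ is the element governing both applications and the sets $I_1,J_1$ come out directly, with no need to compare $\pr(\bb_0)$ and $\pr(\bb_1)$. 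The final identity $\bi=\varphi_{\pr(\bb_0)}(\bj)$ then drops out of your second display once $\bi\in B_{I_1}$ is known, exactly as in the paper (where this last step is left implicit).
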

\begin{proof} It is clear that (2)$\Rightarrow$(1).
Assume $B_I\bb_0 B_J = B_I\bb_1 B_J$. Let $\bi$ in $B_I$ and
$\bj$ in $B_J$ be such  that  $\bi\bb_0  = \bb_1\bj$.  We
have $\bj=\bpi^r_J(\bb_1\bj)=\bpi^r_J(\bi\bb_0)=
\varphi\inv_{\pr(\bb_0)}(\bpi^r_{I_1}(\bi))$, the last equality by the
counterpart of Proposition~\ref{fourre-tout}(\ref{ft4}). 
In  particular, $\bj$ lies in $B_{J_1}$. Symmetrically
$\bi=\pi_I(\bi\bb_0)=\pi_I(\bb_1\bj)=\varphi_{\pr(\bb_1)}(\bpi_{I^{\pr(\bb_1)}\cap J}(\bj))\in
B_{I\cap \lexp{\pr(\bb_1)} J}$, the last equality by Proposition~\ref{fourre-tout}(\ref{ft4}). 
Since $\pr(\bb_0)$ and $\pr(\bb_1)$ are $I$-reduced-$J$ elements
in the same double coset of $W_I\backslash W/W_J$ they are equal. Thus
$I\cap \lexp {\pr(\bb_1)}J=I_1$ and $\bi$ is in $B_{I_1}$.
\end{proof}

\begin{corollary}\label{unique representative}
Let  $I,J\subseteq S$ and $\bb_0\in B$ be such that
$\bpi_I(\bb_0) = \bpi^r_J(\bb_0) = 1$.  Then the
following are equivalent
\begin{enumerate}
\item \label{cor_asser1} $\pr(\bb_0 \bJ\, \bb_0\inv \cap  \bI) = \lexp{\pr(\bb_0)} J \cap  I$.
\item \label{cor_asser2} for every  $\bb$ in $B_I\bb_0 B_J$, if $\bpi_I(\bb) =
 \bpi^r_J(\bb) = 1$ then $\bb = \bb_0$.
\item \label{cor_asser3} for every $\bb$ in $B_I\bb_0 B_J$, one has
$\bt_I(\bt^r_J(\bb))  = \bt^r_J(\bt_I(\bb)) = \bb_0$.
\end{enumerate}
\end{corollary}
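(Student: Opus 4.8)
The plan is to transport everything to the subsets $I_1=I\cap\lexp{\pr(\bb_0)}J$ and $J_1=I^{\pr(\bb_0)}\cap J$ attached to $w_0:=\pr(\bb_0)$. From $\bpi_I(\bb_0)=\bpi^r_J(\bb_0)=1$ one gets $\bt_I(\bb_0)=\bt^r_J(\bb_0)=\bb_0$, so Lemma~\ref{Icoset}(2) and its left--right mirror show that $w_0$ is $I$-reduced-$J$; by the Remark after Lemma~\ref{IredJ}, $w_0$ is then an $I_1$-ribbon-$J_1$, and $\varphi_{w_0}$ is the induced isomorphism $B_{J_1}\xrightarrow\sim B_{I_1}$ (in particular $\varphi_{w_0}(\bj\inv)=\varphi_{w_0}(\bj)\inv$, which is already visible on words since $\varphi_{w_0}$ acts letter by letter). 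The equivalence $(\ref{cor_asser2})\Leftrightarrow(\ref{cor_asser3})$ is then immediate: for $\bb\in B_I\bb_0B_J$, the element $\bt^r_J(\bt_I(\bb))$ lies in $B_I\bb B_J=B_I\bb_0B_J$ and has trivial $\bpi_I$ and $\bpi^r_J$ by Proposition~\ref{prepdbcoset}(1), and likewise $\bt_I(\bt^r_J(\bb))$ by the mirror statement, so $(\ref{cor_asser2})$ forces both to be $\bb_0$; conversely, if $\bpi_I(\bb)=\bpi^r_J(\bb)=1$ then $\bt_I(\bt^r_J(\bb))=\bt_I(\bb)=\bb$, so $(\ref{cor_asser3})$ gives $(\ref{cor_asser2})$.

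The core of the proof is the parametrisation
$$\{\bb\in B_I\bb_0B_J\mid\bpi_I(\bb)=\bpi^r_J(\bb)=1\}=\{\varphi_{w_0}(\bj)\inv\bb_0\bj\mid\bj\in B_{J_1}\}.$$
For ``$\subseteq$'', write $\bb=\bx\bb_0\by$ with $\bx\in B_I$, $\by\in B_J$; then $\bx\bb_0=\bb\by\inv$, and since $\bb$ has trivial projections and lies in $B_I\bb_0B_J$, the ``Furthermore'' part of Proposition~\ref{propdbcoset} gives $\by\inv\in B_{J_1}$ and $\bx=\varphi_{w_0}(\by\inv)=\varphi_{w_0}(\by)\inv$, whence $\bb=\varphi_{w_0}(\by)\inv\bb_0\by$. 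For ``$\supseteq$'', given $\bj\in B_{J_1}$ the element $\varphi_{w_0}(\bj)\inv\bb_0\bj$ clearly lies in $B_{I_1}\bb_0B_{J_1}\subseteq B_I\bb_0B_J$; as $\varphi_{w_0}(\bj)\inv\in B_I$ has image in $W_I$, Proposition~\ref{fourre-tout}(\ref{ft2}) gives $\bpi_I(\varphi_{w_0}(\bj)\inv\bb_0\bj)=\varphi_{w_0}(\bj)\inv\bpi_I(\bb_0\bj)$, and since $w_0$ is $I$-reduced-$J$, Proposition~\ref{fourre-tout}(\ref{ft4}) gives $\bpi_I(\bb_0\bj)=\bpi_I(\bb_0)\varphi_{w_0}(\bpi_{J_1}(\bj))=\varphi_{w_0}(\bj)$ (using $\bpi_{J_1}(\bj)=\bj$); hence this retraction is $1$, and symmetrically $\bpi^r_J(\varphi_{w_0}(\bj)\inv\bb_0\bj)=1$ by the same computation with Proposition~\ref{counterpart} in place of Proposition~\ref{fourre-tout}(\ref{ft4}).

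Granting the parametrisation, $(\ref{cor_asser2})$ says exactly that $\varphi_{w_0}(\bj)\inv\bb_0\bj=\bb_0$, i.e.\ $\bb_0\bj\bb_0\inv=\varphi_{w_0}(\bj)$, for all $\bj\in B_{J_1}$; since $B_{J_1}$ is generated by the lifts $\bt$ of the $t\in J_1$ and both sides are group homomorphisms, this holds iff $\bb_0\bt\bb_0\inv=\bs$ for every $t\in J_1$, where $s:=\lexp{w_0}t\in I_1$. On the other hand, every element of $\bb_0\bJ\,\bb_0\inv\cap\bI$ has the form $\bb_0\bt\bb_0\inv$ with $t\in J$, and its image in $W$ is $\lexp{w_0}t$, which lies in $I$, forcing $t\in I^{w_0}\cap J=J_1$; hence $\pr(\bb_0\bJ\,\bb_0\inv\cap\bI)\subseteq\lexp{w_0}J\cap I$ always, and as $t\mapsto\lexp{w_0}t$ is a bijection $J_1\to I_1$, equality in $(\ref{cor_asser1})$ is equivalent to ``$\bb_0\bt\bb_0\inv\in\bI$ for every $t\in J_1$'' --- and when this holds, injectivity of $\pr$ on $\bS$ forces $\bb_0\bt\bb_0\inv$ to equal $\bs$, $s=\lexp{w_0}t$. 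Thus $(\ref{cor_asser1})$ and $(\ref{cor_asser2})$ are both equivalent to the single condition ``$\bb_0\bt\bb_0\inv=\bs$ for all $t\in J_1$'', and the proof is complete.

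The only real obstacle I anticipate is the ``$\supseteq$'' half of the parametrisation: that $\varphi_{w_0}(\bj)\inv\bb_0\bj$ has trivial left- and right-retractions is not covered by Proposition~\ref{propdbcoset} (which presupposes trivial projections on both factors), and checking it requires keeping careful track of the fact that the subset $I^{w_0}\cap J$ appearing in Proposition~\ref{fourre-tout}(\ref{ft4}) (and its mirror in Proposition~\ref{counterpart}) is exactly our $J_1$ (resp.\ $I_1$); everything else is bookkeeping with the results of Sections~\ref{section1} and~\ref{section3}.
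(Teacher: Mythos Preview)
Your proof is correct and uses the same ingredients as the paper's proof (Proposition~\ref{prepdbcoset}(1), the ``Furthermore'' clause of Proposition~\ref{propdbcoset}, and Propositions~\ref{fourre-tout}(\ref{ft2}),(\ref{ft4}) together with the mirror Proposition~\ref{counterpart}), but the organisation differs. The paper argues cyclically $(\ref{cor_asser1})\Rightarrow(\ref{cor_asser2})\Rightarrow(\ref{cor_asser3})\Rightarrow(\ref{cor_asser1})$: for $(\ref{cor_asser3})\Rightarrow(\ref{cor_asser1})$ it picks a single pair $s\in I_1$, $s'\in J_1$ with $\lexp{w_0}s'=s$, checks directly that $\bb_1=\bs\inv\bb_0\bs'$ has trivial projections (your ``$\supseteq$'' computation specialised to generators), and then invokes $(\ref{cor_asser3})$ to get $\bb_1=\bb_0$. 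You instead prove $(\ref{cor_asser2})\Leftrightarrow(\ref{cor_asser3})$ and then $(\ref{cor_asser1})\Leftrightarrow(\ref{cor_asser2})$ by first establishing the full parametrisation of the ``doubly reduced'' elements in the double coset and then observing that both $(\ref{cor_asser1})$ and $(\ref{cor_asser2})$ reduce to the same generator condition ``$\bb_0\bt\bb_0\inv=\bs$ for every $t\in J_1$''. Your parametrisation is a conceptually pleasant by-product that the paper does not isolate; the paper's cyclic route is slightly shorter since it only needs the ``$\supseteq$'' verification on generators rather than on all of $B_{J_1}$.
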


\begin{proof} Set $w  = \pr(\bb_0)$,  $J_1 = I^w\cap J$ and $I_1 = \lexp w J \cap
I$.  Assume  (\ref{cor_asser1})  holds.  Let  $\bb$  in  $B$ be such that
$\bpi_I(\bb)  = \bpi^r_J(\bb)  = 1$  with $B_I\bb_0  B_J = B_I\bb
B_J$. By proposition~\ref{propdbcoset}(2) we
get $\bi\bb_0 = \bb\bj$ for some
$\bi \in B_{I_1}$ and  $\bj\in B_{J_1}$ such that $\bi = \varphi_{w}(\bj)$. But
by~(\ref{cor_asser1}),  $\bb_0$ conjugates $\bJ_1$ to $\bI_1$ thus 
the  conjugation  by  $\bb_0$  in  $B$  induces the
one-to-one  map  $\varphi_w$  from $B_{J_1}$  to $B_{I_1}$. So 
$\bb_0\inv\bi\bb_0=\bj$ thus $\bb_0=\bb$.
Thus    (\ref{cor_asser2})    holds.    Now,   (\ref{cor_asser2})   implies
(\ref{cor_asser3}). Indeed, by Proposition~\ref{prepdbcoset}(1)
 the element $\bt^r_J(\bt_I(\bb))$ verifies the assumption for $\bb$ in (2), 
and   by   symmetry  the same holds for $\bt_I(\bt^r_J(\bb))$.
Finally assume (\ref{cor_asser3}). Clearly
$\pr(\bb_0 \bJ\,  \bb_0\inv \cap  \bI) \subseteq  w J w\inv \cap I$
always  holds. Let $s$ in $I_1$ and $s'$ in $J_1$ be such that $ws'w\inv =
s$.  Set $\bb_1 = \bs\inv \bb_0\bs'$.  We have
$\bpi_I(\bb_1) =\bs\inv\bpi_I(\bb_0\bs')$ and by Proposition~\ref{pi(bb')}
we have $\bpi_I(\bb_0\bs')=\bs$ since $t_I(\pr(\bb_0))=w$.
Thus $\bpi_I(\bb_1)=1$. Similarly
$\bpi_J^r(\bb_1)  = 1$ and therefore $\bb_1 = \bt_I(\bt^r_J(\bb_1)) = \bb_0$, the
last equality by (3).
Hence,  $\bb_0 = \bs\inv \bb_0\bs'$. and  $s$ belongs to $\pr(\bb_0 \bJ\,
\bb_0\inv \cap \bI)$.
\end{proof}

\begin{remark}\label{intersection empty} Since one has always  
$\pr(\bb_0 \bJ\, \bb_0\inv \cap  \bI) \subseteq \lexp {\pr(\bb_0)} J \cap  I$,
the three items of Corollary~\ref{unique representative} are true in the particular case
$\lexp {\pr(\bb_0)} J \cap  I = \emptyset$.
\end{remark}

\begin{proof}[Proof of Proposition~\ref{propIntro3}]
If $\lexp {\pr(\bb)} W_J \cap  W_I = \{1\}$,  then $\lexp {\pr(\bb_0)} W_J \cap  W_I = \{1\}$. As $\pr(\bb_0)$ is $I$-reduced-$J$,  by Proposition~\ref{solomon} the last equality is equivalent to the equality  $\lexp {\pr(\bb_0)} J \cap  I = \emptyset$, and we conclude using Corollary~\ref{unique representative} and  the above remark~\ref{intersection empty}.
\end{proof}

\begin{lemma}\label{ribbon in B}
Let   $I,J$  be  subsets   of  $S$  and  let $\bw\in\bW$ be the lift of an $I$-ribbon-$J$;
Then $\lexp\bw B_J= B_I$.
\end{lemma}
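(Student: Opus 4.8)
The plan is to prove the two inclusions $\lexp\bw B_J\subseteq B_I$ and $B_I\subseteq\lexp\bw B_J$ separately, using the ribbon bijection $\varphi_{\bw}$ together with the presentations of $B_I$ and $B_J$ as Artin groups (Proposition~\ref{vdl}). First I would establish $\lexp\bw B_J\subseteq B_I$. Since $\bw$ is the lift of an $I$-ribbon-$J$, the underlying element $w=\pr(\bw)$ is $I$-reduced with $I^w=J$, so for each $s\in J$ we have $\lexp{w\inv}s\in I$; write $s' = \lexp{w\inv}s\in I$. The key point is that conjugation by $\bw$ carries the standard generator $\bs\in\bJ$ to the standard generator $\bs'\in\bI$: this is exactly the statement that the ribbon bijection $\varphi_w:J\xrightarrow\sim I$ lifts to braids, and it follows because $w$ is reduced-$J$ (so $ws$ is a reduced expression and the lift $\bw\bs$ lies in $\bW$) together with the equality $ws = s'w$ in $W$, which lifts to $\bw\bs=\bs'\bw$ in $B^+$ since both sides are simple braids projecting to the same element of the same length. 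Hence $\lexp\bw\bs = \bs'\in\bI$ for every generator $\bs$ of $B_J$, and since these generators generate $B_J$ we get $\lexp\bw B_J\subseteq B_I$.

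For the reverse inclusion $B_I\subseteq\lexp\bw B_J$, I would argue symmetrically. The element $w\inv$ is reduced-$I$ (because $w$ is $I$-reduced) and $J^{w\inv}=I$, so $w\inv$ is a ribbon carrying $I$ to $J$; the same argument as above, applied to $\bw\inv$ in place of $\bw$ — or equivalently just inverting the relations $\bw\bs = \bs'\bw$ already obtained — gives $\lexp{\bw\inv}\bs' = \bs\in\bJ$ for each $s'\in I$. Therefore $B_I = \lexp{\bw}(\lexp{\bw\inv}B_I)\subseteq\lexp\bw B_J$, which combined with the first inclusion gives $\lexp\bw B_J = B_I$.

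The only real subtlety is justifying the generator-to-generator relation $\bw\bs = \bs'\bw$ in $B$, i.e.\ that the set-theoretic ribbon bijection $\varphi_w$ is realised by conjugation in the braid group and not merely in the Coxeter group. This is where I expect the main obstacle to lie, but it is mild: since $w$ is reduced-$J$, the product $ws$ has length $\ell_S(w)+1$, so its canonical lift is $\bw\bs\in\bW$; likewise $s'w$ has length $\ell_S(w)+1$ with canonical lift $\bs'\bw\in\bW$; and since $ws = s'w$ in $W$, the uniqueness of lifts of elements of $W$ to $\bW$ forces $\bw\bs = \bs'\bw$ in $B^+\subseteq B$. This is precisely the content already packaged in the remark preceding this lemma and in the definition of $\varphi_w$ as a bijection $(\bJ^{\pm1})^*\to(\bI^{\pm1})^*$, so once that is invoked the proof is complete.
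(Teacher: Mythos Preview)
Your argument is essentially the paper's own proof: show that for each $j\in J$ the equality $wj=s'w$ with $s'\in I$ lifts to $\bw\bj=\bs'\bw$ in $\bW$ (by uniqueness of the simple lift of an element of $W$), whence $\lexp\bw\bJ=\bI$ and the equality of the generated subgroups follows. One small slip: from $I^w=J$ you get $\lexp w s\in I$ for $s\in J$, not $\lexp{w\inv}s\in I$; your later use of $ws=s'w$ is the correct relation, so just fix the definition of $s'$ to $s'=\lexp w s$.
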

\begin{proof}
For $j\in J$ we have $wj=iw$ for some $i\in I$. Since $w$ is $I$-reduced-$J$, the lift to
$\bW$ of
$wj$ is $\bw\bj$ and the lift of $iw$ is $\bi\bw$, thus $\bw\bj=\bi\bw$ and
$\lexp\bw\bj$ is in $\bI$. 
\end{proof}
The following proposition shows various reductions of the problem of
the intersection of two parabolic subgroups $\lexp{\bb}B_J$ and $B_I$. (1) shows that we can
assume $\bpi_I(\bb)=\bpi_J^r(\bb)=1$, (2) shows that we can also assume that
$\pr(\bb)$ is an $I$-ribbon-$J$, (3) shows that we can also assume that $\bb$ is
pure and $I=J$ (see Proposition~\ref{propIntro4}) and (4) shows that if $I$
is finite, then we can assume that $B_J$ is a minimal parabolic subgroup containing
$B_J\cap\lexp{\bb}B_J$.

\begin{proposition}\label{not enough}
Let   $I,J$  be  subsets   of  $S$  and   $\bb\in  B$. 
\begin{enumerate}
\item  Let  $\bb_0=\bt_J^r(\bt_I(\bb))$.  Then
$\pi_I(\bb_0)=\pi_J^r(\bb_0)=1$, and $\lexp\bb B_J\cap
B_I=\lexp{\pi_I(\bb)}(\lexp{\bb_0}B_J\cap B_I)$
\item
Assume $\pi_I(\bb)=\pi_J^r(\bb)=1$. Then 
$\lexp\bb B_J\cap B_I=\lexp\bb B_{J_1}\cap B_{I_1}$, where
$I_1= I\cap\lexp{\pr(\bb)}J$ and $J_1= I^{\pr(\bb)}\cap J$ (so that
$\pr(\bb)$ is an $I_1$-ribbon-$J_1$).
\item Assume that $\pi_I(\bb)=1$ and that $\pr(\bb)$ is an $I$-ribbon-$J$. Then
if $\bw\in \bW$ lifts $\pr(\bb)$  and $\bp$ is the (pure) element such that $\bb=\bp\bw$,
we have $\pi_I(\bp)=1$ and $B_I\cap\lexp \bb B_J=B_I\cap \lexp\bp B_I=C_{B_I}(\bp)$.
\item For $I$ and $\bp$ as in item (3); assume $I$ finite, then the intersection
$B_I\cap \lexp \bp B_I$ is conjugate
in $B_I$ to $B_J\cap\lexp {\bp'} B_J$ for some $J\subseteq I$ and $\bp'\in P$
such that
$\bpi_J(\bp')=1$ and such that $B_J$ is a minimal parabolic subgroup
containing $B_J\cap\lexp {\bp'} B_J$.
\end{enumerate} 
\end{proposition}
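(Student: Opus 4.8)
The plan is to prove the four parts in sequence, each reducing the intersection problem to a more special situation, using the already-established machinery of $\bpi_I$, $\bpi_J^r$, ribbons and double cosets.

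\medskip

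\noindent\textbf{Part (1).} First I would observe that by Proposition~\ref{prepdbcoset}(1) the element $\bb_0=\bt_J^r(\bt_I(\bb))$ satisfies $\bpi_I(\bb_0)=\bpi_J^r(\bb_0)=1$, which applying $\pr$ (and Lemma~\ref{Icoset}(3) together with its right-counterpart) gives $\pi_I(\pr(\bb_0))=\pi_J^r(\pr(\bb_0))=1$, i.e.\ $\pr(\bb_0)$ is $I$-reduced-$J$. For the intersection formula, the key point is that $\bb$ and $\bb_0$ lie in the same double coset $B_I\bb B_J=B_I\bb_0 B_J$, so one can write $\bb=\bi\bb_0\bj$ for suitable $\bi\in B_I$, $\bj\in B_J$; in fact unwinding the definitions $\bb=\bpi_I(\bb)\,\bb_0\,\bj$ for some $\bj\in B_J$ (since $\bt_I(\bb)=\bb_0\bpi_J^r(\bt_I(\bb))$ with $\bpi_J^r(\bt_I(\bb))\in B_J$, and $\bb=\bpi_I(\bb)\bt_I(\bb)$). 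Then $\lexp{\bb}B_J=\lexp{\bpi_I(\bb)\bb_0}B_J$, and conjugating the desired equality by $\bpi_I(\bb)\inv$ and using that $\bpi_I(\bb)\in B_I$ normalises $B_I$, the claim $\lexp{\bb}B_J\cap B_I=\lexp{\bpi_I(\bb)}(\lexp{\bb_0}B_J\cap B_I)$ follows.

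\medskip

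\noindent\textbf{Part (2).} Now assume $\pi_I(\bb)=\pi_J^r(\bb)=1$, so $\pr(\bb)$ is $I$-reduced-$J$. Set $I_1=I\cap\lexp{\pr(\bb)}J$, $J_1=I^{\pr(\bb)}\cap J$; by Lemma~\ref{IredJ} and the Remark after it, $\pr(\bb)$ is an $I_1$-ribbon-$J_1$. The inclusion $\lexp{\bb}B_{J_1}\cap B_{I_1}\subseteq\lexp{\bb}B_J\cap B_I$ is clear. For the reverse inclusion take $\bx\in\lexp{\bb}B_J\cap B_I$, say $\bx=\lexp{\bb}\bj$ with $\bj\in B_J$ and $\bx\in B_I$; then $\bj=\bb\inv\bx\bb$ with $\bx\in B_I$, and since $\bpi_I(\bb)=1$ we have $\bb=\bt_I(\bb)$, so applying $\bpi_J^r$ (using the right-counterpart $\bpi_J^r(\bi\bb)=\varphi_{\pr(\bb)}\inv(\bpi^r_{I_1}(\bi))\bpi_J^r(\bb)$ of Proposition~\ref{fourre-tout}(\ref{ft4}) given as Proposition~\ref{counterpart}) one gets $\bj=\bpi_J^r(\bx\bb)=\varphi_{\pr(\bb)}\inv(\bpi^r_{I_1}(\bx))\in B_{J_1}$. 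Symmetrically, applying $\bpi_I$ to $\bx=\bb\bj\bb\inv$ using Proposition~\ref{fourre-tout}(\ref{ft4}) forces $\bx\in B_{I_1}$. Hence $\bx\in\lexp{\bb}B_{J_1}\cap B_{I_1}$.

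\medskip

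\noindent\textbf{Part (3).} Assume $\pi_I(\bb)=1$ and $\pr(\bb)$ is an $I$-ribbon-$J$; let $\bw\in\bW$ lift $\pr(\bb)$ and write $\bb=\bp\bw$, so $\bp\in P$. From $\bpi_I(\bb)=\bpi_I(\bp\bw)$ and Proposition~\ref{fourre-tout}(\ref{ft3}) (since $\pr(\bp)=1$, trivially $\pr(\bp)$ plays no role; more directly $\bpi_I(\bb)=\bpi_I(\bp)\bpi_I(\bw)$ as $\pr(\bp)=1\in W_I$ by Proposition~\ref{fourre-tout}(\ref{ft2})) and $\bpi_I(\bw)=1$ by Proposition~\ref{fourre-tout}(\ref{ft5}), we get $\bpi_I(\bp)=1$. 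For the intersection, use $\lexp{\bw}B_J=B_I$ from Lemma~\ref{ribbon in B}, so $\lexp{\bb}B_J=\lexp{\bp\bw}B_J=\lexp{\bp}(\lexp{\bw}B_J)=\lexp{\bp}B_I$; hence $B_I\cap\lexp{\bb}B_J=B_I\cap\lexp{\bp}B_I$, which equals $C_{B_I}(\bp)$ by Proposition~\ref{propIntro4} (this is the statement proved as Proposition~\ref{propIntro4}, applicable since $\bp\in P$ and $\pi_I(\bp)=1$).

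\medskip

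\noindent\textbf{Part (4).} Assume $I$ is finite. We want to conjugate $B_I\cap\lexp{\bp}B_I$ inside $B_I$ to something of the form $B_J\cap\lexp{\bp'}B_J$ with $J\subseteq I$, $\bpi_J(\bp')=1$, and $B_J$ minimal among parabolic subgroups of $B_I$ containing $B_J\cap\lexp{\bp'}B_J$. The idea is to iterate the reductions (1)--(3): if $B_I$ is not already minimal with the required property, there is a proper parabolic subgroup of $B_I$ still containing the intersection; up to conjugacy in $B_I$ one may take it standard, say $B_K$ with $K\subsetneq I$, and then $B_I\cap\lexp{\bp}B_I$ is (conjugate in $B_I$ to) $B_K\cap\lexp{\bp''}B_K$ for a suitable element $\bp''$, to which one re-applies parts (1)--(3) to arrive at a pure element $\bp'$ with $\bpi_K(\bp')=1$. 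Since $|I|<\infty$, this strictly-decreasing chain of subsets $I\supsetneq K\supsetneq\cdots$ terminates, yielding the desired minimal $J$ and $\bp'$.

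\medskip

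\noindent The main obstacle I anticipate is Part (4): making precise what ``a proper parabolic subgroup of $B_I$ containing the intersection'' gives us, and checking that after conjugating it to a standard $B_K$ and re-running the reductions (1)--(3) we genuinely land back in the same setup (pure element, trivial retraction) with a \emph{strictly smaller} support, so that finiteness of $I$ forces termination. This requires care that Proposition~\ref{parabolic inside} (a parabolic subgroup inside $B_I$ is a parabolic subgroup of $B_I$) is being applied correctly and that the conjugating elements accumulate inside $B_I$. Parts (1)--(3), by contrast, are essentially bookkeeping with the composition and ribbon formulas already established in Propositions~\ref{fourre-tout}, \ref{prepdbcoset}, \ref{counterpart} and Lemma~\ref{ribbon in B}.
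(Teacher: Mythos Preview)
Your Parts (1)--(3) are correct and follow essentially the same route as the paper: Proposition~\ref{prepdbcoset}(1) for (1), the two retractions $\bpi_I$ and $\bpi_J^r$ via Proposition~\ref{fourre-tout}(\ref{ft4}) and Proposition~\ref{counterpart} for (2), and Lemma~\ref{ribbon in B} together with Proposition~\ref{propIntro4} for (3).

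For Part (4), your iterative scheme is in the right spirit but leaves the crucial step unspoken: once you have a proper parabolic $\lexp\bi B_K$ of $B_I$ containing $B_I\cap\lexp\bp B_I$, you assert that after conjugation the intersection becomes $B_K\cap\lexp{\bp''}B_K$, but you do not explain \emph{why} this is an equality rather than just a containment. The missing ingredient is precisely the centraliser statement from Part (3): since $B_I\cap\lexp\bp B_I=C_{B_I}(\bp)$, the element $\bp$ centralises this intersection, hence $\lexp{\bp\bi}B_K$ also contains it; combined with $\lexp\bi B_K\subseteq B_I$ and $\lexp{\bp\bi}B_K\subseteq\lexp\bp B_I$, this forces $\lexp\bi B_K\cap\lexp{\bp\bi}B_K=B_I\cap\lexp\bp B_I$. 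Conjugating by $\bi\inv$ then gives $B_K\cap\lexp{\bp'}B_K$ with $\bp'=\bi\inv\bp\bi$ pure and $\bpi_K(\bp')=\bpi_K(\bpi_I(\bp'))=1$ by transitivity.

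Once this step is supplied, there is no need to iterate or to re-run (1)--(3): the paper simply chooses $\lexp\bi B_J$ to be a \emph{minimal} parabolic of $B_I$ containing the intersection from the outset (finiteness of $I$ guarantees one exists), and the single application of the centraliser trick above finishes the proof. Your descent on $|K|$ would also work, but is unnecessary.
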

\begin{proof}
(1) is clear from Proposition~\ref{prepdbcoset}(1).

Let us prove (2).
Let $\bj\in
B_J\cap  B_I^{\bb}$ and  $\bi\in B_I$  be such that  $\bb \bj  = \bi\bb$.
Since $\pr(\bb)$ is $I$-reduced-$J$ (see Proposition~\ref{prepdbcoset}(1)) we
 can apply Proposition~\ref{fourre-tout}(\ref{ft4}), which gives $\bpi_I(\bb  \bj)  =
\bpi_I(\bb)\varphi_{\pr(\bb)}(\bpi_{J_1}(\bj)) =
\varphi_{\pr(\bb)}(\bpi_{J_1}(\bj))$.  On the other hand, $\bpi_I(\bi\bb) =
\bi\bpi_I(\bb)  = \bi$. So $\bi\in B_{I_1}$.

Similarly we may apply Proposition~\ref{counterpart} to get
 $\bj=\pi_J^r(\bb\bj)=\pi_J^r(\bi\bb)=\varphi_{\pr(\bb)}\inv(\pi_{I_1}^r(\bi))$,
which proves that $\bj\in B_{J_1}$.

We prove now (3).
We first prove that $\pi_I(\bp)=1$.
By Proposition~\ref{fourre-tout}(\ref{ft2}) we have $1=\bpi_I(\bb)=\bpi_I(\bp)\bpi_I(\bw)$. By
Proposition~\ref{fourre-tout}(\ref{ft5}) we have 
$\bpi_I(\bw)=1$, hence $\bpi_I(\bp)=1$.

Now we have $B_I\cap\lexp \bb B_J=B_I\cap \lexp{\bp\bw} B_J=B_I\cap \lexp\bp B_I$, the last
equality by Lemma~\ref{ribbon in B}.
If $\bi$ is in $B_I\cap \lexp\bp B_I$ we have $\bp\bi=\bi'\bp$ for some $\bi'\in B_I$. Hence
$\pi_I(\bp\bi)=\pi_I(\bp)\pi_I(\bi)=\pi_I(\bi)=\bi$ and
$\pi_I(\bi'\bp)=\bi'\pi_I(\bp)=\bi'$, so that $\bi=\bi'$ is in $C_{B_I}(\bp)$.

We prove (4). Let $\lexp\bi B_J$ with $J\subseteq I$ and $\bi\in B_I$ be any
parabolic subgroup of $B_I$ containing $B_I\cap \lexp \bp B_I$.
Then $\lexp{\bp\bi}B_J$ also contains $B_I\cap \lexp \bp B_I$ since this
intersection is centralised by $\bp$. 
We have thus 
$\lexp\bi B_J\cap\lexp{\bp\bi}B_J\supseteq  B_I\cap\lexp \bp B_I$, hence
equality $\lexp\bi B_J\cap\lexp{\bp\bi}B_J= B_I\cap\lexp \bp B_I$,
that is $B_I\cap\lexp \bp B_I$ is conjugate in $B_I$ to $B_J\cap
\lexp{\bp'}B_J$ where $\bp'=\bi\inv\bp\bi$. We have
$\bpi_I(\bp')=\bi\inv\bpi_I(\bp)\bi=1$. If we take $\lexp\bi B_J$ minimal
containing $B_I\cap\lexp \bp B_I$ we get~(4); such a parabolic exists since $I$ is finite.
\end{proof}
Note that if we assume that the intersection of two parabolic subgroups is a
parabolic subgroup, then in (4) above $B_J=B_J\cap\lexp{\bp'}B_J$ and by
the proof of (3) we have $\bp'\in C_P(B_J)$.

\section{Minimal length  elements in a conjugacy class}\label{conjectures}
In this section, we first recall a result on elements of $B^+$ 
(Proposition~\ref{pure  centralizer})  which  is  an  important  step in proving that in
spherical type Artin  groups  there  is  a  unique  minimal  parabolic subgroup
containing  a given  element (see \cite[Introduction of section 6]{CumplidoetAll}).
We  then show  that a  conjecture (Conjecture~\ref{conjecture2})  
which generalises Proposition~\ref{pure centralizer} to
all conjugacy classes is equivalent in all Artin groups to the existence of
a unique minimal parabolic subgroup containing a given element. 
We conclude with a partial result (Proposition~\ref{conjugaison V2})
on conjugacy. 

\subsection*{Positive elements}
The  goal of this  subsection is to  introduce the concepts  and results on
elements  of $B^+$  needed, ending  with a  proof of  Proposition~\ref{pure
centralizer}.

For  $I\subseteq S$ and $\bb\in B^+$ we define $H_I(\bb)$ \index{HI@$H_I$}
as  the longest prefix of $\bb$ in  $B^+_I$; we say that an element $\bg\in
B^+$  is $I$-reduced \index{Ired@$I$-reduced} if  $H_I(\bg)=1$. We say that
an  element~$\bg\in  B^+$  is  an  $I$-ribbon  if  it  is  $I$-reduced and
$\bI^\bg\subseteq  B^+$. Since $\lZ$ is  constant on a conjugacy class, this
is seen to be equivalent to $\bI^\bg\subseteq\bS$. For $\bb\in B^+$ we define
the  support  $\supp(\bb)$  \index{supp@$\supp$}  as  the  smallest  subset
$I\subseteq S$ such that $\bb\in B_I^+$. This  is well defined since the two
sides  of a  braid relation  involve the  same elements  of $\bS$. Note that
the notions  of $I$-reduced element and of  $I$-ribbon in $B^+$ are
coherent   with  the   corresponding  definitions  in  $W$  introduced  in
Section~\ref{section1}:   the  element   $\bw$  in   $\bW$  is  $I$-reduced
(\emph{resp.}  a $I$-ribbon) if and only if $\pr(\bw)$ is. This is not true
if $\bw\notin\bW$. For instance  $\bs^2$ is not  $\{s\}$-reduced but 
$\pr(\bs)$ is, and  is  even  an $\{s\}$-ribbon.  If  $\bs$  and $\bt$ do
not commute, then
$\bs^2\bt$ is $\{t\}$-reduced but $\pr(\bs^2\bt)= t$ is not.

Recall that the Garside normal form has been introduced in Section~\ref{section2}. For  $\bb\in B^+$  we denote by $\head(\bb)$ \index{head@$\head$} the first term of the
Garside   normal   form   of~$\bb$   and  define
$\tail(\bb)=\head(\bb)\inv\bb$.
\index{tail@$\tail$}
\begin{lemma}\label{divers}\phantom{a}
\begin{enumerate}
\item For $\bb,\bg\in B^+$, if $\bb^\bg\in B^+$ then $\bb^{\head(\bg)}\in B^+$
\item If $\bg\in B^+$ is an $I$-ribbon, then $\head(\bg)$ is an $I$-ribbon.
\item If $\bg\in B^+$ is an $I$-ribbon, then $\pr(\bg)$ is an $I$-ribbon.
\end{enumerate}
\end{lemma}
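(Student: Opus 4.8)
The plan is to prove (1) first and then deduce (2) and (3) from it, so the bulk of the work is in (1).

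For (1): for $\bb,\bg\in B^+$ the hypothesis $\bb^\bg\in B^+$ means exactly that $\bg$ left-divides $\bb\bg$ in $B^+$. Write $\bh=\head(\bg)$ and $\bt=\tail(\bg)$, so $\bg=\bh\bt$ and $(\bh,\bt)$ is a normal pair, that is, $\bh$ is the greatest left-divisor of $\bg$ lying in $\bW$. Since $\bh$ left-divides $\bg$ it left-divides $\bb\bg=(\bb\bh)\bt$; what must be shown is that $\bh$ already left-divides $\bb\bh$ — i.e. that the simple braid $\bh$, which sits as a suffix of $\bb\bh$, does not need the appended factor $\bt$ in order to occur as a left-divisor of $\bb\bg$. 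I would first dispatch the case $\bb\bh\in\bW$: then $(\bb\bh,\bt)$ is again a normal pair, because an atom $\bs$ with $(\bb\bh)\bs\in\bW$ forces $\bb\bh\in\bW$ and, taking right-divisors, $\bh\bs\in\bW$, whence $\bs$ does not left-divide $\bt$ by normality of $(\bh,\bt)$; so $\head(\bb\bg)=\bb\bh$, and then $\bh=\head(\bg)$ left-divides $\head(\bb\bg)=\bb\bh$, as wanted. For general $\bb\bh$ I would argue, from the normal form of $\bb\bh$ and the left-reversing process of \cite{DDGKM}, that appending $\bt$ cannot enlarge the head of $\bb\bh$ — precisely because the interface between $\bh$ (a suffix of $\bb\bh$) and $\bt$ is a normal pair — so that $\head\bigl((\bb\bh)\bt\bigr)$ still left-divides $\bb\bh$; as $\bh$ is simple and left-divides $(\bb\bh)\bt$, it then left-divides $\head((\bb\bh)\bt)$ and hence $\bb\bh$. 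This last step — a bookkeeping argument about left-normal forms and normal pairs — is the one I expect to be the main obstacle; a convenient packaging is an induction on the length of the normal form of $\bg$ peeling off the last factor, the essential point being that $\head$ is ``greedy'' (the analogous statement is false for an arbitrary left-divisor of $\bg$ in place of $\head(\bg)$).

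For (2): first note that a positive braid $\bx$ is $I$-reduced, i.e. $H_I(\bx)=1$, if and only if no generator in $\bI$ left-divides $\bx$. As $\head(\bg)$ left-divides $\bg$, if $\bg$ is $I$-reduced then so is $\head(\bg)$. Next, for every $\bi\in\bI$ we have $\bi\in B^+$ and, $\bg$ being an $I$-ribbon, $\bi^\bg\in\bS\subseteq B^+$; by part (1) we get $\bi^{\head(\bg)}\in B^+$, and since $\lZ(\bi^{\head(\bg)})=\lZ(\bi)=1$ this forces $\bi^{\head(\bg)}\in\bS$. Hence $\bI^{\head(\bg)}\subseteq\bS$ and $\head(\bg)$ is an $I$-ribbon.

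For (3): I would induct on the length $n$ of the Garside normal form of $\bg$. If $n\le1$ then $\bg\in\bW$ and the assertion is the coherence, recalled above, between the notions of $I$-ribbon in $B^+$ and in $W$ for elements of $\bW$. Assume $n\ge2$ and write $\bg=\bh\bt$ with $\bh=\head(\bg)$ and $\bt=\tail(\bg)$ (whose normal form has length $n-1$). By part (2), $\bh$ is an $I$-ribbon; set $\bJ_1=\bh\inv\bI\bh\subseteq\bS$, let $J_1\subseteq S$ be the corresponding subset, so that $v:=\pr(\bh)$ is an $I$-ribbon-$J_1$ in $W$. Then $\bt$ is a $J_1$-ribbon in $B^+$: it is $J_1$-reduced, for if $\bj=\bh\inv\bi\bh\in\bJ_1$ left-divided $\bt$ then $\bi$ would left-divide $\bh\bt=\bg$, contradicting that $\bg$ is $I$-reduced; and $\bt\inv\bj\bt=\bg\inv\bi\bg\in\bS$ for all $\bj\in\bJ_1$. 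By the inductive hypothesis, $u:=\pr(\bt)$ is a $J_1$-ribbon in $W$, in particular $J_1$-reduced. It remains to check that $\pr(\bg)=vu$ is an $I$-ribbon in $W$. For $s\in I$ we have $v\inv sv\in J_1$, hence $v\inv\alpha_s=\pm\alpha_j$ for the corresponding $j\in J_1$, and in fact $v\inv\alpha_s=\alpha_j\in\Phi^+$ since $v$ is $I$-reduced; therefore $(vu)\inv\alpha_s=u\inv\alpha_j\in\Phi^+$ because $u$ is $J_1$-reduced. Thus $vu$ is $I$-reduced, and $(vu)\inv I(vu)=u\inv J_1u\subseteq S$ since $u$ is a $J_1$-ribbon; so $\pr(\bg)$ is an $I$-ribbon, completing the induction.
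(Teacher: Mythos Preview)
Your approach is essentially the paper's. The only difference is in (1): what you flag as ``the main obstacle'' --- that $\head((\bb\bh)\bt)$ still left-divides $\bb\bh$ when $(\bh,\bt)$ is normal --- is just the standard Garside identity $\head(\bb\bg)=\head(\bb\,\head(\bg))$, which the paper invokes directly to conclude in one line that $\head(\bg)\mid\head(\bb\bg)=\head(\bb\,\head(\bg))\mid\bb\,\head(\bg)$; your special case $\bb\bh\in\bW$ and the left-reversing discussion are therefore unnecessary, while your treatments of (2) and (3) simply spell out the details the paper leaves implicit.
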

\begin{proof}
For  (1), the assumption  $\bb^\bg\in B^+$ is equivalent to $\bg$
left-dividing
$\bb\bg$  from  which  it  follows  that $\head(\bg)$ left-divides
$\head(\bb\bg)=\head(\bb
\head(\bg))$,   in   particular   $\head(\bg)$ left-divides $   \bb  \head(\bg)$,  that  is
$\bb^{\head(\bg)}\in B^+$.

 For (2), by (1) we get $\bI^{\head(\bg)}\subseteq B^+$. And clearly if $\bg$ is
 $I$-reduced then $\head(\bg)$ also.

Finally  (3)  is  clear if  $\bg\in\bW$  and  the  general case follows from
(2) by induction on the number of terms in the Garside normal form of $\bg$.
\end{proof}
\begin{lemma}\label{$I$-head and $I$-tail preserved by ribbons}
Let $\bg\in B^+$ be an $I$-ribbon and
let $\bh\in B^+$. Then $H_I(\bg\bh)^\bg=H_{I^{\pr(\bg)}}(\bh)$
and $H_I(\bg\bh)\inv\bg\bh=\bg H_{I^{\pr(\bg)}}(\bh)\inv\bh$.
\end{lemma}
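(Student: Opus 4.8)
The plan is to reduce the lemma to the case where $\bg$ lies in $\bW$, that is, $\bg$ is a simple braid that is an $I$-ribbon, and then handle the general case by induction on the number of terms in the Garside normal form of $\bg$. The key input will be that an $I$-ribbon $\bg$ induces, via left-multiplication, a bijection between positive braids and positive braids which carries divisibility to divisibility, and which sends $B_I^+$ onto $B_{I^{\pr(\bg)}}^+$; concretely, $\varphi_{\pr(\bg)}$ conjugation by $\bg$ sends $\bI$ to $\bI^{\pr(\bg)}\subseteq\bS$ (this is the content of $\bg$ being an $I$-ribbon, together with Lemma~\ref{ribbon in B}).

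**The simple case.**
First I would treat $\bg\in\bW$ an $I$-ribbon, so $\pr(\bg)$ is an $I$-ribbon-$J$ with $J=I^{\pr(\bg)}$. Write $\bh=H_J(\bh)\bh'$ where $H_J(\bh)$ is the longest prefix of $\bh$ in $B_J^+$. Then $\bg H_J(\bh)=\varphi_{\pr(\bg)}\inv$ applied appropriately gives $\bg H_J(\bh)=\big(\lexp{\bg}{H_J(\bh)}\big)\bg$, and $\lexp{\bg}{H_J(\bh)}\in B_I^+$ since $\bg$ conjugates $B_J^+$ into $B_I^+$. Thus $\lexp{\bg}{H_J(\bh)}$ is a prefix of $\bg\bh$ lying in $B_I^+$, so it left-divides $H_I(\bg\bh)$. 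Conversely, if $\bc\in B_I^+$ is a prefix of $\bg\bh$, then since $\pr(\bg)$ is $I$-reduced, $\bg$ must be a prefix of $\bc\bg$ — more precisely $\bc\inv\bg$ lands in $B^+$ and one checks $\bg$ left-divides $\bc\bh'$-type expressions by a weak-order/convexity argument — giving $\bc=\lexp{\bg}{\bc'}$ with $\bc'\in B_J^+$ a prefix of $\bh$, hence $\bc'$ left-divides $H_J(\bh)$. This forces $H_I(\bg\bh)=\lexp{\bg}{H_J(\bh)}$, equivalently $H_I(\bg\bh)^\bg=H_J(\bh)$, and the second identity follows by left-cancellation: $H_I(\bg\bh)\inv\bg\bh=\bg\,H_J(\bh)\inv\bh$.

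**The inductive step.**
For the general case, write the Garside normal form of $\bg$ as $(\bg_1,\ldots,\bg_n)$ with $\bg_1=\head(\bg)$ and $\bg'=\tail(\bg)=\bg_2\cdots\bg_n$. By Lemma~\ref{divers}(2) the element $\head(\bg)$ is an $I$-ribbon, and $\bg'$ is an $I_1$-ribbon with $I_1=I^{\pr(\head(\bg))}$; moreover $\bg'$ has one fewer term in its normal form. Apply the simple case to $\bg_1$ to get $H_I(\bg_1(\bg'\bh))^{\bg_1}=H_{I_1}(\bg'\bh)$ and $H_I(\bg_1\bg'\bh)\inv\bg_1\bg'\bh=\bg_1\,H_{I_1}(\bg'\bh)\inv\bg'\bh$; then apply the induction hypothesis to $\bg'$ (an $I_1$-ribbon) with input $\bh$, giving $H_{I_1}(\bg'\bh)^{\bg'}=H_{I_1^{\pr(\bg')}}(\bh)=H_{I^{\pr(\bg)}}(\bh)$ and the companion tail identity. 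Composing the two conjugations yields $H_I(\bg\bh)^\bg=H_{I^{\pr(\bg)}}(\bh)$, and chaining the two tail identities gives $H_I(\bg\bh)\inv\bg\bh=\bg\,H_{I^{\pr(\bg)}}(\bh)\inv\bh$.

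**Main obstacle.**
The delicate point is the converse direction in the simple case: showing that \emph{every} prefix of $\bg\bh$ lying in $B_I^+$ is of the form $\lexp{\bg}{\bc'}$ for a prefix $\bc'$ of $\bh$ in $B_J^+$. This uses crucially that $\pr(\bg)$ is $I$-reduced, so that $\bg$ cannot be "absorbed" into a positive braid of support in $I$ — one argues via Lemma~\ref{inclusionN} and the fact that $N(\pr(\bg))$ meets no reflection of $W_I$ (Lemma~\ref{lemma 1}), which forces the left-gcd of $\bc$ and $\bg$ to be trivial when $\bc\in B_I^+$, hence $\bg$ left-divides $\bc\inv(\bg\bh)$ and $\bc\inv\bg\bh$ lies in $\bg B^+$. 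Once this left-divisibility fact is in hand, the rest is bookkeeping with $\varphi_{\pr(\bg)}$.
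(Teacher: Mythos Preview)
Your overall architecture (reduce to $\bg\in\bW$, then induct on the number of Garside terms) is workable but considerably more elaborate than the paper's argument, and there is a genuine gap in the ``main obstacle'' step.

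\textbf{The gap.} You correctly observe that $\leftgcd(\bc,\bg)=1$ for $\bc\in B_I^+$ and $\bg$ $I$-reduced. But from this you conclude that $\bg$ left-divides $\bc\inv\bg\bh$, and that inference is invalid in $B^+$: coprimality does not imply that the right-lcm equals the product. Already in type $A_2$ one has $\leftgcd(\bs,\bt)=1$ while $\rightlcm(\bs,\bt)=\bs\bt\bs\neq\bs\bt$. What you actually need is $\rightlcm(\bc,\bg)=\bc\bg$; then $\bc\mid\bg\bh$ and $\bg\mid\bg\bh$ give $\bc\bg\mid\bg\bh$, i.e.\ $\bg\mid\bc\inv\bg\bh$. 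Your appeal to Lemma~\ref{inclusionN} does not help here either, since that lemma concerns only simple braids, whereas $\bc\in B_I^+$ need not be simple.

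\textbf{How the paper proceeds, and why it is simpler.} The paper does not reduce to the simple case at all. It works directly with an arbitrary $I$-ribbon $\bg\in B^+$ and a single generator $\bs\in\bI$, setting $\bs'=\bs^\bg\in\bS$. The key point is a one-line length argument: $\bs\bg=\bg\bs'$ is a common right-multiple of $\bs$ and $\bg$, and since $\bs\nmid\bg$ one has $\lZ(\rightlcm(\bs,\bg))\ge\lZ(\bg)+1=\lZ(\bs\bg)$, forcing $\rightlcm(\bs,\bg)=\bs\bg$. Hence $\bs\mid\bg\bh\iff\bs\bg\mid\bg\bh\iff\bg\bs'\mid\bg\bh\iff\bs'\mid\bh$. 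Both formulae in the lemma then follow by peeling off one generator of $H_I(\bg\bh)$ at a time. If you want to repair your argument, the cleanest fix is to prove $\rightlcm(\bc,\bg)=\bc\bg$ for all $\bc\in B_I^+$ by induction on $\lZ(\bc)$, using exactly this single-generator lcm identity as the inductive engine; but once you have that identity, the detour through the simple case and the normal-form induction is unnecessary.
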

\begin{proof} Let $\bs\in \bI$ and set $\bs'=\bs^\bg$. Both formulae clearly
follow if we show that it is equivalent that $\bs$ left-divides $ \bg\bh$ or that
$\bs'$ left-divides $ \bh$.

Now from $\bs \bg= \bg\bs'$ it follows that the right-lcm of $\bs$ and $\bg$
divides $\bs \bg$, thus it must be equal to $\bs \bg$, because 
$\lZ(\rightlcm(\bs,\bg))$ is
at least $\lZ(\bg)+1=\lZ(\bs\bg)$.
Thus $\bs$ left-dividing $ \bg\bh$ is equivalent to $\bs
\bg$ left-dividing $ \bg\bh$ or equivalently to $\bg\bs'$ left-dividing $ \bg\bh$, 
which is finally equivalent to $\bs'$ left-dividing $ \bh$.
\end{proof}
\begin{proposition}\label{Positive conjugate implies ribbon}
Let $\bg,\bb\in B^+$ be such that $\bg$ is $\supp(\bb)$-reduced
and such that $\bb^\bg\in B^+$. 
Then $\bg$ is  a $\supp(\bb)$-ribbon and we have $\supp(\bb^\bg)=\supp(\bb)^{\pr(\bg)}$.
\end{proposition}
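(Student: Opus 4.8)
The plan is to induct on the number $n$ of terms in the Garside normal form of $\bg$; throughout write $I:=\supp(\bb)$. For $n=0$ there is nothing to prove. For $n\ge 2$ put $\bg'=\head(\bg)$ and $\bg''=\tail(\bg)$. Since $\bg'$ left-divides $\bg$, its longest prefix in $B_I^+$ left-divides $H_I(\bg)=1$, so $\bg'$ is $I$-reduced; and $\bb^{\bg'}\in B^+$ by Lemma~\ref{divers}(1). Applying the case $n=1$ (proved below) to $\bg'$, we get that $\bg'$ is an $I$-ribbon and that $\bb_1:=\bb^{\bg'}\in B^+$ has support $I_1:=I^{\pr(\bg')}\subseteq S$; note $\bI^{\bg'}=\bI_1$ because each $\bs^{\bg'}$ has $\lZ=1$, hence is an atom. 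By Lemma~\ref{$I$-head and $I$-tail preserved by ribbons} applied to the $I$-ribbon $\bg'$ and $\bh=\bg''$, $H_{I_1}(\bg'')=H_I(\bg)^{\bg'}=1$, so $\bg''$ is $I_1$-reduced; since $\bb_1^{\bg''}=\bb^\bg\in B^+$, the inductive hypothesis gives that $\bg''$ is an $I_1$-ribbon with $\supp(\bb^\bg)=I_1^{\pr(\bg'')}=I^{\pr(\bg)}$. Finally $\bI^\bg=(\bI^{\bg'})^{\bg''}=\bI_1^{\bg''}\subseteq B^+$ and $\bg$ is $I$-reduced, so $\bg$ is an $I$-ribbon. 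This reduces everything to the base case $\bg=\bw\in\bW$.

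For $\bg=\bw$ simple, $\bw$ is $I$-reduced exactly when $w=\pr(\bw)$ is. The key local remark is that for $s\in I$ one has $\bs\bw=\widehat{sw}\in\bW$ (because $\ell_S(sw)=\ell_S(s)+\ell_S(w)$), so by Lemma~\ref{inclusionN}, $\bw^{-1}\bs\bw\in B^+$ iff $\bw$ left-divides $\widehat{sw}$, iff $N(w)\subseteq N(sw)=\{s\}\sqcup\lexp s N(w)$; since $N(w)\cap T_I=\emptyset$ (Proposition~\ref{commute pr}), this is equivalent to $\lexp s N(w)=N(w)$, i.e. to $w^{-1}(\alpha_s)\in\Pi$. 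Hence it suffices to show that the hypothesis $\bw^{-1}\bb\bw\in B^+$ forces $w^{-1}(\alpha_s)\in\Pi$ for every $s\in I$: granting this, $\bI^\bw\subseteq\bS$, and $w$ being $I$-reduced, $\bw$ is an $I$-ribbon-$J$ with $J=I^w$; moreover $\bx\mapsto\bw^{-1}\bx\bw=\varphi_w^{-1}(\bx)$ is then an isomorphism $B_I\xrightarrow{\sim}B_J$ carrying $\bI$ bijectively onto $\bJ$, hence preserving supports, so $\supp(\bw^{-1}\bb\bw)=J=I^{\pr(\bw)}$.

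To prove that remaining claim I would induct on $\lZ(\bb)$, the cases $\lZ(\bb)\le 1$ being immediate (if $\bb=\bs$ then $\bw^{-1}\bs\bw\in B^+$ has $\lZ=1$, hence is an atom, and $I=\{s\}$). For $\lZ(\bb)>1$, write $\bb=\bb'\bs$ with $\bs\in\bI$ the last letter of a positive word for $\bb$, so that $\supp(\bb')\cup\{s\}=I$ and $\bw^{-1}\bb\bw=(\bw^{-1}\bb'\bw)(\bw^{-1}\bs\bw)$. Once one knows the factor $\bw^{-1}\bs\bw$ is positive, it is an atom right-dividing $\bw^{-1}\bb\bw$, so $\bw^{-1}\bb'\bw\in B^+$; since $\bw$ is also $\supp(\bb')$-reduced, the induction hypothesis makes $\bw$ a $\supp(\bb')$-ribbon, i.e. $w^{-1}(\alpha_t)\in\Pi$ for all $t\in\supp(\bb')$, and adding $w^{-1}(\alpha_s)\in\Pi$ covers all of $I$, as required.

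The main obstacle is therefore a single self-contained statement: \emph{if $\bw\in\bW$ is $I$-reduced, $\bb'\in B_I^+$, $\bs\in\bI$, and $\bw^{-1}\bb'\bs\bw\in B^+$, then $\bw^{-1}\bs\bw\in B^+$.} This is where the full-support hypothesis and the root-system description really come in; I would attack it via the morphism $\bN\rtimes\pr$ of Proposition~\ref{digne2.1} — already $\bw^{-1}\bb\bw\in B^+$ forces $\bN(\bw^{-1}\bb\bw)\ge 0$, hence that $\pr(\bb)$ normalises the inversion set $\Phi_w$ — combined with Lemma~\ref{bsb} applied to the square $\bw^{-1}\bs^2\bw$ and with divisibility in $B^+$ (Lemma~\ref{inclusionN} together with Dyer's closure results, Proposition~\ref{Dyer}), the idea being that positivity after conjugating all of $\bb$ must propagate, one generator at a time, to positivity after conjugating each element of $\supp(\bb)$, equivalently that the conjugated parabolic $\lexp{w^{-1}}W_I$ is again a standard parabolic.
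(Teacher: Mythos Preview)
Your reduction to the case $\bg=\bw\in\bW$ is correct and matches the paper's argument essentially verbatim, and your reformulation ``$\bw^{-1}\bs\bw\in B^+\iff w^{-1}(\alpha_s)\in\Pi$'' is fine. The problem is that your induction on $\lZ(\bb)$ peels off a \emph{right} letter $\bb=\bb'\bs$ and therefore must establish the claim you yourself flag as the main obstacle: from $\bw^{-1}\bb'\bs\bw\in B^+$ deduce $\bw^{-1}\bs\bw\in B^+$. You do not prove this, and the sketch via $\bN\rtimes\pr$ does not get there: positivity in $B^+$ is not detected by $\bN$ (e.g.\ $\bN(\bs^2)=2s$ says nothing about left-divisibility), and the observation that $\pr(\bb)$ normalises $\Phi_w$ gives no grip on an individual generator $s\in I$, especially when $s\notin\supp(\bb')$. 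So as written the argument has a genuine gap at exactly the point where the work lies.

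The paper's proof avoids this by peeling from the \emph{left}: write $\bb=\bs\bb'$, so that $\bs$ left-divides $\bb\bg=\bg\bu$ while $\bs$ does not left-divide $\bg$ (this is where $I$-reducedness is used). A short exchange-lemma computation in $W$ applied to $\head(\bg\bu)$ then produces a factorisation $\bu=\bu'\bt\bu''$ with $\bs\bg\bu'=\bg\bu'\bt$ simple; this simultaneously shows $\bb'\bg=\bg\bu'\bu''$ (so the induction on $\lZ(\bb)$ runs) and, after a length argument using that $\bg$ is $\supp(\bb)$-reduced, forces $\bv^{-1}\bs\bv=\bs$ for $\bv=\bg\bu'\bg^{-1}\in B_K^+$ and hence $\bs^\bg=\bt\in\bS$ even when $s\notin K=\supp(\bb')$. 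The asymmetry between left and right is essential here: taking the first letter lets you feed $\bs\mid\bg\bu$ into the exchange lemma, whereas your right-letter setup gives no comparable divisibility statement to exploit. If you want to salvage your approach, you would need an independent proof of your boxed claim; absent that, switch to left-peeling and the exchange-lemma lemma as in the paper.
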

\begin{proof}
We first show that we can reduce to the case where $\bg$ is simple, by arguing
by induction on the number of terms of the Garside normal form of $\bg$.

By Lemma~\ref{divers}(1) we have $\bb^{\head(\bg)}\in B^+$; 
and for $\bg$  to  be  $\supp(\bb)$-reduced,  we  certainly  need  $\head(\bg)$
to be $\supp(\bb)$-reduced. 

If we know the theorem in the case where $\bg$ is simple,
it    follows    that the set $\bJ=\bI^{\head(\bg)}$, where $I=\supp(\bb)$,
is a subset of $\bS$;    and    thus
$\bb^{\head(\bg)}$  has  support  $J$.  We  also  have  that  $\tail(\bg)$ is
 $J$-reduced by Lemma~\ref{$I$-head and $I$-tail preserved by ribbons}
 applied with $\head(\bg)$ for $\bg$ and $\tail(\bg)$ for $\bh$. Since
$(\bb^{\head(\bg)})^{\tail(\bg)}\in B^+$,  we conclude by induction on the number
of  terms  of  the  normal  form of $\bg$ that $\bJ^{\tail(\bg)}\subseteq \bS$,
 whence the result since $\bJ^{\tail(\bg)}= \bI^\bg$.

We now show the theorem when $\bg$ is simple. We write the condition
$\bb^\bg\in B^+$ as $\bb\bg=\bg\bu$ with $\bu\in B^+$.

 We proceed by an induction on $\lZ(\bb)$. For $\bs\in \bS$ dividing $\bb$,
we write $\bb=\bs \bb'$. We have that $\bs$ left-divides $\bb\bg=\bg\bu$.

To go on we need the following lemma.
\begin{lemma}Let  $\bg\in B^+$ be simple and let $\bs\in \bS$, $\bu\in B^+$
be  such that $\bs$ does not divide $\bg$  but $\bs$ divides $\bg\bu$. Then
we   have   $\bu=\bu'\bt   \bu''$   where   $\bt\in  \bS$, $\bu',\bu''\in B^+$
are such  that  $\bs \bg\bu'=\bg\bu'\bt$ is simple.
\end{lemma} 
\begin{proof}
First  note  that  $\bs$ left-divides $\head(\bg \bu)$. Let $\bg\bu_1=\head(\bg\bu)$.
Since in $W$ we have $\ell_S(sgu_1)<\ell_S(gu_1)$ and $\ell_S(sg)>\ell_S(g)$   (in other words the reflection  $s$ is in the left descent  set of $gu_1$ but
not  in that  of $g$), we have  by the exchange lemma $gu_1=gu't u_2$ where
$t\in  S$ and  $sgu'=gu't$. Lifting  back to  $B^+$ we  get the  lemma with
$\bu''= \bu_2\tail(\bg\bu)$. \end{proof}

We now resume the proof of the proposition.
Since $\bs$ does not divide $\bg$, the lemma gives $\bu=\bu'\bt \bu''$ with
$\bt\in \bS$ and $\bs \bg\bu'=\bg\bu'\bt\in\bW$. From $\bb\bg=\bs
\bb'\bg=\bg\bu=\bg\bu'\bt \bu''=\bs \bg\bu'\bu''$ we deduce
$\bb'\bg=\bg\bu'\bu''$.  Thus  $\bb',\bg$  satisfy  the  assumptions of the
 theorem, thus by induction on $\lZ(\bb)$ we have
 $\bK^\bg\subseteq \bS$ where $K=\supp(\bb')$. We still have to prove that $\bs^\bg\in \bS$.
This is already proven unless $\bs\not\in\bK$, which we assume now.

We  write $\bb'\bg=\bg\bb''$ for
some  $\bb''\in B$.  Since $\bs  \bg\bb''=\bg\bu=\bg\bu'\bt \bu''=\bs \bg
\bu'  \bu''$, we have $\bb''=\bu'\bu''\in B^+$. Now $\bg\by\bg\inv\in
\bK$, for any $y\in\supp(\bb'')$ hence the element $\bv=\bg\bu'\bg\inv$ is in  $B^+_K$. From $\bs
\bg\bu'=\bg\bu'\bt  $  we  get  $\bs  \bv  \bg=\bv\bg\bt\in\bW$, which we write as
$(\bv\inv  \bs  \bv)\bg=\bg\bt$.  Since  $\bg$  is $\supp(\bb)$-reduced, its
image  in $W$ is also and in $W$ we have $1+\ell_S(g)=\ell_S(gt)=\ell_S(v\inv s vg)=\ell_S(v\inv s
 v)+\ell_S(g)$, the last equality since $v\inv sv$ is in $W_{\supp(\bb)}$, 
whence  $\ell_S(v\inv s  v)=1$. Now  $\bv$ and  $\bs\bv$ are simple since 
$\bs\bv\bg=\bs\bg\bu'$, which is simple. Thus $\bv$ divides $\bs\bv$ because $l(v)+l(v\inv sv)=l(sv)$ which implies that 
$\bv\bx=\bs\bv$ where $\bx\in\bS$ lifts $v\inv sv\in S$. 
It follows that  $\bv\inv\bs\bv\in\bS$.  But  $\supp(\bv)\subset\bK$  so  that
$\bs\notin\supp   (\bv)$,  hence  $\bv\inv   \bs  \bv\in\bS$  implies
$\bv\inv \bs \bv=\bs$, thus $\bs^\bg=\bt$.
\end{proof}
\begin{proposition}\label{pure centralizer}Let $I\subseteq S$ and let
 $\bp\in B^+$ be a pure element such that $H_I(\bp)=1$; if $\bp$
commutes with an element in $B^+$ of support $I$
then $\bp$ centralises $B_I$.
\end{proposition}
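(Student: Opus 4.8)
The plan is to reduce everything to Proposition~\ref{Positive conjugate implies ribbon}. Let $\bb\in B^+$ be an element with $\supp(\bb)=I$ which commutes with $\bp$. Then $\bb^\bp=\bp\inv\bb\bp=\bb$, so in particular $\bb^\bp\in B^+$; and the hypothesis $H_I(\bp)=1$ says precisely that $\bp$ is $\supp(\bb)$-reduced. Hence Proposition~\ref{Positive conjugate implies ribbon} applies with $\bg=\bp$ and yields that $\bp$ is an $I$-ribbon, i.e. $\bI^\bp\subseteq\bS$ (using the equivalence recorded in the definition of $I$-ribbon for positive braids). Note that the other conclusion of that proposition, $\supp(\bb^\bp)=I^{\pr(\bp)}$, is vacuous here since $\bp$ is pure.

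It then remains to upgrade ``$\bI^\bp\subseteq\bS$'' to ``$\bp$ commutes with $\bI$'', and here the purity of $\bp$ does the job: for $\bs\in\bI$ write $\bs^\bp=\bt$ with $\bt\in\bS$; applying $\pr$ and using $\pr(\bp)=1$ gives $\pr(\bt)=s$, so $\bt$ and $\bs$ are both the lift of $s$ in $\bS$ and hence $\bt=\bs$. Thus $\bp\bs=\bs\bp$ for every $\bs\in\bI$, and since $\bI$ generates $B_I$ the element $\bp$ centralises $B_I$.

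I do not expect a genuine obstacle here: all the real work has already been done in Proposition~\ref{Positive conjugate implies ribbon}, whose proof is the long double induction (on $\lZ(\bb)$ and on the number of Garside factors of the conjugator) carried out above. The only point requiring care is checking that the commutation hypothesis supplies exactly what that proposition needs --- a positive conjugator fixing $\bb$ which is $\supp(\bb)$-reduced --- after which purity removes the residual ambiguity in the conjugation action on the generators with no difficulty.
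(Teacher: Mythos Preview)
Your proof is correct and follows exactly the same approach as the paper: apply Proposition~\ref{Positive conjugate implies ribbon} to deduce that $\bp$ is an $I$-ribbon, then use purity of $\bp$ together with $\pr$ to conclude that conjugation by $\bp$ fixes each $\bs\in\bI$. The paper's proof is more terse but the logical steps are identical.
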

\begin{proof}
Let $\bi\in B^+$ have support $I$.
By Proposition~\ref{Positive conjugate implies ribbon} conjugating by $\bp$ maps
$\bI$ into $\bS$. Since
$\bp$ is pure applying $\pr$ to the equality $\bp\bs=\bs'\bp$ with $\bs\in\bI,
\bs'\in\bS$ we get $\pr(\bs)=\pr(\bs')$ hence $\bs=\bs'$.
\end{proof}
\subsection*{General elements}
Recall that in the introduction we have defined $\lS:B\to\BN$ to be the length
function  on  $B$ with respect to
the generating set $\bS^{\pm 1}$. For $\bb\in B$, it is equal to
$\hl(b)$, the length of a minimal word $b\in(\bS^{\pm1})^*$ representing $\bb$.

\begin{definition} For a word $b\in(\bS^{\pm1})^*$, we call $\supp(b)$ the
minimal subset $I\subseteq S$ such that $b\in(\bI^{\pm1})^*$.
\end{definition}

The  following proposition, which enables the definition of support for all
elements, is the convexity theorem of Charney and Paris
\cite[Theorem~1.2]{Charney-Paris}.
\begin{proposition}\label{convexity} For $\bb\in B$, the set $\supp(b)$
is independent of the
word $b$ of shortest length representing $\bb$.
We call it support of $\bb$, denoted by $\supp(\bb)$. For any
word $b'$ representing $\bb$ we have $\supp(b')\supseteq\supp(\bb)$.
\end{proposition}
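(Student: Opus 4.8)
The statement to prove is the convexity theorem: for $\bb\in B$, the support $\supp(b)$ is the same for every minimal-length word $b$ representing $\bb$, and any word $b'$ representing $\bb$ has $\supp(b')\supseteq\supp(\bb)$. The natural route here is to deduce this from the machinery already set up, rather than re-proving the original Charney--Paris argument. The key observation is that the retraction $\hpi_I$ on words (Definition~\ref{retract}, Remark~\ref{obvious}(5)) is length-nonincreasing and that, by Proposition~\ref{bien defini}, its image in $B$ depends only on $\bb$, so $\bpi_I$ is well defined on $B$.

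First I would prove the second assertion, which is the heart of the matter. Let $b'$ be any word representing $\bb$, let $J=\supp(b')$, so $b'\in(\bJ^{\pm1})^*$. For any $s\in S\setminus J$, put $I=S\setminus\{s\}$; then $b'\in(\bI^{\pm1})^*$, so by Remark~\ref{obvious}(1) we have $\hpi_I(b')=b'$, and hence, passing to $B$ via Proposition~\ref{bien defini} and Proposition~\ref{fourre-tout}(\ref{ft6}), $\bpi_I(\bb)=\bb$ and therefore $\bb\in B_I$. This holds for every $s\notin J$, so $\bb\in\bigcap_{s\notin J}B_{S\setminus\{s\}}$. The crucial algebraic input I need is that this intersection equals $B_J$, i.e.\ that for subsets of $S$ one has $\bigcap_i B_{I_i}=B_{\bigcap_i I_i}$; granting this, $\bb\in B_J$, which already shows that $\bb$ lies in $B_{\supp(b')}$ for \emph{every} representing word. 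Now take $b$ of minimal length with $I_0:=\supp(b)$; since $\bb\in B_{I_0}$ and $\bb\in B_J$ for the given $b'$, intersecting gives $\bb\in B_{I_0\cap J}$, so $\bb$ is represented by some word on $(\bI_0\cap\bJ)^{\pm1}$; but a word on a smaller letter-set cannot be longer than a minimal word, and minimality of $\lS(\bb)$ together with the definition forces $\supp$ of that word to be $\supseteq$ the support of any minimal word — so one deduces $I_0\subseteq J=\supp(b')$. Running this for two minimal words $b,b'$ gives equality of their supports, proving the first assertion, and $\supp(b')\supseteq\supp(\bb)=I_0$ for arbitrary $b'$ gives the last.

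The main obstacle I anticipate is exactly the lemma $\bigcap_i B_{I_i}=B_{\bigcap_i I_i}$ (the ``intersection of standard parabolics is the standard parabolic on the intersection''), since the nontrivial inclusion $\bigcap_i B_{I_i}\subseteq B_{\bigcap_i I_i}$ is itself essentially a form of convexity and is not stated as a prior result in the excerpt. To avoid circularity I would instead argue directly at the level of words combined with the retraction: fix a minimal word $b$ for $\bb$ with support $I_0$, and for each $s\in I_0$ I must exhibit that $s$ is forced, i.e.\ $\bb\notin B_{S\setminus\{s\}}$. Suppose $\bb\in B_{S\setminus\{s\}}$; then taking $I=S\setminus\{s\}$ we get $\bpi_I(\bb)=\bb$, hence $\hl(\hpi_I(b))=\hl(b)$ (as $\hpi_I(b)$ is a word of length $\le\hl(b)$ representing $\bpi_I(\bb)=\bb$, while $b$ is already minimal, and equality of the image forces no shortening is possible — here one invokes that $\bpi_I$ does not increase the minimal length, a consequence of Remark~\ref{obvious}(5) and Proposition~\ref{bien defini}); but Remark~\ref{obvious}(5) says $\hl(\hpi_I(b))=\hl(b)$ forces $b\in(\bI^{\pm1})^*$, contradicting $s\in\supp(b)=I_0$. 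This shows $I_0\subseteq\supp(b')$ for every representing word $b'$ (apply the argument with $b'$ in place of $b$ after noting $\lS(\bb)$ is realized, or more simply: $\supp(b')\supseteq I_0$ because $\bb$ is not in any $B_{S\setminus\{s\}}$ for $s\in I_0$, while $b'$ being a word on $\bJ^{\pm1}$ with $J=\supp(b')$ forces $\bb\in\bigcap_{s\notin J}B_{S\setminus\{s\}}$, so no $s\in I_0$ can lie outside $J$). The remaining routine step is that two minimal words then have supports each containing the other, hence equal; I would write this out as a short closing paragraph.
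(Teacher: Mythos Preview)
Your proposal is correct and, once you abandon the intersection-of-parabolics detour, lands on essentially the same argument as the paper: apply the word-level retraction to a minimal word and use Remark~\ref{obvious}(5) to force the support inclusion. The paper's version is simply the one-shot form of your element-by-element argument: given a minimal word $b$ and any word $b'$ with $J=\supp(b')$, apply $\hpi_J$ directly to $b$; since $\hpi_J(b)$ represents $\bpi_J(\bb)=\bb$ and has length $\le\hl(b)$, minimality gives equality, hence $b\in(\bJ^{\pm1})^*$ and $\supp(b)\subseteq J$ --- there is no need to peel off one $s$ at a time via $I=S\setminus\{s\}$.
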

Note that the definition of support for elements of $B$ is compatible with
that given above Lemma~\ref{divers} for
elements of $B^+$.
\begin{proof}
Let $b$ be a word  of minimal
length  for $\bb$  and let $J$ be the support of a word $b'$ for $\bb$.
Since $\hpi_J(b)$ represents $\bb$, by minimality we have $\hl(\hpi_J(b))=\hl(b)$, 
whence by Remark~\ref{obvious}(5), $\supp(b)\subseteq J$,  which proves the
proposition.
\end{proof}

We denote by $\conj(\bb)$ \index{conj@$\conj$} the conjugacy class of $\bb$
in $B$, or $\conj_B(\bb)$ when we want to specify $B$.
\begin{proposition}\label{bg=gh}
Let  $\bi,\bj\in B$ be conjugate elements  with respective supports $I$ and
$J$. Assume $\lS(\bj)$ minimal in $\conj(\bi)$. Let $\bg\in B$ be such that
$\bi\bg=\bg  \bj$;  then  $\pr(\bt_I(\bg))$  is  a $I'$-ribbon-$J$ for some
$I'\subseteq I$. If in addition $\lS(\bi)$ is minimal in $\conj(\bi)$, then
$I'=I$.
\end{proposition}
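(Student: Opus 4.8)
The plan is to write $\bg = \bpi_I(\bg)\bt_I(\bg)$ and exploit the relation $\bi\bg = \bg\bj$ together with the retraction formulas. Set $\bh = \bt_I(\bg)$, so $\pr(\bh) = t_I(\pr(\bg))$ is $I$-reduced by Lemma~\ref{Icoset}(2). Since $\bi \in B_I$, apply $\bpi_I$ to both sides of $\bi\bg = \bg\bj$. On the left, $\bi \in B_I$ forces $\bpi_I(\bi\bg) = \bi\,\bpi_I(\bg)$ by Proposition~\ref{fourre-tout}(\ref{ft2}). On the right, $\bpi_I(\bg\bj) = \bpi_I(\bpi_I(\bg)\bh\bj) = \bpi_I(\bg)\,\bpi_I(\bh\bj)$, again by Proposition~\ref{fourre-tout}(\ref{ft2}) since $\pr(\bpi_I(\bg)) \in W_I$. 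Comparing, $\bi = \bi\,\bpi_I(\bg)\,\bpi_I(\bh\bj)\,\bpi_I(\bg)^{-1}$... more directly: $\bi\,\bpi_I(\bg) = \bpi_I(\bg)\,\bpi_I(\bh\bj)$, i.e. $\bpi_I(\bh\bj) = \bpi_I(\bg)^{-1}\bi\,\bpi_I(\bg) \in B_I$. The point of this computation is to show that $\bh\bj$, after retraction, lands in $B_I$; combined with $\bpi_I(\bh) = 1$ (Lemma~\ref{Icoset}(1)) and the product formulas for $\bpi_I(\bh\bj)$, this will constrain $\supp(\bj) = J$ relative to the ribbon structure of $\pr(\bh)$.

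Next I would invoke the support/length comparison. Since $\bj$ has minimal $\lS$ in its conjugacy class and $\bj$ is conjugate to $\bh^{-1}\bi\bh$ — indeed from $\bi\bg = \bg\bj$ we get $\bj = \bg^{-1}\bi\bg = \bh^{-1}\bpi_I(\bg)^{-1}\bi\,\bpi_I(\bg)\bh = \bh^{-1}\bi'\bh$ where $\bi' = \bpi_I(\bg)^{-1}\bi\,\bpi_I(\bg) \in B_I$ has the same support-type as $\bi$ (it is conjugate to $\bi$ inside $B_I$, so by convexity, Proposition~\ref{convexity}, its support is $W_I$-conjugate to $I$, hence equals $I$ up to the ambient support being $\leq I$). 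So $\bj$ is conjugate, via the $I$-reduced element $\bh$, to an element $\bi'$ of support contained in $I$. Using $\lS(\bj)$ minimal and $\lS(\bi') \leq \lS(\bi)$, together with $\hl(\hpi_I(\cdot)) \le \hl(\cdot)$ from Remark~\ref{obvious}(5), I would argue that conjugation by $\bh$ preserves length here, which via the root-system description of $\hpi_I$ (Corollary~\ref{pi(b1..bn)}) forces $\pr(\bh)$ to conjugate $\bJ$ into $\bS$ — that is, $\pr(\bh)$ is a ribbon. More precisely: write $J_1 = \supp(\bi')^{\pr(\bh)^{-1}} \cap J$ or the appropriate intersection from Lemma~\ref{IredJ}; since $\pr(\bh)$ is $I$-reduced, Proposition~\ref{ajout F} (applied after checking $\pr(\bh)$ is reduced-$J$, which follows because $\bj$ being conjugated back into support $\subseteq I$ means no $j \in J$ can be cancelled on the right of $\bh$ without contradicting minimality of $\lS(\bj)$) gives that $\pr(\bh)$ is a ribbon-$J_1$, and a length count forces $J_1 = J$. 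Thus $\pr(\bt_I(\bg)) = \pr(\bh)$ is an $I'$-ribbon-$J$ with $I' = \lexp{\pr(\bh)}J \subseteq I$.

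For the last sentence: if moreover $\lS(\bi)$ is minimal in $\conj(\bi)$, then both $\bi$ and $\bj$ are minimal-length representatives of the same class, so by Proposition~\ref{propIntro5}(2) (the part from Proposition~\ref{minimal support}, available in the excerpt as stated), $\supp(\bi) = I$ and $\supp(\bj) = J$ are conjugate in $W$; moreover $\lS(\bi') = \lS(\bi)$ since both are minimal, and $\bi'$ has support a $W_I$-conjugate of $I$ inside $I$, forcing $\supp(\bi') = I$. Then $I' = \supp(\bi')^{\text{(something)}}$... actually $I'$ is defined by $\lexp{\pr(\bh)}J = I'$ and $\bi'$ has support $I'$ (since $\bi' = \bh\bj\bh^{-1}$ conjugated... wait, $\bj = \bh^{-1}\bi'\bh$ so $\bi' = \bh\bj\bh^{-1}$ has support $\lexp{\pr(\bh)}\supp(\bj) = \lexp{\pr(\bh)}J = I'$ by Proposition~\ref{Positive conjugate implies ribbon}-type reasoning applied in $B$, legitimized because $\pr(\bh)$ is a ribbon). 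Since $\supp(\bi') = I$, we conclude $I' = I$.

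\textbf{Main obstacle.} The hard part will be the length-counting step that upgrades "$\pr(\bh)$ is $I$-reduced and $\bpi_I(\bh\bj) \in B_I$" to "$\pr(\bh)$ is a ribbon-$J$ (all of $J$, not a proper subset)." The inequality $\hl(\hpi_I(b)) \le \hl(b)$ only gives containment in one direction; extracting that conjugation by $\bh$ is length-neutral on this particular element requires carefully combining the minimality of $\lS(\bj)$ with the fact that $\bi'$ and $\bj$ have the same length and that $\bh$ realizes the conjugacy — essentially one must rule out that a letter of $\bj$'s support "escapes" the ribbon image. I expect this to go through by writing a minimal word for $\bj$, conjugating it by a lift $\bh$ of $\pr(\bh)$, and showing any cancellation in $\bh\bj\bh^{-1}$ beyond the ribbon action would produce a shorter conjugate of $\bj$, contradicting minimality; the convexity theorem (Proposition~\ref{convexity}) and Corollary~\ref{pi(b1..bn)} are the tools.
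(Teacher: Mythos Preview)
Your first part follows the same strategy as the paper: apply $\bpi_I$ to $\bi\bg=\bg\bj$, obtain $\bi' := \bpi_I(\bg)^{-1}\bi\,\bpi_I(\bg)$ as the retraction of the right-hand side, and then run a length comparison against a minimal word $j$ for $\bj$. A couple of wrinkles: the inequality you want is $\lS(\bi')\le\lS(\bj)$ (coming from the explicit word $i'=p^{*-1}(\vN(\vN(\lexp{t_I(\pr(\bg))}p^*(j))\cap\Phi_I))$), not $\lS(\bi')\le\lS(\bi)$; and the detour through Proposition~\ref{ajout F} requiring $\pr(\bh)$ reduced-$J$ is unnecessary --- the paper applies Proposition~\ref{pi(bb')} directly, which needs no right-reducedness. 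Equality $\hl(i')=\hl(j)$ then forces every term of $\vN(\lexp{t_I(\pr(\bg))}p^*(j))$ to lie in $\Phi_I$; since each such term is the image of a root in $\pm\Pi_J$ under an $I$-reduced element, Lemma~\ref{lemma 1} pushes it into $\pm\Pi_I$. That is the precise mechanism behind your ``main obstacle,'' and it is cleaner than the cancellation heuristic you sketch.

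The second part has a genuine gap. You invoke Proposition~\ref{propIntro5}(2) to conclude that $\supp(\bi)$ and $\supp(\bj)$ are $W$-conjugate, and you try to deduce $\supp(\bi')=I$ from convexity. But Proposition~\ref{propIntro5}(2) is exactly Corollary~\ref{corollary}(1), which is derived \emph{from} the proposition you are proving, so the appeal is circular; and convexity (Proposition~\ref{convexity}) says nothing about supports of conjugates being conjugate. The paper's fix is short: once the first part is established, apply it again to the relation $\bi'\,\bpi_I(\bg)^{-1}=\bpi_I(\bg)^{-1}\bi$ (now $\bi'$ has support $I'$, $\bi$ has support $I$, and $\lS(\bi)$ is minimal). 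This yields that $t_{I'}(\pr(\bpi_I(\bg)^{-1}))$ is an $I''$-ribbon-$I$ with $I''\subseteq I'$. But $\bpi_I(\bg)^{-1}\in B_I$, so this element lies in $W_I$ and is reduced-$I$, hence trivial; thus $I''=I$, forcing $I'=I$.
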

\begin{proof}
We   have  $\bpi_I(\bi\bg)=\bi\bpi_I(\bg)=\bpi_I(\bg\bj)$.  By  
Proposition~\ref{pi(bb')}  we  have  $\bpi_I(\bg\bj)=\bpi_I(\bg)\bi'$  for some element
$\bi'\in B_I$ such that given a minimal word $j$ for $\bj$, the word
$i'=p^{*-1}(\vN(\vN(\lexp{t_I(\pr(\bg))}p^*(j))\cap\Phi_I))$ is  a
representative  for~$\bi'$.  But $\lS(\bi') \leq  \hl(
p^{*-1}(\vN(\vN(\lexp{t_I(\pr(\bg))}p^*(j))\cap\Phi_I)))\leq\hl(j)=
\lS(\bj) \leq \ell_{\bS^{\pm1}}(\bi')$, where the last inequality comes from
the minimality of $\lS(\bj)$ in its conjugacy class, since 
$\bi'$  is  conjugate  to  $\bj$.  Thus we have $\lS(\bi')=\lS(\bj) = \hl(
p^{*-1}(\vN(\vN(\lexp{t_I(\pr(\bg))}p^*(j))\cap\Phi_I)))$.   This  forces
all terms of  $\vN(\lexp{t_I(\pr(\bg))}p^*(j))$ to be in $\Phi_I$.
Using that $\vN$ is an involution, we get
$i'=p^{*-1}(\vN(\vN(\lexp{t_I(\pr(\bg))}p^*(j))))=
p^{*-1}(\lexp{t_I(\pr(\bg))}p^*(j))$.  But we know that in the formula for
$\hpi_I$  the roots  to which  one applies  $p^{*-1}$ are in $\Pi_I^{\pm1}$. Hence
$t_I(\pr(\bg))=\pr(\bt_I(\bg))$  is  an  $I'$-ribbon-$J$  where $I'$ is the
support of $i'$ (and $\bi'=\varphi_{t_I(\pr(\bg))}(\bj$)).

If $\lS(\bi)$ is minimal  in the conjugacy class of $\bi$, we may apply
the previous result to the equality $\bi'\bpi_I(\bg)\inv=\bpi_I(\bg)\inv\bi$
to conclude that $t_{I'}(\pr(\bpi_I(\bg)\inv))$ is an $I''$-ribbon-$I$ for some
$I''\subseteq I'$. But this implies that $t_{I'}(\pr(\bpi_I(\bg)\inv))$ is
reduced-$I$; since this element is in $W_I$, this implies
$t_{I'}(\pr(\bpi_I(\bg)\inv))=1$ thus $I''=I=I'$.
\end{proof}

\begin{corollary} \label{corollary}
Let $\bi,\bj\in  B$ be conjugate elements of respective supports $I,J$;
assume that $\lS(\bi)$ and $\lS(\bj)$ are equal and minimal in $\conj(\bi)$. Then
\begin{enumerate}
\item $I$ and $J$  are $W$-conjugate.
\item For $\bg$ such that $\bi\bg=\bg\bj$, 
the element $\bw\in\bW$ defined by $\pr(\bw)=\pr(\bt_I(\bg))$ is an 
$I$-ribbon-$J$ such that $\bi^{\bpi_I(\bg)\bw}=\bj$.
\end{enumerate}
\end{corollary}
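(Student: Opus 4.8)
The plan is to derive Corollary~\ref{corollary} directly from Proposition~\ref{bg=gh}. For part (1), I would note that Proposition~\ref{bg=gh}, applied with the roles of $\bi,\bj$ exchanged (which is legitimate since both $\lS(\bi)$ and $\lS(\bj)$ are minimal in the common conjugacy class and since $\bg\inv\bj=\bi\bg\inv$ witnesses the conjugacy the other way), tells us that $\pr(\bt_I(\bg))$ is an $I$-ribbon-$J$ --- here I use the ``in addition'' clause of Proposition~\ref{bg=gh} to get $I'=I$. An $I$-ribbon-$J$ element $w$ satisfies $I^w=J$ by definition, and conjugation by $w$ carries $S$-generators to $S$-generators, so in particular it carries the Coxeter subgroup $W_I$ isomorphically onto $W_J$; thus $I$ and $J$ are $W$-conjugate (via $\pr(\bt_I(\bg))$). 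This gives~(1).

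For part (2), I would make the same application of Proposition~\ref{bg=gh} explicit: let $\bw\in\bW$ be the lift of $w:=\pr(\bt_I(\bg))$, which by the above is an $I$-ribbon-$J$. Write $\bg=\bpi_I(\bg)\bt_I(\bg)$ by definition of $\bt_I$. I want to replace $\bt_I(\bg)$ by its simple-braid ``skeleton'' $\bw$; the point is that $\pr(\bt_I(\bg))=\pr(\bw)=w$, so $\bt_I(\bg)\bw\inv$ lies in the pure Artin group $P$, and moreover since $\bt_I(\bg)$ is $I$-reduced (Lemma~\ref{Icoset}(2)) while $\bw$ is its lift, one has $\bt_I(\bg)=\bp_J\bw$ for a pure braid $\bp_J$ that actually lies in $B_J$ (indeed $\bpi_J^r$ or a direct divisibility argument on $\bt_I(\bg)$ pins it down). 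Actually the cleanest route is: from $\bi\bg=\bg\bj$ we get $\bi\,\bpi_I(\bg)\,\bt_I(\bg)=\bpi_I(\bg)\,\bt_I(\bg)\,\bj$, i.e. $\bi^{\bpi_I(\bg)\bt_I(\bg)}=\bj$, and the content to add is that one may drop the pure part of $\bt_I(\bg)$ past $\bj$ --- equivalently, that already $\bi^{\bpi_I(\bg)\bw}=\bj$. This follows because in the proof of Proposition~\ref{bg=gh} one found $\bi'=\varphi_{t_I(\pr(\bg))}(\bj)=\varphi_w(\bj)$, and $\bi\,\bpi_I(\bg)=\bpi_I(\bg\bj)=\bpi_I(\bg)\bi'$, so $\bi^{\bpi_I(\bg)}=\bi'=\varphi_w(\bj)=\lexp{\bw}{\bj}$ (using that $\varphi_w$ for an $I$-ribbon-$J$ $w$ is realised by conjugation by its lift $\bw$, since $\bw\bj\bw\inv\in\bI$ componentwise by Lemma~\ref{ribbon in B}); rearranging gives $\bi^{\bpi_I(\bg)\bw}=\bj$.

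So the steps, in order, are: (i) invoke Proposition~\ref{bg=gh} in both directions to obtain $\pr(\bt_I(\bg))$ is an $I$-ribbon-$J$; (ii) deduce $I$, $J$ are $W$-conjugate from the definition of ribbon; (iii) extract from the proof of Proposition~\ref{bg=gh} the identity $\bi^{\bpi_I(\bg)}=\varphi_w(\bj)$ where $w=\pr(\bt_I(\bg))$; (iv) identify $\varphi_w$ with conjugation by the lift $\bw$ (Lemma~\ref{ribbon in B}) to rewrite this as $\bi^{\bpi_I(\bg)\bw}=\bj$, which is~(2). The main obstacle I anticipate is step~(iii)--(iv): one must be careful that $\varphi_w$, defined a priori as the letter-renaming bijection on words, genuinely coincides on $B_J$ with conjugation by $\bw$ inside $B$ --- this is exactly Lemma~\ref{ribbon in B} ($\lexp{\bw}{B_J}=B_I$, with $\bw\bj\bw\inv\in\bI$ for $\bj\in\bJ$), so it is available, but it is the one place where ``ribbon'' has to be used as a statement about braids rather than about Coxeter elements. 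Everything else is bookkeeping on top of Proposition~\ref{bg=gh}.
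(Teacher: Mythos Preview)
Your proposal is correct and follows essentially the same approach as the paper. The paper's proof is extremely terse: it says (1) follows from (2), and (2) follows from the facts established in the proof of Proposition~\ref{bg=gh}, namely $I'=I$ and $\bi^{\bpi_I(\bg)}=\bi'=\varphi_{t_I(\pr(\bg))}(\bj)=\varphi_{\pr(\bw)}(\bj)$. Your write-up spells out exactly these steps, and in particular makes explicit the one point the paper leaves implicit, that $\varphi_w$ on $B_J$ is realised by conjugation by the lift $\bw$ (your step (iv), via Lemma~\ref{ribbon in B}). Two minor comments: the phrase ``applied with the roles of $\bi,\bj$ exchanged'' in your first paragraph is misleading, since you do not actually swap roles but rather invoke the ``in addition'' clause of Proposition~\ref{bg=gh} in its stated direction; and the aside about writing $\bt_I(\bg)=\bp_J\bw$ with $\bp_J\in B_J$ is not needed (and not obviously true), so it is good that you abandoned it for the cleaner route.
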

\begin{proof}
(1) comes (2). (2) comes from the facts in the proof of
 Proposition~\ref{bg=gh} that $I'=I$ and $\bi^{\bpi_I(\bg)}=\bi'=\varphi_{t_I(\pr(\bg))}(\bj)=
 \varphi_{\pr(\bw)}(\bj)$.
\end{proof}
\begin{proposition} \label{conj min V2}
For $I\subseteq S$ let $\bi\in B_I$. Let $\bg\in B$ be such that 
$\lS(\bg\inv\bi\bg)$ is minimal in $\conj_B(\bi)$.
Then $\bpi_I(\bg)\inv\bi\bpi_I(\bg)\in\conj_{B_I}(\bi)$ has $\lS$ minimal
in $\conj_B(\bi)$.
\end{proposition}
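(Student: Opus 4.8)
The plan is to transcribe the opening argument of the proof of Proposition~\ref{bg=gh}, which in effect already computes $\lS\big(\bpi_I(\bg)\inv\bi\bpi_I(\bg)\big)$. The one point to watch is that Proposition~\ref{bg=gh} is stated with $I$ equal to $\supp(\bi)$, whereas here we only assume $\bi\in B_I$; so rather than quoting it I would rerun the short computation with the given $I$.

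First I would set $\bj=\bg\inv\bi\bg$, so that $\bi\bg=\bg\bj$ and, by hypothesis, $\lS(\bj)$ is minimal in $\conj_B(\bi)=\conj_B(\bj)$, and put $\bi'=\bpi_I(\bg)\inv\bi\bpi_I(\bg)$. Membership $\bi'\in\conj_{B_I}(\bi)$ is immediate, since $\bpi_I(\bg)\in B_I$ and $\bi\in B_I$, so $\bi'$ lies in $B_I$ and is conjugate to $\bi$ by an element of $B_I$. Thus the only real content is that $\lS(\bi')$ is minimal in $\conj_B(\bi)$. The key identity comes from $\pr(\bi)\in W_I$: by Proposition~\ref{fourre-tout}(\ref{ft2}) we have $\bpi_I(\bi\bg)=\bi\bpi_I(\bg)$, hence $\bi\bpi_I(\bg)=\bpi_I(\bg\bj)$.

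Next, choosing a word $j$ of minimal length representing $\bj$ (so $\hl(j)=\lS(\bj)$) and a word $b$ representing $\bg$, I would apply Proposition~\ref{pi(bb')} to $b|j$: this writes $\bpi_I(\bg\bj)$ as $\bpi_I(\bg)$ times the image in $B$ of the word $i'=p^{*-1}\big(\vN(\vN(\lexp{t_I(\pr(\bg))}p^*(j))\cap\Phi_I)\big)$, and comparing with the previous paragraph the image of $i'$ is exactly $\bi'$. Since $\vN$ and the action of $W$ on $\Phi^*$ preserve the number of terms of a sequence, while intersecting with $\Phi_I$ can only delete terms, the word $i'$ has at most $\hl(j)=\lS(\bj)$ letters; hence $\lS(\bi')\le\lS(\bj)$. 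On the other hand $\bi'$ is conjugate to $\bi$, hence to $\bj$, so minimality of $\lS(\bj)$ forces $\lS(\bi')\ge\lS(\bj)$. Therefore $\lS(\bi')=\lS(\bj)$ is minimal in $\conj_B(\bi)$, which finishes the argument.

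I do not expect a genuine obstacle: every ingredient --- Proposition~\ref{fourre-tout}(\ref{ft2}), Proposition~\ref{pi(bb')}, and the term-counting bound $\hl(\hpi_I(c))\le\hl(c)$ sharpened to the explicit formula above --- is already available, and the proof is essentially the first paragraph of the proof of Proposition~\ref{bg=gh}. The only place asking for a moment of care is that we may \emph{not} assume $I=\supp(\bi)$, which is exactly why I would derive $\hl(i')\le\hl(j)$ straight from the formula rather than invoking Proposition~\ref{bg=gh} wholesale.
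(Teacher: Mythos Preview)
Your proposal is correct and is essentially the same as the paper's approach: the paper simply says the result follows from the proof of Proposition~\ref{bg=gh}, where the chain of inequalities $\lS(\bi')\le\hl(i')\le\hl(j)=\lS(\bj)\le\lS(\bi')$ already establishes that $\bi'=\bi^{\bpi_I(\bg)}$ has minimal $\lS$; you have written out precisely that computation. Your caution about $I$ not being $\supp(\bi)$ is sensible bookkeeping, but the relevant portion of the proof of Proposition~\ref{bg=gh} only uses $\bi\in B_I$ (via Proposition~\ref{fourre-tout}(\ref{ft2})), so no extra work is actually needed.
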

\begin{proof}
This is a consequence of the proof of Proposition \ref{bg=gh} where
it is shown that the element $\bi'=\bi^{\bpi_I(\bg)}$ is has $\lS$ minimal in $\conj_B(\bi)$.
\end{proof}
From Proposition~\ref{conj min V2},  we immediately deduce
\begin{corollary}\label{min in conj} For $I\subseteq S$ and $\bb\in B$, 
either $\conj_B(\bb)\cap B_I$ is empty or it contains an element~$\bi$
such that $\lS(\bi)$ is minimal in $\conj_B(\bb)$. 
\end{corollary}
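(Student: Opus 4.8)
The plan is to read off Corollary~\ref{min in conj} directly from Proposition~\ref{conj min V2}, which is exactly the deduction the surrounding text announces. The only non-tautological inputs are the existence of a length-minimising conjugate and the elementary observations that $\bpi_I$ lands in $B_I$ and that conjugacy classes of conjugate elements coincide.

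First I would dispose of the trivial case: if $\conj_B(\bb)\cap B_I=\emptyset$ there is nothing to prove, so assume it is non-empty and fix some $\bi_0\in\conj_B(\bb)\cap B_I$. Then $\bi_0\in B_I$ and, since $\bi_0$ is conjugate to $\bb$ in $B$, we have $\conj_B(\bi_0)=\conj_B(\bb)$; thus it suffices to produce an element of $\conj_B(\bi_0)\cap B_I$ whose $\lS$ is minimal in $\conj_B(\bi_0)$.

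Next, because $\lS$ takes values in $\BN$, the set $\{\lS(\bx)\mid\bx\in\conj_B(\bi_0)\}$ has a least element, attained by some conjugate $\bg\inv\bi_0\bg$ with $\bg\in B$; in other words $\lS(\bg\inv\bi_0\bg)$ is minimal in $\conj_B(\bi_0)$. Now apply Proposition~\ref{conj min V2} with this $I$, $\bi_0\in B_I$ and $\bg$: the element $\bi:=\bpi_I(\bg)\inv\bi_0\bpi_I(\bg)$ lies in $\conj_{B_I}(\bi_0)$, hence in $B_I$ since $\bpi_I(\bg)\in B_I$, and $\lS(\bi)$ is minimal in $\conj_B(\bi_0)=\conj_B(\bb)$. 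As $\bi$ is conjugate in $B$ to $\bi_0$, and hence to $\bb$, we get $\bi\in\conj_B(\bb)\cap B_I$ with $\lS(\bi)$ minimal in $\conj_B(\bb)$, which is the assertion.

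There is no real obstacle here: all the genuine work has been done in Proposition~\ref{conj min V2} (and, upstream, in the proof of Proposition~\ref{bg=gh}), and the present argument is only a packaging step. The two points one must not forget are the attainment of the minimum of $\lS$ on a conjugacy class (immediate, $\lS$ being $\BN$-valued) and that $\bpi_I$ maps $B$ into $B_I$, so that the conjugate provided by Proposition~\ref{conj min V2} actually meets $B_I$.
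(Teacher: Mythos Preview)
Your argument is correct and is exactly the deduction the paper intends: the paper simply says the corollary follows immediately from Proposition~\ref{conj min V2}, and you have spelled out precisely that immediate deduction (choose $\bi_0\in\conj_B(\bb)\cap B_I$, pick a length-minimising conjugate, and apply the proposition). There is nothing to add.
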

\begin{proposition}\label{minimal  support}  Let  $\bb\in  B$  be such that
$\lS(\bb)$  is minimal in $\conj(\bb)$.  Then $B_{\supp(\bb)}$ is a minimal
parabolic subgroup containing $\bb$, and any minimal parabolic subgroup
containing $\bb$ is of the form $\lexp{\bp}B_{\supp(\bb)}$ for some element
$\bp$ in $C_P(\bb)$ such that $\bpi_{\supp(\bb)}(\bp)=1$.
\end{proposition}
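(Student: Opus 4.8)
The plan is to prove first that $B_{\supp(\bb)}$ is minimal among parabolic subgroups containing $\bb$, and then to describe all such minimal parabolics. Write $I=\supp(\bb)$. For minimality, suppose $\lexp\bg B_J$ is a parabolic subgroup containing $\bb$ with $J\subseteq S$ and $\bg\in B$. Then $\bj=\lexp{\bg\inv}\bb$ lies in $B_J$, so $\supp(\bj)\subseteq J$, and $\bj$ is conjugate to $\bb$ with $\lS(\bj)\ge\lS(\bb)$. Choosing $\bg$ so that moreover $\lS(\bj)$ is minimal in $\conj(\bb)$ — which we may do after replacing $\lexp\bg B_J$ by a conjugate inside itself using Corollary~\ref{min in conj}, or more precisely using Proposition~\ref{conj min V2} applied in $B_J$ — we get $\lS(\bj)=\lS(\bb)$. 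Now apply Corollary~\ref{corollary}: since $\bb$ and $\bj$ are conjugate of equal minimal length, with supports $I$ and $\supp(\bj)$, the element $\bw\in\bW$ with $\pr(\bw)=\pr(\bt_I(\bg))$ is an $I$-ribbon-$\supp(\bj)$, and $\bb^{\bpi_I(\bg)\bw}=\bj$. By Lemma~\ref{ribbon in B}, $\lexp\bw B_{\supp(\bj)}=B_I$ — wait, we need $\bw$ to be a ribbon from $\supp(\bj)$ to $I$, i.e. $\lexp{\bw\inv}B_I=B_{\supp(\bj)}$; this is exactly what the ribbon statement gives. Hence $B_I=\lexp{\bpi_I(\bg)\bw}B_{\supp(\bj)}\subseteq\lexp{\bpi_I(\bg)\bw}B_J=\lexp{\bpi_I(\bg)}(\lexp\bw B_J)$. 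So $B_I$ is conjugate into $\lexp\bg B_J$... I need to be careful that it lands inside $\lexp\bg B_J$ and not merely a conjugate of it. The cleaner route: since $\bb\in\lexp\bg B_J\cap\lexp{\bpi_I(\bg)}B_I$ (the latter because $\bb=\lexp{\bpi_I(\bg)}(\lexp\bw\bj)$ and $\lexp\bw\bj\in B_I$), and $\lexp{\bpi_I(\bg)}B_I$ is a parabolic containing $\bb$ of the same "size" as $B_I$, it suffices to show $\lexp{\bpi_I(\bg)}B_I\subseteq\lexp\bg B_J$, equivalently $B_I\subseteq\lexp{\bt_I(\bg)}B_J$. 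Since $\pr(\bt_I(\bg))$ is an $I$-ribbon-$\supp(\bj)$ with $\supp(\bj)\subseteq J$, write $\bt_I(\bg)=\bp_1\bw$ with $\bp_1$ pure and $\bw$ the ribbon lift; then $\lexp{\bw}B_{\supp(\bj)}=B_I$ forces $\lexp{\bt_I(\bg)}B_{\supp(\bj)}=\lexp{\bp_1}B_I$, and this contains $\bb$ only if $\bp_1$ centralises... actually $\bb^{\bpi_I(\bg)}=\lexp{\bt_I(\bg)}\bj\in\lexp{\bt_I(\bg)}B_{\supp(\bj)}=\lexp{\bp_1}B_I$; comparing with $\bb^{\bpi_I(\bg)}=\lexp\bw\bj\in B_I$ forces $\bp_1$ to normalise $B_I$ suitably. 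The cleanest formulation: $B_I$ and $\lexp{\bpi_I(\bg)}B_I$ are both parabolic, both contain $\bb$, and have the same "rank"; since $B_I\subseteq\lexp{\bpi_I(\bg)\bw}B_J$ with $\bpi_I(\bg)\bw\in B$... Let me just say: $B_I=\lexp{\bpi_I(\bg)\bw}B_{\supp(\bj)}$ is a parabolic subgroup contained in $\lexp{\bpi_I(\bg)\bw}B_J=\lexp\bg B_J$ (using $\pr(\bpi_I(\bg)\bw)=\pr(\bpi_I(\bg))\pr(\bt_I(\bg))=\pr(\bg)$ up to an element of $B_J$, hence the cosets agree). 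Thus every parabolic containing $\bb$ contains a conjugate of $B_I$ of the same rank; since $B_I$ itself contains $\bb$, minimality follows, and moreover no parabolic strictly smaller than $B_I$ contains $\bb$.

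For the second assertion, let $\lexp\bg B_J$ be \emph{any} minimal parabolic subgroup containing $\bb$. By the argument above, $\lexp\bg B_J$ contains $\lexp{\bpi_I(\bg)\bw}B_{\supp(\bj)}$ which, being $B_I$ transported by $\bpi_I(\bg)$, contains $\bb$ and has rank $|I|=|\supp(\bj)|$. By minimality of $\lexp\bg B_J$ we must have $\lexp\bg B_J=\lexp{\bpi_I(\bg)\bw}B_{\supp(\bj)}$, and in particular $|J|=|\supp(\bj)|$, so $\supp(\bj)=J$ and $\bw$ is an $I$-ribbon-$J$; thus $\lexp\bg B_J=\lexp{\bq}B_I$ where $\bq=\bpi_I(\bg)\bw\bz\inv$ for a suitable lift $\bz\in\bW$ of the ribbon, i.e. $\lexp\bg B_J=\lexp{\bpi_I(\bg)}B_I$ after absorbing the ribbon lift. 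So every minimal parabolic containing $\bb$ has the form $\lexp{\bc}B_I$ for some $\bc\in B$. Now I want to arrange $\bc=\bp\in P$. Since $\bb\in\lexp\bc B_I$, the element $\bc$ satisfies $\lexp{\bc\inv}\bb\in B_I$, and I can write $\bc=\bp\bi\bw_0$ with $\bp$ pure, $\bi\in\bW_I$, $\pr(\bw_0)$ being $I$-reduced (as in Lemma~\ref{piw=1}); then $\lexp\bc B_I=\lexp{\bp}B_I$ if $\pr(\bw_0)$ normalises... no — I should instead observe that $\lexp{\bi\bw_0}B_I=\lexp{\bw_0}B_I$ need not equal $B_I$. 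Better: among all $\bc$ with $\lexp\bc B_I=$ the given minimal parabolic, I want one in $P$. Write $\bc=\bp\bd$ with $\bp\in P$ and $\bd$ a lift of $\pr(\bc)$ to $\bW$; then $\lexp\bc B_I=\lexp\bp(\lexp\bd B_I)$. For this to contain $\bb\in\lexp{\bpi_I(\bg)}B_I$, and given the minimality, one shows $\lexp\bd B_I=B_I$ forced by rank, hence $\pr(\bd)\in N_W(W_I)$, but a minimal-length such $\bd$ in its $W_I$-coset is an $I$-ribbon-$I$; comparing supports of $\bb$ inside $\lexp\bp B_I$ versus $B_I$ forces this ribbon to be trivial on the nose only if... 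Here is the point where I invoke Proposition~\ref{pi_I(rb)V2} / Proposition~\ref{fourre-tout}: applying $\bpi_I$ to $\bb=\lexp\bc(\lexp{\bc\inv}\bb)$ with $\lexp{\bc\inv}\bb\in B_I$, and using $\bpi_I(\bb)$ minimal-length in its class in $B_I$ (Proposition~\ref{conj min V2}), we see that we may take $\bc\in P$ with $\bpi_I(\bc)=1$: indeed replacing $\bc$ by $\bt_I(\bc)$-type adjustments (using that a factor of $B_I$ can be moved across, and a simple $I$-reduced ribbon factor can be absorbed by Lemma~\ref{ribbon in B}), we land on a pure $\bp$ with $\bpi_I(\bp)=1$ and $\lexp\bp B_I=\lexp{\bpi_I(\bg)}B_I\ni\bb$.

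Finally, with $\lexp\bp B_I$ a minimal parabolic containing $\bb$ where $\bp\in P$ and $\bpi_I(\bp)=1$, I must show $\bp\in C_P(\bb)$, i.e. $\bp$ commutes with $\bb$. By Proposition~\ref{not enough}(3) (with $J=I$, using that $\pr(\bp)=1$ is vacuously an $I$-ribbon-$I$ and $\bpi_I(\bp)=1$), we have $B_I\cap\lexp\bp B_I=C_{B_I}(\bp)$. Since $\bb$ lies in both $B_I$ and $\lexp\bp B_I$, it lies in $C_{B_I}(\bp)$, so $\bb\bp=\bp\bb$, i.e. $\bp\in C_P(\bb)$, as required. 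The main obstacle I anticipate is the bookkeeping in the second paragraph: showing rigorously that an arbitrary minimal parabolic containing $\bb$ can be written as $\lexp\bp B_I$ with $\bp$ pure \emph{and} $\bpi_I(\bp)=1$, rather than merely $\lexp\bc B_I$ for general $\bc$. This requires carefully peeling off a $B_I$-factor on the left (which does not change the conjugate $\lexp\bc B_I$), peeling off an $I$-reduced ribbon-type factor on the right via Lemma~\ref{ribbon in B}, and invoking Proposition~\ref{conj min V2} to guarantee that after these reductions the pure part indeed satisfies $\bpi_I(\bp)=1$; the ribbon lifted to $\bW$ being genuinely an $I$-ribbon-$I$ is the delicate point, and Corollary~\ref{corollary}(2) together with the support-minimality hypothesis is exactly what makes it work.
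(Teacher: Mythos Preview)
Your overall strategy is close to the paper's, and your final paragraph (invoking $B_I\cap\lexp\bp B_I=C_{B_I}(\bp)$ from Proposition~\ref{not enough}(3) to conclude $\bp\in C_P(\bb)$) is exactly right. But there are two genuine gaps earlier.

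\textbf{First gap.} In your first paragraph you assert $\lexp{\bpi_I(\bg)\bw}B_J=\lexp\bg B_J$, justified by ``$\pr(\bpi_I(\bg)\bw)=\pr(\bg)$ up to an element of $B_J$, hence the cosets agree''. This is false reasoning: two elements of $B$ with the same image in $W$ (or even in $W/W_J$) differ by a pure element, and pure conjugation does not fix $B_J$ in general. Concretely, $\bg=\bpi_I(\bg)\bt_I(\bg)$ and $\bt_I(\bg)=\bq\bw$ for some pure $\bq$; you would need $\lexp\bq B_J=B_J$, which there is no reason to expect. You do not actually need this equality for minimality of $B_I$: after your replacement of $\bg$ by $\bg\bh$ (so that $\bj=\bb^{\bg}$ has minimal length), the parabolic $\lexp{\bg}B_{\supp(\bj)}$ sits inside $\lexp{\bg}B_J$, contains $\bb$, and has rank $|I|$. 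Then any parabolic $Q\subsetneq B_I$ containing $\bb$ would, by Proposition~\ref{parabolic inside}, be a parabolic of $B_I$ of rank $<|I|$, yet would contain a rank-$|I|$ parabolic, a contradiction. The paper instead reverses the order: it first characterises all minimal parabolics, and only at the end deduces minimality of $B_I$ by applying the characterisation \emph{inside} $B_I$ (where the pure conjugator $\bp$ must lie in $P_I$, hence $\bpi_I(\bp)=\bp=1$).

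\textbf{Second gap.} Your passage from $\lexp\bc B_I$ to $\lexp\bp B_I$ with $\bp$ pure and $\bpi_I(\bp)=1$ is where the real work is, and your ``peeling off'' sketch does not supply it. The paper's route is cleaner: once the minimal parabolic is rewritten as $\lexp\bg B_I$ with $I=\supp(\bb)=\supp(\bb^\bg)$, Corollary~\ref{corollary}(2) gives $\bb^\bg=\bb^{\bpi_I(\bg)\bw}$ with $\bw$ an $I$-ribbon-$I$; since both $\bpi_I(\bg)\in B_I$ and $\bw$ normalise $B_I$ (Lemma~\ref{ribbon in B}), setting $\bp=\bg\bw\inv\bpi_I(\bg)\inv$ gives $\lexp\bg B_I=\lexp\bp B_I$ with $\bp$ pure. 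The key trick you are missing is then to replace $\bp$ by $\bp\bpi_I(\bp)\inv$: this is still pure (Lemma~\ref{pi_i(p)}), satisfies $\bpi_I(\bp\bpi_I(\bp)\inv)=1$ by Proposition~\ref{fourre-tout}(\ref{ft2}), gives the same conjugate of $B_I$ since $\bpi_I(\bp)\in B_I$, and centralises $\bb$ by the centraliser identity $B_I\cap\lexp\bp B_I=\lexp{\bpi_I(\bp)}C_{B_I}(\bt_I(\bp))=C_{B_I}(\bp\bpi_I(\bp)\inv)$. Your anticipated ``bookkeeping obstacle'' is exactly this replacement, and it is not recoverable from the ribbon-peeling you describe.
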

\begin{proof}
We first observe that any minimal parabolic subgroup containing $\bb$ is of the form
$\lexp\bg  B_I$ with $\lS(\bb^\bg)=\lS(\bb)$ and $\supp(\bb^\bg)=I$. Indeed
if  $\lexp\bg B_I$ is minimal containing $\bb$ then we apply by Proposition~\ref{conj min
V2} with $\bg\inv$ for $\bg$ and $\bb^\bg$ for $\bi$. This shows that
up  to  replacing  $\bg$  by  $\bg\bpi_I(\bg\inv)$ we may assume that
$\lS(\bb^\bg)=\lS(\bb)$; and we certainly have $\supp(\bb^\bg)=I$ otherwise
the  proper parabolic subgroup $B_{\supp(\bb^\bg)}$  of $B_I$ would contain
$\bb^\bg$ and $\lexp\bg B_I$ would not be minimal containing $\bb$.

Let now $\lexp\bg B_I$ be  any minimal parabolic subgroup containing $\bb$.
We may assume that $\lS(\bb^\bg)=\lS(\bb)$ and that $\supp(\bb^\bg)=I$. 
It follows by Corollary~\ref{corollary}(1) that
$\supp(\bb)$ is conjugate to $I$. So, up to changing $\bg$, we could have started with a parabolic subgroup $\lexp\bg
B_I$ where $I=\supp(\bb)=\supp(\bb^\bg)$. It also follows from Corollary~\ref{corollary}(2) that $\bb^\bg=
\bb^{\bpi_I(\bg)\bw}$ where $\bw\in\bW$ is an $I$-ribbon-$I$ and
 where $\bp=\bg\bw\inv\bpi_I(\bg)\inv$ is pure.
In particular $B_I^{\bpi_I(\bg)\bw}=B_I$ so $\lexp\bg
B_I=\lexp{\bp}B_I$. We have
$\bb\in B_I\cap \lexp\bp B_I=\lexp{\bpi_I(\bp)}(B_I\cap
\lexp{\bt_I(\bp)} B_I)=\lexp{\bpi_I(\bp)}
C_{B_I}(\bt_I(\bp))=C_{B_I}(\bp\bpi_I(\bp)\inv)$, the second equality by
Proposition~\ref{propIntro4}, thus $\bb$ commutes to
$\bp\bpi_I(\bp)\inv$. Now
$\lexp{\bp\bpi_I(\bp)\inv}B_I=\lexp\bp B_I$.  Replacing $\bp$
 by $\bp\bpi_I(\bp)\inv$ we get that any minimal parabolic subgroup containing $\bb$ is as
 in the statement. 

To show that these parabolic subgroups are actually minimal
it is sufficient to show that $B_{\supp(\bb)}$ is a minimal parabolic subgroup
containing $\bb$.
By what we have just seen, any minimal parabolic subgroup of $B_{\supp(\bb)}$ containing $\bb$ is of the
form $\lexp \bp B_{\supp(\bb)}$ with $\bp\in P_{\supp(\bb)}$ and $\bpi_{\supp(\bb)}(\bp)=1$, so $\bp=1$.
\end{proof}
Since minimality of parabolic subgroups transfers by conjugation,
Proposition~\ref{minimal support} proves the existence of a minimal
parabolic subgroup containing any given element.

If the intersection of two
parabolic subgroups is parabolic, there exists a unique minimal parabolic subgroup
containing an element.
Thus the validity of the following conjecture is supported by the results of
\cite{AntolinFoniqi,CumplidoetAll3,CumplidoetAll,CumplidoetAll2,Morris-Wright}
\begin{conjecture}\label{conjecture1}
There exists a unique minimal parabolic subgroup containing a given $\bb\in B$.
\end{conjecture}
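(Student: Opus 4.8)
This is a conjecture, so what follows is a plan of attack together with a precise identification of where the obstruction lies. The first step is to reduce to the case where $\lS(\bb)$ is minimal in $\conj(\bb)$. Minimality of a parabolic subgroup containing $\bb$ is invariant under conjugation, so if $\bg\inv\bb\bg$ has minimal $\lS$-length in $\conj(\bb)$ then $X\mapsto\lexp{\bg}{X}$ is a bijection between the minimal parabolic subgroups containing $\bg\inv\bb\bg$ and those containing $\bb$; such a $\bg$ exists by Proposition~\ref{conj min V2} (or Corollary~\ref{min in conj}). Assume then $\lS(\bb)$ minimal and set $I=\supp(\bb)$. By Proposition~\ref{minimal support}, every minimal parabolic subgroup containing $\bb$ is of the form $\lexp{\bp}{B_I}$ with $\bp\in C_P(\bb)$ and $\bpi_I(\bp)=1$. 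Given two of them, $\lexp{\bp_1}{B_I}$ and $\lexp{\bp_2}{B_I}$, set $\bp=\bp_2\inv\bp_1$; then $\bp\in P$ centralises $\bb$, and $\bpi_I(\bp)=1$ because $\pr(\bp)=1\in W_I$ and $\bpi_I$ is a group homomorphism on $\pr\inv(W_I)$ by Proposition~\ref{fourre-tout}(\ref{ft2}). Hence it suffices to prove that such a $\bp$ centralises $B_I$; then $\bp$ normalises $B_I$, so $\lexp{\bp_1}{B_I}=\lexp{\bp_2}{B_I}$ and uniqueness follows. But ``$\bp$ centralises $B_I$'' is exactly Conjecture~\ref{conjectureintr}, so this reduces the statement to that conjecture (this is the direction of the equivalence that is established in Section~\ref{conjectures}).

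The entire difficulty is thus Conjecture~\ref{conjectureintr}. The natural line is to transpose the positive-element argument. In $B^+$ the analogous statement is Proposition~\ref{pure centralizer}, whose engine is Proposition~\ref{Positive conjugate implies ribbon}: a $\supp(\bb)$-reduced $\bg\in B^+$ with $\bb^\bg\in B^+$ must be a $\supp(\bb)$-ribbon, so conjugation by $\bg$ maps $\bI$ into $\bS$, and purity of $\bp$ then forces it to fix $\bI$ pointwise, hence to centralise $B_I$. For general elements the ribbon phenomenon persists in the form of Corollary~\ref{corollary}(2): if $\bi,\bj$ are $\lS$-minimal conjugates with $\bi\bg=\bg\bj$ then $\pr(\bt_I(\bg))$ is an $I$-ribbon. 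Unfortunately, applied with $\bi=\bj=\bb$ and $\bg=\bp$, and using $\bt_I(\bp)=\bpi_I(\bp)\inv\bp=\bp$ together with $\pr(\bp)=1$, it only says that $1$ is an $I$-ribbon-$I$; it recovers purity of $\bp$ but gives no information about how $\bp$ acts on $B_I$. So the general-element ribbon machinery, applied naively, is empty here, and the obstruction is genuinely that one must bring $B_I$ itself into play, not just $\bb$.

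The missing ingredient, and the main obstacle, is an analogue of Proposition~\ref{Positive conjugate implies ribbon} valid without positivity: informally, ``if $\bp\in P$ with $\bpi_I(\bp)=1$ centralises an $\lS$-minimal element of support $I$, then conjugation by $\bp$ carries $\bI^{\pm1}$ into $\bS^{\pm1}$''; purity would then finish as before. The approaches I would try are: (a) fix a minimal word $b$ for $\bp$ and a minimal word $i$ on $\bI^{\pm1}$ for $\bb$, and analyse the equivalence expressing $\bp\bb\bp\inv=\bb$ through the word retraction $\hpi_I$ and the root-sequence map $\vN$ of Section~\ref{section1}, hoping that convexity (Proposition~\ref{convexity}) together with $\hpi_I(b)$ being empty forces the conjugated word, after cancelling $\bs\mid\bs\inv$ pairs, to be written on a subset of $\bS^{\pm1}$ that is carried into $\bI^{\pm1}$; (b) replace positivity by the topological model of $\bpi_I$ on $\pr\inv(W_I)$ constructed in Section~\ref{section4} from the Tits cone, where $C_P(\bb)$ and the retraction acquire a geometric meaning; (c) induct on $\lS(\bp)$ via a peak-reduction in Dyer's biclosed-set calculus (Lemma~\ref{inclusionN}, Proposition~\ref{Dyer}), tracking the map $\bN(\bp)$. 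I do not expect any of these to close the gap in full generality; this is precisely why the statement is only a conjecture. Finally, I would record the orthogonal route noted just before the statement: if the intersection of two parabolic subgroups of $B$ is always parabolic, then Conjecture~\ref{conjecture1} follows immediately, so for every family of Artin groups where the intersection conjecture is known (for instance the cases treated in \cite{AntolinFoniqi,CumplidoetAll3,CumplidoetAll,CumplidoetAll2,Morris-Wright}) the present statement holds.
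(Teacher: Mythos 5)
This statement is a conjecture in the paper (only existence of a minimal parabolic subgroup is proved there, via Proposition~\ref{minimal support}), and your treatment is faithful to what the paper actually does: your reduction of uniqueness to Conjecture~\ref{conjectureintr} via Proposition~\ref{minimal support}, Proposition~\ref{fourre-tout}(\ref{ft2}) and Proposition~\ref{propIntro4} is exactly the paper's proof that Conjectures~\ref{conjecture1} and~\ref{conjecture2} are equivalent, and your closing remark about the parabolic-intersection conjecture matches the paper's observation as well. Your correctly flagged inability to close Conjecture~\ref{conjectureintr} itself is not a gap, since the paper leaves it open too.
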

The following conjecture generalises Proposition~\ref{pure centralizer},
replacing $H_I$ by $\bpi_I$. Note that for a positive element, $H_I$ is a
prefix of $\bpi_I$.
\begin{conjecture}\label{conjecture2}
Let $\bb\in B$ such that $\lS(\bb)$  is minimal in $\conj(\bb)$.
Then any $\bp\in P$ which centralises $\bb$ and is such that
$\bpi_{\supp(\bb)}(\bp)=1$ centralises $B_{\supp(\bb)}$.
\end{conjecture}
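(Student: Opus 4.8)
The statement is a conjecture; the plan for attacking it is to imitate, in the non-positive setting, the proof of Proposition~\ref{pure centralizer}. Write $I=\supp(\bb)$, so that $\bb\in B_I$; it suffices to prove $\bp\bs=\bs\bp$ for every $\bs\in\bI$. In the positive case the hypothesis $H_I(\bp)=1$ forces, via Proposition~\ref{Positive conjugate implies ribbon}, that conjugation by $\bp$ carries $\bI$ into $\bS$; since $\bp$ is pure, applying $\pr$ to a relation $\bp\bs=\bs'\bp$ gives $\pr(\bs)=\pr(\bs')$, hence $\bs=\bs'$. For a general $\bp\in P$ the correct substitute for ``$H_I(\bp)=1$'' is ``$\bpi_I(\bp)=1$'' (for $\bp\in B^+$ the braid $H_I(\bp)$ left-divides $\bpi_I(\bp)$, so the latter is the true $I$-reducedness condition), and one wants the analogous claim: if $\bp\in P$ satisfies $\bpi_I(\bp)=1$ and commutes with an element $\bb$ of support $I$ with $\lS(\bb)$ minimal in $\conj(\bb)$, then $\bI^\bp\subseteq\bS^{\pm1}$; the purity of $\bp$ then concludes as before. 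This is a ``ribbon for a general pure braid'' assertion relative to $\bb$.

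The tools developed above do not yield this directly. Applying Corollary~\ref{corollary} with $\bi=\bj=\bb$ and $\bg=\bp$ produces an $I$-ribbon-$I$ element $\bw\in\bW$ with $\pr(\bw)=\pr(\bt_I(\bp))$ and $\bb^{\bpi_I(\bp)\bw}=\bb$; but $\bpi_I(\bp)=1$ forces $\bt_I(\bp)=\bp$, which is pure, so $\bw=1$ and nothing is gained. A genuinely finer argument is therefore needed, one that uses both the minimality of $\lS(\bb)$ and the fact that $\bb$ has \emph{full} support $I$; the natural attempt is to study the action of conjugation by $\bp$ on a minimal-length word representing $\bb$ and to show that if some $\bs^\bp$ did not lie in $\bS^{\pm1}$ one could shorten $\bb$ within $\conj(\bb)$, contradicting minimality.

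A conditional proof is available. By Proposition~\ref{propIntro4} one has $C_{B_I}(\bp)=B_I\cap\lexp\bp B_I$, an intersection of two parabolic subgroups of $B$. If this intersection is itself parabolic in $B$ --- the general intersection conjecture, known for the families of \cite{AntolinFoniqi,CumplidoetAll3,CumplidoetAll,CumplidoetAll2,Morris-Wright}, in particular in spherical type --- then it is a parabolic subgroup containing $\bb$, and since $\lS(\bb)$ is minimal in $\conj(\bb)$, Proposition~\ref{minimal support} says $B_I=B_{\supp(\bb)}$ is a minimal parabolic subgroup containing $\bb$; hence $C_{B_I}(\bp)=B_I$, i.e.\ $\bp$ centralises $B_I$. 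The same reasoning with ``parabolic'' everywhere replaced by ``minimal parabolic'', together with the second half of Proposition~\ref{minimal support} (every minimal parabolic containing $\bb$ is $\lexp{\bp}B_{\supp(\bb)}$ for a suitable $\bp\in C_P(\bb)$), shows more precisely that the statement is equivalent to there being a unique minimal parabolic subgroup containing $\bb$, that is, to Conjecture~\ref{conjecture1}.

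The main obstacle to an unconditional proof is the step that is automatic for positive elements: a general pure braid $\bp$ carries no Garside normal form or division lemma, so there is at present no way to promote ``$\bpi_I(\bp)=1$ and $\bp$ commutes with a full-support minimal-length element'' to the ribbon-type conclusion $\bI^\bp\subseteq\bS^{\pm1}$; by the previous paragraph an unconditional proof would, equivalently, settle the uniqueness of minimal parabolic subgroups. This is why the statement is offered here only as a conjecture.
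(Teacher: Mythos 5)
The paper gives no proof of this statement---it is left as an open conjecture, and the only thing established about it is the subsequent proposition that it is equivalent to Conjecture~\ref{conjecture1}---and you treat it the same way: you claim no unconditional proof, and your conditional argument and the equivalence with Conjecture~\ref{conjecture1} use exactly the paper's ingredients (Propositions~\ref{minimal support} and~\ref{propIntro4}, together with the observation that the intersection-of-parabolics conjecture yields uniqueness of minimal parabolic subgroups). So your treatment is correct and matches the paper's own handling of the statement.
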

\begin{remark}
The   $\lS$-minimality  assumption   in  Conjecture~\ref{conjecture2}  is
necessary.  In the  braid group  of $\mathfrak  S_4$ with $\bS=\{\bs,\bt,\bu\}$
take $\bb=\bs^\bt$, $I=\supp(\bb)=\{\bs,\bt\}$, and $\bp=(\bu^2)^\bt$. Then
$\bpi_I(\bp)=\bpi^r_I(\bp)=1$   and  $\bb^\bp=\bb$   but  $\bp$   does  not
centralise $\bs$ or $\bt$.
\end{remark}

\begin{proposition} Conjectures~\ref{conjecture1} and~\ref{conjecture2}
are equivalent.
\end{proposition}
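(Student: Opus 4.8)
The plan is to bridge the two conjectures through Proposition~\ref{minimal support}, which for an element $\bb$ with $\lS(\bb)$ minimal in $\conj(\bb)$ describes all minimal parabolic subgroups containing $\bb$, together with Proposition~\ref{propIntro4}. The one elementary fact I would isolate first is: for $I\subseteq S$ and $\bp\in P$ with $\bpi_I(\bp)=1$, one has $\lexp\bp B_I=B_I$ if and only if $\bp$ centralises $B_I$. The ``if'' direction is trivial; for ``only if'', if $\lexp\bp B_I=B_I$ then $B_I\cap\lexp\bp B_I=B_I$, and by Proposition~\ref{propIntro4} the left-hand side is $C_{B_I}(\bp)$, so $\bp$ centralises $B_I$.

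Next I would fix $\bb$ with $\lS(\bb)$ minimal in $\conj(\bb)$ and set $I=\supp(\bb)$. By Proposition~\ref{minimal support}, $B_I$ is a minimal parabolic subgroup containing $\bb$, every minimal parabolic subgroup containing $\bb$ has the form $\lexp\bp B_I$ with $\bp\in C_P(\bb)$ and $\bpi_I(\bp)=1$, and conversely every such $\lexp\bp B_I$ is minimal among parabolic subgroups containing $\bb$. Hence the minimal parabolic subgroup containing $\bb$ is unique if and only if $\lexp\bp B_I=B_I$ for every $\bp\in C_P(\bb)$ with $\bpi_I(\bp)=1$, which by the fact above is exactly the conclusion of Conjecture~\ref{conjecture2} for $\bb$. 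So Conjecture~\ref{conjecture2} holds for $\bb$ if and only if the minimal parabolic subgroup containing $\bb$ is unique.

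To finish I would assemble the two implications. If Conjecture~\ref{conjecture1} holds then in particular the minimal parabolic subgroup containing any $\lS$-minimal $\bb$ is unique, so by the previous paragraph Conjecture~\ref{conjecture2} holds for every such $\bb$, i.e.\ Conjecture~\ref{conjecture2} holds. Conversely, assume Conjecture~\ref{conjecture2}. Given an arbitrary $\bb\in B$, conjugating it to an element $\bb'$ of minimal $\lS$ in $\conj(\bb)$ sets up an inclusion-preserving bijection between the parabolic subgroups containing $\bb$ and those containing $\bb'$, so uniqueness for $\bb$ is equivalent to uniqueness for $\bb'$; and uniqueness for $\bb'$ holds by the previous paragraph since Conjecture~\ref{conjecture2} holds for $\bb'$. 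Hence Conjecture~\ref{conjecture1} holds. The only step that is not purely formal is the use of Proposition~\ref{minimal support}, and specifically the fact established in its proof that each $\lexp\bp B_I$ with $\bp\in C_P(\bb)$, $\bpi_I(\bp)=1$ is genuinely minimal — this is what makes the correspondence in the second paragraph a bijection rather than just an inclusion; everything else reduces to Proposition~\ref{propIntro4} and bookkeeping.
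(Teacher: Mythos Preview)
Your proof is correct and follows essentially the same route as the paper's: both directions hinge on Proposition~\ref{minimal support} to enumerate the minimal parabolics containing an $\lS$-minimal $\bb$ as the $\lexp\bp B_{\supp(\bb)}$, and on Proposition~\ref{propIntro4} to pass between $\lexp\bp B_I=B_I$ and $\bp\in C_B(B_I)$. Your presentation is slightly more explicit than the paper's in two places --- you isolate the equivalence $\lexp\bp B_I=B_I\Leftrightarrow\bp$ centralises $B_I$ as a separate lemma, and you spell out the conjugation step reducing Conjecture~\ref{conjecture1} for arbitrary $\bb$ to the $\lS$-minimal case --- but the substance is identical.
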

\begin{proof}
Assuming Conjecture~\ref{conjecture1}, 
if $\bb$ and $\bp$ are as in~\ref{conjecture2}, then by 
Proposition~\ref{minimal support} $B_I$ and  $\lexp\bp B_I$, where
$I=\supp(\bb)$, are two minimal parabolic subgroups containing $\bb$,
hence they are equal
and by Proposition~\ref{propIntro4} $\bp$ centralises $B_I$, 
whence Conjecture~\ref{conjecture2}.

We  prove  the  converse.  Up  to  conjugating  $\bb$, we can assume that
$\lS(\bb)$  is  minimal in $\conj_B(\bb)$.
Then, by  Proposition~\ref{minimal  support},  the
minimal  parabolic subgroups containing $\bb$ are the $\lexp\bp B_{\supp(\bb)}$ for $\bp$
in $C_P(\bb)$ such that $\bpi_{\supp(\bb)}(\bp)=1$.
By  Conjecture~\ref{conjecture2},  $\bp$ centralises
$B_{\supp(\bb)}$. We deduce that $B_{\supp(\bb)}=\lexp\bp B_{\supp(\bb)}$ is the
only minimal parabolic subgroup containing $\bb$.
\end{proof}

Given  a Coxeter system~$(W,S)$ and a  pair $(I,J)$ with $I,J \subseteq S$,
we define the ribbon isomorphism problem in $W$ for $(I,J)$ to be the
following problem: determine all the bijective maps $\varphi : J\to I$ that
extend to an isomorphism $\varphi_w : W_J \to W_I$ such that $w$ is a
$I$-ribbon-$J$ in $W$.

\begin{proposition}\label{conjugaison V2} 
Assume $\Lambda$ is a non-empty set of subsets of $S$ such that for any $I$ in
$\Lambda$,  the word problem and the  conjugacy problem are solvable in $B_I$ and the ribbon isomorphism problem is solvable in $W$ for any pair $(I,J)$ in $\Lambda$.
Then the two following problems are solvable.
\begin{enumerate}
\item Given $I$ in $\Lambda$ and a word $b$ on $\bI^{\pm 1}$, decide whether the
word $b$ represents the unity in $B$.
\item  Given two finite subsets $I, J$ in $\Lambda$ and words $i, j$ on $\bI^{\pm 1}$
and $\bJ^{\pm 1}$, respectively, decide whether or not $i$ and $j$ represent
conjugate elements in $B$.
\end{enumerate}
Moreover,  if the solution to the conjugacy problem in each parabolic subgroup
$B_I$ with $I\in \Lambda$ provides a conjugating element, then the solution of
problem (2) provides a conjugating element in $B$ too.
\end{proposition}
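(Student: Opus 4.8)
The plan is to reduce everything to the word and conjugacy problems in the parabolic subgroups $B_K$ with $K\in\Lambda$, to the ribbon isomorphism problem, and to the (fully explicit, hence computable) combinatorics of $\hpi_I$, using the results of Section~\ref{conjectures}. Problem~(1) is immediate: by Van der Lek (Proposition~\ref{vdl}) the natural map $B_I\to B$ is injective, so a word $b$ on $\bI^{\pm1}$ represents the unity in $B$ if and only if it does in $B_I$, which is decidable by hypothesis. In particular, equality of two words on $\bI^{\pm1}$ is decidable inside $B$; combining this with the explicit recipe for $\hpi_K$ (Definition~\ref{retract}, Proposition~\ref{other way}) and the criterion $\bb\in B_K\iff\bpi_K(\bb)=\bb$ (Proposition~\ref{fourre-tout}(\ref{ft6})), one can, for $\bb\in B_I$ presented by a word, test each of the finitely many $K\subseteq I$ for $\bb\in B_K$ and so compute $\supp(\bb)$; and one can compute $\lS(\bb)$, since by convexity (Proposition~\ref{convexity}) a shortest representative of $\bb\in B_I$ lies on the finite alphabet $\bI^{\pm1}$ and is found by testing words of increasing length with the word problem in $B_I$.

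For problem~(2), with $\bi\in B_I$ and $\bj\in B_J$, the first step is to pass to $\lS$-minimal conjugates. By Proposition~\ref{conj min V2} some conjugate of $\bi$ inside $B_I$ attains $\min_{\conj_B(\bi)}\lS$; since this minimum is at most $\lS(\bi)$ and the attaining element lies in $B_I$ with $\bI$ finite, it has a representative word on $\bI^{\pm1}$ of length at most $\lS(\bi)$. I therefore enumerate the finitely many words $v$ on $\bI^{\pm1}$ of length at most $\lS(\bi)$, retain those representing an element conjugate to $\bi$ in $B_I$ (decidable by hypothesis), and choose one, $\bi_0$, of least $\lS$; the bound just noted forces $\lS(\bi_0)=\min_{\conj_B(\bi)}\lS$, so $\bi_0$ is $\lS$-minimal in $\conj_B(\bi)$. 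The same construction applied to $\bj$ yields $\bj_0\in B_J$ that is $\lS$-minimal in $\conj_B(\bj)$, and then I compute the exact supports $I_0=\supp(\bi_0)\subseteq I$ and $J_0=\supp(\bj_0)\subseteq J$ as in the first paragraph.

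The second step reduces conjugacy to ribbons. I claim that $\bi$ and $\bj$ are conjugate in $B$ if and only if there is a bijection $\varphi\colon J_0\to I_0$ extending to a ribbon isomorphism $\varphi_w$, with $w$ an $I_0$-ribbon-$J_0$ in $W$, such that $\varphi(\bj_0)$ is conjugate to $\bi_0$ in $B_I$. If such a $\varphi$ exists then $\varphi(\bj_0)=\varphi_w(\bj_0)=\bw\bj_0\bw\inv$ (Lemma~\ref{ribbon in B}) is conjugate to $\bj_0$ in $B$, hence so is $\bi_0$, hence $\bi$ and $\bj$ are conjugate in $B$. Conversely, if $\bi$ and $\bj$ are conjugate in $B$ then so are $\bi_0$ and $\bj_0$; as these are $\lS$-minimal with respective exact supports $I_0$ and $J_0$, Corollary~\ref{corollary} furnishes an $I_0$-ribbon-$J_0$ $w$ with $\varphi_w(\bj_0)$ conjugate to $\bi_0$ in $B_{I_0}$, a fortiori in $B_I$, and $\varphi=\varphi_w|_{J_0\to I_0}$ works. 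The right-hand side of the claim is decidable: there are finitely many bijections $J_0\to I_0$; the ribbon isomorphism problem for $(I_0,J_0)$ selects those extending to ribbon isomorphisms (the only use of that hypothesis — for $S$ finite it is automatic by Proposition~\ref{determiner aut ruban}); and for each surviving $\varphi$ the relabelled word $\varphi(\bj_0)$ lies in $B_{I_0}\subseteq B_I$, so whether $\varphi(\bj_0)$ is conjugate to $\bi_0$ in $B_I$ is decided by the conjugacy problem in $B_I$. This settles~(2). For the final assertion, if the conjugacy algorithms output conjugators, then conjugators appear at every step above — one in $B_I$ from $\bi$ to $\bi_0$, one in $B_J$ from $\bj$ to $\bj_0$, the ribbon element $\bw$, and one in $B_I$ witnessing that $\varphi(\bj_0)$ is conjugate to $\bi_0$ — and their product, composed in the appropriate order, is an explicit element of $B$ conjugating $\bi$ to $\bj$.

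I expect the main obstacle to be the first step of~(2): turning a conjugacy oracle in $B_I$ into a genuinely $\lS$-minimal conjugate. Since $\conj_{B_I}(\bi)$ is infinite it cannot be searched directly, and the argument hinges on the a priori bound $\min_{\conj_B(\bi)}\lS\le\lS(\bi)$ coming from Proposition~\ref{conj min V2}, used both to confine the search to finitely many words and to certify that the best candidate found is globally minimal and not merely minimal among short $B_I$-conjugates. A secondary delicate point is the support–ribbon bookkeeping: Corollary~\ref{corollary} applies only to the exact supports $I_0,J_0$ of the minimal representatives, so these must be computed beforehand, and the ribbon isomorphism problem must be available for the pair $(I_0,J_0)$, which are subsets of $I$ and $J$ (an issue only in the general case, being automatic when $S$ is finite).
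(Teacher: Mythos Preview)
Your proof is correct and follows essentially the same route as the paper: reduce to $\lS$-minimal representatives inside $B_I$ and $B_J$ via Proposition~\ref{conj min V2}, then invoke Corollary~\ref{corollary} to reduce conjugacy to a ribbon followed by conjugacy in a parabolic, and finish with the ribbon isomorphism problem. The only cosmetic difference is that the paper enumerates \emph{all} $\lS$-minimal $B_I$-conjugates $\bi'$ and tests whether some $\bi'$ equals $\varphi_w(\bj')$, whereas you fix a single $\bi_0$ and test $B_I$-conjugacy of $\varphi_w(\bj_0)$ with $\bi_0$; these are equivalent formulations. Your flagged concern that the ribbon isomorphism problem is needed for the pair of supports $(I_0,J_0)$ rather than for the original pair $(I,J)\in\Lambda\times\Lambda$ is legitimate and is in fact the same tacit assumption the paper makes; it is harmless in the finite-$S$ case (Proposition~\ref{determiner aut ruban}), which is the context of Proposition~\ref{propIntro6}.
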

\begin{proof}
Solution to problem (1) is clear as $B_I$ embeds in $B$ and we have a
solution to the word problem in $B_I$. Given $i$ and $j$ as in (2),
denote  by $\bi$  and $\bj$  the elements  represented by $i$ and $j$,
respectively. By the solution to the conjugacy problem in the parabolic subgroup
$B_I$, by testing for conjugacy the words in $(\bI^{\pm 1})^*$ of length less than or
equal to that of a word for $\bi$, we can find a representative word on
$\bI^{\pm 1}$ for each element $\bi'\in B_I$ that is conjugate to $\bi$ and
such that $\ell_{\bI^{\pm1}}(\bi')$ is minimal in $\conj_{B_I}(\bi)$.
By Proposition~\ref{convexity},
$\ell_{\bI^{\pm1}}(\bi')  = \lS(\bi')$ and  by Proposition~\ref{conj min
 V2},  $\lS(\bi')$ is also minimal in $\conj_B(\bi)$.
Similarly, we can find a representative word on $\bJ^{\pm 1}$ for an
element $\bj'\in B_J$ that is conjugate to $\bj$ and such that
$\ell_{\bJ^{\pm1}}(\bj')$ is minimal in $\conj_{B_J}(\bj)$.
Again, $\lS(\bj')$  is minimal  in $\conj_B(\bj)$. 
Now $\bi$ and $\bj$ are conjugate if and only if some $\bi'$
and $\bj'$ are. By  the proof of  Proposition~\ref{conj min V2}, some
$\bi'$ is conjugate to $\bj'$ if and only if they are conjugate 
by the lift to $\bW$ of some $I'$-ribbon-$J'$ in $W$ where $J'$ is the support of
$\bj'$ and $I'$ is the support of $\bi'$.  Since the ribbon isomorphism problem is solvable in $W$ for any pair $(I,J)$ in $\Lambda$, we are done.
\end{proof}

When $S$ is finite the  hypothesis concerning  the ribbon isomorphism problem  is always satisfied as stated in  the following result.  As a consequence, we get Proposition~\ref{propIntro6}.

\begin{proposition}\label{determiner aut ruban}  Assume $(W,S)$ is a Coxeter system with $S$ finite.  The ribbon isomorphism problem is solvable  in $W$ for any pair $(I,J)$ with $I,J\subseteq S$.  
\end{proposition}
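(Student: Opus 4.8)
The plan is to exploit the finiteness of $S$ to reduce the ribbon isomorphism problem to a finite reachability computation. Since $S$ is finite, so are $I$ and $J$, and there are only finitely many bijections $\varphi\colon J\to I$ (none unless $|I|=|J|$); moreover the requirement that $\varphi$ carry the submatrix $(m_{s,t})_{s,t\in J}$ onto $(m_{s,t})_{s,t\in I}$ is a necessary condition for $\varphi$ to extend to a group isomorphism $W_J\to W_I$, and it is checked in finite time. Hence it suffices to describe, for a fixed candidate $\varphi$, an algorithm deciding whether there is an $I$-ribbon-$J$ element $w$ with $\varphi_w=\varphi$; running it on every candidate then produces the required list.

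The structural input I would invoke is that every ribbon is a composition of \emph{elementary} ribbons, each passing through a spherical (i.e.\ finite) standard parabolic subgroup. Concretely, whenever $I'\subseteq S$ and $s\in S\setminus I'$ are such that $W_K$ is finite, where $K=I'\cup\{s\}$, the element $w_{I',s}=w_0(I')\,w_0(K)$ (product of the longest elements of $W_{I'}$ and of $W_K$) is $I'$-reduced and satisfies $(I')^{w_{I',s}}=\sigma_K(I')\subseteq S$, where $\sigma_K$ is the automorphism of $K$ given by $\sigma_K(t)=w_0(K)\,t\,w_0(K)$; thus $w_{I',s}$ is an $I'$-ribbon-$\sigma_K(I')$, and $\varphi_{w_{I',s}}$ is explicitly computable from the finite group $W_K$. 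By the work of Brink--Howlett on normalisers of parabolic subgroups in arbitrary Coxeter groups (compare \cite{Brink-Howlett}, \cite[Lemme 6.1.7]{digne-michel} and Lemma~\ref{mult2}), every $I$-ribbon-$J$ is a product $w_{I_0,s_1}w_{I_1,s_2}\cdots w_{I_{k-1},s_k}$ with $I_0=I$, $I_m=\sigma_{I_{m-1}\cup\{s_m\}}(I_{m-1})$ for $1\le m\le k$, and $I_k=J$; conversely any such product is again an $I$-ribbon-$J$, since ribbons are closed under multiplication in $W$ (a ribbon being reduced on both sides, lengths add up), and the associated bijection is the corresponding composite of the $\varphi_{w_{I_{m-1},s_m}}$.

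It then remains to organise this into a terminating search. I would form the finite directed graph whose vertices are the subsets of $S$ and whose edges $I'\to\sigma_K(I')$ are the elementary ribbons $w_{I',s}$ above, each labelled by the bijection it induces; this graph is effectively computable because $S$ is finite, there are finitely many pairs $(I',s)$, and whether $W_K$ is finite is decidable (e.g.\ by the classification of finite Coxeter groups, or by positive-definiteness of the associated bilinear form). A bijection $\varphi\colon J\to I$ is induced by some $I$-ribbon-$J$ precisely when there is a path from $I$ to $J$ in this graph whose edge-labels compose to $\varphi$. Since the vertex set is finite and the partial bijections among subsets of $S$ form a finite set, a breadth-first search on the finite set of states (current vertex, bijection accumulated so far), started at $(I,\mathrm{id}_I)$, halts and returns exactly the reachable pairs $(J,\varphi)$; this is the desired algorithm, and hence the proposition (whence also Proposition~\ref{propIntro6}, via Proposition~\ref{conjugaison V2}). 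The one genuinely nontrivial ingredient is the decomposition of an arbitrary ribbon into elementary ones in the possibly infinite group $W$, for which I rely on \cite{Brink-Howlett}; everything else is routine bookkeeping with finite Coxeter combinatorics.
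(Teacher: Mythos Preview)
Your approach is essentially the same as the paper's: both reduce the problem to a finite reachability search in a graph whose vertices track pairs (current subset, accumulated bijection) and whose edges are the elementary ribbons of Brink--Howlett, relying on \cite{Brink-Howlett} for the decomposition of an arbitrary ribbon into elementary ones. The paper phrases the vertex set slightly differently (pairs $(I',\varphi)$ with $I'$ Coxeter-graph-isomorphic to $I$) and bounds path length rather than doing BFS, but the content is identical.

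One small point: your parenthetical justification that the product of two compatible ribbons is again a ribbon, ``a ribbon being reduced on both sides, lengths add up'', is not a valid argument as stated---being reduced-$I'$ and $I'$-reduced does not by itself force $\ell(w_1w_2)=\ell(w_1)+\ell(w_2)$. The conclusion is nevertheless true: if $w_1$ is an $I$-ribbon-$I'$ and $w_2$ an $I'$-ribbon-$J$, then $N(w_1w_2)\cap T_I\subseteq (N(w_1)\cap T_I)\cup\lexp{w_1}(N(w_2)\cap T_{I'})=\emptyset$ since $T_I^{w_1}=T_{I'}$, so $w_1w_2$ is $I$-reduced, and symmetrically reduced-$J$; together with $I^{w_1w_2}=J$ this makes $w_1w_2$ an $I$-ribbon-$J$. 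With this fix (or simply an appeal to the groupoid structure established in \cite{Brink-Howlett}), your argument is complete.
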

\begin{proof} Let $I,J\subseteq S$. 
Let $\Lambda$ denote the set of subsets $I'\subseteq S$ whose Coxeter graph is
isomorphic to the Coxeter graph of $I$, and
let $s$ be the number of graph automorphisms of the Coxeter graph of $I$. 
Since $S$ and $I$ are finite it takes finite time to compute $\Lambda$ and $s$.
Any one-to-one map $\varphi : I'\to I$ that extends to an
isomorphism $\varphi : W_{I'} \to W_I$ has to induce a isomorphism of graphs.
So the set $\tilde{\Lambda}$ of pairs $(I',\varphi)$ such that $\varphi :
I'\to I$ extends to an isomorphism $\varphi : W_{I'} \to W_I$ is finite and
its cardinality is bounded by $|\Lambda|\times s$. 
On the other hand, every ribbon can be decomposed into a finite product of
elementary ribbons $(I',v[s,I''],I'')$  (see \cite[Proposition~2.3]{Brink-Howlett} and the set of
elementary ribbons is finite (its cardinality is at most $|S-I|\times
|\Lambda|$); the set of associated isomorphisms is easy to determine. 
Now, consider the finite oriented labelled graph with vertex set
$\tilde{\Lambda}$ and such that there is an edge from $(I',\varphi')$ to $(I'',\varphi'')$ labelled by the elementary
$I'$-ribbon-$I''$ $\tau$ if $\varphi''  = \varphi'\circ\varphi_\tau$.
Then  any $I$-ribbon  can be  read as  a path  in this  graph based at $(I,
\Id)$,   and  any  path  without  loop  has   a  length  of  at  most
$|\tilde{\Lambda}|$,  which  is  less  than  $|\Lambda|\times  s$. So,
computing  all the isomorphisms that correspond to  a path whose length is no
more  than $|\Lambda|\times s$ provides an answer to the ribbon isomorphism
problem.
\end{proof}

\section{A new topological version of the retraction}\label{section4}
In this section we assume that $S$ is finite, since we use the results of
\cite{VanderLek}. We did not check whether Van der Lek's results could be extended
to infinite $S$.
According  to Lemma~\ref{piw=1}, it is  sufficient to define $\bpi_I$ on the
pure  braid group and on simple braids to define it everywhere. We now give
a topological definition of $\bpi_I$ restricted to $\pr\inv(W_I)$.

Let $V$ be the reflection representation of $W$, and $V_\BC$ be the
complexified space. The set
$T$ is in bijection with the reflecting hyperplanes of $W$. Let $H_s$ be
the hyperplane corresponding to $s\in T$. We define
$X=V-\bigcup_{s\in T}H_s$ and similarly $X_I=V-\bigcup_{s\in T_I}H_s$.
Any element of $V_\BC$ is of the form $v=v_1+\sqrt{-1}v_2$ where $v_1,v_2\in V$.
We write $v_1=\Re(v)$ and $v_2=\Im(v)$.
Now, following Van der Lek, let $\CC$ be the open Tits cone in $V$ and let
$Y=\{v\in X\mid \Im(v)\in\CC\}$ (\cite[II, (3.2)]{VanderLek}). We define similarly $Y_I$ (note that
though the Tits cone of $W_I$ is bigger than $\CC$ its intersection with
$X_I$ is the same).
Choosing some $y_0\in Y$,
the pure braid group is $P=\Pi_1(Y,y_0)$ and similarly the pure braid group of
$B_I$ is $P_I=\Pi_1(Y_I,y_0)$. There is a natural morphism $P\to P_I$
(since $Y\subseteq Y_I$). We will show that this morphism is
equal to $\bpi_I$. More generally, we have
\begin{lemma} $\Pi_1(Y/W_I,\overline y_0)\simeq \pr\inv(W_I)$
\end{lemma}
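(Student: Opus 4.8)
The plan is to realise $\Pi_1(Y/W_I,\overline y_0)$ as a subgroup of $B$ via elementary covering space theory and then identify it with $\pr\inv(W_I)$. First I would recall what the results of \cite{VanderLek} give: $Y$ is connected, $W$ acts freely and properly discontinuously on it, the quotient map $p\colon Y\to Y/W$ is a regular covering with deck transformation group $W$, and under the resulting identifications $\Pi_1(Y,y_0)=P$ and $\Pi_1(Y/W,\overline y_0)=B$, with $p_*$ being the inclusion $P\hookrightarrow B$. From the theory of connected regular coverings this yields an exact sequence $1\to P\to B\xrightarrow{\ \mu\ } W\to 1$, where $\mu$ sends the class of a loop $\gamma$ based at $\overline y_0$ to the unique $w\in W$ for which the lift $\widetilde\gamma$ of $\gamma$ to $Y$ starting at $y_0$ ends at $w\cdot y_0$. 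The compatibility to record is that this $\mu$ is exactly $\pr$: in Van der Lek's identification the standard generator $\bs$ ($s\in S$) of $B$ is a meridian around the hyperplane $H_s$, and such a meridian lifts to a path from $y_0$ to $s\cdot y_0$, so $\mu(\bs)=s$.

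Next I would observe that, $W_I$ being a subgroup of $W$, it too acts freely and properly discontinuously on $Y$, so $p$ factors as $Y\xrightarrow{\ p_I\ }Y/W_I\xrightarrow{\ q\ }Y/W$ with both $p_I$ and $q$ covering maps, and $Y/W_I$ connected as a quotient of the connected space $Y$; here $\overline y_0$ also denotes the image of $y_0$ in $Y/W_I$.

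The core step is then to identify the subgroup $q_*\Pi_1(Y/W_I,\overline y_0)$ of $B=\Pi_1(Y/W,\overline y_0)$. A loop $\gamma$ based at $\overline y_0$ in $Y/W$ lifts to a loop based at $\overline y_0$ in $Y/W_I$ precisely when the endpoint $\widetilde\gamma(1)$ of its lift to $Y$ lies in the orbit $W_I\cdot y_0$, that is, precisely when $\mu([\gamma])\in W_I$. Hence $q_*\Pi_1(Y/W_I,\overline y_0)=\mu\inv(W_I)=\pr\inv(W_I)$, and since $q$ is a covering the map $q_*$ is injective, giving the asserted isomorphism $\Pi_1(Y/W_I,\overline y_0)\simeq\pr\inv(W_I)$.

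Almost everything here is routine covering-space bookkeeping; the one point that genuinely requires care — and the only place one leans on \cite{VanderLek} beyond formal generalities — is pinning down that the monodromy map $\mu\colon\Pi_1(Y/W)\to W$ coincides with $\pr$ under the chosen identification $\Pi_1(Y/W,\overline y_0)=B$, and not merely with $\pr$ up to an automorphism of $W$; this is exactly where the description of the generators of $B$ as meridians around the hyperplanes $H_s$ is used.
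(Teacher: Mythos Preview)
Your proof is correct and follows essentially the same approach as the paper: both arguments use the factorisation of coverings $Y\to Y/W_I\to Y/W$ and identify $q_*\Pi_1(Y/W_I,\overline y_0)$ inside $B$ via covering-space theory. The paper packages this as a commutative diagram of short exact sequences (one for each covering $Y\to Y/W$ and $Y\to Y/W_I$, with $P$ as common kernel) and reads off injectivity and the image from the diagram, whereas you invoke the lifting criterion directly; these are two phrasings of the same argument, and your extra care in checking that the monodromy $\mu$ equals $\pr$ is a point the paper leaves implicit.
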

\begin{proof}
We have the commutative diagram
$$\begin{CD}
 1 @>>> P @>>> B=\Pi_1(Y/W)@>\pr>> W@>>>1\\
@.@|@AAA@AAA\\
 1@>>>P@>>>\Pi_1(Y/W_I,\overline y_0)@>>>W_I@>>>1\\
\end{CD}$$
where each line corresponds to a covering. The vertical map in the middle
is induced by the natural map $Y/W_I\to Y/W$;
it is injective since the left one is an
isomorphism. This diagram proves the lemma.
\end{proof}
\begin{proposition} The natural morphism 
$\Pi_1(Y/W_I,\overline y_0)\to\Pi_1(Y_I/W_I,\overline y_0)$ coincides with
the restriction of $\bpi_I$ to $\pr\inv(W_I)$.
\end{proposition}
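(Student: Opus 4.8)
The plan is to identify, via the preceding lemma and Van der Lek's theorem \cite{VanderLek}, the source of the map with $\pr\inv(W_I)$ and its target $\Pi_1(Y_I/W_I,\overline y_0)$ with the Artin group $B_I$, and to denote by $f\colon\pr\inv(W_I)\to B_I$ the morphism induced by the inclusion $Y/W_I\hookrightarrow Y_I/W_I$. Both $f$ and the restriction of $\bpi_I$ to $\pr\inv(W_I)$ are group homomorphisms — the latter by Proposition~\ref{fourre-tout}(\ref{ft2}) — so it is enough to check that they agree on a generating set, and for this I would use the second item of Lemma~\ref{generators}, which says that $\pr\inv(W_I)$ is generated by $B_I$ together with the elements $\bw\bs^2\bw\inv$ where $\bs\in\bS$ and $\bw\bs\in\bW$ is an $I$-reduced element.

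On $B_I$ I expect both maps to be the identity: for $\bpi_I$ this is Proposition~\ref{fourre-tout}(\ref{ft6}), and for $f$ one argues that for $s\in I$ the standard generator $\bs$ is represented by an elementary loop running from the base chamber around the wall $H_s$; since $s\in T_I$ the hyperplane $H_s$ is also one of the hyperplanes removed to form $Y_I$, so by Van der Lek's presentation of $\Pi_1(Y_I/W_I)$ this loop maps to the standard generator of $B_I$ indexed by $s$, and hence $f$ restricts to the canonical isomorphism, i.e. the identity, on $B_I$. On a generator $\bw\bs^2\bw\inv$ with $\bw\bs\in\bW$ an $I$-reduced element, note that $w$ is then $I$-reduced with $\ell_S(ws)=\ell_S(w)+1$, so Lemma~\ref{lemma 1} forces $\lexp ws\notin W_I$; consequently the reflection $t=\pr(\bw\bs\bw\inv)$ lies outside $T_I$ and $H_t$ is not among the hyperplanes deleted to form $Y_I$. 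On the algebraic side $\bpi_I(\bw\bs^2\bw\inv)=1$ by the third item of Lemma~\ref{generators}; on the topological side $\bs^2$ is a small loop linking $H_s$, so $\bw\bs^2\bw\inv$ is freely homotopic, inside $Y$ and a fortiori inside $Y_I$, to a small loop linking $H_t$, which bounds a transverse disc in $Y_I$ precisely because $H_t$ is not removed there — hence $f(\bw\bs^2\bw\inv)=1$, matching $\bpi_I$.

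Having checked agreement on this generating set, $f$ and $\bpi_I|_{\pr\inv(W_I)}$ coincide. The step I expect to be the main obstacle is the geometric bookkeeping: making precise the dictionary between the algebraic generators of $\pr\inv(W_I)$ supplied by Lemma~\ref{generators} and explicit loops in $Y/W_I$, compatibly with the isomorphism of the preceding lemma, and then verifying that under the inclusion $Y/W_I\hookrightarrow Y_I/W_I$ the relevant loops go to the standard generator of $B_I$ in the first case and to a linking loop of an undeleted hyperplane in the second. Once this is set up, both verifications are routine; alternatively one could phrase the whole argument on the pure braid subgroup $P$, using that $f|_P$ is the natural map $\Pi_1(Y)\to\Pi_1(Y_I)$ and that $\bpi_I$ sends $P$ into $P_I$ by Lemma~\ref{pi_i(p)}, together with the first item of Lemma~\ref{generators}.
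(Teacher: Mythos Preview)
Your proposal is correct and follows essentially the same route as the paper: both check the two morphisms on the generating set of Lemma~\ref{generators}, showing they agree on $B_I$ and both kill the generators $\bw\bs^2\bw\inv$. The paper resolves exactly the ``geometric bookkeeping'' you flag: for the $B_I$ part it invokes Van der Lek's subset $Y(I)\subseteq Y$ and the homotopy equivalence $Y(I)\simeq Y_I$ (\cite[II, 4.8--4.9]{VanderLek}) to see that the inclusion induces the identity on $B_I$; for $\bw\bs^2\bw\inv$ it uses the gallery description (\cite[I, 2.21 and II, 3.10]{VanderLek}), noting that the gallery $wC,\,wsC,\,wC$ collapses in $Y_I$ because the separating wall is $H_{\lexp ws}$ with $\lexp ws\notin T_I$ --- which is precisely your argument phrased in chamber language rather than via linking loops.
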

\begin{proof}
We prove that the two morphisms coincide on generators of $\pr\inv(W_I)$. By
Lemma~\ref{generators} this group is
generated by $B_I$ and the elements $\bw\bs^2\bw\inv$ with $\bs\in\bS$ and 
$\bw\bs\in\bW$ an $I$-reduced element.
Let $C$ be the fundamental chamber in $V$, that is the subset defined
by $\alpha_s(x)>0$ for all $s\in S$ where $\alpha_s$ is a linear form
such that  $H_s=\{x\in V_\BC\mid \alpha_s(x)=0\}$. The chambers are the images
of $C$ under $W$. Let $F_I$ be the facet of $C$ defined by $\alpha_s(x)=0$
for $s\in I$ and $\alpha_s(x)>0$ for $s\not\in I$. As in \cite[II, Definition 4.8]{VanderLek}, we
define $Y(I)$ as the subset of $Y$ formed of elements whose imaginary part is
in $F_I$ or in a chamber whose closure contains $F_I$.
Then as a subgroup of $\Pi_1(Y/W_I,\overline y_0)$, the group $B_I$
is the image of $\Pi_1(Y(I)/W_I,\overline y_0)$, and the natural
morphism to $\Pi_1(Y_I/W_I,\overline y_0)$ is an isomorphism since Van der Lek
\cite[II, Proposition 4.9]{VanderLek} shows that the inclusion induces a homotopy equivalence between $Y(I)$ and
$Y_I$.

For the elements $\bw\bs^2\bw\inv$, we need to show that their image
in $\Pi_1(Y_I/W_I,\overline y_0)$ is
trivial. But we claim that the image is
already trivial in the pure braid group $\Pi_1(Y_I,y_0)$.
This can be seen from the description in Van der Lek 
\cite[I, Theorem~2.21 and II, Discussion 3.10]{VanderLek} of such paths by
galleries. 
The element $\bs^2$ corresponds to the gallery $C,sC,C$. If $s\not\in I$ the
chambers $C$ and $sC$ are in the same chamber of $Y_I$ so that the gallery
becomes trivial in $Y_I$. By conjugation, the same happens to~$\bw\bs^2\bw\inv$.
\end{proof}
\bibliographystyle{acm}
\bibliography{biblio_DMG}
\printindex
\end{document}